\documentclass[11pt,reqno]{amsart}
\usepackage{amsmath,amssymb}
\usepackage{dsfont}
\usepackage{xcolor}
\usepackage{indentfirst,csquotes}
\usepackage{mathtools}
\usepackage{MnSymbol}
\usepackage{verbatim}
\usepackage{upgreek}
\usepackage{subcaption}
\usepackage{graphicx}
\usepackage{epstopdf}
\usepackage{bm}
\mathtoolsset{showonlyrefs}
\usepackage{fancyhdr}
\usepackage[margin=1in]{geometry}
\usepackage[colorlinks=true,citecolor=magenta,linkcolor=blue]{hyperref}

\newcommand{\eps}{\varepsilon}
\newcommand{\eq}{\mathrm{eq}}
\newcommand{\Span}{\mathrm{Span}}
\newcommand{\norm}[1]{\left\|#1\right\|}

\newcommand{\R}{\mathbb{R}}

\newcommand{\dd}{\mathrm{d}}
\newcommand{\init}{\mathrm{in}}
\newcommand{\drift}{\mathrm{drift}}
\newcommand{\diff}{\mathrm{diff}}

\newcommand{\smallO}{
  \mathchoice
    {{\scriptstyle\mathcal{O}}}%
    {{\scriptstyle\mathcal{O}}}
    {{\scriptscriptstyle\mathcal{O}}}
    {\scalebox{.7}{$\scriptscriptstyle\mathcal{O}$}}
  }

\newtheorem{theorem}{Theorem}
\newtheorem{lemma}{Lemma}
\newtheorem{proposition}{Proposition}
\newtheorem{remark}{Remark}

\makeatletter
\def\blfootnote{\xdef\@thefnmark{}\@footnotetext}
\makeatother

\author[A. Bondesan]{Andrea Bondesan}
\address{Andrea Bondesan \hfill\break
	Department of Mathematical, Physical and Computer Sciences \hfill\break 
    	University of Parma \hfill\break
	Parco Area delle Scienze 53/A, 43124 Parma, Italy \vspace*{2mm}}
 \email{andrea.bondesan@unipr.it \vspace*{5mm}}

\author[J. Borsotti]{Jacopo Borsotti}
\address{Jacopo Borsotti \hfill\break
	School of Resource and Environmental Management  \hfill\break
	Simon Fraser University \hfill\break
	8888 University Drive, V5A 1S6 Burnaby, Canada \vspace*{2mm}}
\email{jacopo\_borsotti@sfu.ca \vspace*{5mm}}

\author[M. Fontana]{Mattia Fontana}
\address{Mattia Fontana \hfill\break
	Laboratoire Jean Alexandre Dieudonné \hfill\break
	Université Côte d'Azur \hfill\break
	Campus Sciences, Parc Valrose, 28 avenue Valrose, 06108 Nice, France \vspace*{2mm}}
\email{mattia.fontana@univ-cotedazur.fr \vspace*{5mm}}

\title[Kinetic epidemiological models on graphons]{Kinetic models of opinion-driven epidemic\\dynamics modulated by graphons}

\begin{document}

\vspace*{-0.8cm}
\begin{abstract}
We introduce new kinetic equations to describe epidemics' spread while accounting for individuals' opinions on protective behaviors. Opinion exchanges occur on a social network represented by a graphon, whose choice strongly influences the dynamics and leads to the emergence of complex nonlinear phenomena, like the creation of opinion leaders or the spontaneous formation of epidemic waves. Starting from individual-based interactions, we derive a nonlinear nonlocal Fokker--Planck model involving reaction terms and degenerate drift--diffusion operators, which depend on the underlying graphon. We establish rigorous results of convergence to equilibrium in $L^1$ space, via relative entropy estimates, and in homogeneous Sobolev spaces $\dot{H}^{-s}$, $s \in \big(\frac{1}{2}, 1\big)$, using Fourier-based techniques. We then design a structure-preserving scheme for the coupled opinion-epidemiological system, highlighting graphon effects: opinion leaders supporting protective behaviors limit disease spread, whereas influenceable individuals may shift toward opposing views, worsening epidemics. At last, we introduce a time-dependent quantity analogous to the effective reproduction number, whose oscillations are linked with the formation of epidemic waves. Notably, these waves are not induced by an explicit external forcing but they naturally emerge from the interactions between agents, depending on the connectivity level prescribed by the graphon.
\end{abstract}

\maketitle

\tableofcontents

\noindent \textbf{Keywords:} Kinetic equations; Mathematical epidemiology; Opinion dynamics; Social networks; Time-dependent reproduction number.

\bigskip
\noindent \textbf{AMS Subject Classification}: 35Q84; 82B21; 91D10; 94A17.

\section{Introduction}

    \noindent The origins of mathematical epidemic modeling based on compartmental approaches trace back to the pioneering work of Kermack and McKendrick \cite{KerMcK}. Since that time, several models, usually described by systems of Ordinary Differential Equations (ODEs), have been developed to simulate complex biological dynamics \cite{Gae, LorPugSenZar, RasKoo} and to account for collective behaviors \cite{BulSenSot, DeldOnSenSot, Sch}. While some efforts were aimed at reproducing the spread of the epidemic as precisely as possible by considering, for example, secondary infections \cite{KakPugSenSot}, disease severity \cite{Bor}, different timescales \cite{JarKuePugSen1}, and age-dependent host mortality \cite{And}, starting from the COVID-19 pandemic it became clear that human behaviors should be incorporated in such models to effectively predict the outcome of the epidemics. Indeed, the presence of an underlying society is what distinguishes the spread of epidemics among humans from the ones occurring among other animal species, as observed in farms \cite{MacBis} and national parks \cite{BraDobHudCroSmi}. 
    
    The nowadays globalized society can be considered at three different levels: anyone can share their opinion through social networks, allowing them to reach people all over the world and amplifying their impact on the society \cite{RasPil}; citizens are physically connected with each other thanks to transportation and infrastructures \cite{CasBicMor}; governments implement strategies to slow down the spread of the epidemics, like quarantines \cite{Nus_etal} and vaccination campaigns \cite{Lan_etal}. While mobility and policy interventions have been extensively studied, with numerous contributions in the literature, the interplay between opinion dynamics and epidemic spreading remains less explored. For instance, from a graph-theoretical perspective, a considerable number of models has been developed to consider the movement of the citizens and the connections between them (see, e.g., \cite{CheXiaPer, FraLoy, HolTilDys, JarKuePugSen2, NalPat}), while optimal control strategies have been developed to organize effective quarantines and vaccination campaigns (see, e.g., \cite{AhmUsmKhaImr, BolBonDelGro, BolBonSorGro}). On the other hand, fewer works have been dedicated to linking opinion dynamics with epidemics (see, e.g., \cite{AlbCalDimZan, BonTosZan, DeRPugSenSor, DelLoyTos, DimPerTosZan, Zan}) and several questions remain open due to the complexity of such models, notably the relaxation to equilibrium of their solutions and the quantification of the convergence rates. 

    In this work, we focus on the impact that individuals' opinions have on the spread of the epidemic. Indeed, opinions influence whether a person will actively try to limit the transmission, for example by wearing face masks \cite{KarLuShiChePam}, or, conversely, will disregard the risks of infection, potentially endorsing conspiracy theories \cite{Tsa_etal}. In general, individuals with an opinion in favor of a protective behavior are less likely to contract the disease and, in case this happens, to spread it. Obviously, since opinions evolve over time, one has to model interactions among citizens to account for these dynamics \cite{DurFraWolZan, Tos, TosTosZan}. In addition, we shall also assume here that opinion exchanges occur on an underlying graphon \cite{Gla}, which is interpreted as the continuous extension of a dense graph and can be used to describe connections on a social network \cite{WanMaCao}. It is also important to consider the presence of opinion leaders, namely highly influential people (such as politicians and influencers) that are capable of altering citizens' opinions while leaving their own unaffected. Interestingly, from a mathematical point of view, one of the novelties of our approach is that, thanks to appropriate choices of the graphon, we are able to characterize both normal citizens and opinion leaders using a single model, instead of considering two separate classes of individuals like in \cite{BonBor, DieHee}. This feature highlights in particular the influence that a social network can have on the formation of opinions inside a population.

    In order to mimic opinion dynamics within a society composed of a large number of individuals, we rely on Toscani’s seminal kinetic model \cite{Tos}, which has been further expanded over more recent years (see, e.g., \cite{BonBor, CalDimTosZan, DurFraWolZan, TosTosZan}). Kinetic models describe opinion formation through binary interactions between agents and it is typically assumed that an individual's opinion evolves according to two mechanisms: compromise dynamics, where agents vary their opinions through interactions with others, and self-thinking dynamics. The main assumption behind these kinetic models is that when two agents interact, only small variations of their opinions can occur. Considering this quasi-invariant regime of opinions, Toscani \cite{Tos} showed that their dynamics can be described by Fokker--Planck-type equations, whose great mathematical advantage relies on the possibility of explicitly computing their equilibria, hence offering a clear description of key phenomena like consensus formation and opinion polarization \cite{BonBor}. These equilibria are crucial to understand how extreme opinions emerge within a society and preventing their formation is clearly important in epidemiological contexts, where polarized views can have harmful consequences. For example, in \cite{Zan} it is showed that opinion polarization against protective behaviors during an epidemic can lead to a significant increase in the number of infections, highlighting the real-world impact of opinion dynamics.

    In this paper, following \cite{AlbCalDimZan, BonTosZan, Zan}, we introduce a kinetic SIR-type model where, for the reasons previously described, the opinion of each individual plays a central role in determining the evolution of the epidemic. The operator modeling contagions and recoveries takes into account such opinions and leads to a generalization of the classical SIR model, allowing us to recover the latter after suitable integrations. Alongside this operator, we consider a collisional kinetic operator describing the evolution of opinions inside each compartment of individuals (susceptible, infected, and recovered) by taking into account an underlying graphon. Indeed, opinion dynamics could in general vary from one compartment to another since, for instance, infected individuals could be characterized by different self-thinking processes than susceptible ones. Our model is not fully kinetic like the one introduced in \cite{MarTosZan}, as we combine moment operators resembling ODE-type interactions with kinetic ones, in order to simulate complex phenomena. Finally, inspired by \cite{TosTosZan}, we also introduce a full kinetic model to measure the population's degree of concern regarding the severity of the disease spread. The idea is to track the popularity of each product related to the epidemic: news, videos, and social posts, that individuals tend to share whenever the latter are aligned with their own opinion. Note that the evolution of the products' popularity depends on the opinion dynamics, but the latter evolve independently of the former.

    Aside from the novelty of the proposed model, from a mathematical viewpoint the main contributions of this work are twofold. Firstly, we prove that solutions to our kinetic SIR-type model converge to equilibrium in the standard $L^1$ norm, making use of relative entropy techniques \cite{AurTosZan, BonBor, FurPulTerTos2} to determine the relaxation rate. We also demonstrate a result on the convergence to equilibrium for solutions to the kinetic equation modeling the popularity of products, in the homogeneous Sobolev spaces $\dot{H}^{-s}$, $s \in \big(\frac{1}{2}, 1\big)$ \cite{MarTosZan, TorTos2}. Secondly, we develop two-dimensional (in phase space) structure-preserving schemes to simulate both models. These numerical algorithms, which we have made publicly available, build upon the seminal work of Pareschi and Zanella \cite{ParZan} and further extend more recent structure-preserving schemes (see, e.g., \cite{BonTosZan, LoyZan, MarTosZan, ParTosTosZan, Zan}). The main novelty lies in the fact that our schemes are specifically designed to simulate compartmental models involving both kinetic and non-kinetic operators, and multiple distribution functions depending on more than one kinetic variable.

    Similarly, from the point of view of applications we also emphasize the two main contributions of our work. On one side, we show how to measure the propensity to interact of each individual by looking at their connections on the graphon. This allows us to demonstrate that opinion polarization is more likely to occur among individuals with a low propensity to interact, while those inclined to interact tend to reach a consensus, highlighting the importance of social interactions for preventing the creation of dangerous extreme opinions. This aligns in particular with the results obtained in \cite{BonBor}, where the first and second authors studied the effects that the level of social activity of the agents has on opinion formation. On the other side, the numerical simulation allow us to highlight the effects of the graphon: opinion leaders in favor of protective behaviors help limit disease spread, whereas easily influenceable individuals may shift toward opinions opposing protective behaviors, potentially leading to more severe epidemics. Moreover, we introduce a time-dependent quantity, resembling the classical basic reproduction number, which presents oscillations that can lead to different epidemic waves. Importantly, these different waves are not related to an explicit external forcing (like in previous works; see, e.g., \cite{BonTosZan}), but naturally arise from the complex structure of our model. 

\subsection{Outline of this work} 

    The article is organized as follows. In Section \ref{section2_0} we introduce our kinetic SIR system. In particular, concerning the modeling of opinion dynamics, starting from a Boltzmann description of the microscopic interactions, we derive the corresponding Fokker--Planck approximation in a quasi-invariant regime of the parameters. At the microscopic scale, the binary exchanges between agents depend on the social network through an interaction function that quantifies how strongly each individual can influence the others. At the macroscopic scale, instead, a graphon characterizes the tendency of agents to compare themselves with one another. In this section we also show why our model represents a generalization of the classical SIR equations, and we illustrate how to deduce the latter by suitable integration of the kinetic system. We also compute the (local) equilibria of the model. Finally, we prove several analytical properties of its solutions, including nonnegativity, $L^p$-regularity, and uniqueness. 
    
    In Section \ref{connectivity} we then introduce a simplification of the original model by replacing the previously defined interaction function and the graphon with a single quantity that incorporates all their information at the microscopic scale: the individuals' propensity to interact. After computing the equilibria of this new model, we investigate the connection between the propensity to interact and the phenomena of opinion polarization and consensus formation. At last, we prove that solutions to the simplified kinetic compartmental model converge to equilibrium in the strong $L^1$ distance and we also determine the relaxation rate. 
    
    We proceed in Section \ref{section4} by introducing a Fokker--Planck model, coupled with the original kinetic SIR system, that measures the population's level of concern regarding the disease spread. We calculate the equilibria of this model and we analyze their relation with the ones of the compartmental system. We then exhibit the main analytical properties of its solutions (existence, nonnegativity, regularity, and uniqueness) and we prove that they converge to equilibrium in the homogeneous Sobolev $\dot{H}^{-s}$ distance, for any $s \in \big(\frac{1}{2}, 1\big)$.

    Section \ref{sec:num} is dedicated to the numerical simulations. We start by introducing our structure-preserving scheme and by describing in details the algorithm. We then carry out several tests to reproduce the trends to equilibrium of the models demonstrated in the previous sections. We also analyze how the underlying graphon influences the evolution of the epidemic, both in presence and absence of opinion leaders. Moreover, we show the possible presence of different epidemic waves. 

    We conclude this work in Section \ref{section6}. As a last thing, we provide a link to the MATLAB code implementing our algorithms. 
	
\section{A kinetic SIR model on graphons} \label{section2_0}

    \noindent We consider a large population of agents subdivided in the following epidemiologically relevant compartments: susceptible ($S$) agents are the ones that can contract the disease; infectious agents ($I$) are responsible for the spread of the disease; removed ($R$) agents are healed and can no longer spread the disease. Let us define the set of these three compartments as $\mathcal{C} = \{S, I, R\}$. We associate to each agent a characteristic trait $w \in \mathcal{I} = [-1,1]$ which represents their opinion about the importance of protecting themselves and the others from the spread of the disease. In particular, $w = \pm 1$ correspond to the two opposite believes, namely agents with opinion $w=-1$ absolutely do not believe in the necessity of protections, whereas agents with opinion $w=1$ are in complete agreement with a protective behavior. We suppose that the frequency of interactions agent--agent leading to variations in their opinion depends on an underlying social network, modeled by a graphon \cite{BorChaCohZha1, BorChaCohZha2, DurFraWolZan, Gla, NalPat}, which is a measurable function $\mathcal{B} \colon [0,1]^2 \to [0, +\infty)$ satisfying $\mathcal{B} \in L^1([0,1]^2)$. Therefore, we assume that each agent is further characterized by a static position $x \in \Omega = [0, 1]$ on the graphon. Note that naturally these social interactions are distinct from the ones leading to the spread of the disease. 

    In the kinetic approach, the evolution of agents within each compartment is described statistically via the use of distribution functions $f_J = f_J(t,x,w)$, $J \in \mathcal{C}$, depending on time $t \geq 0$, position on the graphon $x \in \Omega$, and opinion $w \in \mathcal{I}$. This means that the quantity $f_J(t,x,w) \dd x \dd w$ counts the number of agents in the compartment $J \in \mathcal{C}$ that at time $t$ possess a position on the graphon in $[x, x+\dd x]$ and an opinion in $[w, w+\dd w]$. The number of agents in each compartment and their average opinion are then respectively defined as 
    \begin{equation} \label{eq:density and mean}
        \rho_J(t)=\int_{\Omega \times \mathcal{I}} f_J(t,x,w) \dd x \dd w, \qquad m_J(t)=\frac{1}{\rho_J(t)} \int_{\Omega \times \mathcal{I}} wf_J(t,x,w) \dd x \dd w. 
    \end{equation}
    Without loss of generality, we may assume that $\displaystyle \sum_{J \in \mathcal{C}} \rho_J (0) = 1$. We will be also interested in studying the evolution of the total average opinion of the population, given by 
    \begin{equation} \label{eq:total mean}
        m(t) = \frac{1}{\displaystyle\sum_{J \in \mathcal{C}}\rho_J(t)}\sum_{J \in \mathcal{C}} \int_{\Omega \times \mathcal{I}}  wf_J(t,x,w) \dd x \dd w. 
    \end{equation}
    The time evolution of the functions $f_J$ is then obtained by coupling the compartmental epidemiological description with the kinetic evolution of the social variable \cite{BonTosZan, DimParTosZan, Zan}. Denoting with $\mathbf{f} = (f_J)_{J\in\mathcal{C}}$ the vector distribution function of the three compartments, their evolution is described by the multi-species kinetic model
    \begin{equation} \label{eq:vectorial model}
        \partial_t \mathbf{f} = \mathbf{E}(\mathbf{f},\mathbf{f}) + \frac{1}{\tau} \mathbf{Q}(\mathbf{f},\mathbf{f}), \qquad t \in \R_+,\ x \in \Omega,\ w \in \mathcal{I},
    \end{equation}
    where the SIR-like operator $\mathbf{E}(\mathbf{f},\mathbf{f}) = (E_J(\mathbf{f},\mathbf{f}))_{J\in\mathcal{C}}$ models the evolution of the epidemic by prescribing suitable exchanges between compartments, while the kinetic-type operator $\mathbf{Q}(\mathbf{f},\mathbf{f}) = (Q_J(\mathbf{f}, \mathbf{f}))_{J\in\mathcal{C}}$ models the evolution of opinions inside the population. Moreover, the parameter $\tau > 0$ translates the fact that the opinion variations may happen at a different timescale than the disease spread. This means in particular that the proposed model is multiscale, since it interfaces the dynamics of the epidemic with that of the opinion. Let us now detail the explicit form of the operators $\mathbf{E}$ and $\mathbf{Q}$. 

\subsection{Effects of individuals' opinions on epidemic spread}

    Following \cite{BonTosZan}, we model the compartmental exchanges through the integral SIR-like operators
    \begin{equation} \label{eq:epidemiological operators E}
    \begin{split}
        E_S(\mathbf{f},\mathbf{f})(t,x,w) &= -f_S(t,x,w) \int_{\Omega \times \mathcal{I}} \beta_T(w,w_*) f_I(t,y,w_*) \dd y \dd w_*, \\[2mm]
        E_I(\mathbf{f},\mathbf{f})(t,x,w) &= f_S(t,x,w) \int_{\Omega \times \mathcal{I}} \beta_T(w,w_*) f_I(t,y,w_*) \dd y \dd w_* - \gamma f_I(t,x,w), \\[4mm]
        E_R(\mathbf{f},\mathbf{f})(t,x,w) &= \gamma f_I(t,x,w),
    \end{split}
    \end{equation}
    defined for any $t \geq 0$, $x \in \Omega$, and $w \in \mathcal{I}$. Here, the recovery rate $\gamma > 0$ is such that $1/\gamma$ gives the mean infectious period of the disease \cite{DieHee}, while $\beta_T(w,w_*)$ is a nonnegative decreasing (with respect to each variable taken individually) function that models the transition rate of agents between the compartments $S$ and $I$, giving a measure of the impact of protective behaviors on the interactions between susceptible and infectious individuals. One possible choice is given by 
    \begin{equation} \label{eq:function beta_T}
        \beta_T(w,w_*) = \beta (1-w)^{\alpha} (1-w_*)^{\alpha}, \qquad w,w_* \in \mathcal{I}, 
    \end{equation}
    for some parameters $\beta > 0$ (characterizing the baseline transmission rate of the epidemics) and $\alpha \geq 0$. In particular, we have assumed that the local incidence rate governing the transmission dynamics explicitly depends on the agents opinion $w$, meaning that the more protective an individual’s behavior is, the less likely they are to contract the disease. As a consequence of this modeling choice, the evolution of the disease is fully dependent on the agents' opinion about the use of protective behaviors. 

\subsection{Kinetic modeling of opinion dynamics} \label{kin_op}

    The opinion formation process is based on two mechanisms: 
    \begin{itemize}
        \item compromise dynamics, i.e., individuals tend to settle their differences of opinion; \\[-2mm]
        \item opinion fluctuation, i.e., every interaction between individuals is associated with a variation in opinions due to self-thinking. 
    \end{itemize}
    Consider an agent belonging to a compartment $J \in \mathcal{C}$ and characterized by a position on the graphon and an opinion given by the couple $(x,w) \in \Omega \times \mathcal{I}$ . When they interact with an agent from a compartment $J' \in \mathcal{C}$ and having kinetic traits $(y,w_*) \in \Omega \times \mathcal{I}$, their opinions vary according to the following microscopic interaction \cite{Tos}:
    \begin{equation} \label{eq:microscopic interactions}
    \begin{aligned}
        w' &= w + \lambda G(w,w_*)P(x,y) (w_*-w) + D(w) \eta_J, \\[2mm] 
        w_*' &= w_* + \lambda G(w_*,w)P(y,x) (w-w_*) + D(w_*) \eta_{J'}.
    \end{aligned}
    \end{equation}
    Here, the parameter $\lambda \in (0,1)$ characterizes the strength of the (deterministic) exchange of opinions. The symmetric function $G(w,w_*) \in [0, 1]$ describes how strongly the opinion $w_*$ affects the agent with opinion $w$, therefore it is natural to assume that $G(w,w_*)=\Bar{G}(|w-w_*|)$ with $\Bar{G}$ being a decreasing function that satisfies $\Bar{G}(0) = 1$. The function $P(x,y) \in [0, 1]$ ensures that the interactions depend on the connectivity of each agent and can be interpreted in different ways. For example, one can assume that the opinion exchanges \eqref{eq:microscopic interactions} depends on the quality of the relationship between the two agents: the better this is, the more strongly they influence each other. Moreover, one might assume that highly connected individuals are less influenced by each interaction than poorly connected ones. In Section \ref{connectivity} we will analyze the function $P$ in more detail. The opinion fluctuations are then modeled by centered random variables $\eta_{J}$ and $\eta_{J'}$, which depend on each agent's compartment and possess finite variance of respective values $\sigma_{J}^2>0$ and $\sigma_{J'}^2>0$. In particular, if both $G \equiv 1$ and $P\equiv 1$, the constant $\lambda$ would measure the propensity to move toward the average societal opinion (compromise dynamics), while the variances $\sigma_{J}^2$ and $\sigma_{J'}^2$ would measure the degree of opinion spreading due to diffusion processes (self-thinking). Finally, the function $D(w) \in [0, 1]$ encodes the relevance of such diffusion; for example, agents with an indifferent opinion ($w \simeq 0$) diffuse the most ($D(0)=1$), while those with an extreme opinion ($w \simeq \pm 1$) are less influenced by external factors ($D(\pm 1)=0$). 

    Denoting with $\left\langle X \right\rangle = \mathbb{E}(X)$ the expected value of some random variable $X$, simple computations show that 
    \begin{equation*}
    \begin{split}
        \left<w' + w_*'\right> &= w+w_*+G(w,w_*) \lambda(w_*-w) (P(x,y)-P(y,x)) + D(w)\left<\eta_{J}\right> + D(w_*)\left<\eta_{J'}\right> \\[2mm] 
        &=w+w_*+G(w,w_*) \lambda(w_*-w) (P(x,y)-P(y,x)). 
    \end{split}
    \end{equation*}
    In particular, when the function $P$ is symmetric, namely $P(x,y) = P(y,x)$, then $\left<w' + w_*'\right> = w+w_*$ and thus the average opinion is preserved by the microscopic opinion interactions \eqref{eq:microscopic interactions}. 

    Following \cite{Tos} we will usually consider 
    \begin{equation}
        D(w)=\sqrt{1-w^2}, \qquad w \in \mathcal{I},
    \end{equation}
    which ensures that the post-interaction opinions $w'$ and $w_*'$ stay in the interval $\mathcal{I}$. However, several other choices for the self-diffusion function $D$ are possible \cite{ParTosTosZan, Tos}.  

    If we neglect the compartmental exchanges due to the operator $\mathbf{E}$, employing standard methods of kinetic theory \cite{ParTos_book} and resorting to the derivation of the classical linear Boltzmann equation for elastic rarefied gases \cite{Cer}, one can show that the temporal variation of the vector distribution function $\mathbf{f}$ depends on a sequence of elementary binary interactions of type \eqref{eq:microscopic interactions}. In our case, given any smooth function $\varphi = \varphi(x,w)$ (defining the observable quantities of the system), we assume that the evolution of each $f_J$, $J \in \mathcal{C}$, is described by the integral equation  
    \begin{equation} \label{eq:weak form opinion}
    \begin{split}
        \frac{\dd}{\dd t} \int_{\Omega \times \mathcal{I}} & \varphi(x,w) f_J(t,x,w) \dd x \dd w \\ 
        &= \frac{1}{\tau}\sum_{J' \in \mathcal{C}} \int_{\Omega^2 \times \mathcal{I}^2} \mathcal{B}(x,y) \left<\varphi(x,w')-\varphi(x,w)\right> f_J(t,x,w) f_{J'}(t,y,w_*) \dd x \dd y \dd w \dd w_*,
    \end{split}
    \end{equation}
    where the kernel $\mathcal{B} \colon \Omega^2 \to [0, +\infty)$ is the given graphon, which can be thought of as the continuous equivalent of an adjacency matrix. In particular, the choice $\varphi \equiv 1$ in \eqref{eq:weak form opinion} leads to the conservation of mass in each compartment, i.e., the total number of agents inside each class (and therefore also inside the whole population) does not change over time. 

\subsection{Derivation of a mean-field description} \label{sub3}

    Starting from the collisional-like operators defined through the weak formulation \eqref{eq:weak form opinion}, we shall now perform a quasi-invariant limit procedure to derive a reduced model of Fokker--Planck-type, defining the operator $\mathbf{Q}$ in system \eqref{eq:vectorial model}. Such a simplified mean-field setting will in fact allow us to compute its kinetic equilibria explicitly, and study the evolution of the associated macroscopic quantities \eqref{eq:density and mean} and \eqref{eq:total mean}. 

    Fixed $0<\epsilon \ll 1$, we introduce the scalings 
    \begin{equation}
        \lambda \mapsto \epsilon\lambda, \qquad \sigma_J^2 \mapsto \epsilon\sigma_J^2, \qquad \tau \mapsto \epsilon\tau,
    \end{equation}
    which imply that the interaction \eqref{eq:microscopic interactions} produces only an extremely small variation of the opinion, while the frequency of interactions is increased accordingly. Next, we perform a Taylor expansion of the term encoding the binary interactions in \eqref{eq:weak form opinion}, namely 
    \begin{equation*}
        \varphi(x,w')-\varphi(x,w) = \partial_w \varphi(x,w) (w'-w) + \frac{1}{2} \partial^2_w \varphi(x,w) (w'-w)^2 + \frac{1}{6} \partial^3_w \varphi(x,\tilde{w}) (w'-w)^3, 
    \end{equation*}
    where $\tilde{w} \in (\min\{w,w'\}, \max \{w,w'\})$. Hence, we can rewrite \eqref{eq:weak form opinion} as 
    \begin{equation*}
    \begin{split}
        \frac{\dd}{\dd t} \int_{\Omega \times \mathcal{I}} & \varphi(x,w) f_J(t,x,w) \dd x \dd w \\[2mm] 
        &= \frac{1}{\epsilon\tau}\sum_{J' \in \mathcal{C}} \int_{\Omega^2 \times \mathcal{I}^2} \mathcal{B}(x,y) \partial_w \varphi(x,w) \left< w'-w \right>  f_J(t,x,w) f_{J'}(t,y,w_*) \dd x \dd y \dd w \dd w_* \\[2mm]
        &\ + \frac{1}{2\epsilon\tau}\sum_{J' \in \mathcal{C}} \int_{\Omega^2 \times \mathcal{I}^2} \mathcal{B}(x,y) \partial^2_w \varphi(x,w) \left< (w'-w)^2 \right>  f_J(t,x,w) f_{J'}(t,y,w_*) \dd x \dd y \dd w \dd w_* \\[2mm] 
        &\ + \frac{1}{\epsilon \tau} R_\varphi (f_J),  
    \end{split}
    \end{equation*}
    where $R_\varphi (f_J) = \frac{1}{6}\displaystyle\sum_{J' \in \mathcal{C}} \int_{\Omega^2 \times \mathcal{I}^2} \mathcal{B}(x,y) \partial^3_w \varphi(x,\tilde{w}) \left< (w'-w)^3 \right>  f_J(t,x,w) f_{J'}(t,y,w_*) \dd x \dd y \dd w \dd w_*$. We then substitute $w'=w + G(w,w_*)P(x,y)\lambda (w_*-w) + D(w) \eta_J$ and compute the mean of all random variables, recalling that $\left< \eta_J \right> = 0$. By now taking the limit $\epsilon \to 0$, under the hypothesis that $\left< |\eta_J|^3 \right> < +\infty$, we can check that $R_\varphi (f_J)/\epsilon \to 0$ \cite{CorParTos, Tos}. Integrating back by parts in order to get rid of the derivatives of $\varphi(x,w)$ and assuming that the boundary terms vanish, we formally recover a Fokker--Planck-type equation \cite{CorParTos, ParTos, Tos} describing the evolution of each $f_J$, $J \in \mathcal{C}$, 
    \begin{equation} \label{eq:opinion operator Q}
    \begin{split}
        \partial_t f_J &= \frac{1}{\tau}\left(\lambda \partial_w \left(\sum_{J' \in \mathcal{C}} \mathcal{K}[f_{J'}]  f_J \right) + \frac{\sigma_J^2}{2} \partial^2_w \left(D^2(w) \sum_{J' \in \mathcal{C}} \mathcal{H}[f_{J'}]  f_J\right) \right) \\[2mm]
        & = \frac{1}{\tau} Q_J(\mathbf{f},\mathbf{f}), \quad t\in \R_+,\ x \in \Omega,\ w \in \mathcal{I},
    \end{split}
    \end{equation}
    where we have defined, for any $J \in \mathcal{C}$, the nonlocal operators
    \begin{equation}
        \mathcal{K}[f_J](t,x,w) = \int_{\Omega \times \mathcal{I}} \mathcal{B}(x,y) P(x,y) G(w,w_*) (w-w_*) f_J(t,y,w_*) \dd y \dd w_*, \quad t \in \R_+,\; x \in \Omega,\; w \in \mathcal{I},
    \end{equation}
    and 
    \begin{equation} 
        \mathcal{H}[f_J](t,x) = \int_{\Omega \times \mathcal{I}} \mathcal{B}(x,y) f_J(t,y,w) \dd y \dd w, \quad t \in \R_+,\; x \in \Omega. 
    \end{equation}
    Obviously, the functionals $\mathcal{K}$ and $\mathcal{H}$ must be well defined (i.e., must be finite) for any $t \in \R_+$, $x \in \Omega$ and $w \in \mathcal{I}$. Therefore, we require that the following integrability conditions
    \begin{equation} \label{eq:integrability condition}
        \mathcal{B}(x,\cdot) f_J(t,\cdot, \cdot) \in L^1(\Omega \times \mathcal{I}), \qquad J \in \mathcal{C},
    \end{equation}
    hold for any $t \in \R_+$ and $x \in \Omega$. Moreover, system \eqref{eq:opinion operator Q} is completed with the no-flux boundary conditions
    \begin{equation} \label{eq:BC opinion FP}
    \begin{cases}
        \left. D^2(w) \displaystyle \sum_{J' \in \mathcal{C}} \mathcal{H}[f_{J'}](t,x) f_J(t,x,w) \right|_{w=\pm 1} = 0, \\[6mm] 
        \left. \lambda \displaystyle \sum_{J' \in \mathcal{C}} \mathcal{K}[f_{J'}](t,x,w)  f_J(t,x,w)+\frac{\sigma_J^2}{2} \partial_w \left(D^2(w) \sum_{J' \in \mathcal{C}} \mathcal{H}[f_{J'}](t,x)  f_J(t,x,w)\right) \right|_{w=\pm 1}=0, 
    \end{cases}
    \end{equation}
    holding for any $t \in \R_+$ and $x \in \Omega$, where the first relation is also linked to the property $D(\pm 1) = 0$. Finally, notice that the parameters $\lambda$ and $\sigma_J^2$ (measuring the strength of compromise and self-thinking processes) appear in equation \eqref{eq:opinion operator Q} as coefficients of the drift and diffusion terms, respectively. As a consequence, small values of $\nu_J = \sigma_J^2/\lambda$ correspond to compromise-dominated opinion dynamics, while large values of $\nu_J$ characterize self-thinking-dominated opinion dynamics. 

    As mentioned, the effects of opinion exchanges on the evolution of the relevant macroscopic quantities \eqref{eq:density and mean} and \eqref{eq:total mean} can be inferred from the three Fokker--Planck equations \eqref{eq:opinion operator Q}. The operators $Q_J$ preserve the mass of each compartment due to the second boundary condition of \eqref{eq:BC opinion FP}, hence the number of agents in each class varies only according to the operator $\mathbf{E}$ of \eqref{eq:vectorial model}. On the other hand, the evolution of the mean compartmental opinions depend on both operators, since $\mathbf{Q}$ describes how individuals' opinion changes, while $\mathbf{E}$ can cause a variation of opinion within each compartment by removing agents from it or adding new ones from other classes. Indeed, from the boundary conditions \eqref{eq:BC opinion FP} it follows that, for any $J \in \mathcal{C}$,
    \begin{equation*}
    \begin{split}
        \frac{\dd}{\dd t} \int_{\Omega \times \mathcal{I}} & w f_J(t,x,w) \dd x \dd w = \frac{1}{\tau}\int_{\Omega \times \mathcal{I}} w E_J(\mathbf{f}, \mathbf{f}) \dd x \dd w -\frac{\lambda}{\tau} \sum_{J' \in \mathcal{C}} \int_{\Omega \times \mathcal{I}} \mathcal{K}[f_{J'}] f_J \dd x \dd w \\[2mm] 
        &= \ \frac{1}{\tau}\int_{\Omega \times \mathcal{I}} w E_J(\mathbf{f}, \mathbf{f}) \dd x \dd w \\[2mm] 
        & \quad -\frac{\lambda}{\tau}\sum_{J' \ne J} \int_{\Omega^2 \times \mathcal{I}^2} \mathcal{B}(x,y) P(x,y) (w-w_*) f_J(t,x,w) f_{J'}(t,y,w_*) \dd x \dd y \dd w \dd w_*, 
    \end{split}
    \end{equation*} 
    where $\int_{\Omega \times \mathcal{I}} w E_J(\mathbf{f},\mathbf{f}) \dd x \dd w$ describes the opinion variations due to the epidemiological operator. Summing up the above equations and assuming that the graphon $\mathcal{B}$ and the interaction function $P$ are symmetric, we also deduce the conservation of the total average opinion $m$, since all contributions depending on $\mathbf{E}$ cancel out (by \eqref{eq:epidemiological operators E} and in analogy with the classical SIR system, it holds that $\sum_{J \in \mathcal{C}} E_J(\mathbf{f},\mathbf{f}) \equiv 0$, also implying the conservation of the total mass). 
    
    \begin{remark} \label{rmk1}
        Analogous computations to the ones performed to determine the evolution of the average opinion show that, for any fixed $x \in \Omega$, $\partial_t \sum_{J \in \mathcal{C}}\mathcal{H}[f_J](t,x) = 0$ for any $t \in \R_+$. Therefore, it is enough to require that the integrability conditions \eqref{eq:integrability condition} hold only for the initial distributions $f_J^\init(x,w) = f_J(t=0,x,w)$, $J \in \mathcal{C}$. This conservation property of the functional $\mathcal{H}$ will be important for the analysis conducted in Section \ref{section2}. 
    \end{remark}

\subsection{Macroscopic SIR systems} \label{sec:macro}

    In the following, any solution $\mathbf{f} = (f_J)_{J\in\mathcal{C}}$ of the kinetic SIR system \eqref{eq:vectorial model} satisfying $\mathbf{Q}(\mathbf{f},\mathbf{f}) = 0$ will be termed \emph{local equilibrium} of the kinetic operator and denoted by $\mathbf{f}^\eq = (f_J^\eq)_{J\in\mathcal{C}}$. If we suppose, for the moment, that the opinion exchanges happen on a much faster timescale than that of the epidemic spread, which amounts to consider a regime $\tau \ll 1$, one expects $\mathbf{f}$ to quickly converge toward $\mathbf{f}^\eq$. In this asymptotics, if we choose $\alpha = 0$ in the function $\beta_T$ defined by \eqref{eq:function beta_T}, when the kinetic model evolves around a local equilibrium it can be approximated by a closed macroscopic SIR system prescribing the temporal evolution of the densities $\bm{\rho} = (\rho_J)_{J \in \mathcal{C}}$. Actually, we can say more: integrating system \eqref{eq:vectorial model} over $x \in \Omega$ and $w \in \mathcal{I}$, since the kinetic operators $Q_J$ are mass preserving, one easily checks that the densities $\rho_J$, $J \in \mathcal{C}$, solve the classical SIR equations
    \begin{align} \label{eq:SIR}
    \left\{
    \begin{aligned}
        & \frac{\dd }{\dd t} \rho_S = - \beta \rho_S \rho_I, \\[2mm] 
        & \frac{\dd }{\dd t} \rho_I = \beta \rho_S \rho_I - \gamma \rho_I, \\[2mm] 
        & \frac{\dd }{\dd t} \rho_R = \gamma \rho_I,
    \end{aligned}
    \right.
    \end{align}
    and we can thus infer several analytical properties on the quantities $(\rho_J)_{J \in \mathcal{C}}$. For this, let us introduce the basic reproduction number $\mathcal{R}_0 = \beta/\gamma$ \cite{vandenDriWat}, which can be interpreted biologically as the expected number of cases produced by one infective agent in a large population where all other individuals are susceptible to the infection. First of all, there exists a unique equilibrium point $\bm{\rho}^\infty = (\rho_J^\infty)_{J \in \mathcal{C}}$ depending only on the given initial conditions $\rho_J^\init = \rho_J(0)$, $J \in \mathcal{C}$, which is globally asymptotically stable. In particular, $\rho_S^\infty > 0$, $\rho_I^\infty = 0$, and $\rho_R^\infty = 1 - \rho_S^\infty$, where $\rho_S^\infty$ is the unique solution in the interval $(0,1/\rho_R(0))$ of the equation
    \begin{equation}
        \log \frac{\rho_S^\infty}{\rho_S^\init} + \mathcal{R}_0 (\rho_S^\init + \rho_I^\init - \rho_S^\infty )= 0.
    \end{equation}
    Moreover, $\rho_S$ and $\rho_R$ are respectively a decreasing and an increasing function of time, which imply the upper and lower bounds $\rho_S^\infty \leq \rho_S(t) \leq \rho_S^\init$ and $\rho_R^\init \leq \rho_R(t) \leq \rho_R^\infty$ for any $t \in \R_+$. At last, the behavior of $\rho_I$ depends on the parameter $\mathcal{R}_0$, so that whenever $\mathcal{R}_0 < 1$ one sees that $0 \leq \rho_I(t) \leq \rho_I^\init$ is a decreasing function of time converging exponentially to zero at a rate at least $\mathcal{O}(\exp(-\gamma(1-\mathcal{R}_0)t))$, while for $\mathcal{R}_0 > 1$ the density initially increases up to a maximum value $\rho_I^{\max}$, given by
    \begin{equation}
        \rho_I^{\max} = - \frac{1}{\mathcal{R}_0} (\log \mathcal{R}_0 + \log \rho_S^\init + 1) + \rho_S^\init + \rho_I^\init,
    \end{equation}
    before decaying once again to zero exponentially fast, which readily implies the bounds $0 \leq \rho_I(t) \leq \rho_I^{\max} < 1$ for any $t \in \R_+$.

    On the other hand, if we set $\alpha = 1$ in \eqref{eq:function beta_T}, integrating system \eqref{eq:vectorial model} in $x \in \Omega$ and $w \in \mathcal{I}$, we obtain a generalization of the classical SIR system \eqref{eq:SIR} having the form
    \begin{equation} \label{eq:generalized SIR}
    \left\{
    \begin{aligned}
        & \frac{\dd}{\dd t} \rho_S = - \beta (1-m_S) (1-m_I) \rho_S \rho_I,  \\[2mm] 
        & \frac{\dd}{\dd t} \rho_I = \beta (1-m_S) (1-m_I) \rho_S \rho_I - \gamma \rho_I, \\[2mm] 
        & \frac{\dd}{\dd t} \rho_R = \gamma \rho_I,
    \end{aligned}
    \right.
    \end{equation}
    which is not closed since it does not contain any information on the evolution of the means $m_S$ and $m_I$, but it highlights the influence of the agents' opinion on the epidemic spread. We can therefore introduce the time-dependent quantity, hereafter named \emph{effective reproduction number}, 
    \begin{equation} \label{eq:effective reproduction number}
        \mathcal{R}_\textnormal{eff}(t) = \frac{\beta}{\gamma} (1-m_S(t)) (1-m_I(t)) \rho_S(t), 
    \end{equation}
    which, from a mathematical point of view, has the same effect as the basic reproduction number, although biologically they are not equivalent. We note however that one can still infer some properties of the solutions to \eqref{eq:generalized SIR} as long as $|m_S| \leq 1$ and $|m_I| \leq 1$. Indeed, under these conditions, as in the classical setting for system \eqref{eq:SIR} we can infer that solutions $\rho_J$ starting from a nonnegative initial datum remain nonnegative, hence the conservation of mass guarantees the global bounds $0 \leq \rho_J \leq 1$, $J \in \mathcal{C}$. Moreover, since $\rho_S$ is non-increasing and $\rho_R$ is non-decreasing, they both converge to some limits $\rho_S^\infty, \rho_R^\infty \in [0,1]$. Conservation of mass then ensures that also $\rho_I$ converges to some limit $\rho_I^\infty \in [0,1]$ and the latter must be zero since $\rho_I \in L^1(\R_+)$, which can be easily checked by integrating in time the third equation of \eqref{eq:generalized SIR}. In particular, the value of $\rho_S^\infty > 0$ is now determined by the full evolution of the means $m_S$ and $m_I$, while obviously $\rho_R^\infty = 1 - \rho_S^\infty$. These considerations will become meaningful when studying the asymptotic stability of our kinetic system in Section \ref{sec:trends}.

\subsection{Local equilibria} \label{section_local_eq}

    In order to understand the long-time behavior of our kinetic SIR system, we first have to analyze the equilibria of the Fokker--Planck equation \eqref{eq:opinion operator Q}. For the sake of simplicity, we fix $J \in \mathcal{C}$ and focus our attention only on one of the Fokker--Planck equations \eqref{eq:opinion operator Q}, with an initial condition of unit mass. Moreover, we assume that $G \equiv 1$ and $D(w)=\sqrt{1-w^2}$. Let us define, for $t \in \R_+$ and $x \in \Omega$, the quantities 
    \begin{equation} 
    \begin{split}
        \rho_\mathcal{B}(t,x) & = \sum_{J \in \mathcal{C}} \int_{\Omega \times \mathcal{I}}  \mathcal{B}(x,y) f_J(t,y,w) \dd y \dd w, \\[4mm]
        \rho_{\mathcal{B},P}(t,x) & = \sum_{J \in \mathcal{C}} \int_{\Omega \times \mathcal{I}}  \mathcal{B}(x,y) P(x,y) f_J(t,y,w) \dd y \dd w, \\[4mm]  
        m_{\mathcal{B},P}(t,x) & = \frac{1}{\rho_{\mathcal{B},P}(t,x)}\sum_{J \in \mathcal{C}} \int_{\Omega \times \mathcal{I}}  \mathcal{B}(x,y) P(x,y) w f_J(t,y,w) \dd y \dd w. 
    \end{split}
    \end{equation} 
    Recall in particular from Remark \ref{rmk1} that $\rho_\mathcal{B}$ and $\rho_{\mathcal{B},P}$ are conserved over time. The steady state $B_J = B_J(t,x,w)$ of \eqref{eq:opinion operator Q} must satisfy, for any $t \in \R_+$, $x \in \Omega$ and $w \in \mathcal{I}$, 
    \begin{equation}
        \lambda \rho_{\mathcal{B},P}(x)\left(w-m_{\mathcal{B},P}(t,x)\right)B_J(t,x,w) + \frac{\sigma_J^2}{2} \rho_\mathcal{B}(x) \partial_w \left( (1-w^2) B_J(t,x,w)  \right) = 0, 
    \end{equation}
    and after developing the derivative with respect to $w$, it is easy to see that the Fokker--Planck model relaxes toward beta distributions (with respect to $w$) of the form
    \begin{equation} \label{eq:equilibria_complex}
        B_J(t,x,w)= c_J(t,x) (1+w)^{-1+\frac{\rho_{\mathcal{B},P}(x)}{\rho_\mathcal{B}(x)} \frac{1 + m_{\mathcal{B},P}(t,x)}{\nu_J}} (1-w)^{-1+\frac{\rho_{\mathcal{B},P}(x)}{\rho_\mathcal{B}(x)} \frac{1 - m_{\mathcal{B},P}(t,x)}{\nu_J}}, \quad t \in \R_+,\; x \in \Omega,\; w \in \mathcal{I},
    \end{equation}
    where $c_J(t,x) > 0$ is a suitable normalization function guaranteeing that $\int_{\Omega \times \mathcal{I}} B_J(t,x,w) \dd x \dd w = 1$. 

    Unfortunately, the structure of the equilibrium states \eqref{eq:equilibria_complex} cannot be easily interpreted. For this reason, in Section \ref{connectivity} we will show how to characterize them in a more explicit way by considering a slight simplification of the Fokker--Planck operators, which will allow us to analyze the long-time behavior of the full kinetic SIR model.  

\subsection{Analytical properties} \label{section2}

    We conclude this section with an investigation of the main analytical properties of the kinetic SIR model \eqref{eq:vectorial model}, where the Fokker--Planck operators $Q_J$ have been defined in \eqref{eq:opinion operator Q}. Throughout this study, in order to justify our calculations we will tacitly assume that the integrability condition \eqref{eq:integrability condition} holds (so that the operators $\mathcal{K}$ and $\mathcal{H}$ are well defined) and that the functions $G$ and $D$ are regular enough.  

    We begin by showing that solutions to \eqref{eq:vectorial model} starting from a nonnegative initial datum remain nonnegative over time. We restrict this analysis to the simplified cases $\alpha = 0,1$ in \eqref{eq:function beta_T}.
    The idea is to separately study the epidemic operator $\mathbf{E}$ and the kinetic one $\mathbf{Q}$. In particular, in order to deal with the operators $Q_J$ we need the following preliminary result (which will be also useful to recover the uniqueness of solutions) concerning an $L^1$ comparison principle for solutions to the Fokker--Planck equation \eqref{eq:opinion operator Q}. 
    \begin{lemma} \label{lemma1}
        Let $(f_J)_{J \in \mathcal{C}}$ be a solution to the Fokker--Planck system \eqref{eq:opinion operator Q}
        satisfying the no-flux boundary conditions \eqref{eq:BC opinion FP}, and posed on $\R_+ \times \Omega \times \mathcal{I}$ with initial conditions $f_J(0,x,w) = f_J^\init(x,w)$, $J \in \mathcal{C}$. If $f^\init_J \in L^1(\Omega \times \mathcal{I})$, then the $L^1(\Omega \times \mathcal{I})$ norm of $f_J$ is non-increasing for $t \in \R_+$. 
    \end{lemma}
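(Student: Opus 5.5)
The natural approach is the classical Kato-type $L^1$ contraction argument: test each Fokker--Planck equation \eqref{eq:opinion operator Q} against a smooth approximation of the sign of $f_J$ and integrate over $\Omega \times \mathcal{I}$. Throughout, the nonlocal coefficients
\begin{equation*}
a(t,x,w) = \lambda \sum_{J'} \mathcal{K}[f_{J'}](t,x,w), \qquad b_J(t,x,w) = \frac{\sigma_J^2}{2} D^2(w) \sum_{J'} \mathcal{H}[f_{J'}](t,x)
\end{equation*}
are treated as given functions, frozen along the solution. This turns each equation into a linear drift--diffusion equation $\partial_t f_J = \frac{1}{\tau}\partial_w\big(a f_J + \partial_w(b_J f_J)\big)$ in $w$, parametrized by $x$. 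Note that $b_J \ge 0$ since $\mathcal{B} \ge 0$, and by Remark \ref{rmk1} the factor $\sum_{J'}\mathcal{H}[f_{J'}]$ does not even depend on time. Nonnegativity of $f$ is not assumed, because the lemma is to be used later for differences of solutions and for proving nonnegativity.

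I would fix $\delta>0$ and take a convex smooth approximation $|\cdot|_\delta$ of the absolute value, for instance $|r|_\delta = \sqrt{r^2+\delta^2}-\delta$, with $\mathrm{sgn}_\delta = |\cdot|_\delta'$. A cleaner variant is to work with $g_J = b_J f_J$, writing $f_J = g_J/b_J$ where $b_J>0$, that is, in the interior of $\mathcal{I}$. Then I would compute
\begin{equation*}
\frac{\dd}{\dd t}\int_{\Omega\times\mathcal{I}} |f_J|_\delta = \frac{1}{\tau}\int \mathrm{sgn}_\delta(f_J)\,\partial_w\big(a f_J + \partial_w(b_J f_J)\big)
\end{equation*}
and integrate by parts once in $w$. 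The boundary term contains the total flux at $w=\pm1$, which vanishes by the second condition in \eqref{eq:BC opinion FP}. What remains is
\begin{equation*}
-\frac{1}{\tau}\int \mathrm{sgn}_\delta'(f_J)\,\partial_w f_J\,\big(a f_J + b_J\partial_w f_J + f_J\partial_w b_J\big).
\end{equation*}
The diffusive part $-\int \mathrm{sgn}_\delta'(f_J)\, b_J |\partial_w f_J|^2$ is nonpositive. The drift-type part is $-\int \mathrm{sgn}_\delta'(f_J) f_J\partial_w f_J\,(a+\partial_w b_J)$. I would write $\mathrm{sgn}_\delta'(r)\, r\,\partial_w r = \partial_w \Phi_\delta(f_J)$ with $\Phi_\delta(r)=\int_0^r s\,\mathrm{sgn}_\delta'(s)\,\dd s$, integrate by parts again, and use $|\Phi_\delta(r)| \le C\delta$ uniformly in $r$; for the choice above, $\Phi_\delta(r) = \delta - \delta^2/\sqrt{r^2+\delta^2} \in [0,\delta)$. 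This makes the term $O(\delta)$, provided $\partial_w(a+\partial_w b_J)$ is integrable and the new boundary terms are controlled.

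The limit $\delta\to0$ then gives $\frac{\dd}{\dd t}\|f_J\|_{L^1}\le 0$. For the integrability, $a$ is smooth in $w$ when $G$ is regular, and $\partial_w^2 b_J$ involves $(D^2)''$, which is bounded for $D^2=1-w^2$. Both are bounded using \eqref{eq:integrability condition} and the tacit regularity assumptions.

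The main obstacle is the boundary behavior at $w=\pm1$, where the diffusion degenerates ($D(\pm1)=0$). In the second integration by parts the boundary term $\Phi_\delta(f_J)(a+\partial_w b_J)\big|_{w=\pm1}$ appears, and there is also the question of whether individual pieces of the flux vanish separately at the boundary. I would handle this by noting that this boundary term is itself bounded by $C\delta$, using $|\Phi_\delta|\le\delta$ and boundedness of $a$ and $\partial_w b_J$ at the endpoints, so it disappears in the limit. If one prefers to avoid this, an alternative is to integrate over $[-1+\epsilon,1-\epsilon]$ and then let $\epsilon\to0$, using the no-flux conditions \eqref{eq:BC opinion FP}, of which the first kills $b_J f_J$ at the boundary.
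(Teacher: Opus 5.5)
Your proposal is correct and follows the same route as the paper. You test with a regularized sign, kill the first boundary term with the no-flux condition, discard the nonnegative diffusive term, and write the drift-type remainder as $\partial_w \Phi_\delta(f_J)$ with $\Phi_\delta(r) = r\,\mathrm{sgn}_\delta(r) - |r|_\delta$, which is exactly the paper's $\textrm{sign}_\eps(f_J) f_J - |f_J|_\eps$; this term vanishes as $\delta \to 0$ after a second integration by parts. Your uniform bound $0 \le \Phi_\delta < \delta$ disposes of it more cleanly than the paper's a.e.\ convergence, and it also covers the boundary contribution at $w = \pm 1$ that the paper leaves implicit.

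One justification needs fixing. $b_J \ge 0$ does not follow from $\mathcal{B} \ge 0$ once $f$ may change sign. It holds because, by Remark~\ref{rmk1}, $\sum_{J'} \mathcal{H}[f_{J'}](t,x)$ equals its initial value, which is nonnegative for nonnegative initial data. In the uniqueness argument the frozen coefficients come from an actual solution, not from the difference. The paper uses this assumption tacitly as well.
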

    \begin{proof}
        Consider, for some parameter $\eps > 0$, a regularized increasing approximation of the sign function $\textrm{sign}_\eps(u)$, $u \in \R$. We introduce a regularization $|f_J|_\eps(t,x,w)$ of $|f_J|(t,x,w)$ by defining it as the primitive of $\textrm{sign}_\eps(f_J)(t,x,w)$, for any $t \in \R_+$, $x \in \Omega$ and $w \in \mathcal{I}$. Multiplying by $\textrm{sign}_\eps(f_J)$ both sides of equation \eqref{eq:opinion operator Q}, integrating with respect to $x \in \Omega$ and $w \in \mathcal{I}$ and applying integration by parts together with the no-flux boundary conditions \eqref{eq:BC opinion FP}, we can successively compute
        \begin{equation*}
        \begin{split}
            \frac{\dd}{\dd t} \int_{\Omega \times \mathcal{I}} |f_J|_\eps(t,x,w) \dd w =& - \lambda \int_{\Omega \times \mathcal{I}} \textrm{sign}_\eps'(f_J) f_J \partial_w f_J \sum_{J' \in \mathcal{C}} \mathcal{K}[f_{J'}] \dd x \dd w\\ 
            & \quad - \frac{\sigma_J^2}{2} \int_{\Omega \times \mathcal{I}} \textrm{sign}_\eps'(f_J) \partial_w f_J \partial_w \left( D^2(w) f_J\right) \sum_{J' \in \mathcal{C}} \mathcal{H}[f_{J'}] \dd x \dd w \\[2mm] 
            =& - \lambda \int_{\Omega \times \mathcal{I}} \textrm{sign}_\eps'(f_J) f_J \partial_w f_J \sum_{J' \in \mathcal{C}} \mathcal{K}[f_{J'}] \dd x \dd w \\ 
            & \quad -\frac{\sigma_J^2}{2} \int_{\Omega \times \mathcal{I}}  \textrm{sign}_\eps'(f_J) f_J \partial_w f_J \partial_w  D^2(w) \sum_{J' \in \mathcal{C}} \mathcal{H}[f_{J'}] \dd x \dd w \\ 
            & \qquad -\frac{\sigma_J^2}{2} \int_{\Omega \times \mathcal{I}} \textrm{sign}_\eps'(f_J) \left(\partial_w f_J\right)^2 D^2(w) \sum_{J' \in \mathcal{C}} \mathcal{H}[f_{J'}] \dd x \dd w. 
        \end{split}
        \end{equation*}
        Noticing that
        \begin{equation*}
            \textrm{sign}'_\eps(f_J)f_J \partial_w f_J = \partial_w (\textrm{sign}_\eps(f_J)f_J-|f_J|_\eps),
        \end{equation*}
        we can integrate back by parts the first two integrals and observe that they vanish as $\eps \to 0$, since by construction $\textrm{sign}_\eps(f_J)f_J - |f_J|_\eps \underset{\eps \to 0}{\longrightarrow} 0$ for a.e. $x \in \Omega$ and $w \in \mathcal{I}$. The thesis follows from the fact that the last integral is nonnegative.  
    \end{proof}
    
    We are now able to prove the following property of nonnegativity-preservation.  
    
    \begin{proposition}[Nonnegativity] \label{prop:nonnegativity}
        Let $\mathbf{f}$ be a solution to the SIR kinetic model \eqref{eq:vectorial model} with Fokker--Planck operator $\mathbf{Q}$, defined componentwise by \eqref{eq:opinion operator Q} and satisfying the no-flux boundary conditions \eqref{eq:BC opinion FP}. Assume moreover that $\alpha = 0,1$ in \eqref{eq:function beta_T} and, if $\alpha=1$, that $|m_J(t)| \leq 1$, $J = S, I$, for any $t \in \R_+$. Consider for any $J \in \mathcal{C}$ an initial distribution $f_J^\init \in L^1(\Omega \times \mathcal{I})$ such that $f^\init_J(x,w) \geq 0$ for a.e. $x \in \Omega$ and $w \in \mathcal{I}$. Then,  $f_J(t,x,w) \geq 0$ for a.e. $x \in \Omega$, $w \in \mathcal{I}$, and for any $t \in \R_+$. 
    \end{proposition}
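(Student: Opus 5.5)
The plan is to split the dynamics into the kinetic part $\mathbf{Q}$ and the epidemic part $\mathbf{E}$, as announced before Lemma \ref{lemma1}. I would show that each part separately preserves nonnegativity, and then conclude for the full system \eqref{eq:vectorial model}. The conclusion can be reached in either of two ways: through a splitting (Lie--Trotter) approximation, where nonnegativity passes to the limit in $L^1$; or, more directly, through a single estimate on the negative parts $f_J^- = \max\{-f_J,0\}$ that treats both operators at once. I would follow the second route, since it avoids justifying convergence of the splitting.

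\textbf{Step 1: the kinetic part.} Write $f_J^- = \frac12(|f_J| - f_J)$. Mass is preserved by $Q_J$ thanks to the second boundary condition in \eqref{eq:BC opinion FP}, and the $L^1$ norm is non-increasing by Lemma \ref{lemma1}. Together these give that $\int f_J^-$ is non-increasing along the pure Fokker--Planck flow. More precisely, reusing the regularized sign computation from the proof of Lemma \ref{lemma1} with a regularization of the negative part in place of $|\cdot|_\eps$, the contribution of $\frac1\tau Q_J$ to $\frac{\dd}{\dd t}\int f_J^-$ is $\le 0$ in the limit $\eps\to0$. The drift and the $\partial_w D^2$ terms vanish exactly as in that proof, and the remaining term $-\frac{\sigma_J^2}{2}\int \mathrm{sign}'_\eps (\partial_w f_J)^2 D^2 \sum_{J'}\mathcal H[f_{J'}]$ has a sign. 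Here I need $\sum_{J'}\mathcal H[f_{J'}]\ge 0$. This follows from Remark \ref{rmk1}: the quantity is conserved in time and equal to its nonnegative initial value, because $\mathcal B\ge0$ and $f^\init\ge0$.

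\textbf{Step 2: the epidemic part.} Set $a(t,w) = \int \beta_T(w,w_*) f_I(t,y,w_*)\dd y\dd w_*$. For $\alpha=0$, $a = \beta\rho_I(t)$. For $\alpha=1$, $a = \beta(1-w)(1-m_I)\rho_I$. Then $E_S = -a f_S$, $E_I = a f_S - \gamma f_I$ and $E_R = \gamma f_I$. Multiplying by $-\mathbf 1_{\{f_J<0\}}$ and integrating gives
\[
\frac{\dd}{\dd t}\int f_S^- \le \|a\|_\infty \int f_S^-, \qquad
\frac{\dd}{\dd t}\int f_I^- \le \|a\|_\infty \int f_S^- , \qquad
\frac{\dd}{\dd t}\int f_R^- \le \gamma\int f_I^- .
\]
In the first inequality I use that $-a f_S\cdot(-\mathbf 1_{\{f_S<0\}}) = -a f_S^-$, which is bounded by $|a| f_S^-$ regardless of the sign of $a$. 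In the second, the term $-\gamma f_I$ contributes $-\gamma f_I^- \le 0$. To bound $\|a\|_\infty$ I use the assumption $|m_I|\le1$ and $|1-w|\le 2$, together with $|\rho_I|\le \|f_I\|_{L^1}$, which is bounded on finite time intervals (an $L^1$ bound follows from the same sign computation with $|\cdot|$ and Gronwall).

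\textbf{Step 3: conclusion and main obstacle.} Summing Steps 1 and 2 gives a linear differential inequality for $Y(t)=\sum_J\int f_J^-$ of the form $Y'\le C(T)\,Y$ on $[0,T]$, with $Y(0)=0$. Gronwall then yields $Y\equiv0$, and since $T$ is arbitrary, $f_J\ge0$ a.e. for all $t$.

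The main obstacle is the sign of the coefficient $a$, and hence of the transfer term $a f_S$ in $E_I$. If $a$ could be negative, the source $a f_S$ would push mass of $f_I$ below zero even when $f_S \ge 0$. The bound in Step 2 only controls this by $\int f_S^-$ through the term $a f_S \mathbf 1_{\{f_I<0\}}$. That is fine when $a\ge0$, but otherwise it produces $\int |a| f_S^+$, which is not controlled by $Y$. So I must show $a\ge0$. This requires $(1-m_I)\rho_I \ge 0$, i.e. $\int (1-w_*) f_I \ge 0$. That is exactly where the hypothesis $|m_J|\le1$ enters, combined with $\rho_I\ge0$. To close this circularity I would run a continuity/bootstrap argument. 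Let $t^*$ be the first time where some $f_J$ fails to be nonnegative. On $[0,t^*]$ we have $\rho_I\ge0$, and hence $a\ge0$ by $|m_I|\le 1$ and $w\le1$. The Gronwall argument then extends nonnegativity past $t^*$, which is a contradiction. For $\alpha=0$ the same argument applies with $a=\beta\rho_I$.
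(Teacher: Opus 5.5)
Your overall architecture differs from the paper's. You close a single Gronwall inequality for $Y=\sum_J\int f_J^-$, whereas the paper argues sequentially: first $f_S$, then $f_I$ using $f_S\ge 0$, then $f_R$. Step 1 is fine, including your remark that the Lemma \ref{lemma1} computation needs $\sum_{J'}\mathcal H[f_{J'}]\ge 0$, which follows from its conservation. Step 2 is also fine, given $a\ge 0$ in its second inequality as you note.

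The gap is in Step 3: the bootstrap does not close. To push the Gronwall inequality past $t^*$ you need $a\ge 0$ on some interval $[t^*,t^*+\delta]$. Nonnegativity of $\mathbf f$ on $[0,t^*]$ only gives $a\ge 0$ up to $t^*$. Continuity of $\rho_I$ would rescue this when $\rho_I(t^*)>0$, but the case $\rho_I(t^*)=0$ is left open. So the claimed contradiction is not actually reached.

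The circularity you perceive is not there, and this is exactly what the paper exploits: $\rho_I\ge 0$ holds for all $t$ a priori. Integrate the $I$-equation over $\Omega\times\mathcal I$, where $Q_I$ is mass preserving. This gives the scalar linear equation $\frac{\dd}{\dd t}\rho_I=c(t)\rho_I$, with $c=\beta\rho_S-\gamma$ for $\alpha=0$ and $c=\beta(1-m_I)\int_{\Omega\times\mathcal I}(1-w)f_S\,\dd x\,\dd w-\gamma$ for $\alpha=1$. Hence $\rho_I(t)=\rho_I^\init\exp(\int_0^t c(s)\,\dd s)\ge 0$, whatever the sign of the $f_J$. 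Combined with $|m_I|\le 1$, this gives $a\ge 0$ for all times, and your Gronwall inequality closes with no bootstrap.

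Alternatively, you can avoid the sign of $a$ entirely, because your claim that $\int |a| f_S^+$ is not controlled by $Y$ is wrong. Since $\beta_T\ge 0$, you have $a\ge -\int\beta_T f_I^-\ge -\|\beta_T\|_{L^\infty(\mathcal I^2)}\int f_I^-$. From $-af_S\le a^+f_S^-+a^-f_S^+$ you then get $\frac{\dd}{\dd t}\int f_I^-\le \|a\|_{L^\infty}\int f_S^-+\|\beta_T\|_{L^\infty(\mathcal I^2)}\|f_S\|_{L^1}\int f_I^-$. Thus $Y'\le C(T)Y$ holds for any bounded nonnegative $\beta_T$, and without the hypothesis $|m_J|\le 1$. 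This is what your simultaneous Gronwall structure genuinely buys over the paper's sequential argument.

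In either version, take $\sup_{[0,T]}\|f_J\|_{L^1}<\infty$ from the solution concept, e.g. continuity in time with values in $L^1$. The differential inequality for $\sum_J\|f_J\|_{L^1}$ is quadratic, so Gronwall alone only gives that bound locally in time.
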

    
    \begin{proof}
        Define the negative part of each distribution $f_J$, $J \in \mathcal{C}$, via the regularization introduced in the proof of Lemma \ref{lemma1}, namely 
        \begin{equation*}
            f_{J, \eps}^-(t,x,w)=\frac{1}{2} (|f_J|_\eps (t,x,w) - f_J(t,x,w)), \qquad t \in \R_+,\; x \in \Omega, \;  w \in \mathcal{I}. 
        \end{equation*}
        We start with the case $\alpha = 0$. Multiplying the first equation of kinetic SIR model \eqref{eq:vectorial model} on both sides by $\textrm{sign}_\eps(f_S)$ and integrating in $x \in \Omega$ and $w \in \mathcal{I}$, from Lemma \ref{lemma1} and since $\alpha=0$ we initially deduce that
        \begin{equation*}
        \begin{split}
            \frac{\dd}{\dd t} \int_{\Omega \times \mathcal{I}} |f_{S}|_\eps(t,x,w) \dd x \dd w & \leq - \beta \int_{\Omega \times \mathcal{I}} |f_S|_{\eps} \dd x \dd w \int_{\Omega \times \mathcal{I}} f_I(t,y,w_*) \dd y \dd w_* \\[2mm]
            & = - \beta \rho_I(t) \int_{\Omega \times \mathcal{I}} |f_S|_{\eps} \dd x \dd w ,
        \end{split} 
        \end{equation*}
        where we have used the fact that the densities $(\rho_J)_{J \in \mathcal{C}}$ solve the macroscopic SIR system \eqref{eq:SIR}. Therefore 
        \begin{equation*}
        \begin{split}
            2\frac{\dd}{\dd t} \int_{\Omega \times \mathcal{I}} f_{S, \eps}^-(t,x,w) \dd x \dd w &= \frac{\dd}{\dd t} \int_{\Omega \times \mathcal{I}} |f_S|_\eps(t,x,w) \dd x \dd w - \frac{\dd}{\dd t} \int_{\Omega \times \mathcal{I}} f_S(t,x,w) \dd x \dd w \\[3mm] 
            & \leq - \beta \rho_I(t) \int_{\Omega \times \mathcal{I}} |f_S|_{\eps} \dd x \dd w + \beta \rho_S (t) \rho_I (t) \\[2mm] 
            & = \beta \rho_I (t) \left( \rho_S(t) - \int_{\Omega \times \mathcal{I}} |f_S|_{\eps} \dd x \dd w  \right), 
        \end{split}
        \end{equation*}
        and taking the limit $\eps \to 0$, the nonnegativity of $f_S$ follows because $\rho_S(t) \leq \int_{\Omega \times \mathcal{I}} |f_S|(t,x,w) \dd x \dd w$. 
        Next, using that $f_S$ is nonnegative,  
        \begin{equation*}
        \begin{split}
            \frac{\dd}{\dd t} \int_{\Omega \times \mathcal{I}} |f_I|_{\eps}(t,x,w) \dd x \dd w \leq \beta \rho_I(t) \rho_S(t) - \gamma \int_{\Omega \times \mathcal{I}} |f_I|_{\eps} \dd x \dd w ,
        \end{split} 
        \end{equation*}
        which implies that 
        \begin{equation*}
        \begin{split}
            \frac{\dd}{\dd t} \int_{\Omega \times \mathcal{I}} f_{I, \eps}^-(t,x,w) \dd x \dd w \leq \frac{\gamma}{2} \left( \rho_I(t) - \int_{\Omega \times \mathcal{I}} |f_I|_{\eps} \dd x \dd w \right), 
        \end{split}
        \end{equation*}
        and taking again limit the $\eps \to 0$, the nonnegativity of $f_I$ follows similarly. Finally, the nonnegativity of $f_R$ is trivial since $f_I$ is nonnegative.

        The case $\alpha = 1$ is similar. We initially notice that given the hypothesis on the boundedness of $m_S$ and $m_I$, it is straightforward to check that the generalized SIR system \eqref{eq:generalized SIR} preserves the nonnegativity of solutions, hence the conservation of total mass implies that the $\rho_J$, $J \in \mathcal{C}$, remain bounded between $0$ and $1$. We can thus proceed as in the previous case to recover the following estimate on $f_{S,\eps}^-$: 
        \begin{equation*}
            \frac{\dd}{\dd t} \int_{\Omega \times \mathcal{I}} f_{S, \eps}^-(t,x,w) \dd x \dd w  \leq 2 \beta \rho_I (t) (1-m_I(t)) \int_{\Omega \times \mathcal{I}} f_{S,\eps}^- \dd x \dd w.,
        \end{equation*}
        using that $1-w \leq 2$. An application of Gr\"onwall's inequality then gives
        \begin{equation*}
        	    \frac{\dd}{\dd t} \| f_{S, \eps}^-(t,\cdot, \cdot) \|_{L^1(\Omega \times \mathcal{I})} \leq \| f_{S, \eps}^-(0,\cdot, \cdot) \|_{L^1(\Omega \times \mathcal{I})} \exp\left( 2 \int_0^t \rho_I(s) (1 - m_I(s)) \dd s \right), 
        \end{equation*}
        and taking the limit $\eps \to 0$ we conclude once again with the positivity of $f_S$, given a nonnegative initial datum $f_S^\init$. Using that $f_S \geq 0$ and the boundedness of $m_S$ and $m_I$, the nonnegativity of $f_I$ can then be assessed in a similar way since
        \begin{equation*}
            \frac{\dd}{\dd t} \int_{\Omega \times \mathcal{I}} |f_I|_\eps(t,x,w) \dd x \dd w \leq \beta \rho_I(t) \rho_S(t) (1 - m_I(t)) (1 - m_S(t)) - \gamma \int_{\Omega \times \mathcal{I}} |f_I|_\eps(t,x,w) \dd x \dd w,
        \end{equation*}
        hence
        \begin{equation*}
            \frac{\dd}{\dd t} \int_{\Omega \times \mathcal{I}} f_{I,\eps}^-(t,x,w) \dd x \dd w \leq - \frac{\gamma}{2} \int_{\Omega \times \mathcal{I}} f_{I,\eps}^-(t,x,w) \dd x \dd w,
        \end{equation*}
        thanks to the cancellation of the coupling epidemiological term. Again, the nonnegativity of $f_R$ can be proved in the same way.
    \end{proof}
    
    \begin{remark} \label{rem:positivity}
            More generally, working with regularized solutions one can infer the nonnegativity of the $(f_J)_{J \in \mathcal{C}}$ using a strong maximum principle. Indeed, let us consider smooth enough solutions and argue by contradiction on the existence of a time $\bar{t} > 0$ such that $f_J(t,x,w) \geq 0$, $J \in \mathcal{C}$, for any $t \in [0,\bar{t})$, $x \in \Omega$, $w \in \mathcal{I}$, and of a corresponding couple $(\bar{x}, \bar{w}) \in \Omega \times \mathcal{I}$ where (say for $J = S$)
        \begin{equation*}
            f_S(\bar{t}, \bar{x}, \bar{w}) = 0, \quad \partial_w f_S(\bar{t}, \bar{x}, \bar{w}) = 0, \quad \partial_w^2 f_S(\bar{t}, \bar{x}, \bar{w}) \geq 0, \quad \partial_t f_S(\bar{t}, \bar{x}, \bar{w}) < 0,
        \end{equation*}
        meaning that at $(\bar{t}, \bar{x}, \bar{w})$ the distribution $f_S$ crosses the value zero and becomes negative. A straightforward computation then shows (let us fix $\tau = 1$ for simplicity) that
        \begin{equation*}
            \begin{split}
                \partial_t f_S(\bar{t}, & \bar{x}, \bar{w})= -f_S(\bar{t}, \bar{x}, \bar{w}) \int_{\Omega \times \mathcal{I}} \beta_T(\bar{w},w_*) f_I(\bar{t},y,w_*) \dd y \dd w_* \\[4mm]
                & + \lambda \partial_w \sum_{J' \in \mathcal{C}} \mathcal{K}[f_{J'}] (\bar{t}, \bar{x}, \bar{w})  f_S(\bar{t}, \bar{x}, \bar{w}) + \lambda \sum_{J' \in \mathcal{C}} \mathcal{K}[f_{J'}](\bar{t}, \bar{x}, \bar{w}) \partial_w f_S(\bar{t}, \bar{x}, \bar{w}) \\[2mm] 
                & + \frac{\sigma_J^2}{2} \sum_{J' \in \mathcal{C}} \mathcal{H}[f_{J'}](\bar{t}, \bar{x}) \Big( \partial^2_w D^2(w) f_S(\bar{t}, \bar{x}, \bar{w}) + 2 \partial_w D^2(w) \partial_w f_S(\bar{t}, \bar{x}, \bar{w}) + D^2(w) \partial^2_w f_S(\bar{t}, \bar{x}, \bar{w}) \Big) \\[6mm]
                & \qquad \geq 0,
            \end{split}
        \end{equation*}
        by definition of the point $(\bar{t}, \bar{x}, \bar{w})$. This contradicts the possibility that $\partial_t f_S(\bar{t}, \bar{x}, \bar{w}) < 0$, so $f_S$ must remain nonnegative. The same reasoning can be carried out to prove that $f_I$ stays nonnegative and thus the nonnegativity of $f_R$ follows too.
    \end{remark}
    
    \begin{remark}
        As soon as a solution $\mathbf{f}$ to the SIR kinetic model \eqref{eq:vectorial model} is nonnegative, then the mass of each compartment is always bounded between 0 and 1. Indeed, the nonnegativity implies that $\rho_J(t) \geq 0$ for any $t \in \R_+$ and $J \in \mathcal{C}$, from which follows that $\rho_J(t) \leq 1$ for any $t \in \R_+$ and $J \in \mathcal{C}$ since we have conservation of the total mass. Moreover, notice that the nonnegativity of each $f_J$ also gives
        \begin{equation*}
            0 \leq \int_{\Omega \times \mathcal{I}} (1-w) f_J(t,x,w) \dd x \dd w \leq 2 \rho_J(t),
        \end{equation*}
        hence $0 \leq 1-m_J(t) \leq 2$ by direct computation of the integral, and we recover the condition that $m_J \in [-1,1]$, $J \in \mathcal{C}$.
    \end{remark}

    Since we have conservation of total mass, from the nonnegativity of solutions we directly infer their $L^1(\Omega \times \mathcal{I})$ regularity. Indeed, suppose that $f_J^\init \in L^1(\Omega \times \mathcal{I})$ for any $J \in \mathcal{C}$, and that they are nonnegative. Since from Proposition \ref{prop:nonnegativity} we know that $f_J(t,x,w) \geq 0$ for a.e. $x \in \Omega$, $w \in \mathcal{I}$, and for any $t \in \R_+$, then $||f_J||_{L^1(\Omega \times \mathcal{I})} =\int_{\Omega \times \mathcal{I}} f_J(t,x,w) \dd x \dd w$ for any $t \in \R_+$. Therefore summing up the three equations of system \eqref{eq:vectorial model} and integrating with respect to $x \in \Omega$ and $w \in \mathcal{I}$, we get that for any $t \in \R_+$ 
    \begin{equation*}
        \sum_{J \in \mathcal{C}} ||f_J||_{L^1(\Omega \times \mathcal{I})} = \sum_{J \in \mathcal{C}} ||f_J^\init||_{L^1(\Omega \times \mathcal{I})},
    \end{equation*}
    because the contributions of the operator $\mathbf{E}$ cancel out and the Fokker--Planck operators $Q_J$ are mass preserving. In conclusion, $f_J \in L^1(\Omega \times \mathcal{I})$ for any $J \in \mathcal{C}$ and any $t \in \R_+$.  

    One possible strategy to prove additional $L^q(\Omega \times \mathcal{I})$ regularity of solutions for any $q \in [2,\infty)$ consists in rewriting the diffusion term of the Fokker--Planck equation \eqref{eq:opinion operator Q} in such a way that it exhibits a more manageable self-adjoint structure, and the temporal evolution of $||f_J||_{L^q(\Omega \times \mathcal{I})}$ can be dealt with using suitable integration by parts. The price to pay is to impose additional no-flux boundary conditions ensuring enough regularity at the boundaries. The hypothesis that $q \geq 2$ will be needed to handle derivatives of the form $\partial_w f_J^{q-1}$. However, since the set $\Omega \times \mathcal{I}$ is bounded, if $f_J^\init \in L^q(\Omega \times \mathcal{I})$, $J \in \mathcal{C}$, for some $q \in (1,2)$, then $f_J^\init \in L^1(\Omega \times \mathcal{I})$, $J \in \mathcal{C}$, and we can at least obtain the $L^1(\Omega \times \mathcal{I})$ regularity of solutions. We will have to assume that $\beta_T \in L^\infty (\mathcal{I}^2)$, but we point out that this hypothesis is not restrictive and the modeling choice \eqref{eq:function beta_T} satisfies it for any $\alpha \geq 0$. 
    
    \begin{proposition} [Regularity] \label{prop:regularity}
        Let $q \in [2,+\infty)$ and let $\mathbf{f}$ be a solution to the SIR kinetic system \eqref{eq:vectorial model}. Suppose that the $f_J$, $J \in \mathcal{C}$, are nonnegative. Assume that $\beta_T \in L^\infty (\mathcal{I}^2)$ and $\mathcal{H}[f_J] \in L^\infty (\R_+ \times \Omega)$ for any $J \in \mathcal{C}$. Furthermore, we impose that for any $J \in \mathcal{C}$ the additional no-flux boundary conditions 
        \begin{align} \label{eq:additional BC for regularity opinion FP}
        \left\{
        \begin{aligned}
            & \left. D^2(w) f_J^{q-1}(t,x,w) \partial_w f_J(t,x,w) \right|_{w=\pm 1} = 0, \\[2mm] 
            & \left. f_J(t,x,w)  \right|_{w= \pm 1}=0, 
        \end{aligned}
        \right.
        \end{align}
        hold for any $t \in \R_+$ and $x \in \Omega$. If $f_J^\init \in L^q(\Omega \times \mathcal{I})$, $J \in \mathcal{C}$, then $f_J \in L^q(\Omega \times \mathcal{I})$, $J \in \mathcal{C}$ for any $t \in \R_+$. More precisely, we have the following estimate
        \begin{equation*}
            ||f_J(t,\cdot,\cdot)||_{L^q(\Omega \times \mathcal{I})} \leq 3^{\frac{1}{q}} \max_{J \in \mathcal{C}} ||f_J^\init||_{L^q(\Omega \times \mathcal{I})} \exp(\eta_q t), \quad t \in \R_+,
        \end{equation*}
        where the constants $\eta_q > 0$ are equi-bounded with respect to $q$. 
    \end{proposition}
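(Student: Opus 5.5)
We estimate $\frac{\dd}{\dd t}\|f_J\|_{L^q}^q$ for each compartment, treating the epidemic operator $\mathbf{E}$ and the opinion operator $\mathbf{Q}$ separately, and then close the estimate by summing over $J\in\mathcal{C}$ and applying Gr\"onwall. Throughout, write $\mathcal{H}_{tot}(t,x)=\sum_{J'}\mathcal{H}[f_{J'}](t,x)$ and $\mathcal{K}_{tot}=\sum_{J'}\mathcal{K}[f_{J'}]$. Both are nonnegative, respectively bounded. Indeed, $\mathcal{H}_{tot}$ is bounded by assumption (and conserved in time by Remark \ref{rmk1}). For $\mathcal{K}_{tot}$ we use $|G|,|P|\le 1$ and $|w-w_*|\le 2$, which give $|\mathcal{K}_{tot}|\le 2\mathcal{H}_{tot}$. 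Similarly $|\partial_w\mathcal{K}_{tot}|\le C\mathcal{H}_{tot}$, provided $G$ is Lipschitz, which is part of the tacit regularity assumptions on $G$.

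\emph{Kinetic part.} We multiply \eqref{eq:opinion operator Q} by $q f_J^{q-1}$ and integrate over $\Omega\times\mathcal{I}$. For the drift term, integration by parts in $w$ is justified by $f_J|_{w=\pm1}=0$, and gives
$$-\lambda q(q-1)\int f_J^{q-1}\partial_w f_J\,\mathcal{K}_{tot} = \lambda(q-1)\int f_J^q\,\partial_w\mathcal{K}_{tot},$$
which is bounded by $\lambda(q-1)C\|\mathcal{H}_{tot}\|_\infty\|f_J\|_q^q$. For the diffusion term we expand $\partial_w(D^2 f_J)=D^2\partial_w f_J+(D^2)'f_J$; this is the "self-adjoint rewriting" announced before the statement. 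Integrating by parts, with boundary terms killed by \eqref{eq:additional BC for regularity opinion FP}, produces a nonpositive dissipation term
$$-\tfrac{\sigma_J^2}{2}q(q-1)\int\mathcal{H}_{tot}D^2 f_J^{q-2}(\partial_w f_J)^2\le 0,$$
plus the cross term $-\tfrac{\sigma_J^2}{2}q(q-1)\int\mathcal{H}_{tot}(D^2)'f_J^{q-1}\partial_w f_J$. Writing $f_J^{q-1}\partial_w f_J=\partial_w(f_J^q)/q$ and integrating by parts once more, this cross term becomes $\tfrac{\sigma_J^2}{2}(q-1)\int\mathcal{H}_{tot}(D^2)''f_J^q$. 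Since $(D^2)''=-2$ for $D^2=1-w^2$, it is in fact nonpositive; in general it is bounded by $C(q-1)\|f_J\|_q^q$.

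\emph{Main obstacle.} These kinetic contributions grow linearly in $q$, which conflicts with the requirement that $\eta_q$ be equi-bounded in $q$. The natural fix is to estimate $\frac{\dd}{\dd t}\|f_J\|_q^q$ and divide by $q$, i.e.\ work with $\frac{\dd}{\dd t}\|f_J\|_q$. The drift contribution then becomes $\lambda\frac{q-1}{q}C\|\mathcal{H}_{tot}\|_\infty\le\lambda C\|\mathcal{H}_{tot}\|_\infty$, which is uniform in $q$. Note also that the $q(q-1)$ prefactor on the dissipation is harmless, since that term has a sign and can simply be dropped.

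\emph{Epidemic part.} Since $f_J\ge0$ and $\beta_T\in L^\infty$, the nonlocal incidence factor $\int\beta_T f_I\,\dd y\,\dd w_*$ is bounded by $\|\beta_T\|_\infty\rho_I\le\|\beta_T\|_\infty$. For $J=S$ the term $E_S$ is nonpositive and can be dropped. For $J=I$ we have
$$q\int f_I^{q-1}E_I\le q\|\beta_T\|_\infty\int f_I^{q-1}f_S\le \|\beta_T\|_\infty\big((q-1)\|f_I\|_q^q+\|f_S\|_q^q\big)$$
by Young's inequality, while the $-\gamma$ part is negative. For $J=R$, Young's inequality similarly gives $q\gamma\int f_R^{q-1}f_I\le\gamma((q-1)\|f_R\|_q^q+\|f_I\|_q^q)$. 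Because these terms couple different compartments, we set $Y_q(t)=\sum_J\|f_J\|_q^q$ and obtain $Y_q'\le q\,\eta_q Y_q$, where $\eta_q$ collects $\lambda C\|\mathcal{H}_{tot}\|_\infty/\tau$, $\|\beta_T\|_\infty$ and $\gamma$ and is bounded uniformly in $q$. Gr\"onwall then yields
$$\|f_J\|_q^q\le Y_q(t)\le 3\max_J\|f_J^\init\|_q^q\,e^{q\eta_q t},$$
and taking the $q$-th root gives exactly the stated estimate with the factor $3^{1/q}$.
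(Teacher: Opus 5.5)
Your proposal is correct and follows essentially the same route as the paper: test each equation against $f_J^{q-1}$, discard the dissipative diffusion term, integrate the drift (together with the $(D^2)'$ contribution) by parts into $\frac{q-1}{q}\int \partial_w(\cdot)\, f_J^q$, handle the epidemic coupling with H\"older/Young and $\rho_I\le 1$, and close with Gr\"onwall on $\sum_J\|f_J\|_{L^q}^q$ after factoring out $q$ to get equi-bounded $\eta_q$. The only differences are cosmetic. You use $f_J^{q-1}\partial_w f_J=\partial_w(f_J^q)/q$ directly instead of the paper's splitting $\mathcal{T}_2=\frac1q\mathcal{T}_2+\frac{q-1}{q}\mathcal{T}_2$, and you correctly keep the factor $\sum_{J'}\mathcal{H}[f_{J'}]$ in the $(D^2)''$ term, which the paper's display drops.
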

    
    \begin{proof}
        Fix $q \in [2,\infty)$ and multiply each equation of system \eqref{eq:vectorial model} by $f_J^{q-1}$, with respect to the corresponding index $J \in \mathcal{C}$. After integrating in $x \in \Omega$ and $w \in \mathcal{I}$ we initially obtain 
        \begin{align*}
        \left\{
        \begin{aligned}
            & \frac{1}{q} \frac{\dd}{\dd t} ||f_S||^q_{L^q(\Omega \times \mathcal{I})} = - \int_{\Omega \times \mathcal{I}} f_S^q \left( \int_{\Omega \times \mathcal{I}} \beta_T(w,w_*) f_I(t,y,w_*) \dd y \dd w_* \right) \dd x \dd w \\
            & \hspace{9cm} + \frac{1}{\tau}\int_{\Omega \times \mathcal{I}} Q_S(\mathbf{f},\mathbf{f}) f_S^{q-1} \dd x \dd w, \\[2mm]
            & \frac{1}{q} \frac{\dd}{\dd t} ||f_I||^q_{L^q(\Omega \times \mathcal{I})} = \int_{\Omega \times \mathcal{I}} f_S f_I^{q-1} \left( \int_{\Omega \times \mathcal{I}} \beta_T(w,w_*) f_I(t,y,w_*) \dd y \dd w_* \right) \dd x \dd w - \gamma ||f_I||^q_{L^q(\Omega \times \mathcal{I})} \\
            & \hspace{9cm} + \frac{1}{\tau}\int_{\Omega \times \mathcal{I}} Q_I(\mathbf{f},\mathbf{f}) f_I^{q-1} \dd x \dd w, \\[2mm] 
            & \frac{1}{q} \frac{\dd}{\dd t} ||f_R||^q_{L^q(\Omega \times \mathcal{I})} = \gamma \int_{\Omega \times \mathcal{I}} f_I f_R^{q-1} \dd x \dd w + \frac{1}{\tau}\int_{\Omega \times \mathcal{I}} Q_R(\mathbf{f},\mathbf{f}) f_R^{q-1} \dd x \dd w.  
        \end{aligned}
        \right. 
        \end{align*}
        The idea is to control (up to a constant) each term on the right-hand side of the above equations with the quantities $\|f_J\|_{L^q(\Omega \times \mathcal{I})}^q$, $J \in \mathcal{C}$ and then conclude by applying the Gr\"onwall's Lemma. 
        
        Let us start from the epidemiological operators. We first derive the straightforward control
        \begin{equation*}
            \int_{\Omega \times \mathcal{I}} f_S^q(t,x,w) \left( \int_{\Omega \times \mathcal{I}} \beta_T(w,w_*) f_I(t,y,w_*) \dd y \dd w_* \right) \dd x \dd w \leq ||\beta_T||_{L^\infty (\mathcal{I}^2)} ||f_I||_{L^1(\Omega \times \mathcal{I})} ||f_S||^q_{L^q(\Omega \times \mathcal{I})}, 
        \end{equation*}
        where $||f_I||_{L^1(\Omega \times \mathcal{I})} = \rho_I \in [0,1]$. Using H\"older's and Young's inequalities we then infer the bound 
        \begin{equation*}
        \begin{split}
            \int_{\Omega \times \mathcal{I}} f_S(t,x,w) f_I^{q-1}(t,x,w) \bigg( \int_{\Omega \times \mathcal{I}} & \beta_T(w,w_*) f_I(t,y,w_*) \dd y \dd w_* \bigg)\dd x \dd w \\[2mm]
            &\leq ||\beta_T||_{L^\infty (\mathcal{I}^2)} ||f_I||_{L^1(\Omega \times \mathcal{I})} \left( \frac{1}{q} ||f_S||^q_{L^q(\Omega \times \mathcal{I})} + \frac{q-1}{q} ||f_I||^q_{L^q(\Omega \times \mathcal{I})} \right).
        \end{split}
        \end{equation*}
        Finally, H\"older's inequality also implies that 
        \begin{equation*}
            \gamma \int_{\Omega \times \mathcal{I}} f_I(t,x,w) f_R^{q-1}(t,x,w) \dd x \dd w \leq \gamma \left(\frac{1}{q} ||f_I||^q_{L^q(\Omega \times \mathcal{I})} + \frac{q-1}{q} ||f_R||^q_{L^q(\Omega \times \mathcal{I})} \right). 
        \end{equation*}

        Moving now to the Fokker--Planck operators, we observe that 
        \begin{equation*}
        \begin{split}
            \int_{\Omega \times \mathcal{I}} Q_J(\mathbf{f},\mathbf{f}) f_J^{q-1}(t,x,w) \dd x \dd w =& \overbrace{ \frac{\sigma_J^2}{2} \int_{\Omega \times \mathcal{I}}  \partial_w \left(D^2(w) \sum_{J' \in \mathcal{C}} \mathcal{H}[f_{J'}] \partial_w f_J \right) f_J^{q-1} \dd x \dd w }^{\mathcal{T}_1} \\[2mm] 
            & \quad + \underbrace{\int_{\Omega \times \mathcal{I}} \partial_w \left(\left( \lambda \sum_{J' \in \mathcal{C}} \mathcal{K}[f_{J'}] + \frac{\sigma_J^2}{2} \partial_w D^2(w) \right)f_J\right)  f_J^{q-1} \dd x \dd w  }_{\mathcal{T}_2}. 
        \end{split}
        \end{equation*}
        Integrating by parts $\mathcal{T}_1$ and using the first of the boundary conditions \eqref{eq:additional BC for regularity opinion FP} allows us to show that this term is nonpositive, since 
        \begin{equation*}
            \mathcal{T}_1 = - \frac{\sigma_J^2}{2} (q-1) \int_{\Omega \times \mathcal{I}}  D^2(w) \sum_{J' \in \mathcal{C}} \mathcal{H}[f_{J'}] \left(\partial_w f_J \right)^2 f_J^{q-2} \dd x \dd w \leq 0. 
        \end{equation*}
        The second term $\mathcal{T}_2$ can be treated in two different ways. At first we perform an integration by parts and use the boundary conditions \eqref{eq:additional BC for regularity opinion FP} to get 
        \begin{equation*}
        \begin{split}
            \mathcal{T}_2 &= - \int_{\Omega \times \mathcal{I}} \left( \lambda \sum_{J' \in \mathcal{C}} \mathcal{K}[f_{J'}] + \frac{\sigma_J^2}{2} \partial_w D^2(w) \right)f_J \partial_w f_J^{q-1} \dd x \dd w \\[2mm] 
            & = - (q-1) \int_{\Omega \times \mathcal{I}} \left( \lambda \sum_{J' \in \mathcal{C}} \mathcal{K}[f_{J'}] + \frac{\sigma_J^2}{2} \partial_w D^2(w) \right)f_J^{q-1}  \partial_w f_J \dd x \dd w. 
        \end{split}
        \end{equation*}
        Secondly, we compute the derivative with respect to $w$ to obtain 
        \begin{equation*}
        \begin{split}
            \mathcal{T}_2 =& \int_{\Omega \times \mathcal{I}} \left( \lambda \sum_{J' \in \mathcal{C}} \mathcal{K}[f_{J'}] + \frac{\sigma_J^2}{2} \partial_w D^2(w) \right)f_J^{q-1}  \partial_w f_J \dd x \dd w \\[2mm] 
            &\qquad \qquad + \int_{\Omega \times \mathcal{I}} \partial_w \left(\lambda \sum_{J' \in \mathcal{C}} \mathcal{K}[f_{J'}] + \frac{\sigma_J^2}{2} \partial_w D^2(w)\right) f_J^q \dd x \dd w. 
        \end{split}  
        \end{equation*}
        Therefore we can split $\mathcal{T}_2$ as $\mathcal{T}_2=\frac{1}{q}\mathcal{T}_2+\frac{q-1}{q}\mathcal{T}_2$ to finally write 
        \begin{equation*}
            \mathcal{T}_2 = \frac{q-1}{q} \int_{\Omega \times \mathcal{I}} \partial_w \left(\lambda \sum_{J' \in \mathcal{C}} \mathcal{K}[f_{J'}] + \frac{\sigma_J^2}{2} \partial_w D^2(w)\right) f_J^q \dd x \dd w. 
        \end{equation*}
        Hence, expanding the derivative with respect to $w$ of the operator $\mathcal{K}$, a direct estimate shows that (recall Remark \ref{rmk1})
        \begin{equation*}
        \begin{split}
            \int_{\Omega \times \mathcal{I}} Q_J(\mathbf{f},\mathbf{f}) f_J^{q-1}(t,x,w) \dd x \dd w & \leq \frac{q-1}{q}\bigg( \lambda \left(2 \left|\left|\partial_w G\right|\right|_{L^\infty (\mathcal{I}^2)} ||\mathcal{H}[f_J]||_{L^\infty(\R_+ \times \Omega)}+||\mathcal{H}[f_J]||_{L^\infty(\R_+ \times\Omega)}\right) \\ 
            & \hspace{6cm} + \frac{\sigma_J^2}{2} \left|\left|\partial_w^2 D^2 \right|\right|_{L^\infty (\mathcal{I})} \bigg) ||f_J||^q_{L^q(\Omega \times \mathcal{I})},
        \end{split}
        \end{equation*}
        for any $J \in \mathcal{C}$. In conclusion, 
        \begin{equation*}
            \frac{\dd}{\dd t} \sum_{J \in \mathcal{C}} ||f_J||_{L^q(\Omega \times \mathcal{I})}^q \leq C \sum_{J \in \mathcal{C}} ||f_J||_{L^q(\Omega \times \mathcal{I})}^q , 
        \end{equation*}
        with a constant $C=C(q, \gamma, \lambda, \sigma_J^2, ||\beta_T||_{L^\infty (\mathcal{I}^2)}, ||\mathcal{H}[f_J]||_{L^\infty ([0, \infty) \times \Omega)}, ||\partial_w G||_{L^\infty (\mathcal{I}^2)}, ||\partial_w^2 D^2(w)||_{L^\infty (\mathcal{I})})$, and the sought regularity estimate follows from an application of Gr\"onwall's Lemma. 
    \end{proof}
    
    \begin{remark}
        If we did not assume $\mathcal{H}[f_J] \in L^\infty (\R_+ \times \Omega)$ for any $J \in \mathcal{C}$, then the regularity result could only be proved for a fixed $x \in \Omega$. Namely, if for any $J \in \mathcal{C}$, $f_J^\init(x,\cdot) \in L^1(\mathcal{I})$ for a fixed $x \in \Omega$, then $f_J(t, x, \cdot) \in L^q(\mathcal{I})$, $J \in \mathcal{C}$, for any $t \in \R_+$. 
    \end{remark}
    
    \begin{remark}
        The additional no-flux boundary conditions \eqref{eq:additional BC for regularity opinion FP} are restrictive. Indeed we will show that the equilibria of (a simplified version of) our model (see Eq. \eqref{eq:equilibria opinion FP}) can diverge at $w= \pm 1$. Therefore, its equilibria do not belong to any $L^q(\Omega \times \mathcal{I})$, $q \geq 2$, and thus also its solutions cannot belong to any of these spaces either (see \cite[Section 4.2]{BonBor} for a thorough discussion on this topic). 
    \end{remark}

    The constants $\eta_q$ are equi-bounded by some $\eta_\infty$, for any $q \in [2,+\infty)$. Therefore, in the limit $q \to +\infty$ we recover the regularity $f_J(t,\cdot,\cdot) \in L^\infty(\Omega \times \mathcal{I})$, $J \in \mathcal{C}$, for any $t \in \R_+$, specifically
    \begin{equation*}
        \norm{f_J(t, \cdot, \cdot)}_{L^\infty(\Omega \times \mathcal{I})} \leq \max_{J \in \mathcal{C}} ||f_J^\init||_{L^\infty(\Omega \times \mathcal{I})} \exp(\eta_\infty t), \quad t \in \R_+,
    \end{equation*}
    provided that $f_J^\init \in L^\infty(\Omega \times \mathcal{I})$, $J \in \mathcal{C}$.
    
    We conclude this section by proving a uniqueness result. The main difficulty lies in the fact that we would like to linearize the Fokker--Planck equations \eqref{eq:opinion operator Q}, so that the difference of two solutions is still a solution. 
    
    \begin{proposition}[Uniqueness] \label{prop:uniqueness}
        Let $\mathbf{f}$ and $\mathbf{g}$ be two solutions of the SIR kinetic system \eqref{eq:vectorial model}, with $\beta_T \in L^\infty (\mathcal{I}^2)$. Suppose that $f_J^\init, g_J^\init \in L^1(\Omega \times \mathcal{I})$, $J \in \mathcal{C}$. Moreover, assume that for any $J \in \mathcal{C}$ the equality $\mathcal{K}[f_J](t,x,w) = \mathcal{K}[g_J](t,x,w)$ holds for any $t \in \R_+$ and for a.e. $x \in \Omega$ and $w \in  \mathcal{I}$. If $f_J^\init(x,w) = g_J^\init(x,w)$, $J \in \mathcal{C}$, for a.e $x \in \Omega$ and $w \in \mathcal{I}$, then $f_J(t,x,w) = g_J(t,x,w)$, $J \in \mathcal{C}$, for any $t \in \R_+$ and for a.e. $x \in \Omega$ and $w \in \mathcal{I}$. 
    \end{proposition}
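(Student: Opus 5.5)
We set $h_J = f_J - g_J$, $J \in \mathcal{C}$, and aim at a Grönwall inequality for $\sum_{J\in\mathcal{C}} \|h_J(t,\cdot,\cdot)\|_{L^1(\Omega\times\mathcal{I})}$ with zero initial value.

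\textbf{Step 1: the equation for $h_J$.} The hypothesis $\mathcal{K}[f_J]=\mathcal{K}[g_J]$ gives $\sum_{J'}\mathcal{K}[f_{J'}] = \sum_{J'}\mathcal{K}[g_{J'}]$. For the diffusion coefficient we use Remark \ref{rmk1}: $\sum_{J'}\mathcal{H}[f_{J'}](t,x)$ is conserved in time, so it equals its value at $t=0$. Since $f^\init_J=g^\init_J$, this common value is the same for $\mathbf{f}$ and $\mathbf{g}$. Hence both Fokker--Planck operators have the same coefficients, and
\begin{equation*}
Q_J(\mathbf{f},\mathbf{f})-Q_J(\mathbf{g},\mathbf{g}) = \lambda\partial_w\Big(\sum_{J'}\mathcal{K}[f_{J'}]\,h_J\Big)+\frac{\sigma_J^2}{2}\partial_w^2\Big(D^2\sum_{J'}\mathcal{H}[f_{J'}]\,h_J\Big).
\end{equation*}
This is a \emph{linear} Fokker--Planck operator acting on $h_J$. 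The no-flux conditions \eqref{eq:BC opinion FP} are linear in $f_J$ once the coefficients are frozen, so $h_J$ satisfies them too. Therefore the computation in Lemma \ref{lemma1} applies verbatim to $h_J$: multiplying by $\mathrm{sign}_\eps(h_J)$, integrating and letting $\eps\to0$ shows that the $\mathbf{Q}$-contribution to $\frac{\dd}{\dd t}\|h_J\|_{L^1}$ is nonpositive. This step, where the linearisation is justified, is the main obstacle. The hypothesis on $\mathcal{K}$ handles the drift, and the conservation of $\mathcal{H}$ handles the diffusion.

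\textbf{Step 2: the epidemiological terms.} Write $\mathcal{A}[f_I](w)=\int\beta_T(w,w_*)f_I\,\dd y\,\dd w_*$, and note $|\mathcal{A}[h_I]|\le\|\beta_T\|_{L^\infty}\|h_I\|_{L^1}$. Then
\begin{equation*}
E_S(\mathbf{f},\mathbf{f})-E_S(\mathbf{g},\mathbf{g}) = -h_S\,\mathcal{A}[f_I]-g_S\,\mathcal{A}[h_I].
\end{equation*}
Multiplying by $\mathrm{sign}_\eps(h_S)$ and integrating, the first term gives a nonpositive contribution when $f_I\ge0$, or at worst one bounded by $\|\beta_T\|_\infty\|f_I\|_{L^1}\|h_S\|_{L^1}$. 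The second is bounded by $\|\beta_T\|_\infty\|g_S\|_{L^1}\|h_I\|_{L^1}$. The $L^1$ norms of $f_J,g_J$ are bounded uniformly in time: by Lemma \ref{lemma1} together with a Grönwall argument on the $\mathbf{E}$ terms, or by mass conservation in the nonnegative case. The same decomposition handles $E_I$, and the $\gamma$ terms are linear, contributing at most $\gamma(\|h_I\|_{L^1}+\|h_R\|_{L^1})$.

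\textbf{Step 3: conclusion.} Summing over $J$ and letting $\eps\to0$ we obtain
\begin{equation*}
\frac{\dd}{\dd t}\sum_{J\in\mathcal{C}}\|h_J\|_{L^1}\le C(t)\sum_{J\in\mathcal{C}}\|h_J\|_{L^1},
\end{equation*}
with $C(t)$ locally bounded. Since $h_J(0)=0$, Grönwall's lemma gives $h_J\equiv0$ for every $t\in\R_+$.
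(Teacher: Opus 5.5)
Your proposal is correct and follows the paper's proof essentially step by step. Like the paper, you use the hypothesis on $\mathcal{K}$ and the conservation of $\sum_{J}\mathcal{H}[f_J]$ from Remark \ref{rmk1} to show that the differences solve the linear Fokker--Planck problem $Q_J(\mathbf{f},\mathbf{f}-\mathbf{g})$, run the regularized-sign argument of Lemma \ref{lemma1}, bound the epidemiological differences by $\|\beta_T\|_{L^\infty}$ times products of $L^1$ norms, and close with Gr\"onwall. One minor caveat: for sign-changing solutions your Gr\"onwall comparison on the $\mathbf{E}$ terms is quadratic, so it gives only local-in-time bounds rather than uniform ones. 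That does not matter, because local boundedness of $\|f_I\|_{L^1}$ and $\|g_S\|_{L^1}$ is all your Step 3 needs.
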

    \begin{proof}
        Since for any $t \in \R_+$ and for a.e $x \in \Omega$ and $w \in \mathcal{I}$ we have $\sum_{J \in \mathcal{C}} \mathcal{K}[f_J](t,x,w) = \sum_{J \in \mathcal{C}} \mathcal{K}[g_J](t,x,w)$ and $\sum_{J \in \mathcal{C}} \mathcal{H}[f_J](t,x) = \sum_{J \in \mathcal{C}} \mathcal{H}[g_J](t,x)$ (recall Remark \ref{rmk1}), the differences $f_J-g_J$, $J \in \mathcal{C}$, of two solutions to the Fokker--Planck equation \eqref{eq:opinion operator Q} solve the following system: 
        \begin{align*}
        \left\{
        \begin{aligned}
            \partial_t (f_S-g_S) &= - \bigg( f_S\int_{\Omega \times \mathcal{I}} \beta_T(w,w_*) f_I(t,y,w_*) \dd y \dd w_* - g_S \int_{\Omega \times \mathcal{I}} \beta_T(w,w_*) g_I(t,y,w_*) \dd y \dd w_* \bigg) \\ 
            & \hspace{11cm} + \frac{1}{\tau} Q_S(\mathbf{f}, \mathbf{f}-\mathbf{g}), \\ 
            \partial_t (f_I-g_I) &= \bigg( f_S\int_{\Omega \times \mathcal{I}} \beta_T(w,w_*) f_I(t,y,w_*) \dd y \dd w_* - g_S \int_{\Omega \times \mathcal{I}} \beta_T(w,w_*) g_I(t,y,w_*) \dd y \dd w_* \bigg) \\ 
            & \hspace{9cm} - \gamma (f_I - g_I) + \frac{1}{\tau} Q_I(\mathbf{f}, \mathbf{f}-\mathbf{g}), \\ 
            \partial_t (f_R-g_R) &= \gamma(f_I - g_I) + \frac{1}{\tau} Q_R(\mathbf{f}, \mathbf{f}-\mathbf{g}).
        \end{aligned}
        \right.
        \end{align*}
        Now, the idea is to recover an $L^1(\Omega \times \mathcal{I})$ comparison principle for the differences $f_J-g_J$ to get rid of the Fokker–Planck operators. This is done by introducing an increasing regularized approximation of the sign function $\textrm{sign}_\eps(u)$, $u \in \R$, and defining through it a regularization $|f_J-g_J|_\eps$ of $|f_J-g_J|$ for any $J \in \mathcal{C}$. Multiplying each equation in the previous system by the corresponding $\textrm{sign}_\eps(f_J-g_J)$, integrating with respect to $x \in \Omega$ and $w \in \mathcal{I}$, and taking the limit $\eps \to 0$, we easily recover the estimates (see Lemma \ref{lemma1}) 
        \begin{align*}
        \left\{
        \begin{aligned}
            & \frac{\dd}{\dd t} ||f_S - g_S||_{L^1(\Omega \times \mathcal{I})} \leq ||\beta_T||_{L^\infty(\mathcal{I}^2)} \left( ||f_I||_{L^1(\Omega \times \mathcal{I})} ||f_S - g_S||_{L^1(\Omega \times \mathcal{I})} +||g_S||_{L^1(\Omega \times \mathcal{I})} ||f_I - g_I||_{L^1(\Omega \times \mathcal{I})} \right), \\[4mm] 
            & \frac{\dd}{\dd t} ||f_I - g_I||_{L^1(\Omega \times \mathcal{I})} \leq ||\beta_T||_{L^\infty(\mathcal{I}^2)} \left( ||f_I||_{L^1(\Omega \times \mathcal{I})} ||f_S - g_S||_{L^1(\Omega \times \mathcal{I})} +||g_S||_{L^1(\Omega \times \mathcal{I})} ||f_I - g_I||_{L^1(\Omega \times \mathcal{I})} \right) \\
            & \hspace{13cm} + \gamma ||f_I - g_I||_{L^1(\Omega \times \mathcal{I})}, \\ 
            & \frac{\dd}{\dd t} ||f_R - g_R||_{L^1(\Omega \times \mathcal{I})} \leq \gamma ||f_I - g_I||_{L^1(\Omega \times \mathcal{I})}, 
        \end{aligned} 
        \right.
        \end{align*}
        and we apply Gr\"onwall's inequality to conclude that
        \begin{equation*}
            \frac{\dd}{\dd t} \sum_{J \in \mathcal{C}} ||f_J - g_J||_{L^1(\Omega \times \mathcal{I})} \leq C \sum_{J \in \mathcal{C}} ||f_J - g_J||_{L^1(\Omega \times \mathcal{I})}, 
        \end{equation*}
        where $C=C(\gamma, ||\beta_T||_{L^\infty(\mathcal{I}^2)}, ||f_I||_{L^1(\Omega \times \mathcal{I})}, ||g_S||_{L^1(\Omega \times \mathcal{I})})$. Since the initial distributions coincide, the above estimate implies the uniqueness of solutions.         
    \end{proof}

    Finally, we provide a sufficient condition for equalities $\mathcal{K}[f_J](t,x,w) = \mathcal{K}[g_J](t,x,w)$, $J \in \mathcal{C}$ to hold for any $t \in \R_+$ and for a.e. $x \in \Omega$ and $w \in  \mathcal{I}$. 

    \begin{lemma}
            Let $\mathbf{f}$ and $\mathbf{g}$ be two solutions of the SIR kinetic system \eqref{eq:vectorial model} such that $f_J^\init(x,w) = g_J^\init(x,w)$ for a.e. $x \in \Omega$ and $w \in \mathcal{I}$. If the functionals $\mathcal{K}[f_J](t,x,w)$, $J \in \mathcal{C}$, are analytical, then 
            \begin{equation*}
                \sum_{J \in \mathcal{C}} \mathcal{K}[f_J](t,x,w) \equiv \sum_{J \in \mathcal{C}} \mathcal{K}[g_J](t,x,w)
            \end{equation*}
            for any $t \in \R_+$ and for a.e. $x \in \Omega$ and $w \in \mathcal{I}$. 
    \end{lemma}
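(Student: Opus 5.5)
The plan is to use analyticity in time, via uniqueness of Taylor coefficients. Fix $x \in \Omega$ and $w \in \mathcal{I}$. Interpreting the hypothesis as real-analyticity of $t \mapsto \sum_{J} \mathcal{K}[f_J](t,x,w)$ on $\R_+$, and likewise for $\mathbf{g}$, it suffices to show that all time derivatives of the two functionals coincide at $t=0$. Two real-analytic functions on the connected set $\R_+$ with the same Taylor expansion at $0$ agree everywhere by the identity theorem. The analyticity assumption is really needed only for the $\mathbf{f}$-functional; for $\mathbf{g}$ I would either assume it symmetrically or note that it holds by the same mechanism.

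The key steps are as follows.

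\emph{Step 1: zeroth order.} At $t=0$ we have $f_J^\init = g_J^\init$ a.e. Since $\mathcal{K}$ is a linear integral operator in $f_J(t,\cdot,\cdot)$ with kernel $\mathcal{B}(x,y)P(x,y)G(w,w_*)(w-w_*)$, which is integrable by \eqref{eq:integrability condition}, the values $\mathcal{K}[f_J](0,x,w)$ and $\mathcal{K}[g_J](0,x,w)$ coincide.

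\emph{Step 2: higher orders.} Differentiate under the integral sign: $\partial_t^n \mathcal{K}[f_J] = \mathcal{K}[\partial_t^n f_J]$. Then express $\partial_t^n f_J|_{t=0}$ through the equation \eqref{eq:vectorial model}. By induction on $n$, $\partial_t^n f_J(0)$ is a universal polynomial-type expression, built from $\mathbf{E}$, $\mathbf{Q}$ and $w$-derivatives, in the initial data $\mathbf{f}^\init$, in the values $\partial_t^k \mathcal{K}[f_{J'}](0)$ for $k<n$, and in $\mathcal{H}[f_{J'}]$. The latter is conserved in time by Remark \ref{rmk1}, so it is determined by the initial data. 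The same recursion holds for $\mathbf{g}$ with identical inputs, hence $\partial_t^n f_J(0) = \partial_t^n g_J(0)$ as distributions in $w$, and consequently $\partial_t^n \mathcal{K}[f_J](0,x,w) = \partial_t^n \mathcal{K}[g_J](0,x,w)$. When applying $\mathcal{K}$ to these expressions, $w$-derivatives can be moved onto the smooth kernel $G(w,w_*)(w-w_*)$ by integration by parts in $w_*$; the boundary terms vanish thanks to \eqref{eq:BC opinion FP}.

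\emph{Step 3: conclusion.} Summing over $J$ and applying the identity theorem gives the claimed identity for all $t$ and a.e. $(x,w)$. Combined with Proposition \ref{prop:uniqueness}, this then yields uniqueness.

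The main obstacle is Step 2: justifying that time derivatives of every order exist at $t=0$ and may be computed through the equation and commuted with $\mathcal{K}$. This requires enough smoothness of the solutions and of $G$, $D$, consistent with the paper's tacit regularity assumptions. I would handle it by working at the level of the weak formulation: test the equation against $\varphi(w_*) = \mathcal{B}(x,y)P(x,y)G(w,w_*)(w-w_*)$, so that every time derivative of $\mathcal{K}$ becomes an iterated action of the adjoint (Fokker--Planck and SIR) generator on this smooth test function, integrated against the initial data.
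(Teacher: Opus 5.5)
Your proposal follows the paper's proof: show that all time derivatives of $\sum_J\mathcal{K}[f_J]$ and $\sum_J\mathcal{K}[g_J]$ coincide at $t=0$ by substituting the equation and the common initial data, then conclude by analyticity. Your induction, which also tracks the $\mathbf{E}$ contributions left implicit in the paper, simply spells out what the paper calls ``similar'' for $n\ge 2$. One caveat: analyticity of the $\mathbf{g}$-functionals cannot be obtained ``by the same mechanism'' from that of $\mathbf{f}$, since the identity theorem needs the difference to be analytic. It must therefore be assumed symmetrically, as the statement implicitly does.
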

    \begin{proof}
            Following \cite[Lemma 2]{BonBor}, since the initial distributions coincide, the idea is to show that 
            \begin{equation*}
                \partial_t^n \left. \sum_{J \in \mathcal{C}} \big( \mathcal{K}[f_J](t,x,w) - \mathcal{K}[g_J](t,x,w)\big) \right|_{t=0} = 0,
            \end{equation*}
            for all $n \in \mathbb{N}^*$, and for a.e. $x \in \Omega$ and $w \in \mathcal{I}$. For $n = 1$ it holds that 
            \begin{equation*}
            \begin{split}
                \partial_t \sum_{J \in \mathcal{C}} & \big(\mathcal{K}[f_J] - \mathcal{K}[g_J]\big) \bigg|_{t=0} =  \sum_{J \in \mathcal{C}} \int_{\Omega \times \mathcal{I}} \mathcal{B}(x,y) P(x,y) G(w,w_*) (w-w_*) \partial_t  (f_J-g_J)(t,y,w_*)\big|_{t=0} \dd y \dd w_*  \\[2mm] 
                &= \frac{\lambda}{\tau} \sum_{J \in \mathcal{C}} \int_{\Omega \times \mathcal{I}} \mathcal{B}(x,y) P(x,y) G(w,w_*) (w-w_*) \\ 
                & \hspace{2cm} \times \partial_{w_*} \left(\sum_{J' \in \mathcal{C}} \mathcal{K}[f_{J'}^\init](y,w_*) \; f_J^\init(y,w_*) - \sum_{J' \in \mathcal{C}} \mathcal{K}[g_{J'}^\init](y,w_*) \; g_J^\init(y,w_*)\right) \dd y \dd w_*  \\[2mm] 
                & \quad +\frac{1}{\tau}\sum_{J \in \mathcal{C}}\frac{\sigma_J^2}{2} \int_{\Omega \times \mathcal{I}} \mathcal{B}(x,y) P(x,y) G(w,w_*) (w-w_*) \\ 
                & \hspace{2cm} \times \partial_{w_*}^2 \left( D^2(w_*) \left(\sum_{J' \in \mathcal{C}} \mathcal{H}[f_{J'}^\init](y) \; f_J^\init(y,w_*) - \sum_{J' \in \mathcal{C}} \mathcal{H}[g_{J'}^\init](y) \; g_J^\init(y,w_*)\right)\right) \dd y \dd w_* \\[2mm] 
                &= 0,
            \end{split}
            \end{equation*}
            for a.e. $x \in \Omega$ and $w \in \mathcal{I}$, since the initial distributions coincide. The calculations for higher values of $n \geq 2$ are similar. 
    \end{proof}

\section{A simplification of the model} \label{connectivity}

    \noindent As already observed in Section \ref{sec:macro}, the general structures of the graphon $\mathcal{B}$ and of the interaction function $P$ do not allow us to deduce a meaningful expression for the (local) equilibria of system \eqref{eq:vectorial model}. The goal of this section is to derive a simplified version of our model which admits meaningful equilibria, and to study their properties. The main idea consists in introducing a new quantity, the agents' propensity to interact, which incorporates both effects of $\mathcal{B}$ and $P$.

\subsection{An analysis of the agents' propensity to interact} \label{propensity}

    The propensity to interact of each agent depends on two different factors: the graphon $\mathcal{B}$ indicates how often an agent with position $x \in \Omega$ interacts with the other agents; the interaction function $P$ tells us how much the same agent is influenced by each of these interactions. For any $x \in \Omega$, we can therefore define the propensity to interact $p = p(x)$ as 
    \begin{equation*} 
        p(x) = \int_\Omega \mathcal{B}(x,z) P(x,z) \dd z,  
    \end{equation*}
    which is well defined as soon as
    \begin{equation*}
        \mathcal{B}(x,\cdot) \in L^1(\Omega),
    \end{equation*}
    for any $x \in \Omega$. Note that this hypothesis is directly related with the integrability condition \eqref{eq:integrability condition}. In order to highlight the importance of the function $p: \Omega \to \R_+$, we analyze the following example (another one is reported in Appendix \ref{appA}, and both of them will be fundamental for the numerical simulations that we will carry out in Section \ref{sec:num}).

    Consider the symmetric (note that this assumption is nor unrealistic, nor necessary in order to define $p$) graphon
    \begin{equation} \label{eq:fat-tailed graphon}
        \mathcal{B}(x,y)=(xy)^{-\xi}, \quad \xi \in (0,1), 
    \end{equation}
    usually associated with the context of scale-free networks \cite{BorChaCohZha1, BorChaCohZha2}, i.e., simple graphs whose degree distribution possesses fat tails. Moreover, assume that \cite{DurFraWolZan} 
    \begin{equation} \label{eq:P}
        P(x,y) = \left(1 + \frac{d_\init(x)}{d_\init(y)}\right)^{- \chi}, \quad \chi>0,
    \end{equation}
    where the in-degree $d_\init(x)$ of $x \in \Omega$ is defined as 
    \begin{equation} 
        d_\init(x) = \int_\Omega \mathcal{B}(x,z) \dd z. 
    \end{equation}
    Note that, with this particular choice \eqref{eq:P} of $P$, the interactions depend on the connectivity of each agent: the more connected an agent is the less they are influenced by the others, while agents with a lower connectivity are more affected. In particular, 
    \begin{itemize}
        \item $d_\init(x) \gg d_\init(y)$ implies that, on average with respect to the random variables $\eta_J$, the agent with the highest degree keeps their opinion; \\[-2mm]
        \item $d_\init(x) \simeq d_\init(y)$ implies that agents with a similar number of incoming connections
        are the ones that can most influence each other; \\[-2mm]
        \item $d_\init(x) \ll d_\init(y)$ implies that the less influential agents tend to adopt the opinion of the more connected ones.
    \end{itemize}   
    Figure \ref{fig:main_graphon} shows the graphon \eqref{eq:fat-tailed graphon} and the interaction function \eqref{eq:P}. 

    \begin{figure}
    \centering
    \begin{subfigure}[t]{0.5\textwidth}
        \centering
        \includegraphics[width=\linewidth]{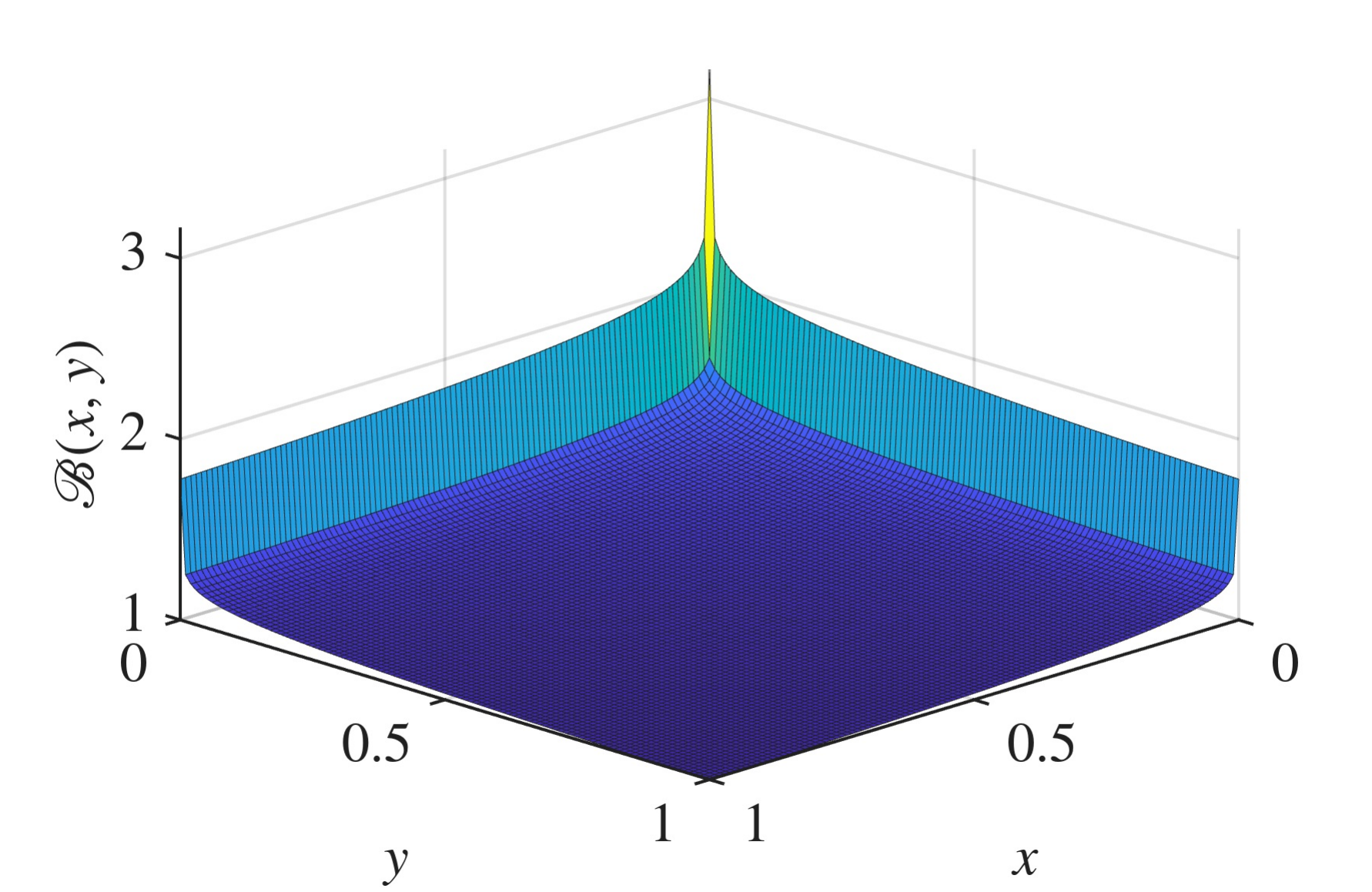}
        \caption*{(A)}
    \end{subfigure}\hfill
    \begin{subfigure}[t]{0.5\textwidth}
        \centering
        \includegraphics[width=\linewidth]{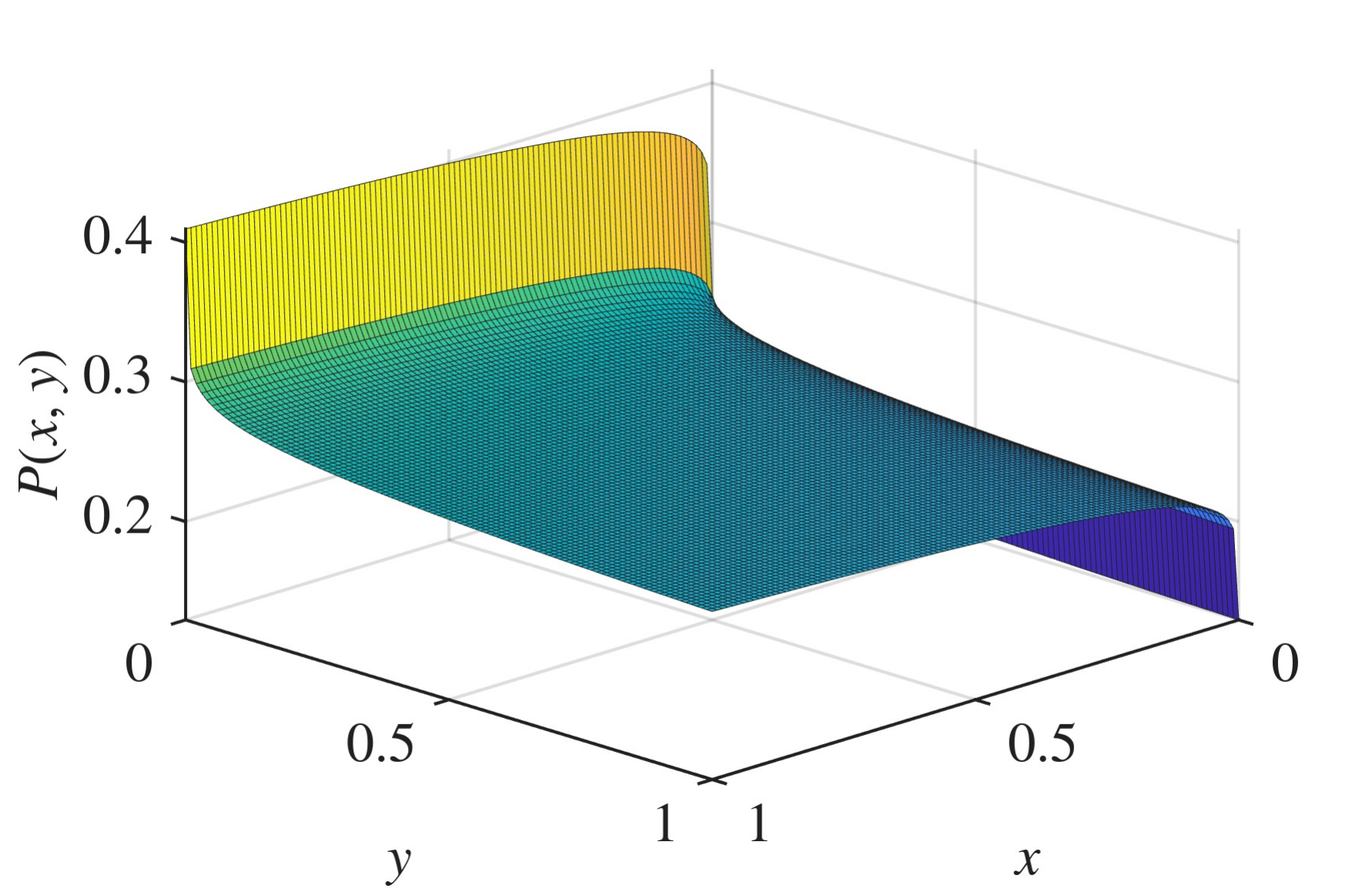}
        \caption*{(B)}
    \end{subfigure}
      \caption{(A) Plot of the graphon $\mathcal{B}$ given by \eqref{eq:fat-tailed graphon}. (B) Plot of the interaction function $P$ defined by \eqref{eq:P}. Values of the parameters: $\xi=0.05$ and $\chi=2$.}
    \label{fig:main_graphon}
    \end{figure}
    
    The propensity to interact $p$ explicitly reads 
    \begin{equation} \label{eq:propensity}
        p(x) = x^{\xi (\chi-1)} \int_\Omega z^{-\xi} (x^\xi + z^\xi)^{-\chi} \dd z, 
    \end{equation}
    hence two situations must be distinguished: 
    \begin{itemize}
        \item $\chi < 1$ implies that $p(x)$ decreases as $x$ increases. Moreover $p \to +\infty$ as $x \to 0$; \\[-2mm]
        \item $\chi > 1$ implies that $p(x)$ can show different behaviors as $x$ varies, depending on the relationship between the parameters $\xi$ and $\chi$. Note that in this case, if $\xi (1+\chi)<1$ then $p(0)=0$ (in particular, it is well-defined even if $\mathcal{B}(0, \cdot)$ is not).  
    \end{itemize}
    The chosen unbounded graphon \eqref{eq:fat-tailed graphon} can be used to model different phenomena. Agents in position $x \simeq 0$ could actually be interpreted as opinion leaders (e.g., the media or highly influential individuals) since they have an extremely high number of connections \cite{BonBor, BonTosZan}. In this case, it is reasonable to assume that their propensity to interact is almost equal to zero since they are willing to influence the others but they do not want to be influenced. Hence, a reasonable choice of $\chi>1$ and of $\xi<(1+\chi)^{-1}$ is such that $p$ attains its maximum value in the interior of $\Omega$. This means that the agents who are the most inclined to interact have an intermediate number of connections. Indeed, highly connected individuals (i.e., the opinion leaders, corresponding to $\mathcal{B} \to +\infty$) do not want to be influenced by the others ($P \simeq 0$), while poorly connected individuals do not engage in a relevant number of interactions ($\mathcal{B} \simeq 0$) even if they are strongly influenced by them ($P \simeq 1$). On the other hand, if no opinion leaders are considered, it is reasonable to assume $\chi < 1$. In this case, only normal agents are taken into account, and as their connectivity decreases, their propensity to interact decreases accordingly. In Figure \ref{fig:propensity to interact} we plot the function $p$ for the two discussed situations. 

    \begin{figure}[h!]
    \includegraphics[width=8cm]{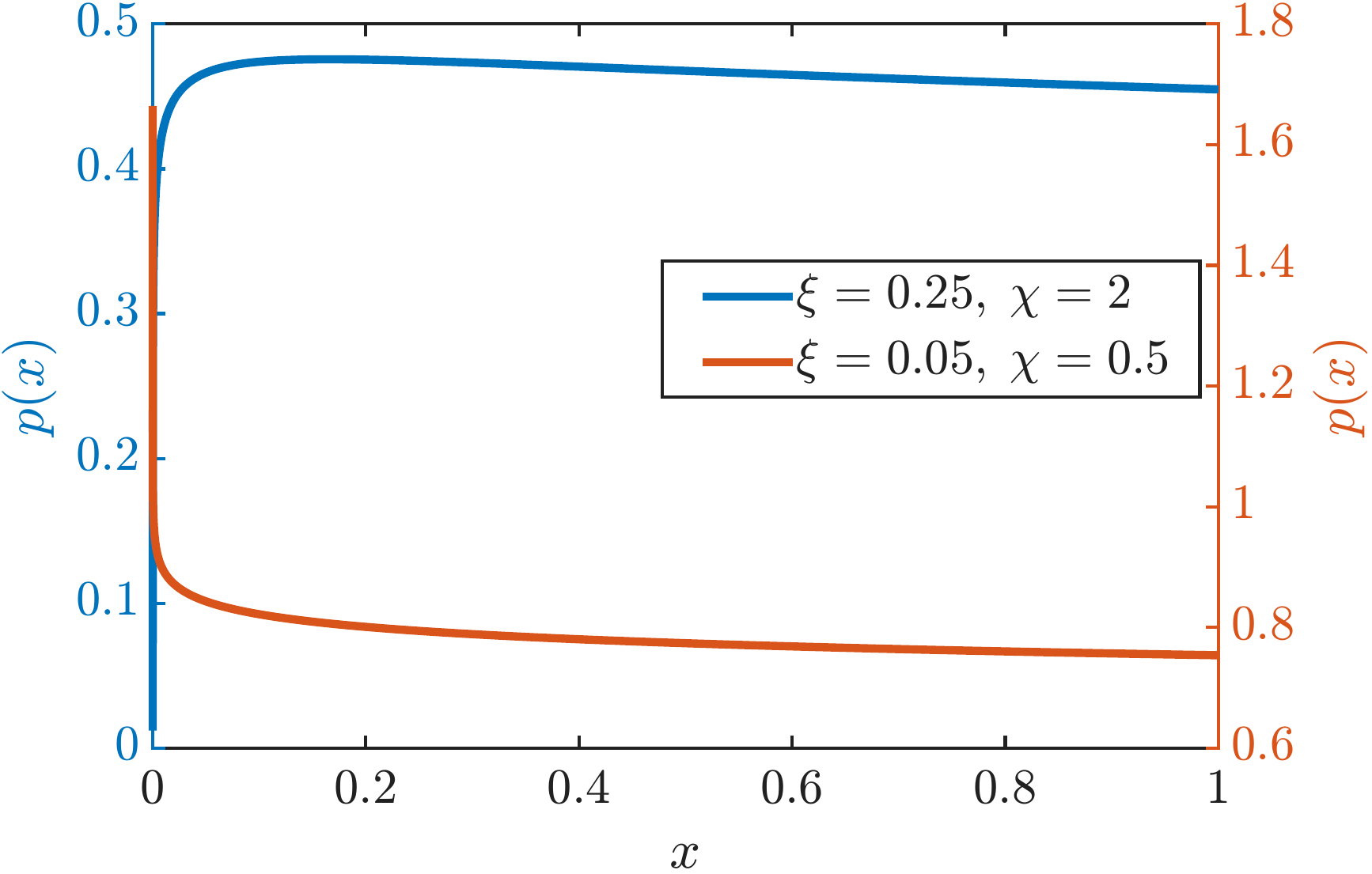}
    \caption{Graph of the propensity to interact $p$ defined by \eqref{eq:propensity}, for different values of $\xi$ and $\chi$. The choice of parameters $\xi=0.25$, $\chi=2$ (left scale) corresponds to the presence of opinion leaders ($p \to 0$ as $x \to 0$), while the choice $\xi=0.05$, $\chi=0.5$ (right scale) corresponds to their absence.} \label{fig:propensity to interact}
    \end{figure} 

    \begin{remark}
        The choice \eqref{eq:fat-tailed graphon} for the graphon $\mathcal{B}$ satisfies the integrability condition \eqref{eq:integrability condition} under a reasonable and realistic assumption. Indeed, provided that $f_J^\init \in L^1(\Omega\times\mathcal{I})$, $J \in \mathcal{C}$, in order to have $\int_{\Omega \times \mathcal{I}} y^{-\xi} f_J(t,y,w_*) \dd y \dd w_* = +\infty$ for some $J \in \mathcal{C}$ and $t>0$, one must have $f_J \sim y^{-\varsigma}$ at $y = 0$ with $\varsigma + \xi > 1$, meaning that the number of agents increases as the connectivity increases. This is clearly unrealistic since highly connected individuals should be a minority. Moreover, recall that (see Remark \ref{rmk1}) it suffices to require that $\int_{\Omega \times \mathcal{I}} y^{-\xi} f_J(0,y,w_*) \dd y \dd w_* < +\infty$ for any $J \in \mathcal{C}$. 
    \end{remark}

\subsection{Derivation of a simplified model} \label{sec:derivation_simplified}

    Instead of considering both the graphon $\mathcal{B}$ and the interaction function $P$, we can derive a simpler model depending solely on the propensity to interact $p$, since it incorporates information from both quantities. The idea is to remove the graphon $\mathcal{B}$ from \eqref{eq:weak form opinion} (and the subsequent calculations) and to replace the microscopic interactions \eqref{eq:microscopic interactions} with 
    \begin{align*}
    \begin{aligned}
        w' &= w + G(w,w_*)\tilde{P}(x)\tilde{P}(y)\lambda (w_*-w) + D(w) \eta_J, \\[2mm] 
        w_*' &= w_* + G(w_*,w)\tilde{P}(y)\tilde{P}(x) \lambda (w-w_*) + D(w_*) \eta_{J'},
    \end{aligned} 
    \end{align*}
    where we have defined $\tilde{P}(x)=g(p(x))$, based on an increasing function $g: \R_+ \to [0,1]$ such that $g(0)=0$. For example, one can consider
    \begin{equation} \label{eq:g_P}
        g(p(x)) = \frac{p(x)}{a+p(x)}, 
    \end{equation}
    for some $a > 0$. Note that this simplification is reasonable only if there are no opinion leaders, since the latter would be incapable of influencing other agents (indeed, according to Section \ref{propensity}, opinion leaders would satisfy $\tilde{P} \simeq 0$). 

    Following Sections \ref{kin_op} and \ref{sub3}, we can derive the following Fokker--Planck equation to describe the evolution of $f_J$, $J \in \mathcal{C}$,
    \begin{equation} \label{eq:opinion operator Q simplified}
    \begin{split}
        \partial_t f_J&=\frac{1}{\tau}\left(\lambda \partial_w \left(\tilde{P}(x)\sum_{J' \in \mathcal{C}} \tilde{\mathcal{K}}[f_{J'}] f_J\right)+\frac{\sigma_J^2}{2} \partial^2_w \left(D^2(w) f_J\right) \right) \\[2mm]
        & = \frac{1}{\tau}\tilde{Q}_J(\mathbf{f},\mathbf{f}), \quad t\in \R_+,\ x \in \Omega,\ w \in \mathcal{I},
    \end{split}
    \end{equation}
    where we have defined, for any $J \in \mathcal{C}$,
    \begin{equation}
        \tilde{\mathcal{K}}[f_J](t,x,w) = \int_{\Omega \times \mathcal{I}} \tilde{P}(y) G(w,w_*) (w-w_*) f_J(t,y,w_*) \dd y \dd w_*, \quad t \in \R_+,\; x \in \Omega,\; w \in \mathcal{I}. 
    \end{equation}
    Once again, we have to complete the Fokker--Planck equation \eqref{eq:opinion operator Q simplified} with no-flux boundary conditions of the form
    \begin{align*}
    \left\{
    \begin{aligned}
        & \left. D^2(w) f_J(t,x,w) \right|_{w=\pm 1} = 0, \\[2mm] 
        & \left. \lambda \tilde{P}(x) \displaystyle \sum_{J' \in \mathcal{C}} \tilde{\mathcal{K}}[f_{J'}](t,x,w)  f_J(t,x,w)+\frac{\sigma_J^2}{2} \partial_w \left(D^2(w) f_J(t,x,w)\right) \right|_{w=\pm 1}=0, 
    \end{aligned}
    \right.
    \end{align*}
    holding for any $t \in \R_+$ and $x \in \Omega$. Finally, notice that model \eqref{eq:opinion operator Q simplified} preserves the average opinion of the population over time, because obviously $\tilde{P}(x)\tilde{P}(y)=\tilde{P}(y)\tilde{P}(x)$. 
    
    By defining $\tilde{\mathbf{Q}}(\mathbf{f},\mathbf{f}) = (\tilde{Q}_J(\mathbf{f}, \mathbf{f}))_{J\in\mathcal{C}}$ componentwise with the reduced operators \eqref{eq:opinion operator Q simplified}, the simplified version of the full SIR kinetic system \eqref{eq:vectorial model} now reads 
    \begin{equation} \label{eq:vectorial model simplified}
        \partial_t \mathbf{f} = \mathbf{E}(\mathbf{f}, \mathbf{f}) + \frac{1}{\tau} \tilde{\mathbf{Q}}(\mathbf{f},\mathbf{f}), \quad t \in \R_+,\ x \in \Omega,\ w \in \mathcal{I}, 
    \end{equation}
    for which all analytical properties derived in Section \ref{section2} clearly still hold. 

\subsection{Equilibria, opinion polarization, and consensus formation} \label{polarization}

    Following and further expanding Section \ref{section_local_eq}, we can compute steady state of the Fokker--Planck equation \eqref{eq:opinion operator Q simplified} assuming $G \equiv 1$ and $D(w)=\sqrt{1-w^2}$. In order to do so, let us define, for $t \in \R_+$, the quantities 
    \begin{equation} \label{eq:weighted density and mean}
    \begin{split}
        \rho_{\tilde{P}}(t) & = \sum_{J \in \mathcal{C}} \int_{\Omega \times \mathcal{I}}  \tilde{P}(x) f_J(t,x,w) \dd x \dd w, \\[4mm]  
        m_{\tilde{P}}(t) & = \frac{1}{\rho_{\tilde{P}}(t)} \sum_{J \in \mathcal{C}} \int_{\Omega \times \mathcal{I}}  \tilde{P}(x) w f_J(t,x,w) \dd x \dd w,
     \end{split}
    \end{equation} 
    and point out that only $\rho_{\tilde{P}}$ is conserved over time. The local equilibrium states $B_J = B_J(t,x,w)$, $J \in \mathcal{C}$, of equations \eqref{eq:opinion operator Q simplified} must satisfy, for any $t \in \R_+$, $x \in \Omega$ and $w \in \mathcal{I}$, 
    \begin{equation}
        \lambda \tilde{P}(x) \rho_{\tilde{P}} \left(w-m_{\tilde{P}}(t)\right)B_J(t,x,w) + \frac{\sigma_J^2}{2} \partial_w \left( (1-w^2) B_J(t,x,w)  \right) = 0, 
    \end{equation}
    Therefore, after developing the derivative with respect to $w$, we find that the $B_J$, $J \in \mathcal{C}$ are given by the beta distributions 
    \begin{equation} \label{eq:equilibria opinion FP}
        B_J(t,x,w)= c_J^B(t,x) (1+w)^{-1+\frac{ \tilde{P}(x) \rho_{\tilde{P}}}{\nu_J} (1 + m_{\tilde{P}}(t))} (1-w)^{-1+\frac{\tilde{P}(x) \rho_{\tilde{P}}}{\nu_J} (1 - m_{\tilde{P}}(t))}, \quad t \in \R_+,\; x \in \Omega,\; w \in \mathcal{I},
    \end{equation}
    where each $c_J^B(t,x) > 0$ is a suitable normalization function such that $\int_{\Omega \times \mathcal{I}} B_J(t,x,w) \dd x \dd w = 1$ (assuming that $\tilde{Q}_J$ is initially applied to distributions $f_J^\init$, $J \in \mathcal{C}$, of unitary mass). This function is uniquely determined by the initial conditions, since the operators $\tilde{Q}_J$ are mass preserving and the agents' position on the graphon does not change over time. In particular, it is easy to check that
     \begin{equation*}
         c_J^B(t,x) = \frac{2^{1-2\frac{\tilde{P}(x)}{\nu_J} \rho_{\tilde{P}}}}{B\left(\frac{\tilde{P}(x) \rho_{\tilde{P}}}{\nu_J} (1 + m_{\tilde{P}}(t)), \frac{\tilde{P}(x) \rho_{\tilde{P}}}{\nu_J} (1 - m_{\tilde{P}}(t))\right)} \int_{\mathcal{I}} f_J^\init(x,w) \dd w, \quad t \in \R_+,\; x \in \Omega,
     \end{equation*}
     where $B$ denotes the standard Beta function. Note that the equilibrium \eqref{eq:equilibria opinion FP} is well defined as long as $|m_{\tilde{P}}(t)| < 1$, i.e., as long as not all agents have opinion $w=1$ or $w=-1$. This is a reasonable assumption since the latter cases would correspond to a trivial equilibrium given by a Dirac delta distribution. 

     \begin{remark} \label{rem:moment compatibility}
         We observe that the local equilibria \eqref{eq:equilibria opinion FP} are consistent with the graphon-weighted moments \eqref{eq:weighted density and mean}. Indeed, from the definition of $c_J^B(t,x)$ and the conservation of $\rho_{\tilde{P}}$ we infer the mass compatibility
         \begin{equation*}
              \sum_{J \in \mathcal{C}} \int_{\Omega \times \mathcal{I}} \tilde{P}(x) B_J(t,x,w) \dd x \dd w = \sum_{J \in \mathcal{C}} \int_{\Omega \times \mathcal{I}} \tilde{P}(x) f_J^\init(x,w) \dd x \dd w = \rho_{\tilde{P}}(0) = \rho_{\tilde{P}}(t).
         \end{equation*}
         Moreover, simple computations show that
         \begin{equation*}
             \int_{\mathcal{I}} w B_J(t,x,w) \dd w = m_{\tilde{P}}(t) \int_{\mathcal{I}} f_J^\init(x,w) \dd w,
         \end{equation*}
         which in turn allows us to prove the momentum compatibility
         \begin{equation*}
             \frac{1}{\rho_{\tilde{P}}} \sum_{J \in \mathcal{C}} \int_{\Omega \times \mathcal{I}} \tilde{P}(x) w B_J(t,x,w) \dd x \dd w = \frac{m_{\tilde{P}}(t)}{\rho_{\tilde{P}}} \sum_{J \in \mathcal{C}} \int_{\Omega \times \mathcal{I}} \tilde{P}(x) f_J^\init(x,w) \dd x \dd w = m_{\tilde{P}}(t),
         \end{equation*}
         thanks to the conservation of $\rho_{\tilde{P}}$.
     \end{remark}

    The equilibria \eqref{eq:equilibria opinion FP} can describe two different scenarios: 
    \begin{enumerate}
        \item None of the exponents of the distribution are negative, therefore $B_J(t,x,\pm 1)=0$ and, fixed $x$, the maximum of $B_J$ is obtained for $w \in (-1,1)$. This situation corresponds to a consensus formation \cite{DurFraWolZan} due to the compromise process of the binary interactions \eqref{eq:microscopic interactions}, where a full consensus would be associated with Dirac delta distributions. \\[-2mm]
        \item At least one of the exponents is negative, hence the distribution diverges. This situation corresponds to a polarization of opinions \cite{Zan} and is commonly related to a self-thinking process stronger than the compromise propensity. 
    \end{enumerate}
    Clearly, the formation of (partial) consensus is preferable over opinion polarization since one would like to avoid the formation of extreme opinions. For example, in \cite{Zan} it is shown that extreme opinions regarding a disease can trigger an increasing spread of infection in the society. Since the quantity $\tilde{P}(x)$ appears at the exponents of \eqref{eq:equilibria opinion FP}, at equilibrium, agents that are inclined to interact ($\tilde{P}(x) \simeq 1$) can be characterized by consensus formation, while at the same time, agents that are not inclined to interact ($\tilde{P}(x) \ll 1$) experience opinion polarization. This is related to the fact that compromise dynamics are always stronger among connected individuals. Moreover, since $|m_{\tilde{P}}| < \rho_{\tilde{P}} \leq 1$, one easily checks that
    \begin{equation*}
       \min \left\{\rho_{\tilde{P}} - m_{\tilde{P}}, \rho_{\tilde{P}} + m_{\tilde{P}} \right\}\in (0,1],
    \end{equation*}
    and since $\tilde{P}(x) \in (0,1]$, we need small values of $\nu_J$ in order to obtain consensus formation. However, as illustrated in Figure \ref{f1}, $\nu_J<1$ is not a sufficient condition to prevent opinion polarization. Recall that $\nu_J<1$ means that the compromise dynamics are stronger than the self-thinking ones, and we see again the importance of the compromise processes for consensus formation. Finally, since $\nu_J$ can vary with $J \in \mathcal{C}$, some agents with position $x$ can be characterized by consensus formation if they belong to a certain compartment, while by opinion polarization if they belong to another compartment. This interesting dynamics highlights the influence of the environment on self-thinking. Indeed, if we assume that infected agents gain a clearer idea of how the disease develops, one could take $\sigma_R^2 \leq \sigma_I^2 \leq \sigma_S^2$, implying that opinion polarization is more likely to occur among susceptible agents. 

    \begin{remark}
        These results align with the analysis performed in \cite{BonBor}, where the first and the second author studied a kinetic model for opinion dynamics driven by the social activity of individuals. There it was shown that socially inactive agents tend to be characterized by opinion polarization, while socially active ones are more prone to reach a consensus, highlighting the importance of social interactions to prevent the formation of extreme opinions. 
    \end{remark}

    \begin{figure}[h!]
    \centering
    \begin{subfigure}{0.48\textwidth}
        \centering
        \includegraphics[width=\textwidth]{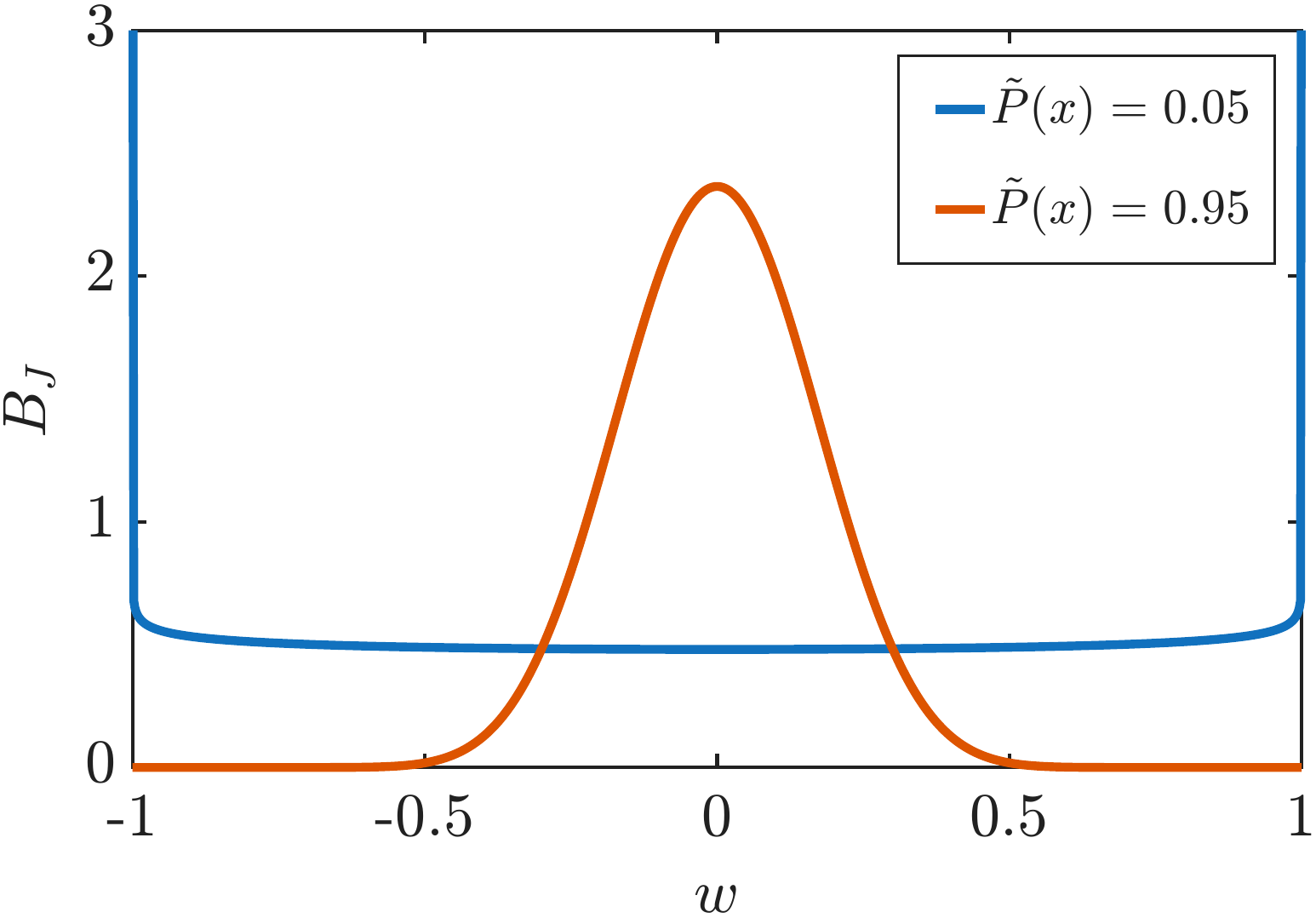}
        \caption{} \label{f1_a}
    \end{subfigure}
    \hspace{5mm}
    \begin{subfigure}{0.475\textwidth}
        \centering
        \includegraphics[width=\textwidth]{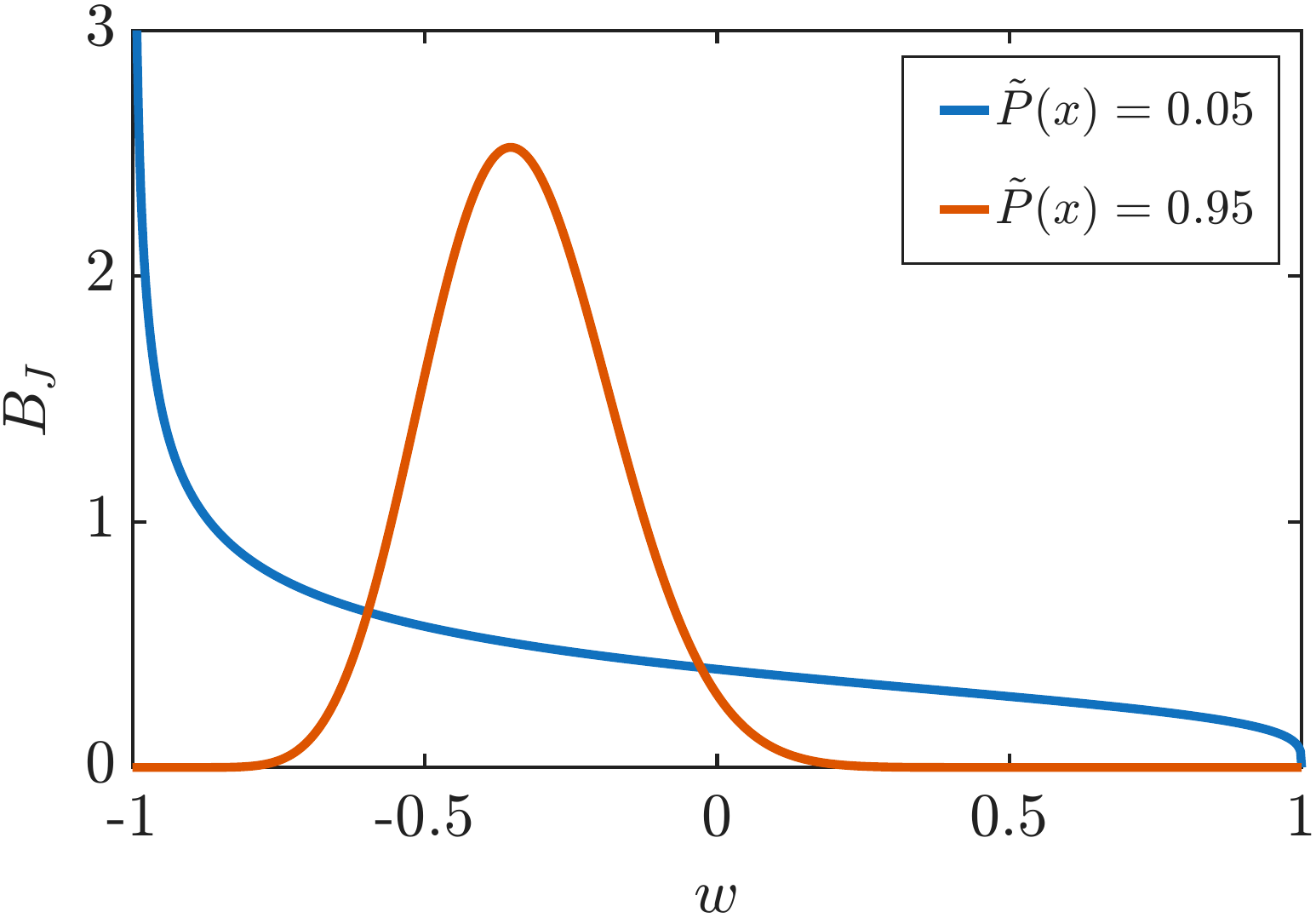}
        \caption{} \label{f1_b}
    \end{subfigure}
    \caption{Beta distributions \eqref{eq:equilibria opinion FP} for different values of $\tilde{P}(x)$. Agents that are not inclined to interact ($\tilde{P}(x)=0.05$) are characterized by opinion polarization, while agents that are inclined to do so ($\tilde{P}(x)=0.95$) experience consensus formation. We have chosen $m_{\tilde{P}}=0$ in (A) and $m_{\tilde{P}}=-0.2$ in (B), while the other parameters are given by $\rho_{\tilde{P}}=0.6$ and $\nu_J=0.032$. Moreover, the function $c_J(t,x)$ is such that the areas subtended by the graphs are equal to $1$.} \label{f1}
    \end{figure}
     
    Consider now the simplified SIR kinetic model \eqref{eq:vectorial model simplified}. We call \emph{local equilibrium} $f_J^\eq = f_J^\eq(t,x,w)$ any distribution that reached the steady state \eqref{eq:equilibria opinion FP} of the Fokker--Planck equation \eqref{eq:opinion operator Q simplified}. These local equilibrium distributions $f_J^\eq$, $J \in \mathcal{C}$, have the structure 
    \begin{equation} \label{eq:local equilibria opinion FP-SIR}
    \begin{split}
        f_J^\eq(t,x,w) &= c_J^\eq(t,x) (1+w)^{-1+\frac{\tilde{P}(x) \rho_{\tilde{P}}}{\nu_J} (1 + m_{\tilde{P}}(t))} (1-w)^{-1+\frac{\tilde{P}(x) \rho_{\tilde{P}}}{\nu_J} (1 - m_{\tilde{P}}(t))} \\[2mm]
        &= \frac{c_J^\eq(t,x)}{c_J^B(t,x)} B_J(t,x,w), \quad t \in \R_+,\; x \in \Omega,\; w \in \mathcal{I},
    \end{split} 
    \end{equation}
    where $c_J^\eq(t,x) \geq 0$ is a suitable normalization function such that $\int_{\Omega \times \mathcal{I}} f_J^\eq(t,x,w) \dd x \dd w = \rho_J(t)$ for any $t \in \R_+$. Notice that in general $c_J^\eq(t,x) \not\equiv \rho_J(t) c_J^B(t,x)$ since the local incidence rate \eqref{eq:function beta_T} depends on $w$. This implies that, for example, some positions on the graphon could increase the chances of contracting the disease because the evolution of the opinion depends on such positions. Hence, unlike $c_J^B$, the function $c_J^\eq$ cannot in general be derived from the initial conditions and its evolution over time can be complex. Going further, the \emph{global equilibrium} $f_J^\infty = f_J^\infty(x,w)$ of system \eqref{eq:vectorial model simplified} must satisfy $\rho_I^\infty =0$, and it is thus given by
    \begin{equation} \label{eq:global equilibria opinion FP-SIR}
    \begin{split}
        f_S^\infty(x,w) = \frac{c_S^\infty(x)}{c_S^{B,\infty}(x)} B_S^\infty(x,w), \quad f_I^\infty(x,w) = 0, \quad
        f_R^\infty(x,w) &= \frac{c_R^\infty(x)}{c_R^{B,\infty}(x)} B_R^\infty(x,w), \quad x \in \Omega,\; w \in \mathcal{I},
    \end{split}
    \end{equation} 
    where we have defined
    \begin{equation} \label{eq:Beta equilibria}
        B_J^\infty(x,w) = c_J^{B,\infty}(x) (1+w)^{-1+\frac{\tilde{P}(x) \rho_{\tilde{P}}}{\nu_J} (1 + m_{\tilde{P}}^\infty)} (1-w)^{-1+\frac{\tilde{P}(x) \rho_{\tilde{P}}}{\nu_J} (1 - m_{\tilde{P}}^\infty)}, \quad x \in \Omega,\; w \in \mathcal{I},
    \end{equation}
    each $c_J^\infty(x)$ and $c_J^{B,\infty}(x)$ respectively denote the quantities $c_J^\eq(t,x)$ and $c_J^B(t,x)$ computed at the corresponding final distributions of \eqref{eq:local equilibria opinion FP-SIR} and \eqref{eq:equilibria opinion FP}, and similarly $m_{\tilde{P}}^\infty$ corresponds to the quantity $m_{\tilde{P}}(t)$ computed at the limit distributions of \eqref{eq:equilibria opinion FP}. 

\subsection{Macroscopic dynamics at equilibrium} \label{sec:sec:macro}

    A particular situation occurs when we suppose that the opinion exchanges are much faster than the evolution of the epidemic, which happens when $\tau \ll 1$. In this case, one expects the distribution functions $f_J$ to quickly reach the corresponding steady state \eqref{eq:local equilibria opinion FP-SIR}. Therefore, in the limit $\tau \to 0$ we can formally replace $\mathbf{f}$ with $\mathbf{f}^\eq = (f_J^\eq)_{J \in \mathcal{C}}$ in the kinetic SIR system \eqref{eq:vectorial model simplified}, to recover the equation
    \begin{equation*}
        \partial_t \mathbf{f}^\eq = \mathbf{E}(\mathbf{f}^\eq, \mathbf{f}^\eq), \quad t \in \R_+,\; x \in \Omega,\; w \in \mathcal{I}.
    \end{equation*}
    Integrating the latter over $x \in \Omega$ and $w \in \mathcal{I}$ and recalling Remark \ref{rem:moment compatibility} on the compatibility of moments for $f_J^\eq$, we thus deduce that the $\rho_J$, $J \in \mathcal{C}$, solve the kinetic-controlled macroscopic SIR-like system
    \begin{equation} \label{eq:controlled SIR}
    \left\{
    \begin{aligned}
        & \frac{\dd}{\dd t} \rho_S = - \beta (1-m_{\tilde{P}})^2 \rho_S \rho_I,  \\[2mm] 
        & \frac{\dd}{\dd t} \rho_I = \beta (1-m_{\tilde{P}})^2 \rho_S \rho_I - \gamma \rho_I, \\[2mm] 
        & \frac{\dd}{\dd t} \rho_R = \gamma \rho_I,
    \end{aligned}
    \right.
    \end{equation}
    with effective reproduction number
    \begin{equation*}
        \mathcal{R}_{\textnormal{eff}}^{\tilde{P}}(t) = \frac{\beta}{\gamma} (1-m_{\tilde{P}}(t))^2 \rho_S(t). 
    \end{equation*}
    Therefore, at local equilibrium the dynamics of the mass densities are driven by the behavior of the graphon-weighted momentum $m_{\tilde{P}}$ (rather than the compartmental means $m_S$ and $m_I$), showing that the model is appropriate for setting up control strategies on the epidemic by suitably modifying the structure of the interaction function $\tilde{P}$ through the graphon $\mathcal{B}$ itself. 

    Notice however that system \eqref{eq:controlled SIR} is still not closed and one is lead to derive an infinite hierarchy of evolution equations for the higher order weighted moments
    \begin{equation*}
    \begin{split}
        \rho_{\tilde{P}^n}(t) & = \sum_{J \in \mathcal{C}} \int_{\Omega \times \mathcal{I}}  \tilde{P}^n(x) f_J(t,x,w) \dd x \dd w, \\[4mm]
        m_{\tilde{P}^n}(t) & = \frac{1}{\rho_{\tilde{P}}(t)} \sum_{J \in \mathcal{C}} \int_{\Omega \times \mathcal{I}}  \tilde{P}^n(x) w f_J(t,x,w) \dd x \dd w,
     \end{split}
    \end{equation*}
    defined for any $n \in \mathbb{N}^*$, where each $\rho_{\tilde{P}^n}$ is in fact conserved over time. Multiplying system \eqref{eq:vectorial model simplified} by $\tilde{P}^n(x) w$, summing over $J \in \mathcal{C}$ and integrating over $x \in \Omega$ and $w \in \mathcal{I}$, we can thus determine the hierarchy of equations
    \begin{equation} \label{eq:hierarchy of moments}
        \frac{\dd}{\dd t} m_{\tilde{P}^n} = -\frac{\lambda}{\tau} \rho_{\tilde{P}^{n+1}} (m_{\tilde{P}^{n+1}} - m_{\tilde{P}}),
    \end{equation}
    where we have used two successive integration by parts to get rid of the diffusion term in the Fokker--Planck operator, while the epidemiological operators naturally cancels out when summing over $J \in \mathcal{C}$ (recall also that we are still working with $G \equiv 1$ and $D(w) = \sqrt{1-w^2}$).
    
    System \eqref{eq:hierarchy of moments} can be written in simple matrix form as
    \begin{equation*}
        \frac{\dd}{\dd t} M_{\tilde{P}} = A_{\tilde{P}} M_{\tilde{P}},
    \end{equation*}
    with $M_{\tilde{P}} = (m_{\tilde{P}}, m_{\tilde{P}^2}, m_{\tilde{P}^3},\ldots)^\intercal$ and
    \begin{equation*}
        A_{\tilde{P}} = \frac{\lambda}{\tau} \left( \begin{array}{ccccc}
            \rho_{\tilde{P}^2} & -\rho_{\tilde{P}^2} & 0 & 0 & \ldots \\[1mm]
            \rho_{\tilde{P}^3} & 0 & -\rho_{\tilde{P}^3} & 0 & \ldots \\[1mm]
            \rho_{\tilde{P}^4} & 0 & 0 & -\rho_{\tilde{P}^4} & \ldots \\[1mm]
            \vdots & \vdots & \vdots & \ddots & \ddots
        \end{array} \right).
    \end{equation*}
    Assuming to consider a solution $\mathbf{f}$ of the kinetic SIR system \eqref{eq:vectorial model simplified} such that, for all $J \in \mathcal{C}$, $f_J \geq 0$ and $f_J \in L^\infty(\R_+, L^1(\Omega \times \mathcal{I}))$, it then follows by standard semigroup theory that the above evolution equation is well-posed in the space of bounded sequences $\ell^\infty$. Indeed, it is easy to check that $\|M_{\tilde{P}}(t)\|_{\ell^\infty} \leq 1$ for any $t \in \R_+$ and $\|A_{\tilde{P}}\|_{\ell^\infty} = 2 \frac{\lambda}{\tau} \sup_{n \geq 2} \rho_{\tilde{P}^n} = 2 \frac{\lambda}{\tau} \rho_{\tilde{P}^2}$ (the second equality comes from the fact that $\tilde{P}(x) \leq 1$), hence $A_{\tilde{P}}$ defines a linear bounded operator on $\ell^\infty$. Additionally, if we suppose that $\rho_{\tilde{P}^n} > 0$ for every $n \geq 1$, we have that that $\ker(A_{\tilde{P}}) = \Span\big((1, 1, 1,\ldots)^\intercal\big)$ and the dynamics of the hierarchy \eqref{eq:hierarchy of moments} identifies a unique nonzero macroscopic equilibrium state given by $M_{\tilde{P}}^\infty = m_{\tilde{P}}^\infty(1, 1, 1,\ldots)^\intercal$, where the value of $m_{\tilde{P}}^\infty \in [-1,1]$ is determined by the initial conditions $f_J^\init$, $J \in \mathcal{C}$, and defines the global kinetic equilibria \eqref{eq:Beta equilibria}. In particular, in the instantaneous relaxation limit $\tau \to 0$ each $m_{\tilde{P}^n}$ is driven toward $m_{\tilde{P}}$ and system \eqref{eq:hierarchy of moments} formally reaches the global consensus equilibrium state $M_{\tilde{P}}^\infty$.

\subsection{Trends to equilibrium} \label{sec:trends}

    We conclude this section with an analysis of the asymptotic behavior of solutions to the compartmental kinetic model \eqref{eq:vectorial model simplified} assuming $\alpha=0,1$ in \eqref{eq:function beta_T}, $G \equiv 1$, and $D(w) = \sqrt{1-w^2}$. Additionally, we shall only consider initial distributions $f_J^\init$, $J \in \mathcal{C}$, that are independent of $x$, since in this case the quantity $m_{\tilde{P}}$ \eqref{eq:weighted density and mean} is conserved over time (recall that $\rho_{\tilde{P}}$ is always conserved over time), as proven in the following lemma.

    \begin{lemma} \label{lem:weight}
        Suppose that $G \equiv 1$ and that the initial distributions $f_J^\init$, $J \in \mathcal{C}$, are independent of $x$, namely $f_J^\init(x,w) = f_J^\init(w)$ for all $J \in \mathcal{C}$. If the weighted average opinion $m_{\tilde{P}}$ defined by \eqref{eq:weighted density and mean} is analytical, then it is conserved over time. 
    \end{lemma}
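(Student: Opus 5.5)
The proof uses analyticity in time. We show that every time derivative of $m_{\tilde{P}}$ vanishes at $t=0$; analyticity then forces $m_{\tilde{P}}(t)\equiv m_{\tilde{P}}(0)$. The derivatives of $m_{\tilde{P}}$ come from the moment hierarchy \eqref{eq:hierarchy of moments}, which was derived for $G\equiv 1$ and $D(w)=\sqrt{1-w^2}$. In that derivation the epidemiological operator cancels after summing over $J\in\mathcal{C}$, and the diffusion term disappears after two integrations by parts with the no-flux conditions. For $n=1$ it gives
\begin{equation*}
\frac{\dd}{\dd t} m_{\tilde{P}} = -\frac{\lambda}{\tau}\rho_{\tilde{P}^2}\big(m_{\tilde{P}^2}-m_{\tilde{P}}\big),
\end{equation*}
and for general $n$ the equation for $m_{\tilde{P}^n}$ involves $m_{\tilde{P}^{n+1}}$ and $m_{\tilde{P}}$. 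Throughout, the $\rho_{\tilde{P}^n}$ are conserved.

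The key observation concerns $x$-independent data. If $f_J^\init(x,w)=f_J^\init(w)$, the integrals factorise:
\begin{equation*}
\rho_{\tilde{P}}(0)\,m_{\tilde{P}^n}(0)=\Big(\int_\Omega\tilde{P}^n(x)\dd x\Big)\sum_{J\in\mathcal{C}}\int_{\mathcal{I}} w f_J^\init(w)\dd w,
\end{equation*}
with the analogous factorisation $\rho_{\tilde{P}^n}=\big(\int_\Omega\tilde{P}^n\dd x\big)\sum_{J}\int_{\mathcal{I}} f_J^\init\dd w$. We must compare this with the normalisation of $m_{\tilde{P}^n}$ in the paper, which divides by $\rho_{\tilde{P}}$ rather than by $\rho_{\tilde{P}^n}$. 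The quantity the hierarchy actually needs is the difference $m_{\tilde{P}^{n+1}}-m_{\tilde{P}}$ multiplied by $\rho_{\tilde{P}^{n+1}}$. To handle this cleanly, we rewrite the hierarchy in terms of the unnormalised moments
\begin{equation*}
\mu_n(t)=\sum_{J\in\mathcal{C}}\int_{\Omega\times\mathcal{I}}\tilde{P}^n(x)\,w\,f_J\dd x\dd w,
\end{equation*}
recomputing $\frac{\dd}{\dd t}\mu_n$ directly from the equation. This gives
\begin{equation*}
\frac{\dd}{\dd t}\mu_n=-\frac{\lambda}{\tau}\Big(\mu_{n+1}\rho_{\tilde{P}}-\rho_{\tilde{P}^{n+1}}\mu_1\Big),
\end{equation*}
where the total mass $\sum_J\rho_J=1$ enters through $\tilde{\mathcal{K}}$. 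At $t=0$ the factorisation gives $\mu_k(0)=\pi_k\,\mu^\init$, with $\pi_k=\int_\Omega\tilde{P}^k\dd x$ and $\mu^\init=\sum_J\int_{\mathcal{I}} w f_J^\init\dd w$. Likewise $\rho_{\tilde{P}^k}=\pi_k$. Hence the bracket becomes
\begin{equation*}
\mu_{n+1}(0)\rho_{\tilde{P}}-\rho_{\tilde{P}^{n+1}}\mu_1(0)=\pi_{n+1}\mu^\init\pi_1-\pi_{n+1}\pi_1\mu^\init=0,
\end{equation*}
using unit total mass. This settles the first derivative.

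For higher derivatives we argue by induction on $k$, proving that $\partial_t^k\mu_n(0)=0$ for all $n\ge1$ and all $k\ge1$. Differentiating the $\mu_n$ equation $k-1$ more times expresses $\partial_t^k\mu_n$ as a linear combination of $\partial_t^{k-1}\mu_{n+1}$ and $\partial_t^{k-1}\mu_1$, because the coefficients $\rho_{\tilde{P}^k}$ are constant. For $k\ge2$ both terms vanish at $t=0$ by the induction hypothesis; the base case $k=1$ is the computation above. In particular $\partial_t^k\mu_1(0)=0$ for all $k$, so $m_{\tilde{P}}=\mu_1/\rho_{\tilde{P}}$ is constant, since $\rho_{\tilde{P}}$ is conserved and $m_{\tilde{P}}$ is analytic by assumption.

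The main obstacle is bookkeeping: the normalisation of $m_{\tilde{P}^n}$ must be consistent with the hierarchy \eqref{eq:hierarchy of moments}, and the mass factor must be exactly the one produced by $\tilde{\mathcal{K}}$. Differentiating in time under the integral and the needed boundary terms are justified by the same regularity assumptions already used to derive \eqref{eq:hierarchy of moments}.
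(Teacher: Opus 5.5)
Your argument is correct. The outer strategy is the paper's: show that every time derivative of $m_{\tilde P}$ vanishes at $t=0$ and invoke analyticity, with the first derivative vanishing because $x$-independent data factorise. What differs is the higher-order step. The paper differentiates the integral expression for $\frac{\dd}{\dd t}m_{\tilde P}$ directly, expands with the Leibniz rule, and argues that every resulting term contains a pair of distributions whose antisymmetric $(w-\bar w)$-integral cancels. It writes this out only for the second derivative and then asserts it for all orders. You instead observe that for $G\equiv 1$ one has $\sum_{J'}\tilde{\mathcal K}[f_{J'}]=\rho_{\tilde P}\,w-\mu_1$, independent of $x$. Hence the unnormalised moments satisfy the closed, constant-coefficient system $\frac{\dd}{\dd t}\mu_n=-\frac{\lambda}{\tau}\big(\rho_{\tilde P}\mu_{n+1}-\rho_{\tilde P^{n+1}}\mu_1\big)$. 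An induction on the derivative order, quantified over all $n$ at once, then turns the paper's pairing heuristic into a complete proof. The same system also gives $\mu_n\in C^\infty$ by bootstrapping, so your repeated differentiation is legitimate.

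Your route buys more than rigour. The sequence $\mu_n(t)\equiv\pi_n\mu^{\init}$ is a stationary solution of the system. For nonnegative solutions of unit mass, $|\mu_n|\le 1$ and $\rho_{\tilde P^{k}}\le 1$, so a Gr\"onwall estimate on $\sup_n|\mu_n(t)-\pi_n\mu^{\init}|$ shows it is the only bounded solution. The lemma therefore holds without the analyticity hypothesis. The paper's route never needs to identify the closed moment system, but its all-orders claim is only sketched.

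Two small corrections.
\begin{itemize}
\item Delete the display in your first paragraph. With the paper's normalisation of $m_{\tilde P^n}$ (division by $\rho_{\tilde P}$), your own $\mu_n$-equation gives $\frac{\dd}{\dd t}m_{\tilde P}=-\frac{\lambda}{\tau}\big(\rho_{\tilde P}m_{\tilde P^2}-\rho_{\tilde P^2}m_{\tilde P}\big)$. The quoted formula would instead give $-\frac{\lambda}{\tau}\pi_2\big(\frac{\pi_2}{\pi_1}-1\big)\mu^{\init}\neq 0$ at $t=0$ whenever $\mu^{\init}\neq 0$ and $\pi_2<\pi_1$, which contradicts the lemma. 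Recomputing from the equation was therefore essential, not cosmetic.
\item What enters through $\tilde{\mathcal K}$ is $\rho_{\tilde P}$, not the total mass. The total mass appears only in $\rho_{\tilde P^k}=\pi_k$ and cancels in the bracket anyway.
\end{itemize}
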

    \begin{proof}
        If we show that for all $n \in \mathbb{N}^*$, 
        \begin{equation*}
            \frac{\dd^n}{\dd t^n}  \left. m_{\tilde{P}} \right|_{t=0} = 0, 
        \end{equation*}
        then the result will follow from the assumption that $m_{\tilde{P}}$ is analytical. For $n = 1$ we have 
        \begin{equation*}
        \begin{split}
            \frac{\dd}{\dd t} \left. m_{\tilde{P}} \right|_{t=0} &= - \frac{1}{\rho_{\tilde{P}}} \left. \sum_{J \in \mathcal{C}} \int_{\Omega \times \mathcal{I}}  \tilde{P}^2(x) \lambda \sum_{J' \in \mathcal{C}} \tilde{\mathcal{K}}[f_{J'}](t,x,w) f_J(t,x,w) \dd x \dd w \right|_{t=0} \\[2mm] 
            & = - \frac{1}{\rho_{\tilde{P}}} \left. \sum_{J \in \mathcal{C}} \sum_{J' \in \mathcal{C}} \lambda \int_{\Omega \times \mathcal{I}} \tilde{P}^2(x) \int_{\Omega \times \mathcal{I}} \tilde{P}(y) (w-w_*) f_{J'}(t,y,w_*) f_J(t,x,w) \dd y \dd w_*\dd x \dd w \right|_{t=0} \\[2mm] 
            &= - \frac{\lambda}{\rho_{\tilde{P}}} \int_{\Omega^2} \tilde{P}^2(x) \tilde{P}(y) \underbrace{\left( \sum_{J \in \mathcal{C}} \sum_{J' \in \mathcal{C}} \int_{\mathcal{I}^2} (w-w_*) f_{J'}^\init(w_*) f_J^\init(w) \dd w_* \dd w \right)}_{= 0} \dd x \dd y \\ 
            & = 0, 
        \end{split}
        \end{equation*}
        using the hypothesis on the initial distributions. Note that the epidemiological operator $\mathbf{E}$ in \eqref{eq:vectorial model simplified} can be neglected since its contributions cancel out by summing over $J \in \mathcal{C}$. The strategy to prove that also the higher order derivatives cancel out follows the same lines, and relies on exhibiting the sum of symmetric integrals. As an example, we show what happens in the case $n = 2$. Due to the Leibniz product rule, the second derivative is a sum of several terms. However, if for example we fix $J' \in \mathcal{C}$ and we differentiate $f_J$ for all $J \in \mathcal{C}$, then the resulting sum is equal to zero. Indeed, 
        \begin{equation*}
        \begin{split}
            -  \sum_{J \in \mathcal{C}} & \left. \lambda \int_{\Omega \times \mathcal{I}}  \tilde{P}^2(x) \int_{\Omega \times \mathcal{I}}  \tilde{P}(y) (w-w_*) f_{J'}(t,y,w_*) \partial_t f_J(t,x,w) \dd y \dd w_*\dd x \dd w \right|_{t=0} \\[2mm] 
            &= \left.\sum_{J \in \mathcal{C}} \lambda^2 \int_{\Omega \times \mathcal{I}} \tilde{P}^2(x) \int_{\Omega \times \mathcal{I}}  \tilde{P}(y) f_{J'}(t,y,w_*) \tilde{P}(x) \sum_{ J'' \in \mathcal{C}} \tilde{\mathcal{K}}[f_{J''}](t,x,w) f_J(t,x,w) \dd y \dd w_*\dd x \dd w  \right|_{t=0} 
            \\[2mm] 
            & = \lambda^2 \int_{\Omega^3 \times \mathcal{I}} \tilde{P}^3(x)  \tilde{P}(y) 
             \tilde{P}(\Bar{x}) \underbrace{\left( \sum_{J \in \mathcal{C}} \sum_{J'' \in \mathcal{C}} \int_{\mathcal{I}^2} (w-\Bar{w}) f_{J''}^\init(\Bar{w}) f_J^\init(w) \dd\Bar{w} \dd w \right)}_{= 0} f_{J'}^\init(w_*) \dd x \dd y \dd\Bar{x} \dd w_* \\ 
            & =0. 
        \end{split}
        \end{equation*}
        Clearly the computations are similar when fixing $J$ instead of $J'$. Therefore, the result is proved for $n = 2$, and by iterating these computations one gets the desired result for all $n \in \mathbb{N}^*$. Indeed, the key of the proof is that there are always two paired distributions such that their sums cancel out, while the remaining distributions (only $f_{J'}^\init$ in the previous case, but for larger $n$ the number will grow) can be separated from the others. 
    \end{proof}

    \begin{remark}
        If the initial distributions are not independent of the position $x \in \Omega$ on the graphon, then Lemma \ref{lem:weight} is not true since the function $\tilde{P}$ could give more weight to some opinions than to others. The additional hypothesis on the initial distributions ensures that no opinion is privileged at time $t = 0$, and the previous lemma shows that this property is in fact preserved over time. 
    \end{remark}
    
    Recall also that (see Section \ref{sec:macro}) when $\alpha = 0, 1$ we recover at the macroscopic level the classical SIR system \eqref{eq:SIR} and the generalized SIR system \eqref{eq:generalized SIR}, respectively. Taking into account these choices and the structure of $\mathbf{f}^\eq$ \eqref{eq:local equilibria opinion FP-SIR}, we are then lead to consider the particular local equilibria
    \begin{equation} \label{eq:f_eq}
        f_J^\eq(t,x,w)=\rho_J(t) B_J^\infty(x,w), \quad t \in \R_+,\; x \in \Omega,\; w \in \mathcal{I}.
    \end{equation}
    In particular, thanks to Lemma \ref{lem:weight} we have that $m_{\tilde{P}}^\infty = m_{\tilde{P}}(t) = m_{\tilde{P}}(0)$. Note that \eqref{eq:f_eq} correspond to the exact local equilibria \eqref{eq:local equilibria opinion FP-SIR} when $\alpha = 0$, since in this case the local incidence rate \eqref{eq:function beta_T} is independent of $w$ and thus $c_J^\eq(t,x) = \rho_J(t) c_J^{B,\infty}(x)$, but the situation instead changes when $\alpha = 1$, since the presence of the epidemiological operator does not ensure tensorization of the normalization function $c_J^\eq$. However, we have seen that the dynamics of system \eqref{eq:generalized SIR} converge to a unique macroscopic equilibrium $\bm{\rho}^\infty$ (as long as $|m_S| \leq 1$ and $m_I \leq 1$), which in turn uniquely identifies the kinetic global equilibria
    \begin{equation} \label{eq:f_inf}
    \begin{split}
        f_S^\infty(x,w) = \rho_S^\infty B_S^\infty(x,w), \quad f_I^\infty(x,w) = 0, \quad f_R^\infty(x,w) = \rho_R^\infty B_R^\infty(x,w), \quad x \in \Omega,\; w \in \mathcal{I}. 
    \end{split}
    \end{equation}   
    for system \eqref{eq:vectorial model simplified}. Heuristically, one expects that any solution $\mathbf{f}$ to \eqref{eq:vectorial model simplified} would approach over time a local equilibrium $\mathbf{f}^\eq$ given by \eqref{eq:local equilibria opinion FP-SIR}, which cancels out the kinetic operator $\tilde{\mathbf{Q}}(\mathbf{f},\mathbf{f})$ and whose moments $\bm{\rho}$ are solutions to the macroscopic SIR-type systems \eqref{eq:SIR} and \eqref{eq:generalized SIR}. These moments should then converge to the unique globally asymptotically stable equilibrium $\bm{\rho}^\infty$ of these SIR equations, meaning that we would eventually observe a long-time relaxation of the form $\mathbf{f} \underset{t \to +\infty}{\longrightarrow} \mathbf{f}^\infty$.
    
    In order to make this argument rigorous, we proceed as in \cite{AurTosZan, BonBor} by studying the temporal evolution of the relative entropy functional between two distributions $g = g(t,x,w)$ and $h = h(t,x,w)$, defined as
    \begin{equation} \label{eq:relative entropy}
        H(g|h)(t) = \int_{\Omega \times \mathcal{I}} g(t,x,w) \log \frac{g(t,x,w)}{h(t,x,w)} \dd x\dd w, \quad t \in \R_+,
    \end{equation}
    proving that it exhibits a decay to zero as time approaches infinity, and concluding with the decay of the $L^1(\Omega \times \mathcal{I})$ norm via Csiszár--Kullback--Pinsker inequality. The idea will be to investigate the relaxation of $\mathbf{f}$ toward the auxiliary local equilibria \eqref{eq:f_eq}, rather than the correct one \eqref{eq:local equilibria opinion FP-SIR}, to avoid dealing with the normalization function $c_J^\eq$ whose temporal evolution is not known. Eventually we will show that these auxiliary local equilibria still converge toward the correct global equilibria \eqref{eq:f_inf}, thanks to the uniqueness of the global macroscopic equilibrium $\bm{\rho}^\infty$. We shall develop these considerations in three successive steps, starting to analyze the relaxation of $f_S$ toward $f_S^\infty$, proceeding with the study of the long-time behavior of $f_I$ toward zero, and finally deducing the convergence of $f_R$ toward $f_R^\infty$ thanks to mass conservation. For this last step, we will need to assume that for any $J \in \mathcal{C}$ it holds $\sigma_J^2 = \sigma^2$ for some $\sigma^2 > 0$.

    Throughout the analysis, we consider a solution $\mathbf{f} \in L^\infty(\R_+, L^1(\Omega \times \mathcal{I})$ to system \eqref{eq:vectorial model simplified} starting from a nonnegative initial datum $f_J^\init = f_J^\init(w)$, $J \in \mathcal{C}$, independent of the position on the graphon (so that Lemma \ref{lem:weight} holds).

    \medskip
    \noindent \textbf{Step 1 -- Relaxation of $f_S$.} Let us fix $g = f_S$ and $h = f_S^\eq$ in \eqref{eq:relative entropy}. We distinguish between the two cases $\alpha = 0,1$ to make the presentation easier to read.

    \smallskip
    \noindent \textit{Case $\alpha = 0$}. The transition rate \eqref{eq:function beta_T} is simply a constant $\beta_T(w,w_*) = \beta$. We look a the time derivative of the relative entropy $H(f_S|f_S^\eq)$n which reads 
    \begin{equation*}
        \frac{\dd}{\dd t} H(f_S|f_S^\eq) = \int_{\Omega \times \mathcal{I}} \left( 1 + \log \frac{f_S}{f_S^\eq} \right) \partial_t f_S \dd x\dd w - \int_{\Omega \times \mathcal{I}} \frac{f_S}{f_S^\eq} \partial_t f_S^\eq \dd x \dd w.
    \end{equation*}
    Now, using the explicit form of $f_S^\eq$ given by \eqref{eq:f_eq}, we see that the second integral reduces to
    \begin{equation*}
        -\int_{\Omega \times \mathcal{I}} \frac{f_S(t,x,w)}{f_S^\eq(t,x,w)} \partial_t f_S^\eq(t,x,w) \dd x \dd w = -\frac{\partial_t \rho_S}{\rho_S} \int_{\Omega \times \mathcal{I}} f_S \dd x\dd w = -\partial_t \rho_S,
    \end{equation*}
    thanks to the definition of $\rho_S$. Using the first equation of the macroscopic SIR system \eqref{eq:SIR}, we thus conclude that
    \begin{equation*}
        -\int_{\Omega \times \mathcal{I}} \frac{f_S(t,x,w)}{f_S^\eq(t,x,w)} \partial_t f_S^\eq(t,x,w) \dd x\dd w = \beta \rho_S \rho_I.
    \end{equation*}
    Going back now to the first integral, since $f_S$ solves the SIR kinetic system \eqref{eq:vectorial model simplified}, we can rewrite it as
    \begin{equation*}
        \begin{split}
            \int_{\Omega \times \mathcal{I}} & \left( 1 + \log \frac{f_S(t,x,w)}{f_S^\eq(t,x,w)} \right) \partial_t f_S(t,x,w) \dd x\dd w \\[4mm]
            & = \int_{\Omega \times \mathcal{I}} \left(1 + \log \frac{f_S}{f_S^\eq} \right) E_S(\mathbf{f},\mathbf{f}) \dd x \dd w + \int_{\Omega \times \mathcal{I}} \left(1 + \log \frac{f_S}{f_S^\eq} \right) \tilde{Q}_S(\mathbf{f},\mathbf{f}) \dd x \dd w \\[4mm]
            & = - \beta \rho_S \rho_I - \beta \rho_I \int_{\Omega \times \mathcal{I}} f_S \log \frac{f_S}{f_S^\eq} \dd x \dd w - I_H(f_S|f_S^\eq), \\[6mm]
            & = - \beta \rho_S \rho_I - \beta \rho_I H(f_S|f_S^\eq) - I_H(f_S|f_S^\eq),
        \end{split}
    \end{equation*}
    where $I_H(f_S|f_S^\eq) \geq 0$ denotes the entropy production functional
    \begin{equation} \label{eq:I_H}
        I_H(f_S|f_S^\eq)(t) = 4 \int_{\Omega \times \mathcal{I}} (1 - w^2) f_S^\eq(t,x,w) \left( \partial_w \sqrt{\frac{f_S(t,x,w)}{f_S^\eq(t,x,w)}} \right)^2 \dd x \dd w, \quad t \in \R_+.
    \end{equation}
    Combining the computations of the two integral terms, we infer the initial estimate
    \begin{equation} \label{eq:estimate of H}
        \frac{\dd}{\dd t} H(f_S|f_S^\eq) \leq - I_H(f_S|f_S^\eq),
    \end{equation}
    since the term depending on $H(f_S|f_S^\eq) \geq 0$ can be neglected.
    
    From the computations performed in \cite{FurPulTerTos1} one can then prove the inequality
    \begin{equation} \label{eq:relation between H and D}
        D_H^2(f_S|f_S^\eq) \leq \frac{1}{2} I_H(f_S|f_S^\eq),
    \end{equation}
    linking the entropy production $I_H$ with the Hellinger distance between the two distributions $f_S$ and $f_S^\eq$, given by
    \begin{equation*}
        D_H(f_S|f_S^\eq)(t)  = \left( \int_{\Omega \times \mathcal{I}} \left( \sqrt{f_S} - \sqrt{f_S^\eq} \right)^2 \dd x \dd w \right)^{\frac{1}{2}}, \quad t \in \R_+.
    \end{equation*}
    In particular, its temporal evolution reads
    \begin{equation*}
        \frac{\dd}{\dd t} D_H^2(f_S|f_S^\eq) = \int_{\Omega \times \mathcal{I}} \left( 1 - \sqrt{\frac{f_S^\eq}{f_S}} \right) \partial_t f_S \dd x \dd w + \int_{\Omega \times \mathcal{I}} \left( 1 - \sqrt{\frac{f_S}{f_S^\eq}} \right) \partial_t f_S^\eq \dd x\dd w.
    \end{equation*}
    Proceeding in a similar way as with the study of $H(f_S|f_S^\eq)$, we can see the second integral is nonpositive since
    \begin{equation*}
        \begin{split}
            \int_{\Omega \times \mathcal{I}} \left( 1 - \sqrt{\frac{f_S(t,x,w)}{f_S^\eq(t,x,w)}} \right) \partial_t f_S^\eq(t,x,w) \dd x \dd w &= \int_{\Omega \times \mathcal{I}} \left( f_S^\eq - \sqrt{f_S f_S^\eq} \right) \frac{\partial_t f_S^\eq}{f_S^\eq} \dd x \dd w \\[2mm]
            & = - \beta \rho_S \rho_I  + \beta \rho_I \int_{\Omega \times \mathcal{I}} \sqrt{f_S f_S^\eq} \dd x \dd w \\[2mm]
            & \leq - \beta \rho_S \rho_I + \beta \rho_I \underbrace{\left( \int_{\Omega \times \mathcal{I}} f_S \dd x \dd w \right)^{\frac{1}{2}}}_{\sqrt{\rho_S}} \ \underbrace{\left( \int_{\Omega \times \mathcal{I}} f_S^\eq \dd x\dd w \right)^{\frac{1}{2}}}_{\sqrt{\rho_S}}  \\ 
            & = 0
        \end{split}
    \end{equation*}
    where we have replaced $\partial_t \rho_S = - \beta \rho_S \rho_I$ and used Cauchy--Schwarz inequality to bound the integral of $\sqrt{f_S f_S^\eq}$. Using the first equation of the simplified SIR kinetic system \eqref{eq:vectorial model simplified} to replace $\partial_t f_S$ inside the first integral and adapting the computations performed in \cite{FurPulTerTos1}, which can be proved to hold (in particular the weighted Chernoff inequality from \cite[Theorem 3.3]{FurPulTerTos1}) for two general distributions having the same density, we can then infer the estimate
    \begin{equation} \label{eq:estimate of D}
        \begin{split}
            \frac{\dd}{\dd t}  D_H^2(f_S|f_S^\eq)  \leq &  \int_{\Omega \times \mathcal{I}} \left( 1 - \sqrt{\frac{f_S^\eq}{f_S}} \right) \partial_t f_S \dd x\dd w \\[2mm]
             = & \int_{\Omega \times \mathcal{I}} \left( 1 - \sqrt{\frac{f_S^\eq}{f_S}} \right) E_S(\mathbf{f},\mathbf{f}) \dd x \dd w + \int_{\Omega \times \mathcal{I}} \left( 1 - \sqrt{\frac{f_S^\eq}{f_S}} \right) \tilde{Q}_S(\mathbf{f},\mathbf{f}) \dd x \dd w \\[2mm]
             = & - \beta \rho_S \rho_I + \beta \rho_I \int_{\Omega \times \mathcal{I}} \sqrt{f_S f_S^\eq} \dd x \dd w + \int_{\Omega \times \mathcal{I}} \left( 1 - \sqrt{\frac{f_S^\eq}{f_S}}  \right)  \tilde{Q}_S(\mathbf{f},\mathbf{f}) \dd x \dd w \\[4mm]
             \leq & - I_D(f_S|f_S^\eq),
        \end{split}
    \end{equation}
    where we have used once again Cauchy--Schwarz inequality as in the previous estimate to prove that the first two contributions cancel out, and we have introduced the functional $I_D(f_S|f_S^\eq) \geq 0$ defining the entropy production of the squared Hellinger distance, which is given by
    \begin{equation} \label{eq:I_D}
        I_D(f_S|f_S^\eq)(t) = 8 \int_{\Omega \times \mathcal{I}} (1 - w^2) f_S^\eq(t,x,w) \left( \partial_w \sqrt[4]{\frac{f_S(t,x,w)}{f_S^\eq(t,x,w)}} \right)^2 \dd x \dd w, \quad t \in \R_+.
    \end{equation}
    
    Now, integrating equation \eqref{eq:estimate of H} over $t \in \R_+$, we infer that $I_H(f_S|f_S^\eq)$ belongs to $L^1(\R_+)$ since $H(f_S|f_S^\eq) \geq 0$ and thus
    \begin{equation*}
        \int_{\R_+} I_H(f_S|f_S^\eq)(t) \dd t \leq H(f_S|f_S^\eq)(0) < +\infty,
    \end{equation*}
    Therefore, from the inequality \eqref{eq:relation between H and D} relating the squared Hellinger distance with the entropy production $I_H$ and from the entropy-entropy production estimate \eqref{eq:estimate of D} on $D_H^2$, we respectively deduce that the Hellinger distance is integrable in time and non-increasing. This allows us to conclude that $D_H^2(f_S|f_S^\eq) = \smallO(1/t)$ when $t \to +\infty$, recovering the following $L^1(\Omega \times \mathcal{I})$ estimate for the convergence of $f_S$ toward the local equilibrium $f_S^\eq$: 
    \begin{equation*}
        \norm{f_S - f_S^\eq}_{L^1(\Omega \times \mathcal{I})} \leq 2 \sqrt{\rho_S^\init} D(f_S|f_S^\eq) = \smallO(1/\sqrt{t}), \quad t \to +\infty,
    \end{equation*}
    which is obtained from the equality $|a - b| = |\sqrt{a} - \sqrt{b}|(\sqrt{a} + \sqrt{b})$ and from two successive applications of Cauchy--Schwarz inequality, also recalling that $\rho_S \leq \rho_S^\init$ for any $t \in \R_+$.
    
    At last, we can study the relaxation of the local equilibrium $f_S^\eq$ toward the global one $f_S^\infty$ via simple computations of their relative $L^1(\Omega \times \mathcal{I})$ distance. We obtain
    \begin{equation*}
   	 \norm{f_S^\eq - f_S^\infty}_{L^1({\Omega \times \mathcal{I}})} = \big| \rho_S - \rho_S^\infty \big| \underbrace{\int_{\Omega \times \mathcal{I}} B_S^\infty(x,w) \dd x \dd w}_{= 1} \underset{t \to +\infty}{\longrightarrow} 0,
    \end{equation*}
    using the fact that $\rho_S$ converges toward $\rho_S^\infty$ in \eqref{eq:SIR}. Therefore, one finally deduces the long-time relaxation to equilibrium $\norm{f_S - f_S^\infty}_{L^1({\Omega \times \mathcal{I}})} \underset{t \to +\infty}{\longrightarrow} 0$.

    \smallskip
    \noindent \textit{Case $\alpha = 1$}. The transition rate \eqref{eq:function beta_T} now writes $\beta_T(w,w_*) = \beta (1-w)(1-w_*)$. We start by noticing that because $f_S, f_I \geq 0$ and $f_S, f_I \in L^\infty(\R_+, L^1(\Omega \times \mathcal{I}))$, all their moments remain bounded and in particular $\rho_S, \rho_I \in [0,1]$ and $m_S, m_I \in [-1,1]$. Therefore, the solutions to the generalized SIR system \eqref{eq:generalized SIR} satisfy the properties detailed at the end of Section \ref{sec:macro}. In particular, it holds that $\rho_I \in L^1(\R_+)$ and there exists a unique asymptotically stable global equilibrium $\bm{\rho}^\infty$.
    
    Recalling the time derivative of the relative entropy $H(f_S|f_S^\eq)$
    \begin{equation*}
        \frac{\dd}{\dd t} H(f_S|f_S^\eq) = \int_{\Omega \times \mathcal{I}} \left( 1 + \log \frac{f_S}{f_S^\eq} \right) \partial_t f_S \dd x\dd w - \int_{\Omega \times \mathcal{I}} \frac{f_S}{f_S^\eq} \partial_t f_S^\eq \dd x \dd w,
    \end{equation*}
    from the expression of $f_S^\eq$ given by \eqref{eq:f_eq} we can compute the second integral as
    \begin{equation*}
        -\int_{\Omega \times \mathcal{I}} \frac{f_S(t,x,w)}{f_S^\eq(t,x,w)} \partial_t f_S^\eq(t,x,w) \dd x \dd w = -\partial_t \rho_S = \beta (1-m_S) (1-m_I) \rho_S \rho_I,
    \end{equation*}
    where we have used the first equation of the macroscopic system \eqref{eq:SIR}. Then, similarly to the previous case $\alpha = 0$, the first integral rewrites
    \begin{equation*}
        \begin{split}
            \int_{\Omega \times \mathcal{I}} & \left( 1 + \log \frac{f_S(t,x,w)}{f_S^\eq(t,x,w)} \right) \partial_t f_S(t,x,w) \dd x\dd w \\[4mm]
            & = \int_{\Omega \times \mathcal{I}} \left(1 + \log \frac{f_S}{f_S^\eq} \right) E_S(\mathbf{f},\mathbf{f}) \dd x \dd w + \int_{\Omega \times \mathcal{I}} \left(1 + \log \frac{f_S}{f_S^\eq} \right) \tilde{Q}_S(\mathbf{f},\mathbf{f}) \dd x \dd w \\[4mm]
            & = -\beta (1-m_S)(1-m_I) \rho_S \rho_I - \beta \rho_I(1-m_I) \int_{\Omega \times \mathcal{I}} (1-w) f_S \log \frac{f_S}{f_S^\eq} \dd x \dd w - I_H(f_S|f_S^\eq),
        \end{split}
    \end{equation*}
    where $I_H(f_S|f_S^\eq) \geq 0$ denotes the same entropy production functional \eqref{eq:I_H}. Now, adding and subtracting $f_S - f_S^\eq$ inside second integral term, we can successively
    \begin{equation*}
        \begin{split}
            \int_{\Omega \times \mathcal{I}} (1-w) f_S(t,x,w) \log \frac{f_S(t,x,w)}{f_S^\eq(t,x,w)} \dd x \dd w &= \int_{\Omega \times \mathcal{I}} (1-w) f_S^\eq \left( \frac{f_S}{f_S^\eq} \log \frac{f_S}{f_S^\eq} - \frac{f_S}{f_S^\eq} + 1 \right) \dd x \dd w \\[2mm]
            & \qquad\qquad\qquad\qquad\qquad\qquad+ \int_{\Omega \times \mathcal{I}} (1-w) (f_S  - f_S^\eq) \dd x \dd w \\[4mm]
            &\geq - \rho_S (m_S - m_{\tilde{P}}^\infty),
        \end{split}
    \end{equation*}
    using that $y \log y - y + 1 \geq 0$ for any $y \in \R_+$, leading us to infer the estimate
    \begin{equation} \label{eq:estimate of H 2}
        \frac{\dd}{\dd t} H(f_S|f_S^\eq) \leq - I_H(f_S|f_S^\eq) + \beta (1-m_I) (m_S - m_{\tilde{P}}^\infty) \rho_S \rho_I,
    \end{equation}
    where we have discarded the other terms because they are nonpositive. Integrating the latter over $t \in \R_+$, we find once again that $I_H(f_S|f_S^\eq)$ belongs to $L^1(\R_+)$ since $H(f_S|f_S^\eq) \geq 0$ and $\rho_I \in L^1(\R_+)$:
    \begin{equation*}
        \int_{\R_+} I_H(f_S|f_S^\eq)(t) \dd t \leq H(f_S|f_S^\eq)(0) + 4\beta \int_{\R_+} \rho_I(t) \dd t < +\infty,
    \end{equation*}
    where we have used that $\rho_S \in [0,1]$ and $m_S, m_{\tilde{P}}^\infty \in [-1,1]$.
    
    From inequality \eqref{eq:relation between H and D} we thus deduce that the Hellinger distance $D^2_H(f_S|f_S^\eq)$ also belongs to $L^1(\R_+)$. The remaining piece consists in studying its temporal evolution, which we recall to be given by
    \begin{equation*}
        \frac{\dd}{\dd t} D_H^2(f_S|f_S^\eq) = \int_{\Omega \times \mathcal{I}} \left( 1 - \sqrt{\frac{f_S^\eq}{f_S}} \right) \partial_t f_S \dd x \dd w + \int_{\Omega \times \mathcal{I}} \left( 1 - \sqrt{\frac{f_S}{f_S^\eq}} \right) \partial_t f_S^\eq \dd x\dd w.
    \end{equation*}
    It is then easy to check in the same way as for $\alpha = 0$ that the second integral is nonpositive
    \begin{equation*}
        \begin{split}
            \int_{\Omega \times \mathcal{I}} & \left( 1 - \sqrt{\frac{f_S(t,x,w)}{f_S^\eq(t,x,w)}} \right) \partial_t f_S^\eq(t,x,w) \dd x \dd w \\[2mm]
            & = - \beta (1-m_S)(1-m_I) \rho_S \rho_I  + \beta (1-m_S)(1-m_I) \rho_I \int_{\Omega \times \mathcal{I}} \sqrt{f_S f_S^\eq} \dd x \dd w \\[2mm]
            & \leq - \beta (1-m_S)(1-m_I) \rho_S \rho_I  + \beta (1-m_S)(1-m_I) \rho_I \underbrace{\left( \int_{\Omega \times \mathcal{I}} f_S \dd x \dd w \right)^{\frac{1}{2}}}_{\sqrt{\rho_S}} \ \underbrace{\left( \int_{\Omega \times \mathcal{I}} f_S^\eq \dd x\dd w \right)^{\frac{1}{2}}}_{\sqrt{\rho_S}}  \\ 
            & = 0.
        \end{split}
    \end{equation*}
    Explicit computations on the first integral instead give \cite{FurPulTerTos1}
    \begin{equation*}
        \begin{split}
            \int_{\Omega \times \mathcal{I}} & \left( 1 - \sqrt{\frac{f_S^\eq(t,x,w)}{f_S(t,x,w)}} \right) \partial_t f_S(t,x,w) \dd x \dd w \\[2mm]
             = & \int_{\Omega \times \mathcal{I}} \left( 1 - \sqrt{\frac{f_S^\eq}{f_S}} \right) E_S(\mathbf{f},\mathbf{f}) \dd x \dd w + \int_{\Omega \times \mathcal{I}} \left( 1 - \sqrt{\frac{f_S^\eq}{f_S}} \right) \tilde{Q}_S(\mathbf{f},\mathbf{f}) \dd x \dd w \\[2mm]
             \leq & -\beta (1-m_S)(1-m_I) \rho_S \rho_I - \beta \rho_I(1-m_I) \int_{\Omega \times \mathcal{I}} (1-w) \sqrt{f_S f_S^\eq} \dd x \dd w - I_D(f_S|f_S^\eq),
        \end{split}
    \end{equation*}
    where the entropy production of the squared Hellinger distance is still defined by \eqref{eq:I_D}. We can then apply Cauchy--Schwarz inequality to bound
    \begin{equation*}
        \int_{\Omega \times \mathcal{I}} (1-w) \sqrt{f_S f_S^\eq} \dd x \dd w \leq \underbrace{\left( \int_{\Omega \times \mathcal{I}} (1-w) f_S \dd x \dd w \right)^{\frac{1}{2}}}_{\sqrt{\rho_S(1-m_S)}} \ \underbrace{\left( \int_{\Omega \times \mathcal{I}} (1-w) f_S^\eq \dd x\dd w \right)^{\frac{1}{2}}}_{\sqrt{\rho_S(1-m_{\tilde{P}}^\infty)}},
    \end{equation*}
    and thus combine the first two terms into
    \begin{equation*}
        -\beta (1-m_S)(1-m_I) \rho_S \rho_I - \beta \rho_I(1-m_I) \int_{\Omega \times \mathcal{I}} (1-w) \sqrt{f_S f_S^\eq} \dd x \dd w \leq \beta (1-m_I) \sqrt{|m_S - m_{\tilde{P}}^\infty|} \rho_S \rho_I,
    \end{equation*}
    using the simple inequality $\sqrt{a} - \sqrt{b} \leq \sqrt{|a-b|}$ applied to $a = 1-m_S$ and $b = 1-m_{\tilde{P}}^\infty$.

    We can finally estimate the time derivative of $D^2(f_S|f_S^\eq)$ as
    \begin{equation*}
        \frac{\dd}{\dd t} D_H^2(f_S|f_S^\eq) \leq -I_D(f_S|f_S^\eq) + \beta (1-m_I) \sqrt{|m_S - m_{\tilde{P}}^\infty|} \rho_S \rho_I < +\infty,
    \end{equation*}
    thanks to the boundedness of the moments and the nonnegativity of $I_D(f_S|f_S^\eq)$.
    
    The Hellinger distance is thus integrable in time and has bounded derivative, so it must vanish asymptotically when $t \to +\infty$. In particular $D_H^2(f_S|f_S^\eq) = \smallO(1/t)$ when $t \to +\infty$, hence $f_S$ converges in $L^1(\Omega \times \mathcal{I})$ toward the auxiliary local equilibrium \eqref{eq:f_eq} as
    $\norm{f_S - f_S^\eq}_{L^1(\Omega \times \mathcal{I})} = \smallO(1/\sqrt{t})$. Because $\rho_S \underset{t \to +\infty}{\longrightarrow} \rho_S^\infty$ in \eqref{eq:generalized SIR}, the auxiliary local equilibrium  relaxes toward the global one \eqref{eq:f_inf} at the same rate, and one concludes once again with the long-time relaxation to equilibrium $\norm{f_S - f_S^\infty}_{L^1({\Omega \times \mathcal{I}})} \underset{t \to +\infty}{\longrightarrow} 0$.

    \medskip  
    \noindent \textbf{Step 2 -- Relaxation of $f_I$.} The study of the long-time behavior of $f_I$ is very simple, since $\rho_I^\infty = 0$ and we would thus want to prove that $f_I$ relaxes to zero in the $L^1({\Omega \times \mathcal{I}})$ norm as $t \to +\infty$. But since the SIR kinetic system \eqref{eq:vectorial model simplified} preserves the mass and the positivity of solutions, one has that $\norm{f_I}_{L^1({\Omega \times \mathcal{I}})} = \rho_I$ which converges to the unique disease-free equilibrium of the corresponding macroscopic SIR system \eqref{eq:SIR}. We thus easily conclude that $\norm{f_I}_{L^1({\Omega \times \mathcal{I}})} \underset{t \to +\infty}{\longrightarrow} 0$.

    \medskip  
    
    \noindent \textbf{Step 3 -- Relaxation of $f_R$.} It remains to show that also the distribution $f_R$ of recovered individuals converges toward its corresponding global equilibrium state $f_R^\infty$, which we shall prove by exploiting the fact that the SIR kinetic model preserves the total mass of the population. Specifically, we introduce the distribution $f(t,x,w)= \sum_{J \in \mathcal{C}} f_J (t,x,w)$ of the whole population. Summing the equations of system \eqref{eq:vectorial model simplified}, since $\sum_{J \in \mathcal{C}} E_J(\mathbf{f},\mathbf{f}) \equiv 0$ and thanks to the particular structure of the operators $\tilde{Q}_J$ with parameters independent of $J \in \mathcal{C}$ (recall that we assumed $\sigma_J^2 = \sigma^2$, $J \in \mathcal{C}$), one finds that the total distribution $f$ solves the Fokker--Planck equation
    \begin{equation*}
        \partial_t f = \frac{1}{\tau} \left( \lambda \rho_{\tilde{P}} \partial_w \left( (w - m_{\tilde{P}}^\infty) f \right) + \frac{\sigma^2}{2} \partial_w^2 \left( (1 - w^2) f \right) \right), \quad t \in \R_+,\; x \in \Omega,\; w \in \mathcal{I}.
    \end{equation*}
    In particular, thanks to mass conservation ensuring that $\sum_{J \in \mathcal{C}} \rho_J = \sum_{J \in \mathcal{C}} \rho_J^\infty = 1$ for all $t \in \R_+$, one expects $f$ to converge toward the global equilibrium state $f^\infty(x,w) = B^\infty(x,w)$ defined by \eqref{eq:Beta equilibria} (recall once again that $\sigma_J^2 = \sigma^2$ for any $J \in \mathcal{C}$, implying that $\nu_J = \nu$ for some $\nu>0$; similarly, for any $x \in \Omega$ we have that $c_J^{B,\infty}(x) = c^{B,\infty}(x)$ for some normalization function $c(x)>0$). Following the exact same computations performed in Step 1 to prove the convergence of $f_S$ toward $f_S^\eq$ (except that now we do not have to deal with the epidemiological operators), one can show that $\norm{f - f^\infty}_{L^1({\Omega \times \mathcal{I}})} \underset{t \to +\infty}{\longrightarrow} 0$.
    
    Therefore, using the definition of $f$ and the fact that $f^\infty(x,w) = \sum_{J \in \mathcal{C}} \rho_J^\infty B_J^\infty(x,w)$ for any $x \in \Omega$ and $w \in \mathcal{I}$, we easily deduce the long-time relaxation of $f_R$ toward $f_R^\infty$ as
    \begin{equation*}
    \begin{split}
    \norm{f_R - f_R^\infty}_{L^1({\Omega \times \mathcal{I}})} & = \norm{f - f_S - f_I - (f^\infty - f_S^\infty)}_{L^1({\Omega \times \mathcal{I}})} \\[4mm]
    & \leq \underbrace{\norm{f_S - f_S^\infty}_{L^1({\Omega \times \mathcal{I}})}}_{\underset{\textrm{from Step 1}}{\underset{t \longrightarrow +\infty}{\longrightarrow} 0}} + \underbrace{\norm{f_I}_{L^1({\Omega \times \mathcal{I}})}}_{\underset{\textrm{from Step 2}}{\underset{t \longrightarrow +\infty}{\longrightarrow} 0}} + \underbrace{\norm{f - f^\infty}_{L^1({\Omega \times \mathcal{I}})}}_{\underset{\textrm{from Step 3}}{\underset{t \longrightarrow +\infty}{\longrightarrow} 0}} \underset{t \longrightarrow +\infty}{\longrightarrow} 0.
    \end{split}
    \end{equation*}
    
    We are therefore ready to prove the following theorem. 

    \begin{theorem} \label{teo:SIR}
        Let $\mathbf{f}$ be a solution to the simplified SIR kinetic model \eqref{eq:vectorial model simplified}, starting from a nonnegative initial datum that is independent of the position on the graphon, namely $f_J^\init = f_J^\init(w)$ for all $J \in \mathcal{C}$. Assume that $G \equiv 1$, $D(w) = \sqrt{1-w^2}$, $\alpha = 0,1$ in \eqref{eq:function beta_T} and, if $\alpha=1$, that $|m_J(t)| \leq 1$, $J = S, I$, for any $t \in \R_+$. Moreover, suppose that $\sigma_J^2 = \sigma^2$, $J \in \mathcal{C}$, for some $\sigma^2 > 0$. Then, for any $J \in \mathcal{C}$, 
        \begin{equation*}
            \norm{f_J - f_J^\infty}_{L^1(\Omega \times \mathcal{I})} = \smallO(1/\sqrt{t}), \quad t \to + \infty,
        \end{equation*}
        where the global equilibria $f_J^\infty$, $J \in \mathcal{C}$, are given by \eqref{eq:f_inf}. 
    \end{theorem}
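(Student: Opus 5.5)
The proof will collect the three steps carried out just above the statement, organised around the triangle inequality
\[
\norm{f_J - f_J^\infty}_{L^1(\Omega\times\mathcal{I})} \leq \norm{f_J - f_J^\eq}_{L^1(\Omega\times\mathcal{I})} + \norm{f_J^\eq - f_J^\infty}_{L^1(\Omega\times\mathcal{I})},
\]
where $f_J^\eq=\rho_J B_J^\infty$ are the auxiliary local equilibria \eqref{eq:f_eq}. Lemma \ref{lem:weight} applies because the data are independent of $x$, so $m_{\tilde P}(t)=m_{\tilde P}^\infty$ and $B_J^\infty$ is time independent. The nonnegativity from Proposition \ref{prop:nonnegativity} and the hypothesis $|m_J|\le 1$ guarantee that $\rho_J\in[0,1]$, $\rho_I\in L^1(\R_+)$, and $\bm\rho\to\bm\rho^\infty$. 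These facts hold for the macroscopic systems \eqref{eq:SIR} and \eqref{eq:generalized SIR}.

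\emph{For $J=S$}, I will use the entropy estimate \eqref{eq:estimate of H} when $\alpha=0$ and \eqref{eq:estimate of H 2} when $\alpha=1$. Integrating either in time gives $I_H\in L^1(\R_+)$. Inequality \eqref{eq:relation between H and D} then gives $D_H^2(f_S|f_S^\eq)\in L^1(\R_+)$. The Hellinger evolution estimate \eqref{eq:estimate of D} and its analogue for $\alpha=1$ show that $D_H^2$ is nonincreasing up to the term $\beta(1-m_I)\sqrt{|m_S-m_{\tilde P}^\infty|}\rho_S\rho_I$, which is integrable. Set $\phi(t)=D_H^2(t)-\int_t^\infty C\rho_I$. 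This $\phi$ is nonincreasing and integrable, so $t\phi(t)\le 2\int_{t/2}^t\phi\to0$, that is, $\phi=o(1/t)$. Csiszár--Kullback-type control by Cauchy--Schwarz then gives $\norm{f_S-f_S^\eq}_{L^1}\le 2\sqrt{\rho_S^\init}\,D_H$.

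\emph{For $J=I$}, nonnegativity gives $\norm{f_I}_{L^1}=\rho_I$. \emph{For $J=R$}, with $\sigma_J=\sigma$, the total distribution $f$ solves a linear Fokker--Planck equation with constant drift center $m_{\tilde P}^\infty$. Repeating the Step 1 argument without reaction terms gives $\norm{f-f^\infty}_{L^1}=o(1/\sqrt t)$. I then conclude by the triangle inequality $\norm{f_R-f_R^\infty}\le\norm{f_S-f_S^\infty}+\norm{f_I}+\norm{f-f^\infty}$.

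The main obstacle is converting qualitative convergence into the stated rate $o(1/\sqrt t)$. This is automatic for the Hellinger parts, but the macroscopic contributions $|\rho_S-\rho_S^\infty|$ and $\rho_I$ also enter. For these I will use the SIR structure. Since $\dot\rho_R=\gamma\rho_I$ and $\rho_I\to0$, eventually $\dot\rho_I\le-c\rho_I$, because $\mathcal{R}_{\rm eff}<1$ in the limit: $\rho_S^\infty$ lies below the threshold, and for $\alpha=1$ I bound $\mathcal{R}_{\rm eff}$ by the weighted means. So $\rho_I$ decays exponentially, and $|\rho_S-\rho_S^\infty|\le\int_t^\infty\beta'\rho_I$ decays exponentially too, which is $o(1/\sqrt t)$. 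For $\alpha=1$ this requires checking $\limsup\mathcal{R}_{\rm eff}<1$. I will first try to deduce it from $\rho_I\in L^1$ together with the integrated relation $\log(\rho_S^\infty/\rho_S^\init)=-\int\beta(1-m_S)(1-m_I)\rho_I$. If that fails, I will settle for showing that the rate is dictated by the Hellinger terms, with the exponential macroscopic decay assumed.
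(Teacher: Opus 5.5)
Your route is the paper's own: entropy and Hellinger control of $f_S$ against $f_S^{\eq}=\rho_S B_S^\infty$, then $\norm{f_I}_{L^1}=\rho_I$, then the linear Fokker--Planck equation for the total $f$, all glued by triangle inequalities. For $\alpha=0$ it closes. There $D_H^2(f_S|f_S^{\eq})$ is genuinely nonincreasing and integrable, and your threshold argument gives exponential decay of $\rho_I$ and of $\rho_S-\rho_S^\infty$. You should say why $\beta\rho_S^\infty<\gamma$ holds strictly: otherwise $\rho_S(t)>\rho_S^\infty\geq\gamma/\beta$ forces $\dot\rho_I>0$ for all $t$ when $\rho_I^{\init}>0$.

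For $\alpha=1$ there is a genuine gap, and it already appears in the Hellinger step, which is not automatic. With your $\phi=D_H^2-C\int_t^\infty\rho_I$ you get $\phi'\leq -I_D+2C\rho_I$, so $\phi$ need not be nonincreasing. The correct choice $\phi=D_H^2+C\int_t^\infty\rho_I$ is nonincreasing, but then
\[
\frac{t}{2}\,D_H^2(t)\leq\int_{t/2}^{t}D_H^2(s)\,\dd s+\frac{Ct}{2}\int_{t/2}^{\infty}\rho_I(s)\,\dd s .
\]
So $D_H^2=o(1/t)$ requires $t\int_{t/2}^\infty\rho_I\to0$. That is a quantitative decay of $\rho_I$, which $\rho_I\in L^1(\R_+)$ does not give.

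That decay is exactly what is missing, and it cannot be avoided. Since $\norm{f_S-f_S^\infty}_{L^1}\geq|\rho_S-\rho_S^\infty|$, the statement itself forces $|\rho_S-\rho_S^\infty|=o(1/\sqrt{t})$.

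Your plan to get $\limsup\mathcal{R}_{\mathrm{eff}}<1$ from $\rho_I\in L^1(\R_+)$ and $\log(\rho_S^\infty/\rho_S^{\init})=-\int_0^\infty\beta(1-m_S)(1-m_I)\rho_I\,\dd t$ cannot work. That identity is just the integrated $\rho_S$-equation. Nothing in the hypotheses controls $m_I=\rho_I^{-1}\int w f_I\,\dd x\,\dd w$, because $L^1$ convergence $f_I\to0$ carries no information on the normalized profile $f_I/\rho_I$.

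Nothing you have excludes $1-\mathcal{R}_{\mathrm{eff}}(t)\sim c/t$. That gives $\rho_I\sim t^{-c\gamma}$, which for $1<c\gamma\leq 3/2$ is integrable while $\sqrt{t}\int_t^\infty\rho_I\not\to0$. In that case $|\rho_S-\rho_S^\infty|$ is not $o(1/\sqrt{t})$.

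Falling back on an assumed exponential decay adds a hypothesis that is not in the statement. Closing the argument would need an analysis of the evolution of $f_I/\rho_I$, driven by the infection source and $\tilde Q_I$. The goal would be to show that $m_I$ converges to a limit with $\frac{\beta}{\gamma}(1-m_{\tilde P}^\infty)(1-m_I^\infty)\rho_S^\infty<1$, or at least to prove a bound such as $\int_0^\infty t\,\rho_I\,\dd t<\infty$. The bound would suffice because $|\dot\rho_I|\leq C\rho_I$, so it also yields $t\rho_I(t)\to0$.

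The paper's proof has the same hole, in three places:
\begin{itemize}
\item it asserts $\norm{f_I}_{L^1}=o(1/t)$ ``since it is integrable'', which is invalid without eventual monotonicity, and $\rho_I$ need not be eventually monotone when $\alpha=1$ (cf.\ the waves of Test 3);
\item it asserts without argument that $f_S^{\eq}$ reaches $f_S^\infty$ ``at the same rate'';
\item for $\alpha=1$ it deduces $D_H^2=o(1/t)$ from integrability plus a bounded derivative.
\end{itemize}
Your diagnosis of the obstacle is sharper than the paper's, but as written neither argument establishes the stated rate when $\alpha=1$.
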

    \begin{proof}
        First of all, thanks to Proposition \ref{prop:nonnegativity} we know that $f_J \geq 0$, $J \in \mathcal{C}$, and thus $\mathbf{f} \in L^\infty(\R_+,L^1(\Omega \times \mathcal{I})$ as a consequence of mass conservation. In particular, both macroscopic SIR systems \eqref{eq:SIR} and \eqref{eq:generalized SIR} uniquely identify the global equilibrium state $\bm{\rho}^\infty$. Moreover, since the initial datum is independent of $x$, Lemma \ref{lem:weight} ensures that the global kinetic equilibria \eqref{eq:f_inf} are well-defined and unique. Therefore, all our previous computations are rigorously justified (note that the hypothesis that $|m_J(t)| \leq 1$ for $J = S, I$ remains crucial in the case when $\alpha = 1$).
        
        Now, we have already proved that $\norm{f_J - f_J^\infty}_{L^1(\Omega \times \mathcal
        I)} \underset{t \longrightarrow +\infty}{\longrightarrow} 0$ for any $J \in \mathcal{C}$. It remains to demonstrate the rates of convergence. The rate for $f_S$ was already deduced in Step 1. Concerning $f_I$, since it is integrable, then $\norm{f_I - f_I^\infty}_{L^1(\Omega \times \mathcal
        I)} = \norm{f_I}_{L^1(\Omega \times \mathcal
        I)} = \smallO(1/t)$ when $t \to + \infty$. As shown in \cite{FurPulTerTos1}, $\norm{f - f^\infty}_{L^1(\Omega \times \mathcal
        I)} = \smallO(1/\sqrt{t})$ when $t \to + \infty$, hence also $\norm{f_R - f_R^\infty}_{L^1(\Omega \times \mathcal
        I)} = \smallO(1/\sqrt{t})$ when $t \to + \infty$, and the thesis follows. 
    \end{proof} 

\section{Perception of the disease's dangerousness} \label{section4}

    \noindent Until now, we have analyzed how the agents' opinion about the willingness to adopt or not a protective behavior influences the evolution of an epidemic. In this section we want to investigate how seriously the disease is perceived by the population. Even if this dynamic does not directly affect the evolution of the epidemic, it would be interesting to determine the relationship between the global equilibria derived in Section \ref{polarization} and the population's opinion about the disease itself. Our goal is to fix a value $\hat{w} \in [-1,1]$, representing a hypothesized danger level of the disease, and understand how any products related to this hypothesis spread inside the population. Products may be any piece of information reaching the agents: news, videos, advertisements, and social posts, that individuals possibly share depending on how much their current opinion is aligned with them. Agents with an opinion $w \geq \hat{w}$ will contribute to the spread of these products, since they consider the disease at least as dangerous as $\hat{w}$. On the other hand, agents with opinion $w < \hat{w}$ consider the disease less dangerous than $\hat{w}$, so they will not contribute to spread these products. For example, by fixing $\hat{w} = 0$ we could measure the popularity of the ideas that judge the disease to be dangerous. We will assume that the evolution of these products depends on the propensity to interact $p$ of each agent, even if we do not necessarily have to consider the simplified version of our model (recall that $p$ can in fact be defined even for the original model). 

\subsection{Kinetic modeling of popularity spread}

    We quantify the popularity of a product by means of a variable $v \in \R_+$, whose evolution depends on the interaction with the opinion $w$ and the propensity to interact $p$ of the agents that the product reaches. Inspired by \cite{TosTosZan}, we consider the following microscopic update rule for $v$: 
    \begin{equation} \label{eq:microscopic interactions popularity}
        v' = v - \mu v + \theta p(x) \mathbb{I}_{[\hat{w},1]}(w) + D_p(v) \omega, 
    \end{equation}
    where $\mu \in (0,1)$ represents the natural decay rate of the product's popularity when it is not shared by an agent, $\theta > 0$ measures the incidence of $p$ on the popularity, $\mathbb{I}_A$ denotes the indicator function of the set $A \subseteq \mathcal{I}$, and $\omega$ is a random variable with zero mean and finite variance $\zeta^2>0$, modeling a stochastic fluctuation of the popularity that is modulated by a popularity-dependent strength $D_p(v) \geq 0$. According to \eqref{eq:microscopic interactions popularity}, the increase in popularity depends on whether an individual decides to share the product, which happens when their opinion about the disease's dangerousness is equal or larger than the hypothesized danger level $\hat{w}$. In such a case, the increase in popularity is proportional to the individual's propensity to interact. In order to guarantee that $v' \geq 0$ it suffices to impose that $D_p(v) \omega \geq (\mu - 1) v$, which is satisfied as soon as $\omega \geq \mu-1$ and $D_p(v) \leq v$. Note that this latter inequality implies $D_p(0) = 0$. We may assume 
    \begin{equation}
        D_p(v) = v, \quad v \in \R_+,
    \end{equation}
    since, as we shall see, with this particular choice the equilibria are coherent with empirical evidence. Notice however that other choices are possible \cite{TosTosZan}. Obviously we also have to assume that $p$ is bounded, although this is not a restrictive assumption since, if needed, it suffices to consider a modified version $p_\Lambda$ of $p$ such that for any $x \in \Omega$, $p_\Lambda(x) = \min \{p(x), \Lambda\}$ for some $\Lambda \gg 1$.

    Given a distribution function $h = h(t,v)$, depending on time $t \in \R_+$ and popularity $v \in \R_+$, and such that $h(t,v) \dd v$ represents the number of products with popularity in $[v, v+\dd v]$ at time $t$, the evolution of $h$ should depend on the opinion distribution inside the population, namely we have to consider the coupling 
    \begin{align} \label{eq:FP-SIR and popularity}
    \left\{
    \begin{aligned}
         & \partial_t \mathbf{f} = \mathbf{E}(\mathbf{f},\mathbf{f}) + \frac{1}{\tau} \mathbf{Q}(\mathbf{f},\mathbf{f}), \quad t\in \R_+,\; x \in \Omega,\; w \in \mathcal{I}, \\[2mm] 
         & \partial_t h  = \frac{1}{\tau_p} Q_p (h, \mathbf{f}), \qquad \qquad \ t \in \R_+,\; v \in \R_+, 
    \end{aligned}
    \right.
    \end{align}
    where $\tau_p > 0$ represents a possible additional timescale at which the popularity of products evolve. Note that in \eqref{eq:FP-SIR and popularity} the collisional operator $\mathbf{Q}$ can be replaced by its simplified version $\tilde{\mathbf{Q}}$ introduced in Section \ref{connectivity}. On the other hand, the collisional operator $Q_p$ can be derived as follows. Proceeding as in Section \ref{kin_op}, the second equation from system \eqref{eq:FP-SIR and popularity} reads in weak form
    \begin{equation} \label{eq:weak form popularity}
        \frac{\dd}{\dd t} \int_{\R_+} \psi(v) h(t,v) \dd v = \frac{1}{\tau_p}\sum_{J \in \mathcal{C}} \int_{\Omega \times \mathcal{I} \times \R_+} \left<\psi(v')-\psi(v)\right> h(t,v) f_J(t,x,w) \dd x \dd w \dd v, 
    \end{equation}
    where $\psi=\psi(v)$ is a smooth test function. Note that the choice $\psi(v)=1$ implies the conservation of mass for the distribution $h$, hence we can assume that $\int_{\R_+} h(t,v) \dd v = 1$. Introducing the scaling
    \begin{equation} \label{scaling_h}
        \mu \mapsto \epsilon\mu, \qquad \theta \mapsto \epsilon\theta, \qquad \zeta^2 \mapsto \epsilon\zeta^2, \qquad \tau_p \mapsto \epsilon\tau_p,
    \end{equation}
    which depends on a small parameter $0 < \epsilon \ll 1$, we can follow the computations of Section \ref{sub3} to formally derive, in the limit $\epsilon \to 0$, a Fokker--Planck-type equation describing the evolution of the distribution $h$, namely 
    \begin{equation} \label{eq:popularity FP}
    \begin{split}
        \partial_t h &= \frac{1}{\tau_p} \left( \partial_v \big( \left(\mu v - \theta \mathcal{F}[f] \right)h \big) + \frac{\zeta^2}{2} \partial^2_v \left(D_p^2(v) h\right)\right) \\[2mm] 
        & = \frac{1}{\tau_p} Q_p(h, f), \quad t \in \R_+,\; v \in \R_+, 
    \end{split}
    \end{equation}
    where we have defined the nonlocal quantity
    \begin{equation*} 
        \mathcal{F}[f](t) = \int_{\Omega \times \mathcal{I}} p(x) \mathbb{I}_{[\hat{w},1]}(w) f(t,x,w) \dd x \dd w, \quad t \in \R_+,
    \end{equation*}
    with $f(t,x,w) = \sum_{J \in \mathcal{C}} f_J (t,x,w)$. Model \eqref{eq:popularity FP} is then completed with the following no-flux boundary conditions, holding for any $t \in \R_+$: 
    \begin{align} \label{eq:BC popularity FP}
    \left\{
    \begin{aligned}
        & \left.D_p^2(v) h(t,v)\right|_{v=0} = 0, \\[2mm] 
        & \left.(\mu v - \theta \mathcal{F}[f](t)) h(t,v) + \frac{\zeta^2}{2} \partial_v \left(D_p^2(v) h(t,v)\right) \right|_{v=0} = 0, \\[2mm]
        & \displaystyle\lim_{v \to +\infty} v^{\kappa -1} D_p^2(v) h(t,v) = 0, \\[2mm]
        & \displaystyle\lim_{v \to +\infty} v^\kappa \left( (\mu v - \theta \mathcal{F}[f](t)) h(t,v) + \frac{\zeta^2}{2} \partial_v \left(D_p^2(v) h(t,v)\right) \right) = 0,  
    \end{aligned}
    \right.
    \end{align}
    where $\kappa \geq 1$ must be sufficiently large so that the distribution $h$ has finite moments of interest. Note that the first boundary condition is related to $D_p(0) = 0$. It is important to notice that the evolution of $h$ depends only on the opinion operator $\mathbf{Q}$. Moreover, one can bound $\mathcal{F}[f]$ as
    \begin{equation} \label{eq:estimate of F}
        0 \leq \mathcal{F}[f](t) \leq \int_{\Omega \times \mathcal{I}} p(x) f(t,x,w) \dd x \dd w,
    \end{equation}
    for any $t \in \R_+$, where the last integral is constant since the position on the graphon $x$ does not vary over time (similarly to the total weighted density $\rho_{\tilde{P}}$ defined by \eqref{eq:weighted density and mean}). 

    Regarding the macroscopic moments, we introduce the average popularity 
    \begin{equation}
        m_p(t) = \int_{\R_+} v h(t,v) \dd v, \quad t \in \R_+,
    \end{equation}
    whose evolution is obtained by taking $\psi(v)=v$ in \eqref{eq:weak form popularity}, to get
    \begin{equation} \label{eq:evolution of m_p}
        \frac{\dd}{\dd t}m_p = - \frac{\mu}{\tau_p} m_p + \frac{\theta}{\tau_p} \mathcal{F}[f], \quad t \in \R_+. 
    \end{equation}
    Similarly, if we set $D_p(v)=v$ and we assume that $\kappa \geq 2$, from \eqref{eq:popularity FP} it follows that the energy of the distribution $h$ 
    \begin{equation}
        e_p(t) = \int_{\R_+} v^2 h(t,v) \dd v, \quad t \in \R_+,
    \end{equation}
    evolves according to
    \begin{equation} \label{eq:evolution of e_p}
        \frac{\dd}{\dd t}e_p = \frac{\zeta^2 -2 \mu}{\tau_p} e_p + \frac{2 \theta}{\tau_p}  \mathcal{F}[f] m_p, \quad t \in \R_+.
    \end{equation}
    These considerations will be useful later on. 

\subsection{Local and global equilibria} \label{sub_popul_eq}

    Let us set $D_p(v)=v$ and assume that $\mathcal{F}[f](t) > 0$ for any $t \in \R_+$. Then, the \emph{local equilibrium} $h^\eq$ of the Fokker--Planck equation \eqref{eq:popularity FP} is the (unique) solution of unitary mass to 
    \begin{equation}
        \left(\mu v - \theta \mathcal{F}[f](t) \right)h(t,v) + \frac{\zeta^2}{2} \partial_v \left(D_p^2(v) h(t,v)\right) = 0,
    \end{equation}
    for any $t \in \R_+$ and $v \in \R_+$, which is given by the inverse gamma distribution
    \begin{equation} \label{eq:local equilibria popularity FP}
        h^\eq (t,v) = \frac{\left(2 \theta \mathcal{F}[f](t)\right)^{1 + \frac{2 \mu}{\zeta^2}}}{\zeta^{2 + \frac{4 \mu}{\zeta^2}} \Gamma\left(1 + \frac{2 \mu}{\zeta^2}\right)} \; v^{-2 \left(1+\frac{\mu}{\zeta^2}\right)} \exp \left(-\frac{2 \theta \mathcal{F}[f](t)}{\zeta^2}\frac{1}{v}\right), \quad t \in \R_+,\; v \in \R_+,
    \end{equation}
    where $\Gamma$ denotes the gamma function. 
    Note that $\int_{\R_+} h^\eq(t,v) \dd v = 1$.
    Assuming now that $f$ reached a global equilibrium $f^\infty$ and defining 
    \begin{equation*} 
        \mathcal{F}^\infty = \int_{\Omega \times \mathcal{I}} p(x) \mathbb{I}_{[\hat{w},1]}(w) f^\infty(x,w) \dd x \dd w, 
    \end{equation*}
    the \emph{global equilibrium} of equation \eqref{eq:popularity FP} reads 
    \begin{equation} \label{eq:global equilibria popularity FP}
        h^\infty(v) = \frac{\left(2 \theta \mathcal{F}^\infty\right)^{1 + \frac{2 \mu}{\zeta^2}}}{\zeta^{2 + \frac{4 \mu}{\zeta^2}} \Gamma\left(1 + \frac{2 \mu}{\zeta^2}\right)} \; v^{-2 \left(1+\frac{\mu}{\zeta^2}\right)} \exp \left(-\frac{2 \theta \mathcal{F}^\infty}{\zeta^2}\frac{1}{v}\right), \quad v \in \R_+,
    \end{equation}
    which has also unit mass. We point out that $h^\infty$ is admissible as long as $\mathcal{F}^\infty > 0$, i.e., as long as $\hat{w}<1$, otherwise it would not have a finite mass. Notice also that such global equilibrium is a fat-tailed inverse gamma distribution, since $h^\infty(v) \sim v^{-2(1+\mu/\zeta^2)}$ when $v \to +\infty$. More precisely, this distribution exhibits a Pareto tail \cite{GuaTos} which indicates that products reaching very high popularity levels may be rare in general but not that improbable. The mean of the distribution $h^\infty$ is $m_p^\infty = \theta \mathcal{F}^\infty / \mu$, in accordance with equation \eqref{eq:evolution of m_p} (indeed, $h^\infty$ satisfies the boundary conditions \eqref{eq:BC popularity FP} for $\kappa=1$, which corresponds to having a finite mean). Concerning the variance of $h^\infty$, due to its behavior when $v \to +\infty$, it is finite if and only if $\zeta^2 < 2 \mu$, which implies that the global equilibrium \eqref{eq:global equilibria popularity FP} satisfies the boundary conditions \eqref{eq:BC popularity FP} for $\kappa=2$. Note that this is coherent with equation \eqref{eq:evolution of e_p}, which in turn tells us that in this case the energy of $h^\infty$ is given by $e_p^\infty = 2 \theta^2 {\mathcal{F}^\infty}^2/(\mu(2\mu-\zeta^2))$. The relevance of the distribution $h^\infty$ is supported by empirical evidence on the diffusion of products (in non-epidemiological contexts). For instance, in \cite{Red} the statistical distribution of the popularity of scientific articles, measured in terms of their citations, features a power law-type tail with Pareto exponent close to 3. Moreover, in \cite{SinRag} it is shown that the popularity of movies in the United States, measured in terms of their box office gross income, exhibits completely analogous statistical properties. Figure \ref{f3} shows different graphs of the distribution $h^\infty$ defined by \eqref{eq:global equilibria popularity FP}, for fixed values of the parameters $\mu$, $\zeta$, and $\theta$ (corresponding to a finite variance) and increasing values of the quantity $\mathcal{F}^\infty$, which corresponds to the amount of popularity pumped into the system by social network users sharing the products. In particular, from the asymptotic analysis conducted in Section \ref{connectivity} we infer that $\mathcal{F}^\infty$ is well defined and can be computed explicitly at least in the case when $\mathbf{f}$ evolves according to the simplified SIR kinetic model \eqref{eq:vectorial model simplified} and the transition rate $\beta_T$ is constant, since in this case we proved the rigorous convergence of $\mathbf{f}$ toward $\mathbf{f}^\infty$ (for which we provided the explicit expression \eqref{eq:f_inf}) and thus also that of $f = \sum_{J \in \mathcal{C}} f_J$ toward $f^\infty = \sum_{J \in \mathcal{C}} f_J^\infty$. The lowest values of $\mathcal{F}^\infty$ (and hence of the average popularity) correspond to the occurrence of opinion polarization only at $w=-1$, like shown in Figure \ref{f1}(B). In fact, in this case the agents tend to have opinions $w$ lower than some $\hat{w} \in (-1,1)$ and the function $p$ tends to be small, since we showed that high values of $p$ facilitate consensus formation (see Section \ref{polarization}). These two factors contribute to reduce both values of $\mathcal{F}[f](t)$ and $\mathcal{F}^\infty$. On the other hand, the hightest values of $\mathcal{F}^\infty$ do not necessarily correspond to the occurrence of opinion polarization at $w=1$. Indeed, even if in the latter situation there are many agents willing to share the products, their influence on other individuals could be very low since, once again, opinion polarization is facilitated by small values of the function $p$. Therefore, these two effects might compensate each other and prevent $\mathcal{F}^\infty$ to reach its supremum. Such considerations suggest that it is generally better to avoid the formation of extreme opinions. In particular, the popularity of products considering the disease as dangerous is favored by a population of agents that willing to adopt a protective behavior, but are also inclined to interact with each other.
    
    \begin{figure}[h!]
    \centering
        \includegraphics[width=0.5\textwidth]{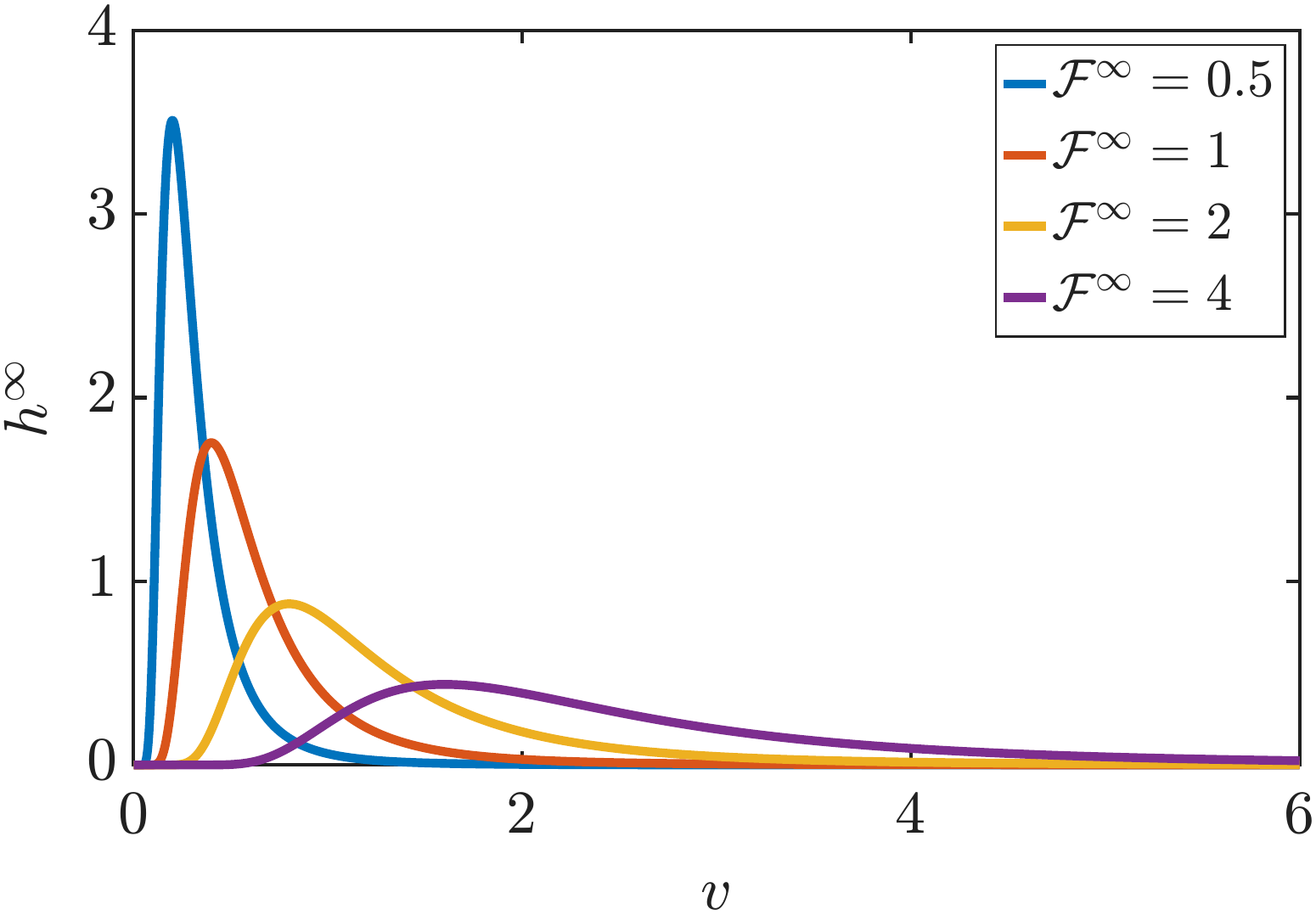}
    \caption{Inverse gamma distributions \eqref{eq:global equilibria popularity FP} for different values of $\mathcal{F}^\infty$. We have set $\mu=1.5$, $\zeta=1$, and $\theta=1$, corresponding to an equilibrium 
    $h^\infty$ with finite variance.} \label{f3}
    \end{figure}

    Finally, we remark that it is possible to replace the function $\mathbb{I}_{[\hat{w},1]}(w)$ in \eqref{eq:microscopic interactions popularity} with its opposite $\mathbb{I}_{[-1,\hat{w}]}(w)$. In this case, one could measure how not seriously the disease is perceived by the population and we could determine analogous results to the ones obtained in this section. 

\subsection{Analytical properties} 

    Following the strategies presented in Section \ref{section2}, it is easy to prove that the Fokker--Planck equation \eqref{eq:popularity FP} preserves the nonnegativity of the initial datum and, since it is mass preserving, also its $L^1(\R_+)$ regularity. Moreover, given the function $\mathcal{F}[f]$, it is possible to demonstrate the uniqueness of its solution. Concerning the $L^q(\R_+)$ regularity, $q \geq 2$, the following proposition holds. 

    \begin{proposition} [Regularity] \label{prop:regularity popularity}
        Let $q \in [2,+\infty)$ and let $h$ be a solution to the Fokker--Planck equation \eqref{eq:popularity FP}. Assume that $\partial_v^2 D_p^2 \in L^\infty(\R_+)$ and that the following additional no-flux boundary conditions
        \begin{align} \label{eq:additional BC for regularity popularity FP}
        \left\{
        \begin{aligned}
            & \left. D_p^2(v) \partial_v h(t,v) h^{q-1}(t,v) \right|_{v=0} = 0, \\[2mm] 
            & \left. h(t,v)  \right|_{v=0}=0,  \\[2mm]
            & \displaystyle \lim_{v \to +\infty}  D_p^2(v)  \partial_v h(t,v) h^{q-1}(t,v), \\[2mm]
            & \displaystyle \lim_{v \to +\infty}  h^q(t,v) (v + \partial_v D_p^2(v)) = 0,
        \end{aligned}
        \right.
        \end{align}
        hold for any $t \in \R_+$ and $v \in \R_+$. If $h^\init \in L^q(\R_+)$, then $h(t, \cdot) \in L^q(\R_+)$ for any $t \in \R_+$. More precisely, 
        \begin{equation*}
            \norm{h(t, \cdot)}_{L^q(\R_+)} \leq \norm{h^\init}_{L^q(\R_+)} \exp(\eta_q t), \quad t \in \R_+, 
        \end{equation*}
        where $\eta_q = \frac{q-1}{q}\left(\mu + \frac{\zeta^2}{2} \norm{\partial_v^2 D_p^2}_{L^\infty(\R_+)}\right)$. 
    \end{proposition}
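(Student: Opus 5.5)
We follow the strategy of Proposition \ref{prop:regularity}: multiply \eqref{eq:popularity FP} by $h^{q-1}$ and integrate over $v \in \R_+$. This gives
\begin{equation*}
\frac{1}{q}\frac{\dd}{\dd t}\norm{h}_{L^q(\R_+)}^q = \frac{1}{\tau_p}\int_{\R_+} Q_p(h,f)\, h^{q-1} \dd v .
\end{equation*}
(The factor $1/\tau_p$ is either absorbed by setting $\tau_p=1$, as the stated value of $\eta_q$ suggests, or carried along.) We rewrite the operator in divergence form as
\begin{equation*}
Q_p(h,f) = \frac{\zeta^2}{2}\partial_v\big(D_p^2 \partial_v h\big) + \partial_v\Big(\big(\mu v - \theta\mathcal{F}[f] + \tfrac{\zeta^2}{2}\partial_v D_p^2\big) h\Big),
\end{equation*}
and split the integral into a diffusive term $\mathcal{T}_1$ and a transport term $\mathcal{T}_2$.

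\emph{Diffusive term.} Integrating $\mathcal{T}_1$ by parts, the boundary contributions $D_p^2 \partial_v h\, h^{q-1}$ vanish at $v=0$ and as $v\to+\infty$ by the first and third conditions in \eqref{eq:additional BC for regularity popularity FP}. What remains is
\begin{equation*}
\mathcal{T}_1 = -\frac{\zeta^2}{2}(q-1)\int_{\R_+} D_p^2\, h^{q-2}(\partial_v h)^2 \dd v \le 0 .
\end{equation*}

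\emph{Transport term.} Set $a(v)=\mu v - \theta\mathcal{F}[f] + \frac{\zeta^2}{2}\partial_v D_p^2$. We use the same two-way computation as before. Integrating by parts gives $\mathcal{T}_2 = -(q-1)\int a\, h^{q-1}\partial_v h \dd v$, while expanding the derivative gives $\mathcal{T}_2 = \int a\, h^{q-1}\partial_v h \dd v + \int (\partial_v a) h^q \dd v$. Writing $\mathcal{T}_2 = \frac{1}{q}\mathcal{T}_2 + \frac{q-1}{q}\mathcal{T}_2$ and combining the two expressions cancels the $h^{q-1}\partial_v h$ terms, so
\begin{equation*}
\mathcal{T}_2 = \frac{q-1}{q}\int_{\R_+}\partial_v a\, h^q \dd v .
\end{equation*}
Since $\partial_v a = \mu + \frac{\zeta^2}{2}\partial_v^2 D_p^2$ does not depend on $\mathcal{F}[f]$, this yields
\begin{equation*}
\mathcal{T}_2 \le \frac{q-1}{q}\Big(\mu + \frac{\zeta^2}{2}\norm{\partial_v^2D_p^2}_{L^\infty(\R_+)}\Big)\norm{h}_{L^q(\R_+)}^q .
\end{equation*}

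\emph{Boundary terms in $\mathcal{T}_2$.} The main point to justify is that the boundary terms $a\,h^q$ generated by these integrations by parts vanish. At $v=0$ this follows from $h|_{v=0}=0$, because $a(0)$ is finite: $\mathcal{F}[f]$ is bounded by \eqref{eq:estimate of F}. At infinity, $\mathcal{F}[f]$ is a bounded constant in $v$, and the fourth condition in \eqref{eq:additional BC for regularity popularity FP} controls $h^q(v+\partial_v D_p^2)$. We also need $h^q\to0$ at infinity to handle the constant part $-\theta\mathcal{F}[f]$; this follows from the same condition since $v\to\infty$.

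\emph{Conclusion.} Combining the two bounds gives
\begin{equation*}
\frac{\dd}{\dd t}\norm{h}^q_{L^q(\R_+)} \le q\,\eta_q \norm{h}^q_{L^q(\R_+)},
\end{equation*}
and Grönwall's lemma followed by taking the $q$-th root yields $\norm{h(t,\cdot)}_{L^q(\R_+)} \le \norm{h^\init}_{L^q(\R_+)}\exp(\eta_q t)$, with the stated constant $\eta_q$.
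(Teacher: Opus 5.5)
Your proposal is correct and follows the paper's own argument. The paper states that this proposition is proved exactly as Proposition \ref{prop:regularity}: multiply by $h^{q-1}$, integrate the diffusive part by parts to obtain a nonpositive term, and split the transport term as $\frac{1}{q}\mathcal{T}_2+\frac{q-1}{q}\mathcal{T}_2$ so that only $\partial_v a=\mu+\frac{\zeta^2}{2}\partial_v^2D_p^2$ survives, then conclude with Gr\"onwall's lemma.

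Your remark on the factor $1/\tau_p$ is right: the stated $\eta_q$ corresponds to $\tau_p=1$. Two ingredients remain implicit, as they are in the paper. First, you need $h\ge0$, or else work with $|h|^{q-2}h$. Second, you read the fourth boundary condition as guaranteeing $h^q\big(\mu v+\frac{\zeta^2}{2}\partial_vD_p^2\big)\to0$ and $h^q\to0$ at infinity; this is immediate in the relevant case $D_p(v)=v$.
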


    Proposition \ref{prop:regularity popularity} can be proved in the exact same way as Proposition \ref{prop:regularity}. Notice that the choice $D_p(v)=v$ satisfies the request $\partial_v^2 D_p^2 \in L^\infty(\R_+)$. Recall that, if $D_p(v)=v$ and $\mathcal{F}[f] \underset{t \to +\infty}{\longrightarrow} \mathcal{F}^\infty > 0$, the stationary solution $h^\infty$ to \eqref{eq:popularity FP} is given by \eqref{eq:global equilibria popularity FP}. Due to the presence of the exponential function, such equilibrium always satisfies the two no-flux boundary conditions \eqref{eq:additional BC for regularity popularity FP} at $v=0$. Moreover, a simple computation shows that, since $h^\infty (v) \sim v^{-2(1+\mu/\zeta^2)}$ when $v \to + \infty$, it also always satisfies the other two no-flux boundary conditions at $v \to + \infty$. Therefore, there are no restrictions on the parameters governing the popularity spread in order to derive these regularity properties for the global equilibrium.

    The constants $\eta_q$ are equi-bounded by $\eta_\infty=\mu + \frac{\zeta^2}{2} \norm{\partial_v^2 D_p^2}_{L^\infty(\R_+)}$, for any $q \in [2,+\infty)$. Therefore, in the limit $q \to +\infty$ we infer the regularity $h(t,\cdot) \in L^\infty(\R_+)$ for any $t \in \R_+$, specifically
    \begin{equation*}
        \norm{h(t, \cdot)}_{L^\infty(\R_+)} \leq \norm{h^\init}_{L^\infty(\R_+)} \exp(\eta_\infty t), \quad t \in \R_+,
    \end{equation*}
    provided that $h^\init \in L^\infty(\R_+)$. 

    We conclude with a comment about the existence of solutions for equation \eqref{eq:popularity FP} in the relevant case $D_p(v) = v$. In such specific setting, well-posedness of \eqref{eq:popularity FP} can be inferred within the framework of renormalized solutions studied by LeBris and Lions \cite{BriLio}. For this, we extend $h$ to $\R$ by defining $h(t,v) = 0$ for $v < 0$, and we observe that the evolution of \eqref{eq:popularity FP} preserves this property by maximum principle considerations (using similar arguments to those presented in Remark \ref{rem:positivity}, one shows that $v = 0$ acts as a barrier keeping $h(t,\cdot) \equiv 0$ when $v \leq 0$). Since the dynamics remains confined to $\R_+$, one can consider smooth enough extensions to $\R$ of the drift and diffusion coefficients in order to fall into the hypotheses of \cite[Proposition 2]{BriLio}. By defining the augmented drift term as $\mathbf{b}(t,v) = (\mu + \zeta^2) v - \theta \mathcal{F}[f](t)$ and the diffusion term as $\sigma(v) = \zeta v$, we can extend $\mathbf{b}$ from $v \in \R_+$ to $v \in \R$ by simply keeping its same expression, while we set $\sigma$ to be zero when $v < 0$. Then, $\mathbf{b}$ and $\sigma$ satisfy the regularity conditions (notice that $\mathcal{F}[f] \in L^1([0,T])$ for any $T>0$ thanks to the bound \eqref{eq:estimate of F}) required by \cite[Proposition 2]{BriLio}, hence it follows that given an initial datum $h^\init \in L^1(\R_+) \cap L^\infty(\R_+)$, for any time $T>0$ Eq. \eqref{eq:popularity FP} possesses a (unique) solution $h \in L^\infty ([0,T], L^1(\R_+) \cap L^\infty(\R_+))$ with $v\partial_v h \in L^2([0,T], L^2(\R_+))$. 

\subsection{Asymptotic analysis}

    Assuming that $D_p(v)=v$, we shall rigorously analyze the trend to equilibrium of the solution $h$ to \eqref{eq:popularity FP} in homogeneous Sobolev spaces $\dot{H}^{-s}(\R_+)$, with $s \in \big(\frac{1}{2}, 1\big)$. These are defined through the corresponding norm 
    \begin{equation}
        \norm{g}_{\dot{H}^{-s}(\R_+)} = \left( \int_{\R} |\xi|^{-2s} |\hat{g}(\xi)|^2 \dd \xi \right)^{\frac{1}{2}}, 
    \end{equation}
    where $\hat{g}: \R \to \mathbb{C}$ denotes the Fourier transform of some function $g: \R_+ \to \R_+$. Clearly, no probability distribution $g$ belongs to such space because $\hat{g}(0)=1$, $\hat{g}$ is continuous, and $\int_{(-\eps, \eps)} |\xi|^{-2s} \dd \xi = + \infty$ for any $s \in \big(\frac{1}{2}, 1\big)$ and any $\eps > 0$. For this reason, instead of investigating the convergence of $h$ toward $h^\infty$, we will study the convergence of the difference $h-h^\infty$ toward $0$. We start with the following useful result.

    \begin{lemma} \label{lemma:appB}
        Consider a random variable $X$ distributed as $g$, with a finite mean $\mathbb{E}(X)$. Then, the Fourier transform $\hat{g}$ of $g$ can be written as $\hat{g}(\xi) = 1 - i \xi \mathbb{E}(X) + R(\xi)$ for any $\xi \in \R$, where: 
        \begin{itemize}
            \item if $\mathbb{E}(X^2) < +\infty$, then $|R(\xi)| = \mathcal{O}(|\xi|^2)$; \\
            \item if $g(v) = \frac{\beta^\alpha}{\Gamma(\alpha)} v^{-1-\alpha} \exp(-\beta/v)$ for some constants $\alpha > 1$ and $\beta>0$, then $|R(\xi)| = \mathcal{O}(|\xi|^\alpha)$. 
        \end{itemize}
    \end{lemma}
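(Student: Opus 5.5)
The plan is a direct estimate of the remainder of the Fourier transform, using one pointwise bound and then splitting the integral at $v=1/|\xi|$. I fix the convention $\hat{g}(\xi)=\int_{\R_+} e^{-i\xi v} g(v)\,\dd v$, so that $\hat g(0)=1$ because $g$ is a probability density. Since $\mathbb{E}(X)<+\infty$, the remainder is well defined and equals
\begin{equation*}
R(\xi) = \hat g(\xi) - 1 + i\xi\,\mathbb{E}(X) = \int_{\R_+} \big(e^{-i\xi v} - 1 + i\xi v\big)\, g(v)\,\dd v .
\end{equation*}
Everything rests on the elementary pointwise bound
\begin{equation*}
|e^{-iy}-1+iy| \le \min\Big\{\tfrac{y^2}{2},\, 2|y|\Big\}, \qquad y\in\R .
\end{equation*}
The quadratic part follows from Taylor's formula with integral remainder. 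The linear part follows from $|e^{-iy}-1|\le |y|$.

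\emph{First case.} If $\mathbb{E}(X^2)<+\infty$, I use only the quadratic part of the bound. This gives $|R(\xi)|\le \tfrac{\xi^2}{2}\,\mathbb{E}(X^2)$ for every $\xi\in\R$, hence $|R(\xi)|=\mathcal{O}(|\xi|^2)$.

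\emph{Second case.} For the inverse gamma density, the only property I need is the tail bound $g(v)\le \frac{\beta^\alpha}{\Gamma(\alpha)}\, v^{-1-\alpha}=:C v^{-1-\alpha}$, which holds because $e^{-\beta/v}\le 1$. I split the integral defining $R$ at $v=1/|\xi|$.
\begin{itemize}
\item On $(0,1/|\xi|)$ I use the quadratic bound, which gives at most $\tfrac{C}{2}\xi^2\int_0^{1/|\xi|} v^{1-\alpha}\,\dd v = \tfrac{C}{2(2-\alpha)}|\xi|^{\alpha}$.
\item On $(1/|\xi|,\infty)$ I use the linear bound, which gives at most $2C|\xi|\int_{1/|\xi|}^\infty v^{-\alpha}\,\dd v = \tfrac{2C}{\alpha-1}|\xi|^{\alpha}$.
\end{itemize}
Summing the two pieces yields $|R(\xi)|\le C_\alpha |\xi|^\alpha$ for all $\xi$. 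This uses $\alpha>1$ for convergence of the tail integral (equivalently, finiteness of the mean) and $\alpha<2$ for integrability of $v^{1-\alpha}$ at the origin.

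The main obstacle is the range of $\alpha$, since the statement only assumes $\alpha>1$.
\begin{itemize}
\item For $\alpha=2$ the near-origin piece produces a factor $\log(1/|\xi|)$. The splitting then gives only $\mathcal{O}(|\xi|^2\log(1/|\xi|))$ for small $|\xi|$.
\item For $\alpha>2$ the second moment is finite, so the first case applies and $R(\xi)\sim -\tfrac{\xi^2}{2}\mathbb{E}(X^2)$ as $\xi\to 0$. A bound $\mathcal{O}(|\xi|^\alpha)$ near the origin is then false.
\end{itemize}
I will therefore state the second case explicitly for $\alpha\in(1,2)$. In the application $\alpha=1+2\mu/\zeta^2$, and this range corresponds to an infinite-variance equilibrium. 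For $\alpha>2$ I fall back on the first case, which gives the bound $\mathcal{O}(|\xi|^{\min\{\alpha,2\}})$ near $\xi=0$. I expect this to be exactly what is needed later, because the $\dot H^{-s}$ estimates only use the behaviour of $R$ near $\xi=0$. Away from the origin the trivial bound $|R(\xi)|\le 2+|\xi|\,\mathbb{E}(X)$ suffices.
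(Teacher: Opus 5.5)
Your proof is correct and is essentially the paper's argument. The paper also drops the factor $e^{-\beta/v}\le 1$, then substitutes $\varsigma=\xi v$ to write the bound as $|\xi|^{\alpha}\frac{\beta^\alpha}{\Gamma(\alpha)}\int_0^{\infty}|e^{-i\varsigma}-1+i\varsigma|\,\varsigma^{-1-\alpha}\,\dd\varsigma$. Your split at $v=1/|\xi|$ with the quadratic and linear bounds is just the explicit check that this constant is finite.

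Your caveat on the range of $\alpha$ is also correct and goes beyond the paper. That integral converges at $\varsigma=0$ only for $\alpha<2$, so the paper's proof tacitly needs $\alpha\in(1,2)$. For $\alpha>2$ the second bullet as stated is false near $\xi=0$, exactly as you observe.
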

    \begin{proof}
        Let $X$ be a random variable with probability density function $g$ over $\R_+$, then 
        \begin{equation}
            \hat{g}(\xi) = \mathbb{E}(\exp(-i\xi X)) = \int_{\R_+} g(v) \exp(-i \xi v) \dd v. 
        \end{equation}
        The following Taylor expansion with integral error term for $\exp(-i\xi X)$ holds: 
        \begin{equation}
            \exp(-i\xi X) = 1 - i \xi X - \xi^2 X^2 \int_0^1 (1-\varsigma) \exp(-i \xi X \varsigma) \dd \varsigma. 
        \end{equation}
        Therefore 
        \begin{equation}
            \hat{g}(\xi) = \mathbb{E}(\exp(-i\xi X)) = \mathbb{E}(1-i \xi X) + R(\xi) = 1 - i \xi \mathbb{E}(X) + R(\xi), 
        \end{equation}
        where 
        \begin{equation}
            R(\xi) = \mathbb{E}(\exp(-i\xi X) - 1 + i \xi X) = -\xi^2 \int_0^1 (1-\varsigma) \mathbb{E}(X^2 \exp(-i \xi X \varsigma)) \dd \varsigma
        \end{equation}
        denotes the remainder term. If $\mathbb{E}(X^2)<+\infty$ then 
        \begin{equation}
            |R(\xi)| \leq |\xi|^2 \int_0^1 (1-\varsigma) \mathbb{E}(X^2) \dd \varsigma = \frac{\mathbb{E}(X^2)}{2} |\xi|^2. 
        \end{equation}
        On the other hand, if $g(v) = \frac{\beta^\alpha}{\Gamma(\alpha)} v^{-1-\alpha} \exp(-\beta/v)$, with $\alpha > 1$ and $\beta>0$, then for $\xi > 0$
        \begin{equation}
        \begin{split}
            |R(\xi)| & \ \ \leq \int_0^{+ \infty} |\exp(-i\xi v) - 1 + i \xi v| \frac{\beta^\alpha}{\Gamma(\alpha)} v^{-1-\alpha} \dd v \\[2mm] 
            & \overset{\varsigma=\xi v}{=} \xi^\alpha \int_0^{+\infty} |\exp(-i \varsigma) - 1 + i \varsigma| \frac{\beta^\alpha}{\Gamma(\alpha)} \varsigma^{-1-\alpha} \dd \varsigma \\[4mm] 
            & \ \ \leq C(\alpha, \beta) |\xi|^{\alpha}, 
        \end{split}
        \end{equation} 
        where $C(\alpha,\beta)>0$ because $\exp(-i \varsigma) - 1 + i \varsigma = -\frac{\varsigma^2}{2} + \smallO(\varsigma^3)$ for $\varsigma \to 0$. The calculations for $\xi<0$ are similar and the thesis follows. 
    \end{proof}

    \begin{remark}
        The estimates of Lemma \ref{lemma:appB} are interesting in the limit $|\xi| \to 0$. Indeed, we trivially have         \begin{equation}
        \begin{split}
            |R(\xi)| \leq \mathbb{E}(2+|\xi| X) = 2 + |\xi| \mathbb{E}(X), 
        \end{split}
        \end{equation}
        which is stronger than the previous estimates for sufficiently large $|\xi|$. 
    \end{remark}

    Assume that the initial distribution $h^\init(v) = h(0,v)$ has finite second moment. Lemma \ref{lemma:appB} implies that, in the limit $|\xi| \to 0$, 
    \begin{equation} \label{eq:initial datum}
        \hat{h}(0,\xi) - \hat{h}^\eq (0,\xi) = \mathcal{O}(|\xi|).  
    \end{equation}
    Note that this equality holds even if the local equilibrium does not have a finite energy. Therefore, as $|\xi| \to 0$, 
    \begin{equation}
        |\xi|^{-2s} |\hat{h}(0,\xi) - \hat{h}^\infty (0,\xi)|^2 \leq C |\xi|^{-2s} |\xi|^{2}
    \end{equation}
    for some $C>0$, hence $\norm{h(0,\xi) - h^\infty(0,\xi)}_{\dot{H}^{-s}(\R_+)} < + \infty$ for all $s \in \big(\frac{1}{2}, 1\big)$. Obviously, the same is true if one of the two distributions is replaced with the local equilibrium $h^\eq$ defined by \eqref{eq:local equilibria popularity FP}. We are now ready to prove a convergence result in the spaces $\dot{H}^{-s}(\R_+)$, $s \in \big(\frac{1}{2}, 1\big)$. 

    \begin{theorem} \label{teo_appB}
        Let $D_p(v)=v$ and $h$ be a solution to the Fokker--Planck equation \eqref{eq:popularity FP}, starting from a nonnegative initial datum $h^\init$ with finite second moment. Assume that $\mathcal{F}[f](t) > 0$ for any $t \in \R_+$, $\mathcal{F}[f] \underset{t \to +\infty}{\longrightarrow} \mathcal{F}^\infty>0$ and $\frac{\dd}{\dd t} \mathcal{F}[f] \underset{t \to +\infty}{\longrightarrow} 0$. Then, for any $s \in \big(\frac{1}{2}, 1\big)$, the difference $h - h^\infty$ converges to $0$ in the homogeneous Sobolev space $\dot{H}^{-s}(\R_+)$. 
    \end{theorem}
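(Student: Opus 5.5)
We will work in Fourier variables, following the strategy of \cite{MarTosZan, TorTos2}. With $\hat h(t,\xi)=\int_{\R_+} h(t,v)e^{-i\xi v}\,\dd v$ and $D_p(v)=v$, the boundary conditions \eqref{eq:BC popularity FP} let us integrate \eqref{eq:popularity FP} against $e^{-i\xi v}$. This should give the first order transport-type equation
\begin{equation*}
\tau_p\,\partial_t \hat h = -\mu\,\xi\,\partial_\xi \hat h - i\xi\,\theta\,\mathcal{F}[f](t)\,\hat h + \frac{\zeta^2}{2}\,\xi^2\,\partial_\xi^2 \hat h .
\end{equation*}
The same equation, with $\mathcal{F}[f]$ replaced by $\mathcal{F}^\infty$ and the time derivative dropped, is satisfied by $\hat h^\infty$. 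Setting $u=\hat h-\hat h^\infty$, we get
\begin{equation*}
\tau_p\,\partial_t u = -\mu\,\xi\,\partial_\xi u + \frac{\zeta^2}{2}\xi^2\partial_\xi^2 u - i\xi\theta\mathcal{F}[f]\,u - i\xi\theta\big(\mathcal{F}[f]-\mathcal{F}^\infty\big)\hat h^\infty .
\end{equation*}
We then compute $\frac{\dd}{\dd t}\int_\R |\xi|^{-2s}|u|^2\dd\xi$. The term $-i\xi\theta\mathcal{F}[f]u$ is purely imaginary times $|u|^2$, so it does not contribute.

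For the drift term we integrate by parts: $-\mu\int|\xi|^{-2s}\xi\,\partial_\xi|u|^2 = \mu(1-2s)\int|\xi|^{-2s}|u|^2$, which is a strict dissipation because $s>\frac12$. For the diffusion term, two integrations by parts give $-\frac{\zeta^2}{2}\int|\xi|^{2-2s}|\partial_\xi u|^2$ plus a lower-order term $\frac{\zeta^2}{2}(2-2s)(1-2s)\int|\xi|^{-2s}|u|^2\cdot\frac12$ of the same sign as the drift term, hence also nonpositive. Collecting, we aim for
\begin{equation*}
\frac{\dd}{\dd t}\norm{h-h^\infty}^2_{\dot H^{-s}} \le -\frac{c_s}{\tau_p}\norm{h-h^\infty}^2_{\dot H^{-s}} + \frac{2\theta}{\tau_p}\big|\mathcal{F}[f]-\mathcal{F}^\infty\big|\int_\R |\xi|^{1-2s}|u|\,|\hat h^\infty|\,\dd\xi ,
\end{equation*}
with $c_s=(2s-1)\big(\mu+\frac{\zeta^2}{2}(1-s)\big)>0$. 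The boundary terms at $\xi=0$ and $|\xi|\to\infty$ vanish because $u=\mathcal{O}(|\xi|)$ near zero by Lemma~\ref{lemma:appB} (as in \eqref{eq:initial datum}, provided the mean is controlled along the flow through \eqref{eq:evolution of m_p}), $s<1$, and $u$ is bounded.

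For the source term we use Cauchy--Schwarz: $\int|\xi|^{1-2s}|u||\hat h^\infty| \le \norm{u}_{\dot H^{-s}}\big(\int|\xi|^{2-2s}|\hat h^\infty|^2\big)^{1/2}$. The last integral needs decay of $\hat h^\infty$ at infinity, which holds since $h^\infty$ is smooth and vanishes to all orders at $v=0$, but it is not automatically finite. If finiteness fails we will split $\xi$ into low and high frequencies and use the bound $|u|\le 2$ at high frequencies. With $y=\norm{u}_{\dot H^{-s}}$ this yields $y'\le -\frac{c_s}{2\tau_p}y+C|\mathcal{F}[f]-\mathcal{F}^\infty|$. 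Because $\mathcal{F}[f]\to\mathcal{F}^\infty$, Gr\"onwall/Duhamel then gives $y(t)\to0$, starting from $y(0)<\infty$, which was established before the statement.

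We expect the main obstacle to be justifying the boundary terms at $\xi=0$ uniformly in time, that is, showing $u(t,\xi)=\mathcal{O}(|\xi|)$ with a time-uniform constant, together with the integrability of the source term. For this we will use that $m_p(t)$ stays bounded by \eqref{eq:evolution of m_p} and \eqref{eq:estimate of F}. The hypothesis $\frac{\dd}{\dd t}\mathcal{F}[f]\to0$ becomes useful if we instead compare $h$ with the local equilibrium $h^\eq$, whose time derivative is driven by $\frac{\dd}{\dd t}\mathcal{F}[f]$. As a fallback we will run the same energy estimate for $h-h^\eq$, where the source becomes $\partial_t\hat h^\eq$, and then conclude with $\norm{h^\eq-h^\infty}_{\dot H^{-s}}\to0$ by continuity in $\mathcal{F}$.
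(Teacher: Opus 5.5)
Your main route is correct, and it differs from the paper's. The paper never compares $h$ with $h^\infty$ directly. It splits $\norm{h-h^\infty}_{\dot H^{-s}(\R_+)}\le\norm{h-h^{\eq}}_{\dot H^{-s}(\R_+)}+\norm{h^{\eq}-h^\infty}_{\dot H^{-s}(\R_+)}$ and runs the Fourier energy estimate for $h-h^{\eq}$ with source $\partial_t\hat h^{\eq}$. That source is bounded uniformly in $\xi$ by $C_1|\frac{\dd}{\dd t}\mathcal F[f]|$ and is absorbed through a low/high frequency split, using $|\hat h-\hat h^{\eq}|\le C_2|\xi|$, which yields a sublinear power of the norm. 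The paper then shows $h^{\eq}\to h^\infty$ in $L^1$ by dominated convergence and turns this into $\dot H^{-s}$ convergence by a second frequency split; this is exactly your fallback.

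Your direct comparison replaces the source $\partial_t\hat h^{\eq}$ with $-i\theta\xi(\mathcal F[f]-\mathcal F^\infty)\hat h^\infty$. The extra factor $\xi$ is paid for by the smoothness of the fixed profile. Extended by zero to $v<0$, the inverse gamma density is $C^\infty$ and flat at $v=0$, with $\partial_v h^\infty=\mathcal O(v^{-3-2\mu/\zeta^2})$ at infinity. So it lies in $H^1(\R)$, and $\int_\R|\xi|^{2-2s}|\hat h^\infty|^2\dd\xi\le 2+\int_{|\xi|>1}\xi^2|\hat h^\infty|^2\dd\xi<\infty$. The integral you were unsure about is therefore finite, and no fallback is needed. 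Your high-frequency fallback with $|u|\le 2$ would in any case require decay of $\hat h^\infty$, since $|\xi|^{1-2s}$ is not integrable at infinity.

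The boundary terms at $\xi=0$ also need no uniformity in time. Pointwise in $t$ it suffices that $|u(t,\xi)|\le(m_p(t)+m_p^\infty)|\xi|$, which follows from $|e^{-i\xi v}-1|\le|\xi|v$ and first moments only.

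What your route buys is a linear inequality $y'\le-\frac{c_s}{2\tau_p}y+C|\mathcal F[f]-\mathcal F^\infty|$. Its Duhamel formula gives convergence from $\mathcal F[f]\to\mathcal F^\infty>0$ alone, with an explicit rate (exponential if $\mathcal F[f]$ relaxes exponentially). The hypothesis $\frac{\dd}{\dd t}\mathcal F[f]\to0$ becomes superfluous, as do $\mathcal F[f](t)>0$ for all $t$ (which only serves to define $h^{\eq}$) and the paper's entire second step. The paper's route, in exchange, needs no Fourier decay of the equilibrium profile, only an $L^1$ bound on its time derivative.

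Both arguments leave one formal point at the same level of rigor. The boundary term $|\xi|^{2-2s}\,\bar u\,\partial_\xi u$ at $|\xi|\to\infty$, coming from the diffusion integration by parts, needs some decay of $\hat h(t,\cdot)$ and $\partial_\xi\hat h(t,\cdot)$, not just boundedness of $u$.
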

    \begin{proof}
        Since 
        \begin{equation*}
            \norm{h - h^\infty}_{\dot{H}^{-s}(\R_+)} \leq \norm{h-h^\eq}_{\dot{H}^{-s}(\R_+)} + \norm{h^\eq - h^\infty}_{\dot{H}^{-s}(\R_+)}, 
        \end{equation*}
        we divide the proof into two parts, similarly to what we have done in Section \ref{sec:trends}. Firstly, we will demonstrate that $\norm{h-h^\eq}_{\dot{H}^{-s}(\R_+)} \underset{t \to +\infty}{\longrightarrow} 0$, and then we will prove that $\norm{h^\eq - h^\infty}_{\dot{H}^{-s}(\R_+)} \underset{t \to +\infty}{\longrightarrow} 0$.  

        \medskip   

        \noindent \textbf{Step 1 -- Convergence to the local equilibrium ${h}^\eq$.} We follow the strategy adopted in \cite{MarTosZan}. Since the right hand side of the Fokker--Planck equation \eqref{eq:popularity FP} vanishes when evaluated at the local equilibrium $h^\eq$, we have 
        \begin{equation} \label{eq:evolution of h-h_eq}
            \tau_p \partial_t (h - h^\eq) = - \tau_p \partial_t h^\eq + \partial_v \left( \left(\mu v - \theta \mathcal{F}[f] \right)(h -h^\eq\right) + \frac{\zeta^2}{2} \partial^2_v \left(v^2 (h-h^\eq) \right). 
        \end{equation}
        The Fourier transformed version of \eqref{eq:evolution of h-h_eq} is \cite{TorTos2}
        \begin{equation*}
        \begin{split}
            \tau_p \partial_t (\hat{h} - \hat{h}^\eq) = \frac{\zeta^2}{2} \xi^2 \partial_{\xi}^2 (\hat{h} - \hat{h}^\eq) - \mu \xi \partial_{\xi} (\hat{h} - \hat{h}^\eq) - i \theta \mathcal{F}[f] \xi (\hat{h} - \hat{h}^\eq) - \tau_p \partial_t  \hat{h}^\eq, 
        \end{split}
        \end{equation*}
        Let $\hat{h}(t,\xi) - \hat{h}^\eq(t,\xi) = a(t,\xi) + i b(t,\xi)$ and $\hat{h}^\eq(t,\xi) = \tilde{a}(t,\xi) + i \tilde{b}(t,\xi)$, then 
        \begin{equation*}
        \begin{split}
            \tau_p \partial_t a =& \frac{\zeta^2}{2} \xi^2 \partial_{\xi}^2 a - \mu \xi \partial_{\xi} a + \theta \mathcal{F}[f] \xi b - \tau_p \partial_t  \tilde{a}, \\ 
            \tau_p \partial_t b =& \frac{\zeta^2}{2} \xi^2 \partial_{\xi}^2 b - \mu \xi \partial_{\xi} b - \theta \mathcal{F}[f] \xi a - \tau_p \partial_t  \tilde{b}.
        \end{split}
        \end{equation*}
        If we multiply the first equation by $2a(t,\xi)$ and the second one by $2b(t,\xi)$, summing up we get 
        \begin{equation} \label{eq:proof4}
        \begin{split}
            \tau_p \partial_t \left| \hat{h}(t,\xi) - \hat{h}^\eq(t,\xi) \right|^2 &= \zeta^2 \xi^2 (a \partial^2_\xi a + b \partial^2_\xi b) - \mu \xi \partial_{\xi} \left| \hat{h} - \hat{h}^\eq \right|^2 -2 a \tau_p \partial_t  \tilde{a} -2 b \tau_p \partial_t  \tilde{b} \\[4mm] 
            & \leq \zeta^2 \xi^2 (a \partial^2_\xi a + b \partial^2_\xi b) - \mu \xi \partial_{\xi} \left| \hat{h} - \hat{h}^\eq \right|^2 + 2 \tau_p \left| \hat{h} - \hat{h}^\eq \right| \left|\partial_t \hat{h}^\eq\right|. 
        \end{split}
        \end{equation}
        Multiplying by $|\xi|^{-2s}$, $\frac{1}{2} < s <1$, and integrating over $\R$ with respect to $\xi$, we obtain the evolution equation for $\|h-h^\eq\|_{\dot{H}^{-s}(\R_+)}^2$, which reads 
        \begin{equation} \label{eq:initial estimate of h-h_eq}
        \begin{split}
            \tau_p \frac{\dd}{\dd t} \int_\R |\xi|^{-2s} |\hat{h}(t,\xi) - \hat{h}^\eq(t,\xi)|^2 \dd \xi & \leq \underbrace{\zeta^2 \int_\R |\xi|^{2-2s} \left(a \partial^2_\xi a + b \partial^2_\xi b \right) \dd \xi - \mu \int_\R \xi |\xi|^{-2s} \partial_{\xi} \left| \hat{h} - \hat{h}^\eq \right|^2 \dd \xi}_{\mathcal{T}_1} \\[2mm] 
            & \hspace{5cm} +\underbrace{2\tau_p \int_\R |\xi|^{-2s} \left| \hat{h} - \hat{h}^\eq \right| \left|\partial_t \hat{h}^\eq\right| \dd \xi}_{\mathcal{T}_2}. 
        \end{split}
        \end{equation}
        Let us start from $\mathcal{T}_1$. Suitable repeated integrations by parts (and the behavior of the Fourier transform for large $|\xi|$) imply that \cite{MarTosZan, TorTos1, TorTos2}
        \begin{equation} \label{eq:T_1}
            \mathcal{T}_1 = (1-2s) (\zeta^2(1-s)+\mu) \int_\R |\xi|^{-2s} \left| \hat{h} - \hat{h}^\eq \right|^2 \dd \xi - \zeta^2 \int_\R |\xi|^{2-2s} \left(\left|\partial_\xi a\right|^2 + \left|\partial_\xi b\right|^2\right) \dd \xi. 
        \end{equation}
        Then, for any $R>0$, 
        \begin{equation}
            0 \leq \int_\R |\xi|^{-2s} \big(\xi \partial_\xi a(t,\xi) - R a(t,\xi)\big)^2 \dd \xi = \int_\R |\xi|^{2-2s} |\partial_\xi a|^2 \dd \xi + \left(R^2 + R(1-2s)\right) \int_\R |\xi|^{-2s} a^2 \dd \xi, 
        \end{equation}
        where we have integrated by parts to deal with the mixed term of the square. In particular, we find
        \begin{equation}
            \int_\R |\xi|^{2-2s} |\partial_\xi a(t,\xi)|^2 \dd \xi \geq (R(2s-1) - R^2) \int_\R |\xi|^{-2s} a^2 \dd \xi,
        \end{equation}
        and an analogous inequality holds for $b(t,\xi)$. By choosing $R$ in the optimal way, namely $R=\frac{2s-1}{2}$, we thus obtain 
        \begin{equation} \label{eq:estimate of a and b}
            \int_\R |\xi|^{2-2s} \left(\left|\partial_\xi a(t,\xi) \right|^2 + \left|\partial_\xi b(t,\xi) \right|^2\right) \dd \xi \geq \frac{(2s-1)^2}{4} \int_\R |\xi|^{-2s} \left| \hat{h} - \hat{h}^\eq \right|^2 \dd \xi. 
        \end{equation}
        Combining \eqref{eq:T_1} and \eqref{eq:estimate of a and b}, we infer that $\mathcal{T}_1$ is controlled by  
        \begin{equation} \label{eq:estimate of T_1}
            \mathcal{T}_1 \leq - \underbrace{(2s-1) \left(\zeta^2 \frac{3-2s}{4} + \mu\right)}_{C_0>0} \int_\R |\xi|^{-2s} \left| \hat{h} - \hat{h}^\eq \right|^2 \dd \xi . 
        \end{equation} 
        
        Concerning $\mathcal{T}_2$, by the definition of the Fourier transform and the properties of the inverse Gamma distributions \cite[Section 5(a)]{MarTosZan} (see also \cite{GraRyz}) we have 
        \begin{equation} \label{eq:estimate of h_eq}
            |\partial_t \hat{h}^\eq(t,\xi)| = \left| \int_{\R_+} \partial_t h^\eq(t,v) \exp(-i \xi v) \dd v\right| \leq \int_{\R_+} |\partial_t h^\eq(t,v)| \dd v \leq C_1 \left|\frac{\dd}{\dd t} \mathcal{F}[f](t)\right|, 
        \end{equation}
        for some suitable constant $C_1>0$. Since both $h$ and $h^\eq$ are probability densities, then $\hat{h}(t,0)=\hat{h}^\eq(t,0)=1$ and there exists a constant $C_2 > 0$ such that the following Taylor expansion with Lagrange remainder can be bounded as 
        \begin{equation} \label{eq:estimate with Taylor}
            |\hat{h}(t,\xi) - \hat{h}^\eq(t,\xi)| = |\partial_\xi \hat{h}(t,\xi_1) - \partial_\xi \hat{h}^\eq(t,\xi_2)| |\xi| \leq (m_p(t) + m_p^\eq(t)) |\xi| \leq C_2 |\xi|, 
        \end{equation}
        where $\xi_1, \xi_2 \in (\min\{0, \xi\}, \max\{0, \xi\})$ and we have defined $\displaystyle m_p^\eq(t) = \int_{\R_+} v h^\eq(t,v) \dd v$. Indeed, 
        \begin{equation}
            |\partial_\xi \hat{h}(t,\xi)| = \left| -i\int_{\R_+} v h(t,v) \exp(-i \xi v) \dd v \right| \leq \int_{\R_+} v h(t,v) \dd v = m_p(t),
        \end{equation}
        so that \eqref{eq:evolution of m_p} and \eqref{eq:estimate of F} hold (clearly, a similar inequality holds also for $\partial_\xi \hat{h}^\eq(t,\xi)$). Combining \eqref{eq:estimate of h_eq} and \eqref{eq:estimate with Taylor} we then get 
        \begin{equation*}
            |\xi|^{-2s} \left| \hat{h} - \hat{h}^\eq \right| \left|\partial_t \hat{h}^\eq\right| \leq C_1 C_2  \left|\frac{\dd}{\dd t} \mathcal{F}[f]\right| |\xi|^{1-2s}, 
        \end{equation*}
        which in turn implies that for any $R>0$ 
        \begin{equation*}
            \int_{|\xi|\leq R} |\xi|^{-2s} \left| \hat{h}(t,\xi) - \hat{h}^\eq(t,\xi) \right| \left|\partial_t \hat{h}^\eq(t,\xi) \right| \dd \xi \leq C_1 C_2  \left|\frac{\dd}{\dd t} \mathcal{F}[f]\right| \frac{R^{2-2s}}{1-q}, 
        \end{equation*}
        hence Cauchy--Schwartz inequality allows us to infer that 
        \begin{equation*}
        \begin{split}
            \int_{|\xi|> R} |\xi|^{-2s} \left| \hat{h}(t,\xi) - \hat{h}^\eq(t,\xi) \right| \left|\partial_t \hat{h}^\eq(t,\xi) \right| \dd \xi &\leq \sqrt{\int_{|\xi|> R} |\xi|^{-2s} \left| \hat{h} - \hat{h}^\eq \right|^2 \dd \xi} \; \sqrt{\int_{|\xi|> R} |\xi|^{-2s} \left|\partial_t \hat{h}^\eq \right|^2  \dd \xi} \\[2mm]
            &\leq C_1 \left|\frac{\dd}{\dd t} \mathcal{F}[f]\right| \sqrt{\int_\R |\xi|^{-2s} \left| \hat{h} - \hat{h}^\eq \right|^2 \dd \xi} \; \sqrt{\int_{|\xi|> R} |\xi|^{-2s} \dd \xi} \\[2mm] 
            &= C_1 \left|\frac{\dd}{\dd t} \mathcal{F}[f]\right| \frac{1}{R^{q-\frac{1}{2}}} \sqrt{\frac{2}{2s-1} \int_\R |\xi|^{-2s} \left| \hat{h} - \hat{h}^\eq \right|^2 \dd \xi}. 
        \end{split}
        \end{equation*}
        Combining these two inequalities and choosing $R$ in the optimal way, the term $\mathcal{T}_2$ can be finally bounded as 
        \begin{equation} \label{eq:estimate of T_2}
            \mathcal{T}_2 \leq \tau_p C_3 \left|\frac{\dd}{\dd t} \mathcal{F}[f]\right| \left(\int_\R |\xi|^{-2s} \left| \hat{h} - \hat{h}^\eq \right|^2 \dd \xi\right)^{\frac{2-2s}{3-2s}}, 
        \end{equation}
        where $C_3=C_3(s, C_1, C_2)>0$. At last, combining \eqref{eq:initial estimate of h-h_eq}, \eqref{eq:estimate of T_1}, and \eqref{eq:estimate of T_2} we get 
        \begin{equation}
        \begin{split}
            \frac{\dd}{\dd t} \int_\R |\xi|^{-2s} \left| \hat{h}(t,\xi) - \hat{h}^\eq(t,\xi) \right|^2 \dd \xi \leq -\frac{C_0}{\tau_p} \int_\R |\xi|^{-2s} \left| \hat{h} - \hat{h}^\eq \right|^2 \dd \xi + C_3 \left|\frac{\dd}{\dd t} \mathcal{F}[f]\right| \left(\int_\R |\xi|^{-2s} \left| \hat{h} - \hat{h}^\eq \right|^2 \dd \xi\right)^{\frac{2-2s}{3-2s}}.
        \end{split}
        \end{equation}
        Since $\frac{\dd}{\dd t} \mathcal{F}[f] \underset{t \to +\infty}{\longrightarrow} 0$, it is easy to demonstrate that $\displaystyle \int_\R |\xi|^{-2s} \left| \hat{h}(t,\xi) - \hat{h}^\eq(t,\xi) \right|^2 \dd \xi \underset{t \to +\infty}{\longrightarrow} 0$, implying the convergence of $h$ toward $h^\eq$ with respect to the $\dot{H}^{-s}(\R_+)$ norm, for $s \in \big(\frac{1}{2},1\big)$. 

        \medskip    

        \noindent \textbf{Step 2 -- Convergence to the global equilibrium $h^\infty$.} For this step, notice that the dominate convergence theorem implies that  $\norm{h^\eq - h^\infty}_{L^1(\R_+)} \underset{t \to +\infty}{\longrightarrow} 0$. Indeed, for a sufficiently large $t$, 
        \begin{align}
            h^\eq(t,v) \leq g(v) =
            \begin{cases}
                2 \frac{\left(2 \theta \mathcal{F}^\infty\right)^{1 + \frac{2 \mu}{\zeta^2}}}{\zeta^{2 - \frac{4 \mu}{\zeta^2}} \Gamma\left(1 + \frac{2 \mu}{\zeta^2}\right)} v^{-2 \left(1+\frac{\mu}{\zeta^2}\right)} \exp \left(-\frac{\theta \mathcal{F}^\infty}{\zeta^2}\frac{1}{v}\right),   \quad &  0 < v < 1, \\
                \\
                2 \frac{\left(2 \theta \mathcal{F}^\infty\right)^{1 + \frac{2 \mu}{\zeta^2}}}{\zeta^{2 - \frac{4 \mu}{\zeta^2}} \Gamma\left(1 + \frac{2 \mu}{\zeta^2}\right)} v^{-2 \left(1+\frac{\mu}{\zeta^2}\right)}, & \quad v \geq 1, 
            \end{cases}
        \end{align} 
        with $g \in L^1(\R_+)$, and obviously $h^\eq(t,v) \underset{t \to +\infty}{\longrightarrow} h^\infty(v)$ pointwise in $v \in \R_+$. 

        For a fixed $0 < \eps \ll 1$, we have that
        \begin{equation}
            \int_{\R_+} |\xi|^{-2s} \left| \hat{h}^\eq(t,\xi) - \hat{h}^\infty(\xi) \right|^2 \dd \xi = \underbrace{\int_{|\xi|>\eps} |\xi|^{-2s} \left| \hat{h}^\eq - \hat{h}^\infty \right|^2 \dd \xi }_{\mathcal{T}_3} + \underbrace{\int_{|\xi| \leq \eps} |\xi|^{-2s} \left| \hat{h}^\eq - \hat{h}^\infty \right|^2 \dd \xi}_{\mathcal{T}_4}, 
        \end{equation}
        and we can study the two integrals separately. Let us start with $\mathcal{T}_3$. For all $\xi \in \R$ we have 
        \begin{equation}
            \left| \hat{h}^\eq(t,\xi) - \hat{h}^\infty(\xi) \right| = \left| \int_{\R_+} \big( h^\eq(t,v) - h^\infty(v) \big) \exp(-i \xi v) \dd v\right| \leq \norm{h^\eq - h^\infty}_{L^1(\R_+)}. 
        \end{equation}
        Therefore 
        \begin{equation}
            \sup_{\xi \in \R} \left| \hat{h}^\eq(t,\xi) - \hat{h}^\infty(\xi) \right|^2 \leq \norm{h^\eq - h^\infty}_{L^1(\R_+)}^2, 
        \end{equation}
        which implies that 
        \begin{equation}
            \mathcal{T}_3 \leq \norm{h^\eq - h^\infty}_{L^1(\R_+)}^2 \int_{|\xi|> \eps} |\xi|^{-2s} \dd \xi = \frac{2}{2s-1} \norm{h^\eq - h^\infty}_{L^1(\R_+)}^2 \eps^{1-2s}. 
        \end{equation}
        Consider now the integral $\mathcal{T}_4$. Substituting $\hat{h}$ with $\hat{h}^\textnormal{eq}$ in \eqref{eq:initial datum} leads directly to  
        \begin{equation}
            \mathcal{T}_4 \leq \frac{2C}{3-2s} \eps^{3-2s},
        \end{equation}
        for some $C>0$ which does not depend on $t$ since the first moment is bounded (recall \eqref{eq:evolution of m_p} and \eqref{eq:estimate of F}) and, in the case of an infinite second moment, with the notation of Lemma \ref{lemma:appB}, $\alpha$ is constant and $\beta$ is bounded because $\mathcal{F}[f]  \underset{t \to +\infty}{\longrightarrow} \mathcal{F}^\infty$ (hence the estimate for $|R(\xi)|$ is uniform in time). 

        In conclusion, 
        \begin{equation}
            \int_{\R_+} |\xi|^{-2s} \left| \hat{h}^\eq(t,\xi) - \hat{h}^\infty(\xi) \right|^2 \dd \xi \leq \frac{2}{2s-1} \norm{h^\eq - h^\infty}_{L^1(\R_+)}^2 \eps^{1-2s} + \frac{2C}{3-2s} \eps^{3-2s}. 
        \end{equation}
        Setting, for example, $\eps=\eps(t)=\norm{h^\eq - h^\infty}_{L^1(\R_+)}$ we obtain that $\norm{h^\eq - h^\infty}_{\dot{H}^{-s}(\R_+)} \underset{t \to +\infty}{\longrightarrow} 0$. 
    \end{proof} 

    \begin{remark}
        The calculations of Theorem \ref{teo_appB} could be repeated to prove the convergence of $h$ to equilibrium, even when initially $h^\init$ does not have a finite energy. However, in this case one would also have to repeat the calculations of Lemma \ref{lemma:appB} to demonstrate that $h(0,v)-h^\eq(0,v) \in \dot{H}^{-s}(\R_+)$, which held true in the proof of Theorem \ref{teo_appB} thanks precisely to the hypothesis that the energy of the initial distribution was finite. 
    \end{remark}

    \begin{remark} \label{rmk:L1_conv}
        Assuming the exponential relaxation of $\mathcal{F}[f]$ toward $\mathcal{F}^\infty > 0$, it is possible to recover the convergence of $h$ to the global equilibrium $h^\infty$ in $L^1(\R_+)$. Indeed, it would be enough to adapt the computations from the proof of \cite[Theorem 2]{BonBor}. The only additional difficulty would arise from the fact that here our (local and global) equilibria are given by inverse gamma distributions, while in the cited paper the equilibria were described by beta distributions. These lead to the appearance in the estimates of integral terms involving the $\log$ moment of the solution to the Fokker--Planck equation, which could be controlled by applying H\"older's inequality and by exploiting the regularity of the solution itself. In the present setting, the inverse gamma distributions would instead produce more singular moments of the form $\displaystyle \int_{\R_+} \frac{1}{v} h(t,v) \dd v$, and the regularity of $h$ could not be used to bound them since obviously $\frac{1}{v}$ does not belong to any $L^q(\R_+)$, $q \geq 1$. The idea in this case would thus be to reduce the singularity via an integration by parts, recovering integrals of the form $\displaystyle \int_{\R_+} \log v \partial_v h(t,v) \dd v$ that one could estimate once again thanks to H\"older's inequality, provided that $\partial_v h$ is regular enough. Such regularity would follow from an adaptation of Proposition \ref{prop:regularity popularity} to study the evolution of the norms $\|\partial_v h \|_{L^q(\R_+)}$, $q \geq 2$, under suitable additional boundary conditions on the solution $h$.
    \end{remark}

    We spend some final words on the particular case of $\mathcal{F}^\infty = 0$. Since \eqref{eq:evolution of m_p} implies that $m_p \underset{t \to +\infty}{\longrightarrow} 0$, then $h(t,v)$ converges (in the sense of distributions) toward the Dirac delta $\delta(v)$ because the support of $h$ is always contained in $\R_+$ (even if $D_p(v) \ne v$). Indeed, for any $\eps > 0$ 
    \begin{equation}
        \int_\eps^{+\infty} h(t,v) \dd v \leq \frac{1}{\eps} \int_\eps^{+\infty} v h(t,v) \dd v \leq \frac{m_p}{\eps} \underset{t \to +\infty}{\longrightarrow} 0. 
    \end{equation}
    This result is not surprising. Indeed, $\mathcal{F}^\infty = 0$ is equivalent to $\hat{w}=1$, which implies that only a negligible part of the population will contribute to the spread of the products, hence their popularity can only decrease. Interestingly, the variance of $h$ converges to zero (i.e., to the variance of $\delta(v)$) if and only if $\zeta^2 < 2 \mu$ (see \eqref{eq:evolution of e_p}). Indeed, if $\zeta^2 \geq 2 \mu$ the diffusion process is very strong and a relevant fraction of the mass of $h$ remains scattered over $\R_+$ for any time $t \in \R_+$. This represents a nice example of the fact that the convergence in the sense of distributions does not imply the convergence of the moments. 

\section{Numerical simulations} \label{sec:num}

    \noindent We conclude the analysis of our model with several numerical simulations to illustrate the complete behaviors of system \eqref{eq:FP-SIR and popularity}. We start by presenting the numerical algorithm used in the simulations in Sections \ref{code} and \ref{code_popularity}, while we will carry out a series of tests in the following subsections. We will mainly describe the structure-preserving numerical scheme for the kinetic SIR-type system \eqref{eq:vectorial model} since it represents a generalization of the numerical scheme employed in the simulations of the Fokker--Planck model \eqref{eq:popularity FP}. However, we will explain how the two schemes interact with each other. Concerning the numerical tests, we will start by analyzing the trends to equilibrium of the simplified model \eqref{eq:vectorial model simplified}, before moving to the analysis of epidemic dynamics considering the original model \eqref{eq:vectorial model}. Throughout this last section we will always assume $D(w)=\sqrt{1-w^2}$ and $D_p(v)=v$, for which we derived explicit expressions for the equilibria of the models.     

\subsection{Numerical algorithm for the kinetic SIR system} \label{code}  

The approximation of the kinetic SIR system \eqref{eq:vectorial model} is obtained through a splitting strategy to deal separately with the Fokker--Planck opinion formation process and with the epidemiological operators. Specifically, the opinion step is advanced using a semi-implicit, structure-preserving scheme \cite{ParZan}, whereas the epidemiological step is integrated using a classical fourth-order Runge--Kutta method. 

In the following, we will focus on the Chang--Cooper-type scheme \cite{ChaCoo} adopted for the Fokker--Planck step. This method is structure-preserving, meaning that it preserves at the numerical level the structural properties of the model, by conserving mass, maintaining the solution nonnegative, and capturing its correct long-time behavior.

\medskip 

\noindent \textbf{Domain discretization.} Given $N_x \in \mathbb{N}^*$ points in the graphon domain $\Omega = [0,1]$, the (uniform) spatial grid is given by
\begin{equation*}
    x_i = i \Delta x, \qquad i \in \{0, \ldots, N_x-1\}, \qquad \Delta x = \frac{1}{N_x-1},
\end{equation*}
with $x_{N_x-1} = 1$, while given $N_w \in \mathbb{N}^*$ points in the opinion domain $\mathcal{I} = [-1,1]$, the (uniform) opinion grid reads
\begin{equation*}
    w_j = -1 + j\Delta w, \qquad j \in \{0, \ldots, N_w-1\}, \qquad \Delta w = \frac{2}{N_w-1},
\end{equation*}
with $w_{N_w-1} = 1$. Finally, the time variable is discretized as 
\begin{equation*}
    t^n = n \Delta t, \qquad n \in \{0, \ldots, N_t\}, 
\end{equation*}
where $\Delta t>0$ is an adaptive time step chosen to satisfy the stability requirement \eqref{eq:CFL kinetic SIR} of the scheme (which will be presented later on) and we denote with $T > 0$ the final simulation time, which may vary between the different tests.

At each time $t^n$, the discrete approximation of each distribution function $f_J$, $J \in \mathcal{C}$, is represented by the values $(f_{J})_{i,j}^n \simeq f_J(t^n, x_i, w_j)$, forming a two-dimensional array for each compartment. The indices $i$ and $j$ run over all spatial and opinion grid points, respectively. 

\medskip 
\noindent \textbf{Chang--Cooper reconstruction of the $w$-flux.} We formally rewrite the Fokker--Planck operator $\mathbf{Q}$ defined by \eqref{eq:opinion operator Q} in gradient flow form as
\begin{equation*}
    Q_J(\mathbf{f}, \mathbf{f})(t, x, w) = \partial_w \mathcal{J}_J(t, x, w), \quad t \in \R_+,\; x \in \Omega,\; w \in \mathcal{I},
\end{equation*}
for any $J \in \mathcal{C}$, where the $J$-th microscopic flux in the opinion variable is given by 
\begin{align*}
    \mathcal{J}_J(t, x, w) &= \left( \lambda \sum_{J' \in \mathcal{C}} \mathcal{K}[f_{J'}](t, x, w)  - \sigma_J^2 w \sum_{J'\in \mathcal{C}} \mathcal{H}[f_{J'}](t, x)\right) f_J(t, x, w) \\
    & \hspace{3.5cm} +\frac{\sigma_J^2}{2} (1-w^2) \sum_{J'\in \mathcal{C}} \mathcal{H}[f_{J'}](t, x) \partial_w f_J(t, x, w), \quad t \in \R_+,\; x \in \Omega,\; w \in \mathcal{I}.
\end{align*}
This structure allows for a Chang--Cooper-type discretization in $w$. For this, let us define for any $t \in \R_+$, $x \in \Omega$, and $w \in \mathcal{I}$, the drift and diffusion terms
\begin{align*}
    \mathcal{T}_J^\drift(t, x, w) &= \lambda \sum_{J' \in \mathcal{C}} \mathcal{K}[f_{J'}](t, x, w) - \sigma_J^2w \sum_{J'\in \mathcal{C}} \mathcal{H}[f_{J'}](t, x), \\[2mm]
    \mathcal{T}_J^\diff(t, x, w) &= \frac{\sigma_J^2}{2} (1-w^2) \sum_{J' \in \mathcal{C}} \mathcal{H}[f_{J'}](t, x).
\end{align*}
At each interface $w_{j+\frac{1}{2}} = w_j + \frac{\Delta w}{2}$, the diffusion
coefficient is evaluated pointwise,
\begin{equation*}
    (\mathcal{T}_J^\diff)_{i, j+\frac{1}{2}}^n
    = \mathcal{T}_J^\diff\big(t^n, x_i, w_{j+\frac{1}{2}}\big),
\end{equation*}
while the interface drift coefficient is tied to the
cell integral of the ratio between drift and diffusion, namely
\begin{equation*}
    (\mathcal{T}_J^\drift)_{i, j+\frac{1}{2}}^n
    = \frac{(\mathcal{T}_J^\diff)_{i, j+\frac{1}{2}}^n}{\Delta w}\,
      \lambda_{i,j+\frac{1}{2}}^n,
    \qquad
    \lambda_{i,j+\frac{1}{2}}^n
    = \int_{w_j}^{w_{j+1}}
      \frac{\mathcal{T}_J^\drift(t^n, x_i, w)}{\mathcal{T}_J^\diff(t^n, x_i, w)}
      \, \dd w.
\end{equation*}
Then, at each fixed position $x_i$ on the graphon and time $t^n$, the Chang--Cooper
numerical flux in the $w$-direction is written as
\begin{equation*}
    (\mathcal{J}_J)_{i,j+\frac{1}{2}}^n
    = (\mathcal{T}_J^\drift)_{i, j+\frac{1}{2}}^n
      (\bar{f}_{J})_{i,j+\frac{1}{2}}^n
    + (\mathcal{T}_J^\diff)_{i, j+\frac{1}{2}}^n
      \frac{(f_{J})_{i,j+1}^n - (f_{J})_{i,j}^n}{\Delta w},
\end{equation*}
where the central weighted average $(\bar{f}_{J})_{i,j+\frac{1}{2}}^n$ is
defined by
\begin{equation*}
  (\bar{f}_{J})_{i,j+\frac{1}{2}}^n
  = \big(1-\delta_{i,j+\frac{1}{2}}^n\big)\, (f_{J})_{i,j+1}^n
  + \delta_{i,j+\frac{1}{2}}^n\, (f_{J})_{i,j}^n,
\end{equation*}
with weights
\begin{equation*}
    \delta_{i,j+\frac{1}{2}}^n
    = \frac{1}{\lambda_{i,j+\frac{1}{2}}^n}
    + \frac{1}{1 - \exp\big(\lambda_{i,j+\frac{1}{2}}^n\big)} \,\in (0,1).
\end{equation*}
The cell integrals $\lambda_{i,j+\frac{1}{2}}^n$ are approximated by an eighth--order accurate Gauss--Legendre quadrature
formula. However, in the two cells adjacent to
$w = \pm 1$, where the diffusion vanishes and the integrand is singular, the
integral is evaluated with the second--order accurate midpoint rule instead. Mass conservation is ensured by the no-flux boundary conditions
imposed at the boundaries of the opinion domain \cite{ParZan}.

\medskip 
\noindent \textbf{Discretization in time.} The drift--diffusion operator described above is integrated with a semi-implicit backward Euler step. For each fixed position $x_i$ on the graphon, the update in the opinion variable reads
\begin{equation*}
  \frac{(f_{J})_{i,j}^{n+1} - (f_{J})_{i,j}^n}{\Delta t} - \frac{1}{\Delta w} \left( (\mathcal{J}_J)_{i,j+\frac{1}{2}}^{n+1} - (\mathcal{J}_J)_{i,j-\frac{1}{2}}^{n+1} \right) = 0. 
\end{equation*}
This semi-implicit discretization leads, at each time step and for each position $x_i$, to a tridiagonal linear system in the $w$-index for $(f_{J})_{i,j}^{n+1}$, which can be written in matrix form as
\begin{equation*}
A[(\mathbf{f}_{J})_i^n] (\mathbf{f}_{J})_i^{n+1} = (\mathbf{f}_{J})_i^n,
\end{equation*}
where $(\mathbf{f}_{J})_i^n = ((f_{J})_{i,0}^n, \ldots, (f_{J})_{i,N_w-1}^n)$ is the vector of unknowns at time $t^n$ and $A[(\mathbf{f}_{J})_i^n]$ is the tridiagonal matrix whose entries depend on the drift--diffusion terms and on the Chang--Cooper weights introduced above.

A key property of this semi-implicit scheme is that, under a suitable restriction on the time step, nonnegativity of the solution is preserved, namely from $(\mathbf{f}_J)_i^n \geq 0$ it follows that $(\mathbf{f}_{J})_i^{n+1} \geq 0$. More precisely, nonnegativity is ensured as soon as the CFL (Courant--Friedrichs--Lewy) condition 
\begin{equation} \label{eq:CFL kinetic SIR}
    \Delta t < \frac{\Delta w}{2 C^n} \qquad \textnormal{with} \qquad C^n = \max_{\underset{j \in \{0, \dots, N_w-2\}}{\underset{i \in \{0, \dots, N_x-1\}}{J \in \mathcal{C}} }}   \left| (\mathcal{T}_{J}^\drift)_{i, j+\frac{1}{2}}^n \right|
\end{equation}
is satisfied \cite{ParZan}.

\medskip 
\noindent \textbf{Operator splitting for the full kinetic SIR system.} To efficiently advance system \eqref{eq:vectorial model} in time, we employ an operator splitting technique during which, for any fixed position $x_i$ on the graphon, opinion and epidemic dynamics are solved separately over each time interval $[t^n, t^{n+1}]$. We adopt a simple first-order splitting scheme composed of the following two steps.

Firstly, in the opinion formation step, for each compartment $J \in \mathcal{C}$ we solve
\begin{equation*}
    \partial_t f_J^* = \frac{1}{\tau} Q_J(\mathbf{f}^*, \mathbf{f}^*)
\end{equation*}
using the semi-implicit structure-preserving Chang--Cooper scheme described beforehand. Note that here $f_J^*$ is a two-dimensional array whose components correspond to a pair $(x_i, w_j)$. Similarly $\mathbf{f}^*$ denotes a three-dimensional array incorporating all values of the two-dimensional arrays $f_J^*$, $J \in \mathcal{C}$. The result of this step is then
\begin{equation*}
    f_J^{n,*}= \mathcal{Q}_J^{\Delta t}(\mathbf{f}^n, \mathbf{f}^n), 
\end{equation*}
for any $J \in \mathcal{C}$. Secondly, in the epidemiological step, we consider the updated opinion distributions $\mathbf{f}^*$ as initial datum to solve the SIR-type system 
\begin{align*}
    \left\{
    \begin{aligned}
    \partial_t f_S^{**} &= - f_S^{**} \int_{\Omega \times \mathcal{I}} \beta_T(w,w_*) f_I^{**}(y,w_*) \dd y \dd w_*, \\[2mm]
    \partial_t f_I^{**} &= f_S^{**} \int_{\Omega \times \mathcal{I}} \beta_T(w,w_*) f_I^{**}(y,w_*) \dd y \dd w_* - \gamma f_I^{**}, \\[4mm]
    \partial_t f_R^{**} &= \gamma f_I^{**},
    \end{aligned}
    \right.
\end{align*}
using a classical fourth-order Runge--Kutta method. The outcome of this step is
\begin{equation}
    f_J^{n,**} = \mathcal{E}_J^{\Delta t}(\mathbf{f}^{n,*}, \mathbf{f}^{n,*}),
\end{equation}
for any $J \in \mathcal{C}$. In conclusion, the overall update of the unknowns over one time step is given by 
\begin{equation}
    f_J^{n+1} = \mathcal{E}_J^{\Delta t} \left( \mathcal{Q}_J^{\Delta t}(\mathbf{f}^n, \mathbf{f}^n), \mathcal{Q}_J^{\Delta t}(\mathbf{f}^n, \mathbf{f}^n) \right),
\end{equation}
for any $J \in \mathcal{C}$. In all our simulations, the opinion domain $\mathcal{I}$ is discretized using a uniform grid of $N_w = 101$ points, while $\Omega$ is discretized with $N_x = 21$ evenly spaced nodes.

\subsection{Numerical algorithm for the popularity model} \label{code_popularity} 

As the popularity of a product is quantified by the variable $v \in \R_+$ and since the steady state of the kinetic equation \eqref{eq:popularity FP} is represented by an inverse gamma distribution with fat tails, in the numerical approximation the domain of $v$ must be truncated in an appropriate manner. In more detail, we discretize the popularity interval $[0,L]$ using a uniform grid
\begin{equation*}
    v_k = k \Delta v, \qquad k \in \{0,\dots,N_v-1\}, \qquad \Delta v = \frac{L}{N_v-1},
\end{equation*}
where the right boundary $L$ and the number of nodes $N_v$ are selected adaptively by balancing the retention of the inverse gamma tail-mass and a sufficient resolution of its peak, namely we impose a tail criterion based on the incomplete gamma function, choosing $L$ so that the residual density beyond $L$ is below a prescribed tolerance $\varepsilon_{\mathrm{tail}}$. In addition, we prescribe a target mesh size $\Delta v_{\mathrm{target}}$ to resolve the peak of the equilibria with a fixed number of grid points. The final pair $(N_v,L)$ is obtained by enforcing
\begin{equation*}
    \Delta v \leq \Delta v_{\mathrm{target}}, \qquad N_v \in [N_{\min},N_{\max}], \qquad L \geq L_{\min},
\end{equation*}
where $N_{\min}$ and $N_{\max}$ are user-prescribed bounds on the admissible grid size and $L_{\min}$ is a minimum admissible truncation ensuring that the peak region is included.

The time variable is then discretized as 
\begin{equation} 
    t^m = m \Delta t_p, \qquad m \in \{0, \ldots, N_{t_p}\}, 
\end{equation}
where $\Delta t_p>0$ is an adaptive time step chosen to satisfy the CFL condition \eqref{eq:CFL popularity} detailed later on (which ensures the stability of the scheme for the popularity) and in general $\Delta t_p \neq \Delta t$. We highlight however that the final simulation time $T$ does instead coincide with that used to run the structure-preserving scheme for the kinetic SIR system. This is natural, since the evolution of the popularity depends on how the nonlocal quantity $\mathcal{F}[f](t)$ evolves through $f = \sum_{J \in \mathcal{C}} f_J$.

At each time $t^m$, the discrete approximation of the distribution function $h$ is given by the values $h_k^m \simeq h(t^m, v_k)$, forming now a one-dimensional array with the index $k$ running over all popularity grid points. Proceeding as before, we formally rewrite the Fokker--Planck operator $Q_p$ defined by \eqref{eq:popularity FP} in gradient flow form
\begin{equation*}
    Q_p(h, f)(t, v) = \partial_v \mathcal{J}_p(t, v), \quad t \in \R_+,\; v \in [0,L],
\end{equation*}
where the flux is given by
\begin{equation*}
    \mathcal{J}_p(t, v) = \Big( (\mu + \zeta^2) v - \theta \mathcal{F}[f](t) \Big) h(t,v) + \frac{\zeta^2}{2} v^2 \partial_v h(t,v), \quad t \in \R_+,\; v \in [0,L].
\end{equation*}
Let us then define for any $t \in \R_+$ and $v \in [0,L]$ the drift and diffusion terms
\begin{align*}
    \mathcal{T}_p^\drift(t, v) = (\mu + \zeta^2) v - \theta \mathcal{F}[f](t), \qquad \mathcal{T}_p^\diff(t, v) = \frac{\zeta^2}{2} v^2.
\end{align*}
At each interface $v_{k+\frac{1}{2}} = v_k + \frac{\Delta v}{2}$, these terms are evaluated as 
\begin{equation*}
    \left(\mathcal{T}_p^\diff\right)_{k+\frac{1}{2}}^m
    = \mathcal{T}_p^\diff\big(t^m, v_{k+\frac{1}{2}}\big),
    \qquad
    \left(\mathcal{T}_p^\drift\right)_{k+\frac{1}{2}}^m
    = \frac{\left(\mathcal{T}_p^\diff\right)_{k+\frac{1}{2}}^m}{\Delta v}\,
      \lambda_{k+\frac{1}{2}}^m,
    \qquad
    \lambda_{k+\frac{1}{2}}^m
    = \int_{v_k}^{v_{k+1}}
      \frac{\mathcal{T}_p^\drift(t^m, v)}{\mathcal{T}_p^\diff(t^m, v)}\, \dd v,
\end{equation*}
and the Chang--Cooper flux formulation in $v$ reads
\begin{equation*}
    (\mathcal{J}_p)_{k+\frac{1}{2}}^m
    = (\mathcal{T}_p^\drift)_{k+\frac{1}{2}}^m
      \bar{h}_{k+\frac{1}{2}}^m
    + (\mathcal{T}_p^\diff)_{k+\frac{1}{2}}^m
      \frac{h_{k+1}^m - h_k^m}{\Delta v},
\end{equation*}
where the central weighted average $\bar{h}_{k+\frac{1}{2}}^m$ is defined by
\begin{equation*}
  \bar{h}_{k+\frac{1}{2}}^m
  = \big(1-\delta_{k+\frac{1}{2}}^m\big)\, h_{k+1}^m
  + \delta_{k+\frac{1}{2}}^m\, h_k^m,
\end{equation*}
with weights
\begin{equation*}
    \delta_{k+\frac{1}{2}}^m
    = \frac{1}{\lambda_{k+\frac{1}{2}}^m}
    + \frac{1}{1 - \exp\big(\lambda_{k+\frac{1}{2}}^m \big)} \,\in (0,1).
\end{equation*}
We approximate all the cell integrals $\lambda_{k+\frac{1}{2}}^m$ with an eighth--order accurate Gauss--Legendre quadrature formula.

At last, the drift--diffusion operator described above is integrated with a semi-implicit backward Euler step, providing the following temporal update in the popularity variable: 
\begin{equation*}
  \frac{h_k^{m+1} - h_k^m}{\Delta t_p} - \frac{1}{\Delta v} \left( (\mathcal{J}_p)_{k+\frac{1}{2}}^{m+1} - (\mathcal{J}_p)_{k-\frac{1}{2}}^{m+1} \right) = 0. 
\end{equation*}
In particular, time integration of system \eqref{eq:FP-SIR and popularity} involves two distinct CFL constraints. The SIR Fokker--Planck solver is run on a time grid determined by the time step $\Delta t$ computed from the CFL stability condition \eqref{eq:CFL kinetic SIR}, which provides values of the functional $\mathcal{F}[f](t)$ only at the corresponding output instants. The popularity equation is then advanced with an independent time step $\Delta t_p$ dictated by the stability condition (guaranteeing nonnegativity of the solution) of the structure-preserving scheme in the variable $v$, which reads
\begin{equation} \label{eq:CFL popularity}
    \Delta t_p < \frac{\Delta v}{2 C^m} \qquad \textnormal{with} \qquad C^m = \max_{k \in \{0, \dots, N_v-2\}}   \left| \left(\mathcal{T}_p^\drift\right)_{k+\frac{1}{2}}^m \right|.
\end{equation}
Since typically the popularity time step $\Delta t_p$ is much smaller than the kinetic SIR time step $\Delta t$, we point out that the quantity $\mathcal{F}[f](t)$ must be evaluated at intermediate times to determine the evolution of the popularity. We therefore reconstruct $\mathcal{F}[f](t)$ by piecewise-constant interpolation on the kinetic SIR output grid, namely we take $\mathcal{F}[f](t)=\mathcal{F}[f](t^n)$ for $t \in [t^n,t^{n+1})$.

\subsection{Test 1 -- Trends to equilibrium for the SIR system} 

We begin our numerical investigations by illustrating the trends to equilibrium proved in Theorem \ref{teo:SIR} for the simplified model \eqref{eq:vectorial model simplified}. For this, we fix $\alpha=0$ in the function \eqref{eq:function beta_T}, $G \equiv 1$, $\sigma_J^2 = \sigma^2>0$ for all $J \in \mathcal{C}$, and we consider initial distributions that are independent of the position on the graphon.

We focus our attention on two different graphons $\mathcal{B}$. The first \eqref{eq:fat-tailed graphon} has been introduced in Section \ref{connectivity}, with a propensity function $p$ given by \eqref{eq:propensity}, while the second \eqref{eq:graphon_app} is described in Appendix \ref{appA}, with propensity function $p$ given by \eqref{eq:propensity_app}. In particular, we make use of a cutoff $10^{-10}$ to deal with the singularities of the graphon \eqref{eq:fat-tailed graphon} when computing $p(x)$, namely we consider the transformation $(x,y) \mapsto (x+10^{-10}, y+10^{-10})$ (see the published code \cite{MATLAB} for more details).

Figure \ref{IC5 P2 POLARIZZ} presents the results of the analysis related to the first graphon. Note that the choice of the parameters $\chi$ and $\xi$ corresponds to the absence of opinion leaders (recall Sections \ref{propensity} and \ref{sec:derivation_simplified}). The distributions $f_S$, $f_I$, and $f_R$ are initialized as
\begin{gather*}
    f_S^\init(x,w)=\rho_S^\init \left(\frac{2}{3}\,\mathbb{I}_{\Omega \times [-1,0]}(x,w)+\frac{1}{3}\mathbb{I}_{\Omega \times [0,1]}(x,w)\right), \\[4mm] f_I^\init(x,w)=\frac{\rho_I^\init}{2}\mathbb{I}_{\Omega \times \mathcal{I}}(x,w), \qquad f_R^\init(x,w)=\frac{\rho_R^\init}{2} \mathbb{I}_{\Omega \times \mathcal{I}}(x,w), 
\end{gather*}
with $\rho_I^\init = \rho_R^\init = 10^{-3}$ and $\rho_S^\init = 1-\rho_I^\init-\rho_R^\init$. Their evolution over time is plotted for different fixed values of $x \in \Omega$. Even if the initial distributions are independent of the position on the graphon, its influence leads to opinion polarization for certain $x$ and to consensus formation for others. We highlight that, up to our knowledge, even if the numerical results do not perfectly match the theoretical equilibria, this level of precision in reproducing opinion polarization has never been achieved before with these numerical techniques (indeed, previous studies reported only graphs showing consensus formation). Obviously, in this case it is not possible to perfectly capture the analytical solutions because they diverge at the boundaries $w = \pm 1$. As a consequence, the numerical errors at the boundaries probably propagate in the interior of $\mathcal{I}$. Notice that $\rho_S$ is decreasing, $\rho_R$ is increasing, and $\rho_I$ initially increases and then decreases, in line with the dynamics of the classical SIR system \eqref{eq:SIR} with $\mathcal{R}_0 > 1$. Finally, as predicted by Theorem \ref{teo:SIR}, note that the $L^1(\Omega \times \mathcal{I})$ norms decrease as $t \to + \infty$, reaching values of order $10^{-2}$ or lower (the points $w=\pm 1$ were excluded when computing these norms).

Figure \ref{IC4 P6 CONSENSO} regards the second graphon. The distributions $f_S$, $f_I$, and $f_R$ are initialized as
\begin{gather*}
    f_S^\init(x,w) = 3 \rho_S^\init \mathbb{I}_{\Omega \times \left[\frac{1}{3},\frac{2}{3}\right]}(x,w), \\[4mm]
    f_I^\init(x,w)=\frac{\rho_I^\init}{2}\mathbb{I}_{\Omega \times \mathcal{I}}(x,w), \qquad f_R^\init(x,w)=\frac{\rho_R^\init}{2} \mathbb{I}_{\Omega \times \mathcal{I}}(x,w),
\end{gather*}
with $\rho_I^\init = \rho_R^\init = 10^{-3}$ and $\rho_S^\init = 1-\rho_I^\init-\rho_R^\init$.
We plot again their evolution over time for different fixed values of $x \in \Omega$, but we now observe consensus formation among all individuals. Like before, the results align with the dynamics of the classical SIR system \eqref{eq:SIR} and with Theorem \ref{teo:SIR}. It is clear that the $L^1(\Omega \times \mathcal{I})$ distance between the solutions to their corresponding equilibria decreases exponentially fast, an effect that was not evident in the previous test due to the numerical errors (recall that we were only able to prove a convergence of the order $\smallO(1/\sqrt{t})$ for $t \to + \infty$). The higher precision obtained in this case follows from the absence of opinion polarization, which prevents numerical errors at the boundaries $w = \pm 1$. In particular, the quantities $\| f_J - f_J^\infty \|_{L^1(\Omega \times \mathcal{I})}$ quickly reach values of order $10^{-6}$ and would continue to decrease as $t$ increases.

\begin{figure}[h!] 
    \centering
    
    \begin{subfigure}[t]{0.49\textwidth}
        \centering
        \includegraphics[width=\linewidth]{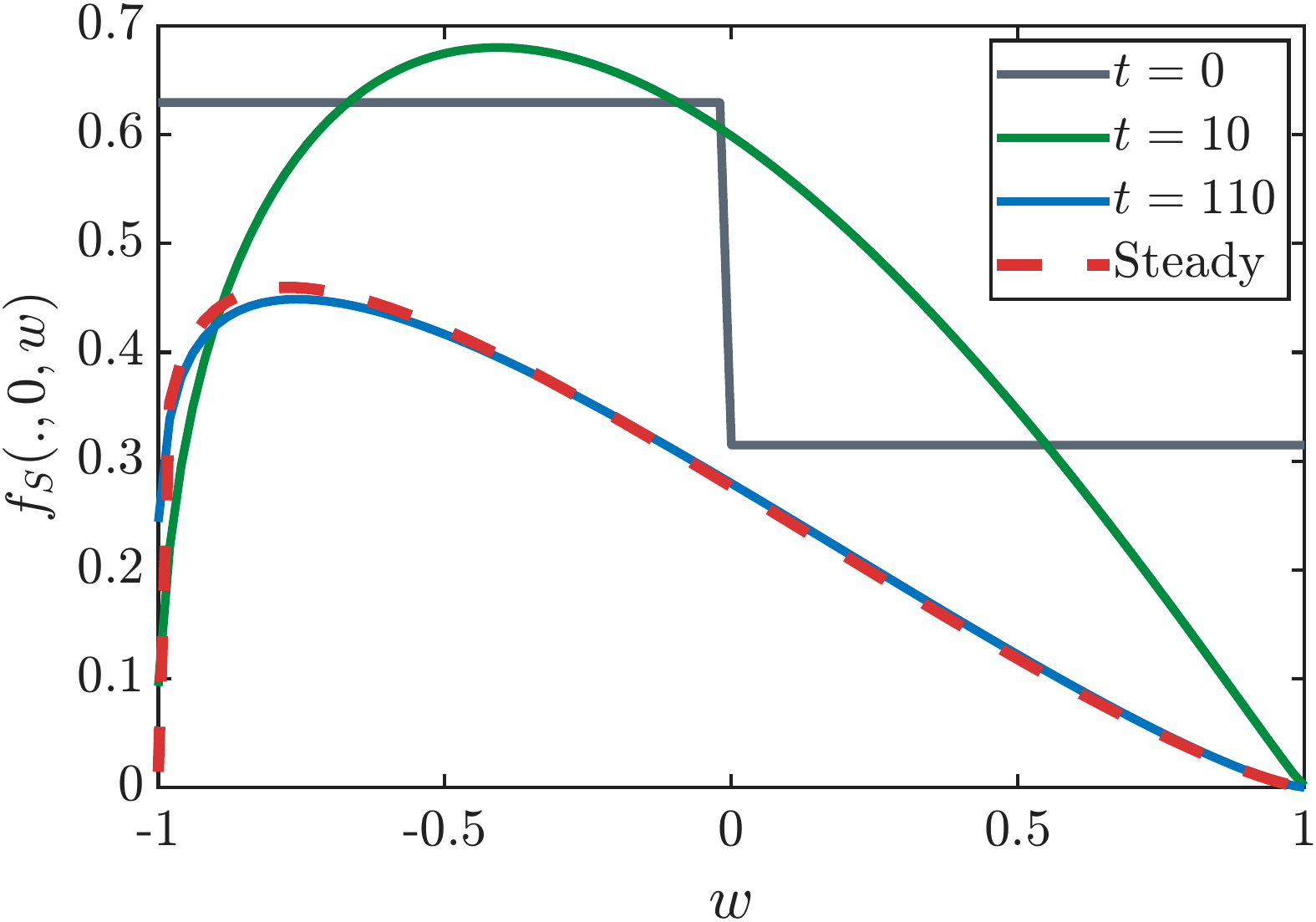}
        \caption*{(A)}
    \end{subfigure}\hfill
    \begin{subfigure}[t]{0.49\textwidth}
        \centering
        \includegraphics[width=\linewidth]{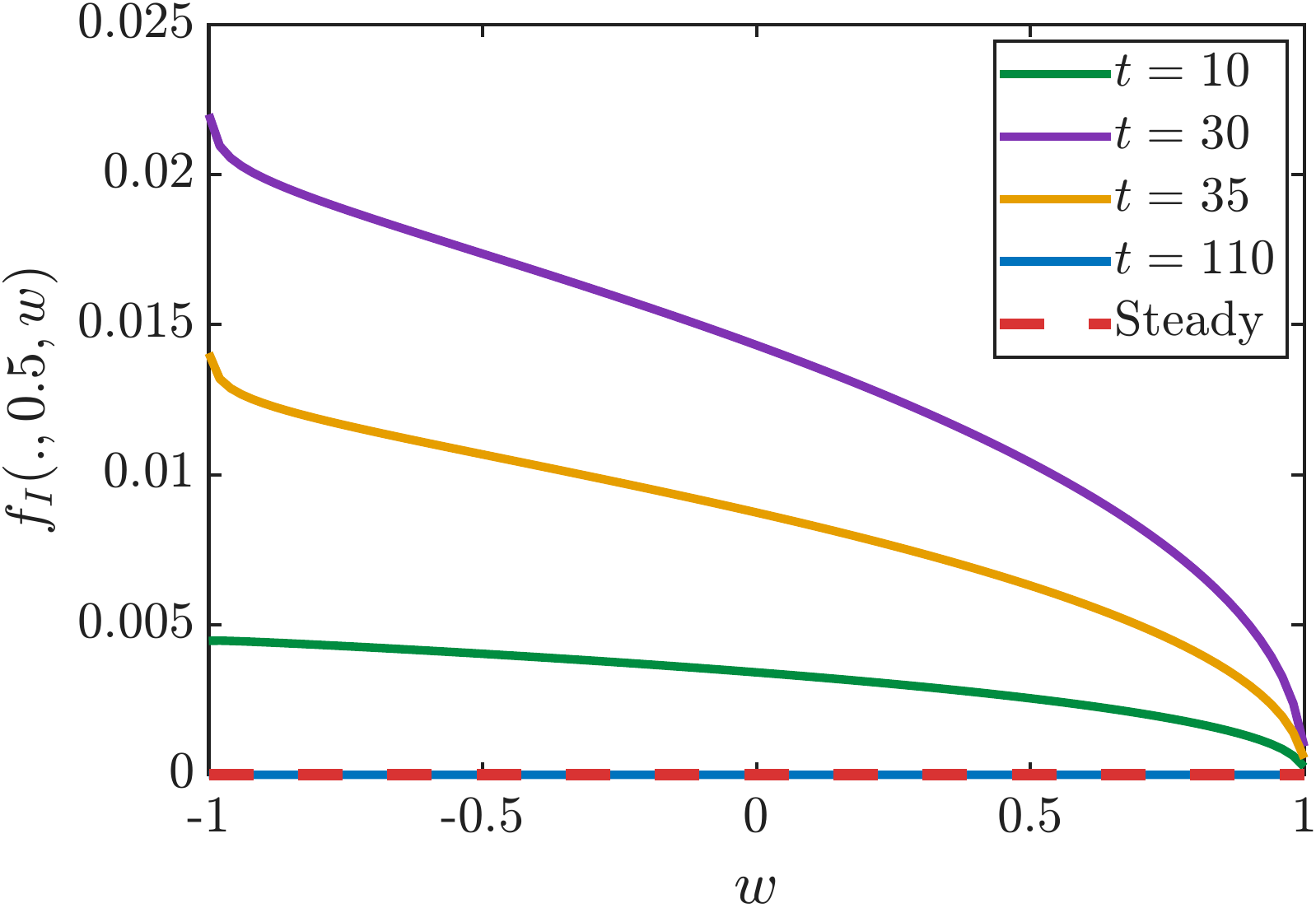}
        \caption*{(B)}
    \end{subfigure}

    \vspace{0.8em}

    \begin{subfigure}[t]{0.49\textwidth}
        \centering
        \includegraphics[width=\linewidth]{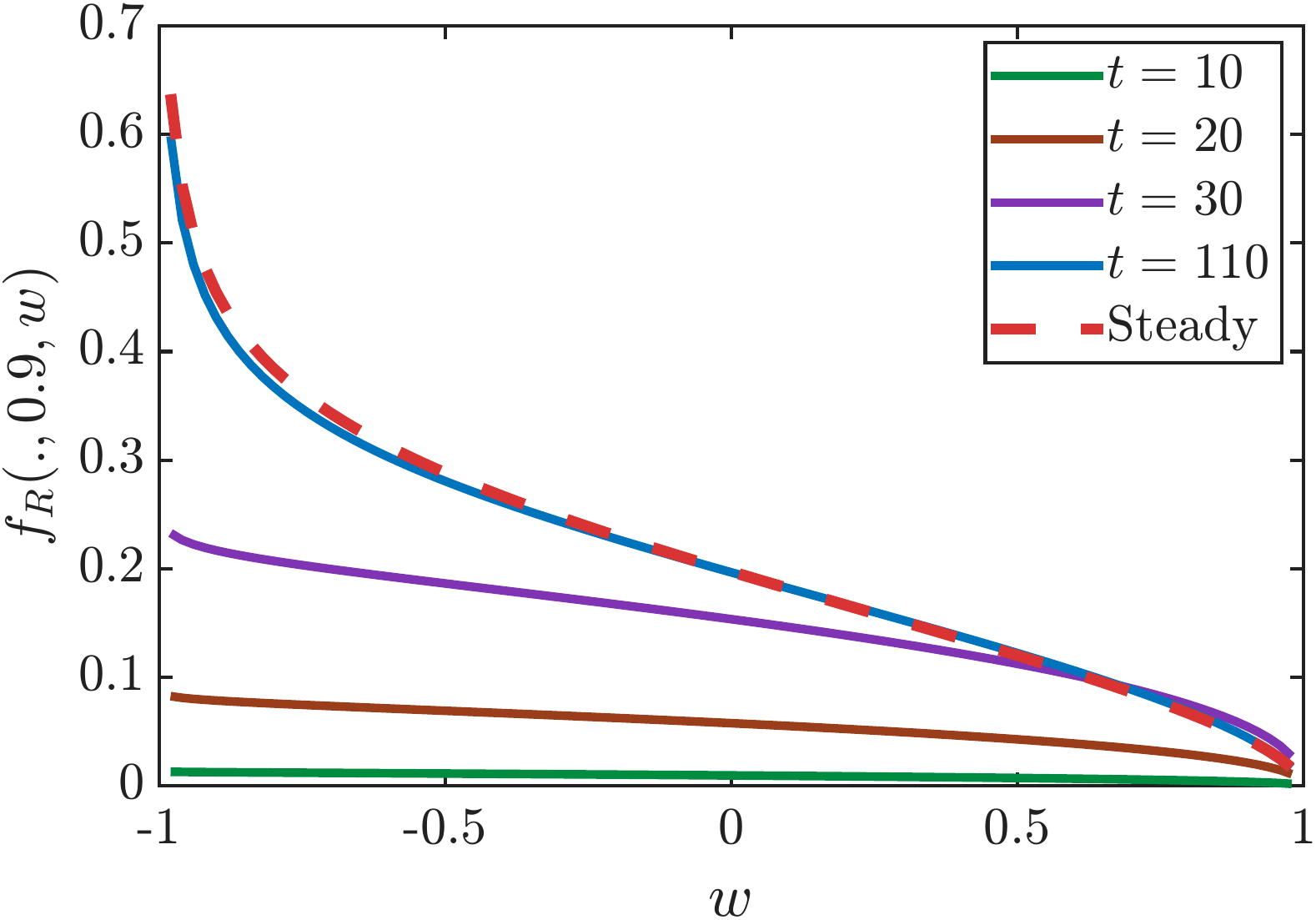}
        \caption*{(C)}
    \end{subfigure}\hfill
    \begin{subfigure}[t]{0.49\textwidth}
        \centering
        \includegraphics[width=\linewidth]{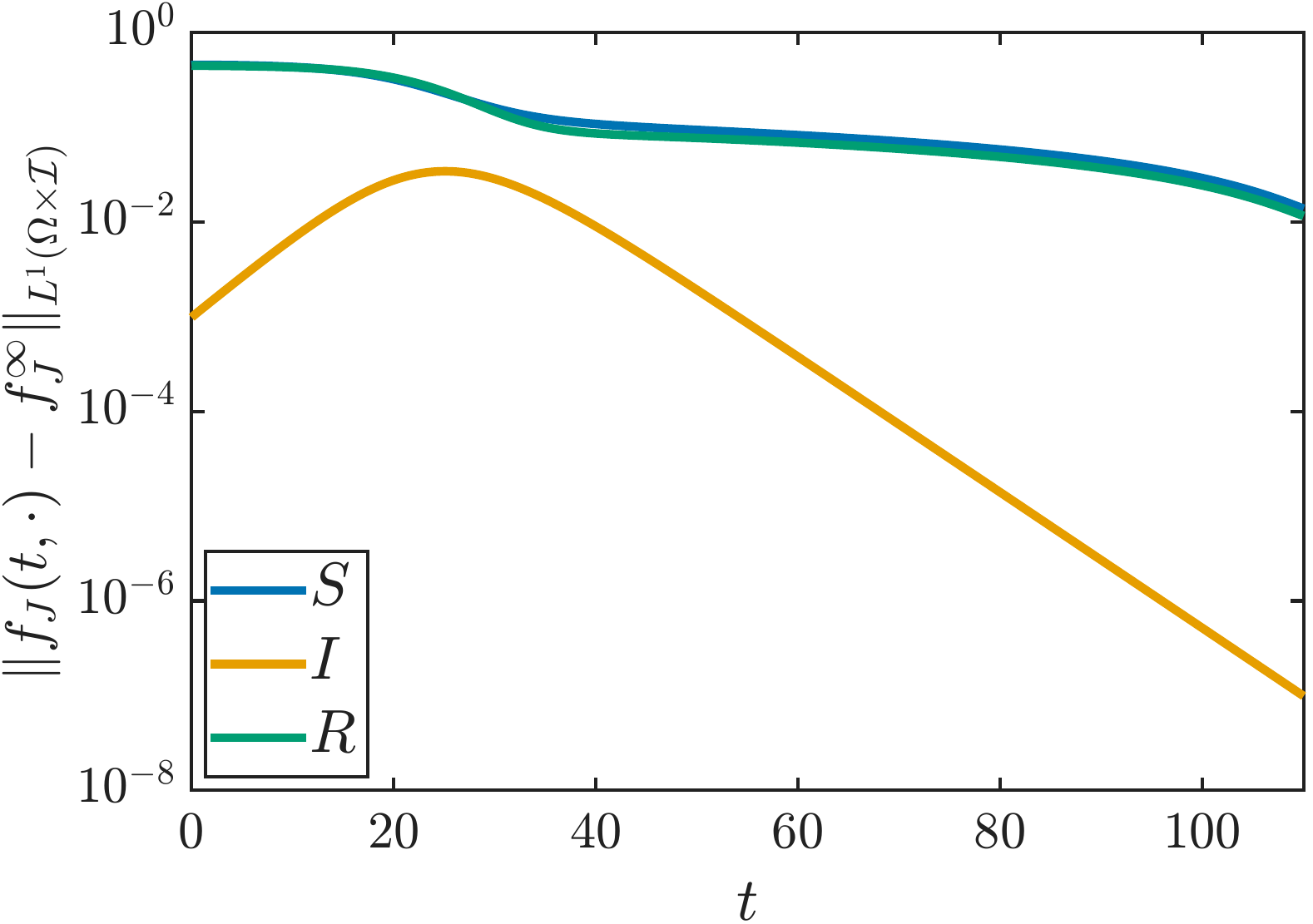}
        \caption*{(D)}
    \end{subfigure}

  \caption{\textbf{Test 1.} Evolution of the simplified model \eqref{eq:vectorial model simplified} with epidemiological transition rate $\beta_T$ defined by \eqref{eq:function beta_T} and graphon $\mathcal{B}$ given by \eqref{eq:fat-tailed graphon}. Opinion distributions of susceptible (A), infected (B), and removed (C) individuals at different time instants and different fixed positions $x \in \Omega$ on the graphon. Note that we only plot $f_S^\init$, since $f_I^\init$ and $f_R^\init$ are uniform distributions with a very small mass. The dashed curves represent the global equilibria \eqref{eq:f_inf}. The last plot (D) shows the time-evolution of the $L^1(\Omega \times \mathcal{I})$ distance between each distribution $f_J$ and its corresponding equilibrium $f_J^\infty$. Values of the parameters: $\beta=0.8$, $\alpha = 0$ \eqref{eq:function beta_T}, $\gamma=0.6$, $\lambda=1$, $\sigma^2 = 0.16$, $\tau=1$, $\xi=0.05$ \eqref{eq:fat-tailed graphon}, $\chi=0.5$ \eqref{eq:P}, and $a=1$ \eqref{eq:g_P}. }
\label{IC5 P2 POLARIZZ}
\end{figure}

\begin{figure}[h!] 
    \centering
    
    \begin{subfigure}[t]{0.49\textwidth}
        \centering
        \includegraphics[width=\linewidth]{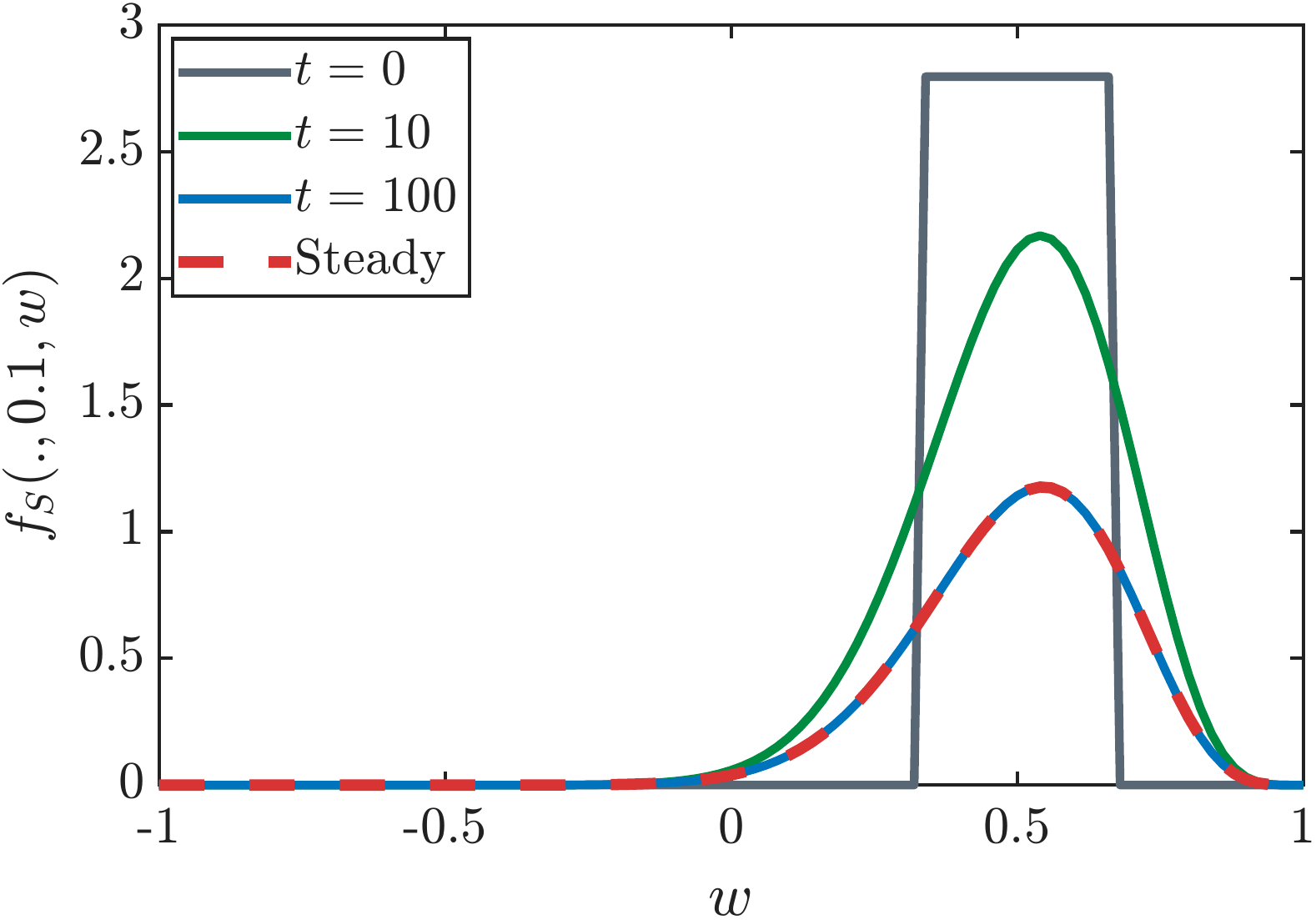}
        \caption*{(A)}
    \end{subfigure}\hfill
    \begin{subfigure}[t]{0.49\textwidth}
        \centering
        \includegraphics[width=\linewidth]{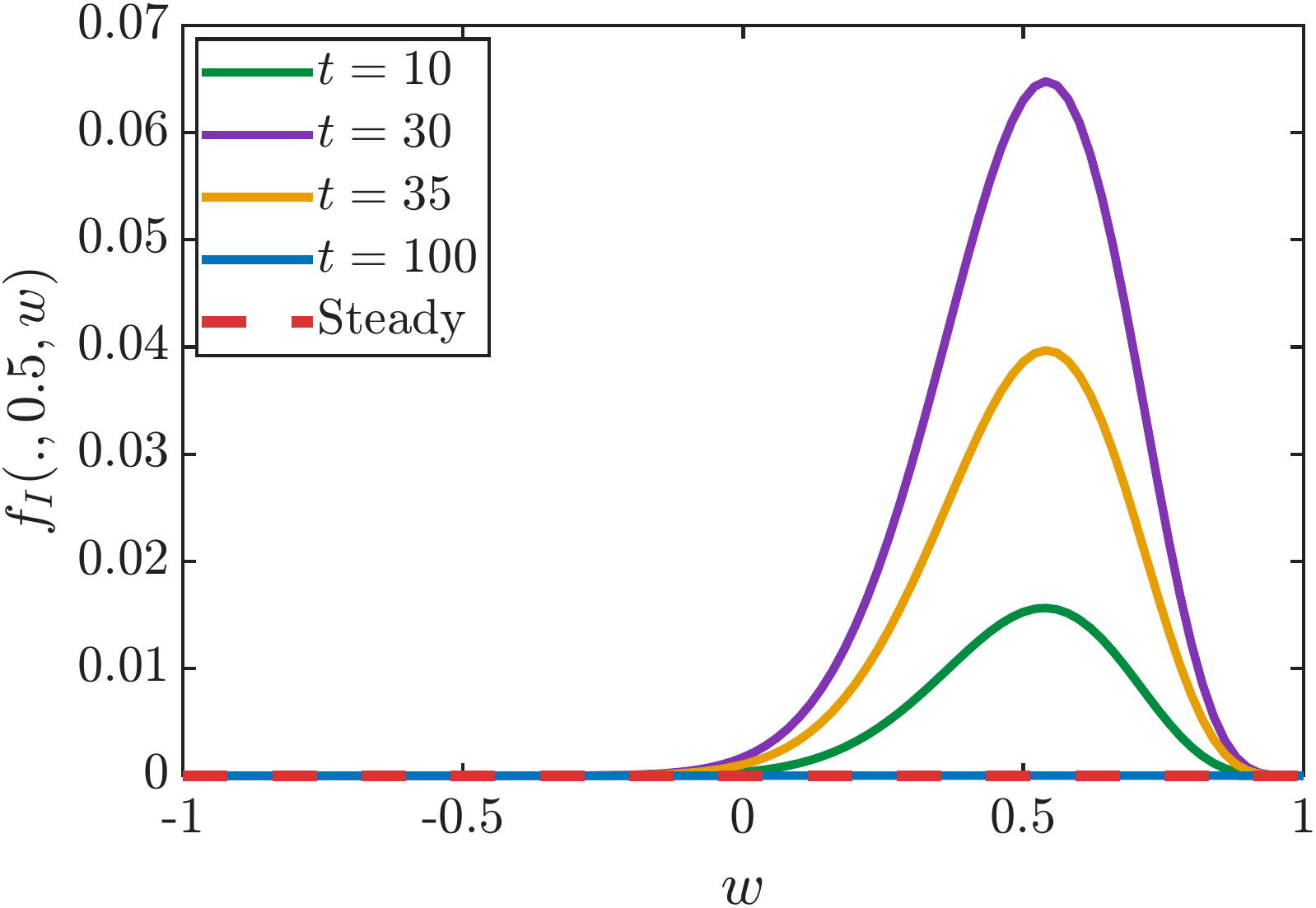}
        \caption*{(B)}
    \end{subfigure}

    \vspace{0.8em}

    \begin{subfigure}[t]{0.49\textwidth}
        \centering
        \includegraphics[width=\linewidth]{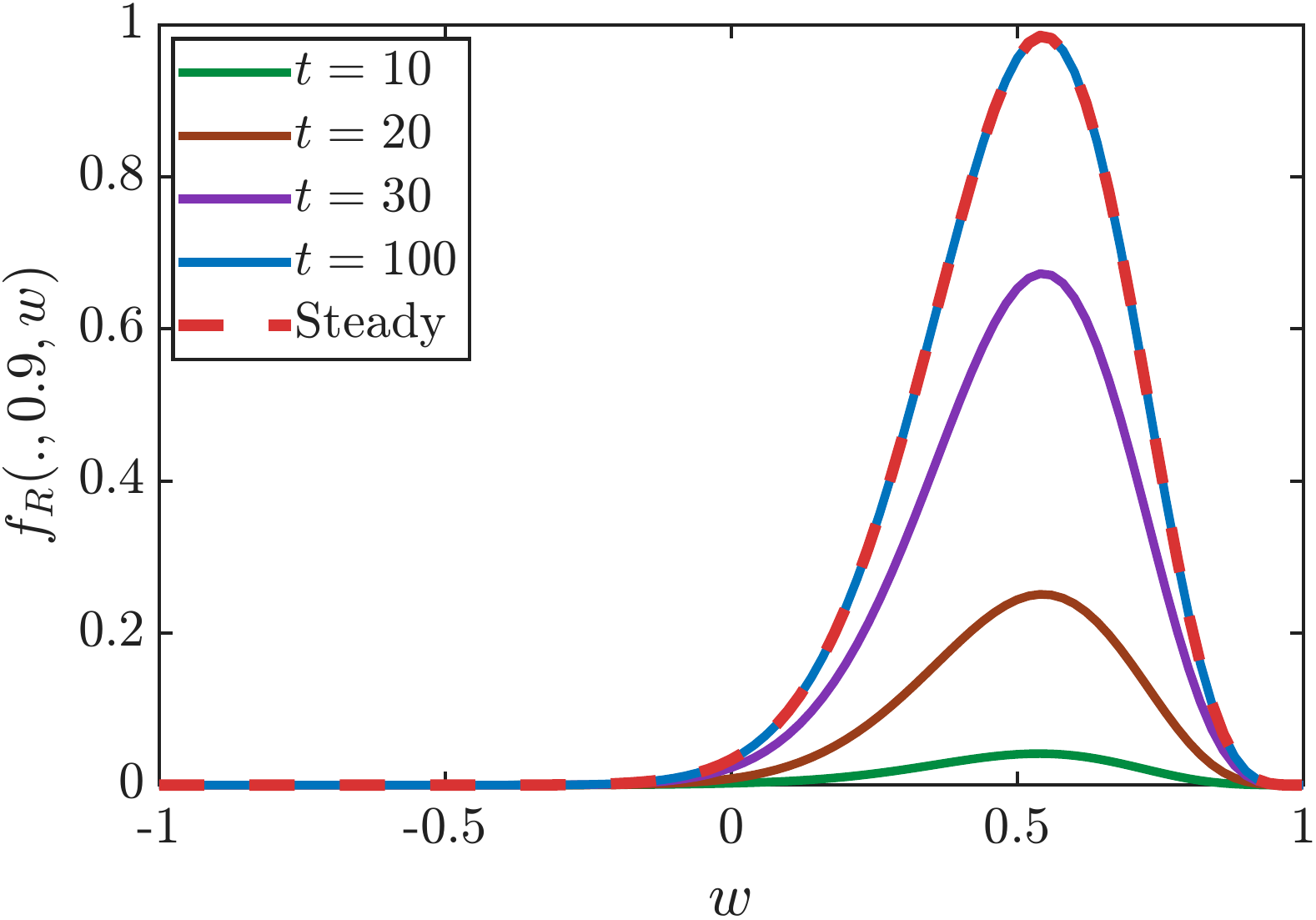}
        \caption*{(C)}
    \end{subfigure}\hfill
    \begin{subfigure}[t]{0.49\textwidth}
        \centering
        \includegraphics[width=\linewidth]{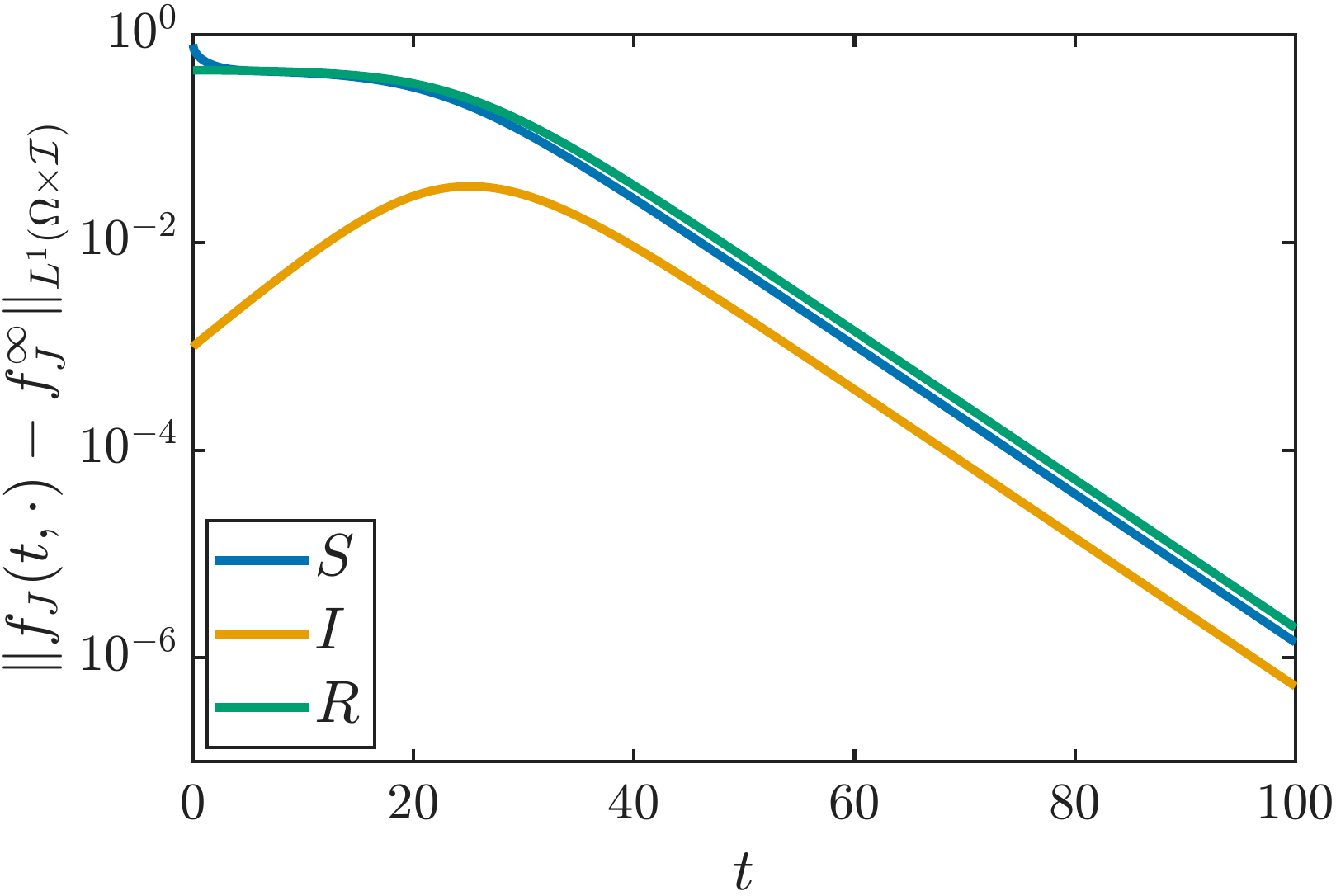}
        \caption*{(D)}
    \end{subfigure}

  \caption{\textbf{Test 1.} Evolution of the simplified model \eqref{eq:vectorial model simplified} with epidemiological transition rate $\beta_T$ defined by \eqref{eq:function beta_T} and graphon $\mathcal{B}$ given by \eqref{eq:graphon_app}. Opinion distributions of susceptible (A), infected (B), and removed (C) individuals at different time instants and different fixed positions $x \in \Omega$ on the graphon. Note that only $f_S^\init$ is displayed, since $f_I^\init$ and $f_R^\init$ are uniform distributions with very small mass. The dashed curves represent the global equilibria \eqref{eq:f_inf}. The last plot (D) shows the time-evolution of the $L^1(\Omega \times \mathcal{I})$ distance between each distribution $f_J$ and its corresponding equilibrium $f_J^\infty$. Values of the parameters: $\beta=0.8$, $\alpha = 0$ \eqref{eq:function beta_T}, $\gamma=0.6$, $\lambda=1$, $\sigma^2 = 0.01$, $\tau=1$, $r = 0.2$ \eqref{eq:graphon_app}, $\chi=0.5$ \eqref{eq:P}, and $a=0.5$ \eqref{eq:g_P}. }
\label{IC4 P6 CONSENSO}
\end{figure}

\subsection{Test 2 -- Trends to equilibrium for the popularity}

We proceed by investigating the trends to equilibrium for the popularity model \eqref{eq:FP-SIR and popularity}. We start from the previous simulations to reproduce the epidemic spread. The discretization parameters introduced in Section \ref{code_popularity} are fixed as follows: the tail tolerance is set to $\varepsilon_{\mathrm{tail}} = 10^{-14}$, the minimum admissible domain size is $L_{\min} = 8\,v_{\mathrm{peak}}$, and the number of grid points in the variable $v$ is constrained within the range $N_{\min} = 101$ and $N_{\max} = 2001$ (see the published code \cite{MATLAB} for more details). 

Figure \ref{fig:trend_pop} shows the results of this analysis. Looking at the second test, it is visible that $\mathcal{F}[f]$ converges exponentially fast toward $\mathcal{F}^\infty$, implying the $L^1(\R_+)$ convergence of $h$ toward $h^\infty$, as anticipated by Remark \ref{rmk:L1_conv}. Moreover, this relaxation appears to be exponential and the $L^1(\R_+)$ norm of the difference between the solution and the corresponding equilibrium reaches values of order $10^{-3}$. The first test shows a similar behavior, but a less precise converge to equilibrium due to the propagation of errors in the simulation of the kinetic SIR model in presence of opinion polarization. 
Note that, since in general a closed form of the equilibrium is not available for system \eqref{eq:vectorial model}, the steady value $\mathcal{F^\infty}$ is approximated using the numerical solution of the kinetic SIR model obtained at the final time $T$ of our simulations, meaning that we take $\mathcal{F^\infty} = \mathcal{F}[f](T)$.

\begin{figure}[h!] 
    \centering
    
    \begin{subfigure}[t]{0.5\textwidth}
        \centering
        \includegraphics[width=\linewidth]{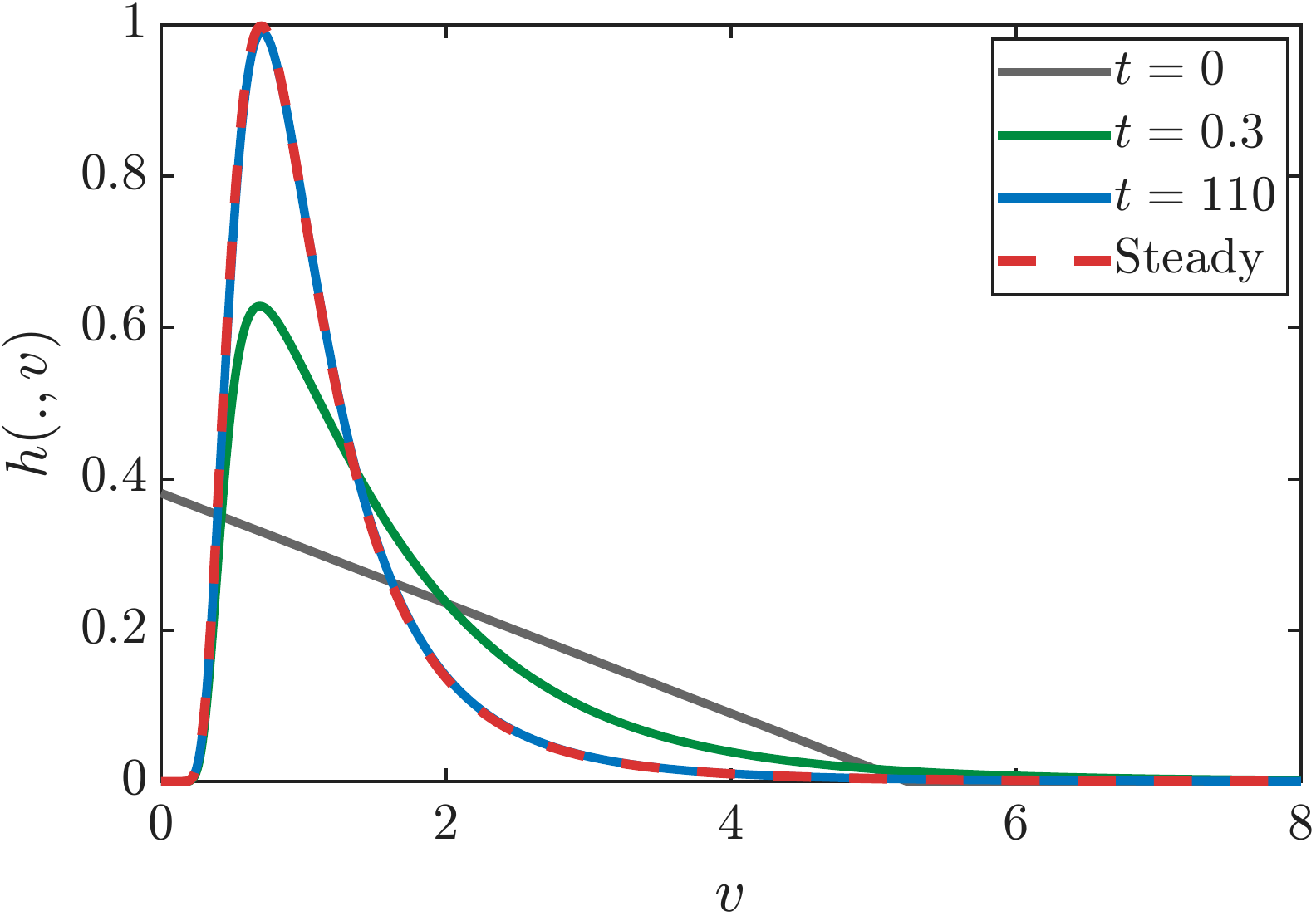}
        \caption*{(A)}
    \end{subfigure}\hfill
    \begin{subfigure}[t]{0.5\textwidth}
        \centering
        \includegraphics[width=\linewidth]{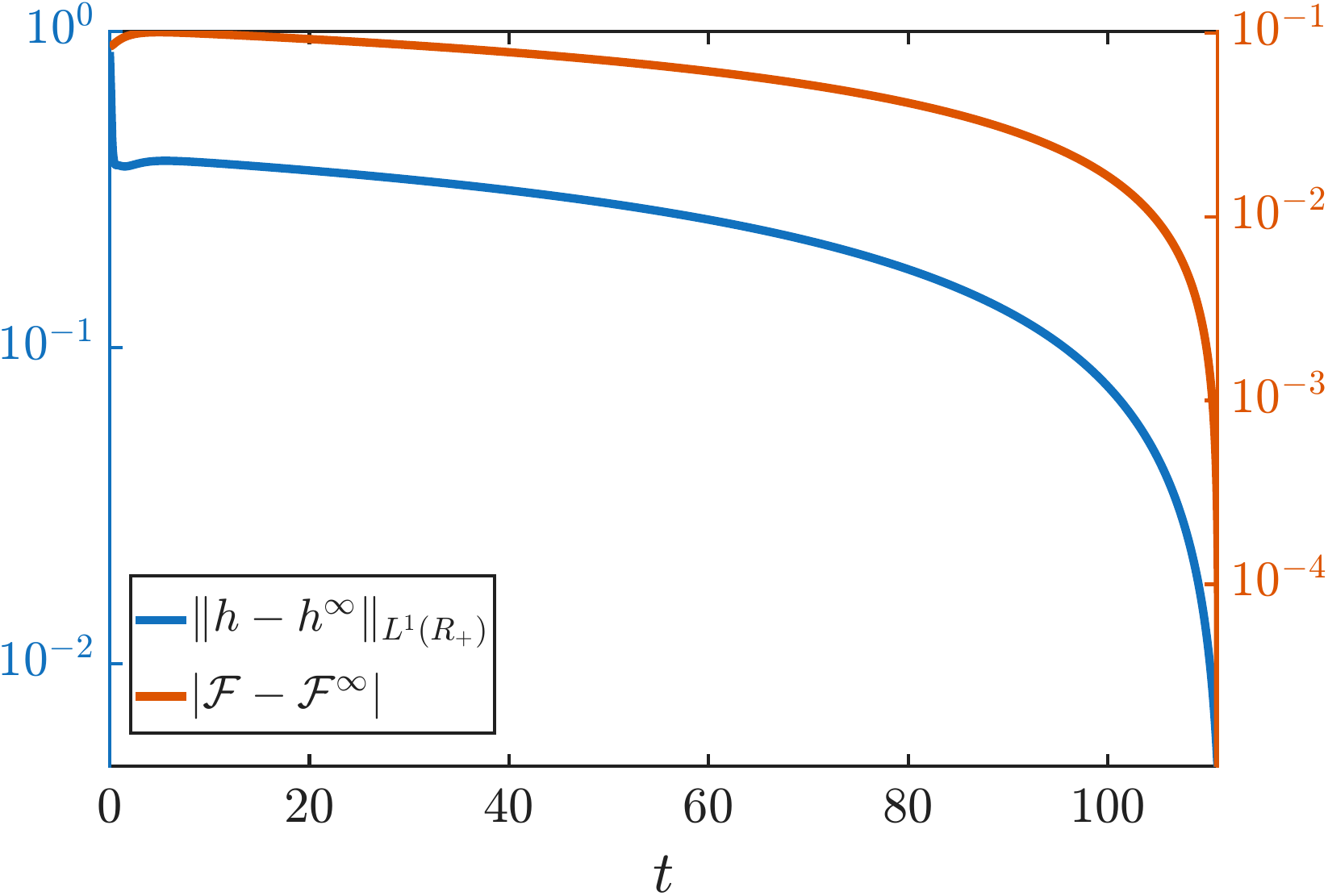}
        \caption*{(B)}
    \end{subfigure}

    \vspace{0.8em}

    \begin{subfigure}[t]{0.49\textwidth}
        \centering
        \includegraphics[width=\linewidth]{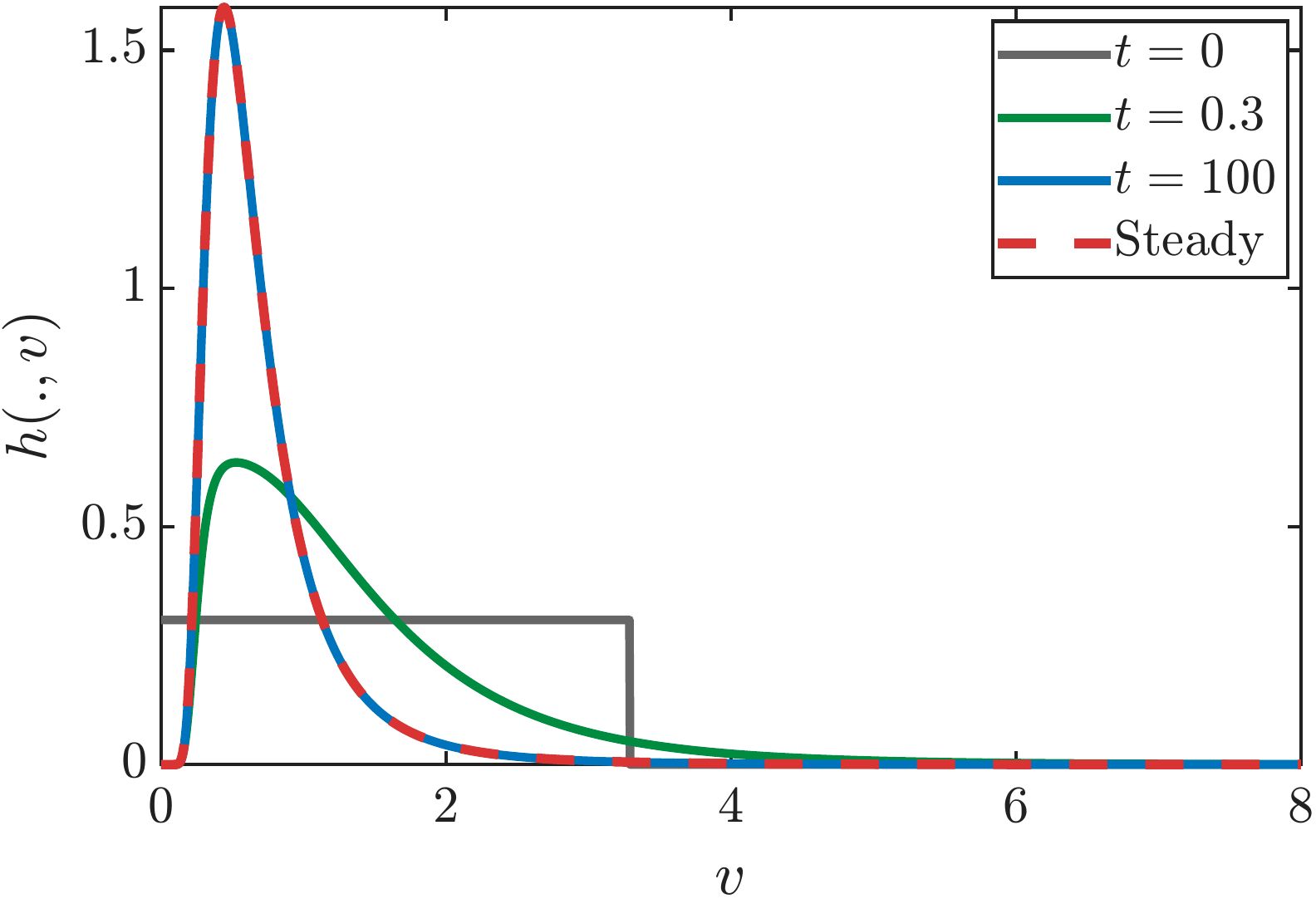}
        \caption*{(C)}
    \end{subfigure}\hfill
    \begin{subfigure}[t]{0.49\textwidth}
        \centering
        \includegraphics[width=\linewidth]{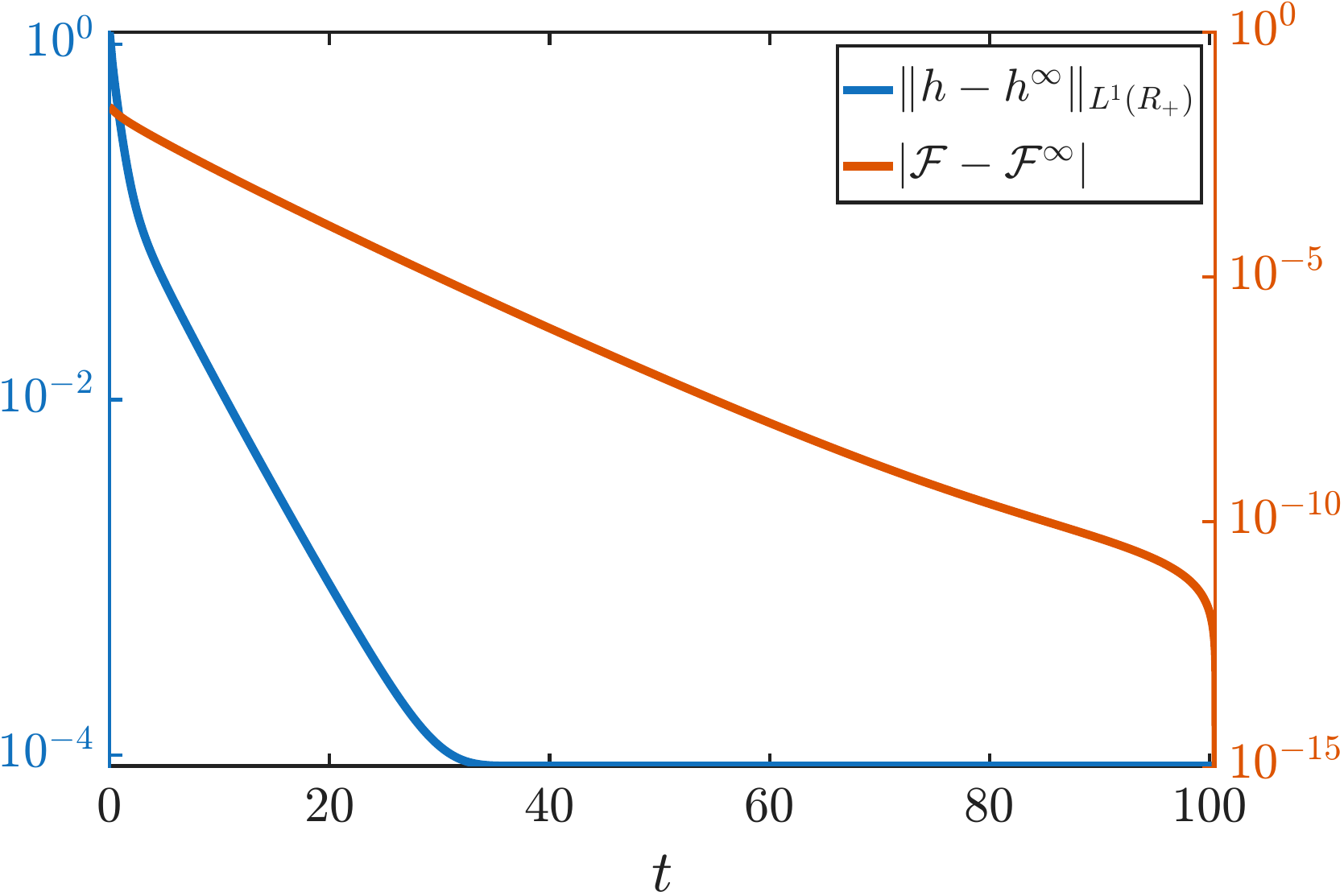}
        \caption*{(D)}
    \end{subfigure}

  \caption{\textbf{Test 2.} Evolution of the popularity model \eqref{eq:FP-SIR and popularity} depending on that of $\mathbf{f}$, which varies as in Figure \ref{IC5 P2 POLARIZZ} ((A) and (B) with $\hat{w}=-0.3$) or as in Figure \ref{IC4 P6 CONSENSO} ((C) and (D) with $\hat{w}=0.3$). In figures (A) and (C) we plot the distribution of the popularity at different time instants. The dashed curve represents the global equilibrium \eqref{eq:global equilibria popularity FP}. Figures (B) and (D) show the evolution of the quantity $|\mathcal{F}[f](t)-\mathcal{F}^\infty|$ and of the $L^1(\R_+)$ distance between the distribution $h$ from its corresponding equilibrium $h^\infty$. Values of the parameters: $\mu=1.5$, $\theta=5$, $\zeta^2 =1$, and $\tau_p=1$.}
\label{fig:trend_pop}
\end{figure}

\subsection{Test 3 -- Oscillations of the reproduction number and waves of infection}

We next aim to show that, despite its reduced complexity, our simplified model \eqref{eq:vectorial model simplified} is able to produce rich dynamics like the formation of epidemic waves without resorting to the use of external opinion controls \cite{BonTosZan}. For this, we choose $\alpha = 1$ in the epidemiological transition rate function $\beta_T$ defined by \eqref{eq:function beta_T}, in order to recover at the macroscopic level the generalized SIR system \eqref{eq:generalized SIR} which is associated with the time-dependent effective reproduction number \eqref{eq:effective reproduction number}. We then consider the fat-tailed graphon $\mathcal{B}$ given by \eqref{eq:fat-tailed graphon} and the interaction function $P$ defined by \eqref{eq:P}, associated with the presence of agents with a low propensity to interact (blue curve in Figure \ref{fig:propensity to interact}; note that, due to the cutoff, $0< \min_{x \in \Omega} p(x) \sim 10^{-3}$, hence the model is well-defined). Moreover, we assume that the exchanges in $\mathcal{I}$ are modulated by a bounded confidence-type function \cite{HegKra} of the form
\begin{equation} \label{eq:bounded confidence}
    G(w,w_*)=
        \left\{
        \begin{aligned}
            & 1 \quad \textnormal{if}\ \  |w-w_*| \leq \Delta, \\[2mm] 
            & 0 \quad \textnormal{if} \ \   |w-w_*| > \Delta,
        \end{aligned}
        \right.
\end{equation}
depending on a parameter $\Delta \in (0,2)$, which allows for interactions only between individuals that have close enough opinions. We initialize the distributions $f_S$, $f_I$, and $f_R$ as
\begin{gather*}
    f_S^\init(x,w) = \rho_S^\init \left( \frac{c}{4}\mathbb{I}_{A_1}(x,w) + c \mathbb{I}_{A_2}(x,w)\right), \\[4mm] f_I^\init(x,w)=\frac{\rho_I^\init}{2}\mathbb{I}_{\Omega \times \mathcal{I}}(x,w), \qquad f_R^\init(x,w)=\frac{\rho_R^\init}{2} \mathbb{I}_{\Omega \times \mathcal{I}}(x,w),
\end{gather*}
where $\rho_I^\init = \rho_R^\init = 10^{-3}$ and $\rho_S^\init = 1-\rho_I^\init-\rho_R^\init$, $A_1$ and $A_2$ are congruent rectangular subsets of $\Omega \times \mathcal{I}$ centered around the points $(x_1,w_1) = (0.8,-0.6)$ and $(x_2,w_2) = (0.2,0.6)$ respectively, and $c > 0$ is a constant chosen such that $f_S^\init$ has mass $\rho_S^\init$. In particular, compared to the previous setting we consider an asymmetric initial datum for the population of susceptible individuals, with two main groups of opposite opinions and different levels of connectivity (agents with positive opinions being four times more numerous).

Figure \ref{fig:reduced model epidemic waves} shows the results of this test. Looking at the evolution of the density $\rho_I$, we deduce that the epidemic spreads over multiple consecutive waves of infection, whose impact decreases over time before fading out completely \cite{BonTosZan}. This phenomenon stems directly from the oscillating behavior of the effective reproduction number $\mathcal{R}_{\mathrm{eff}}$ \eqref{eq:effective reproduction number}, and is the result of non-equilibrium competitive interactions between highly connected susceptible and infected individuals that are strongly polarized on extreme positive and negative stances, respectively, causing the incidence rate (which depends on their mean opinions $m_S$ and $m_I$) in the generalized SIR system \eqref{eq:generalized SIR} to oscillate.

\begin{figure}
    \centering
    
    \begin{subfigure}[t]{0.5\textwidth}
        \centering
        \includegraphics[width=\linewidth]{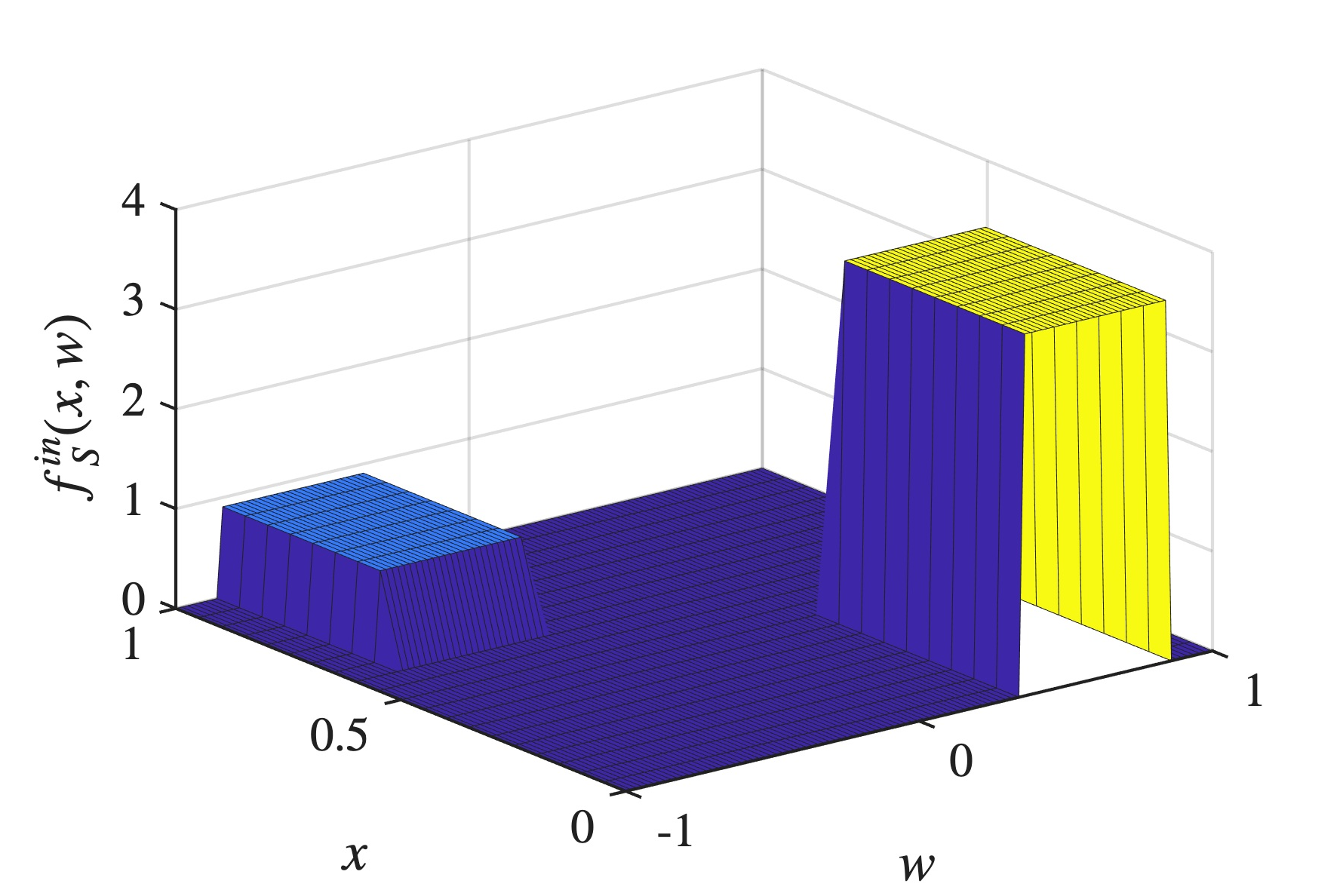}
        \caption*{(A)}
    \end{subfigure}\hfill
    \begin{subfigure}[t]{0.5\textwidth}
        \centering
        \includegraphics[width=\linewidth]{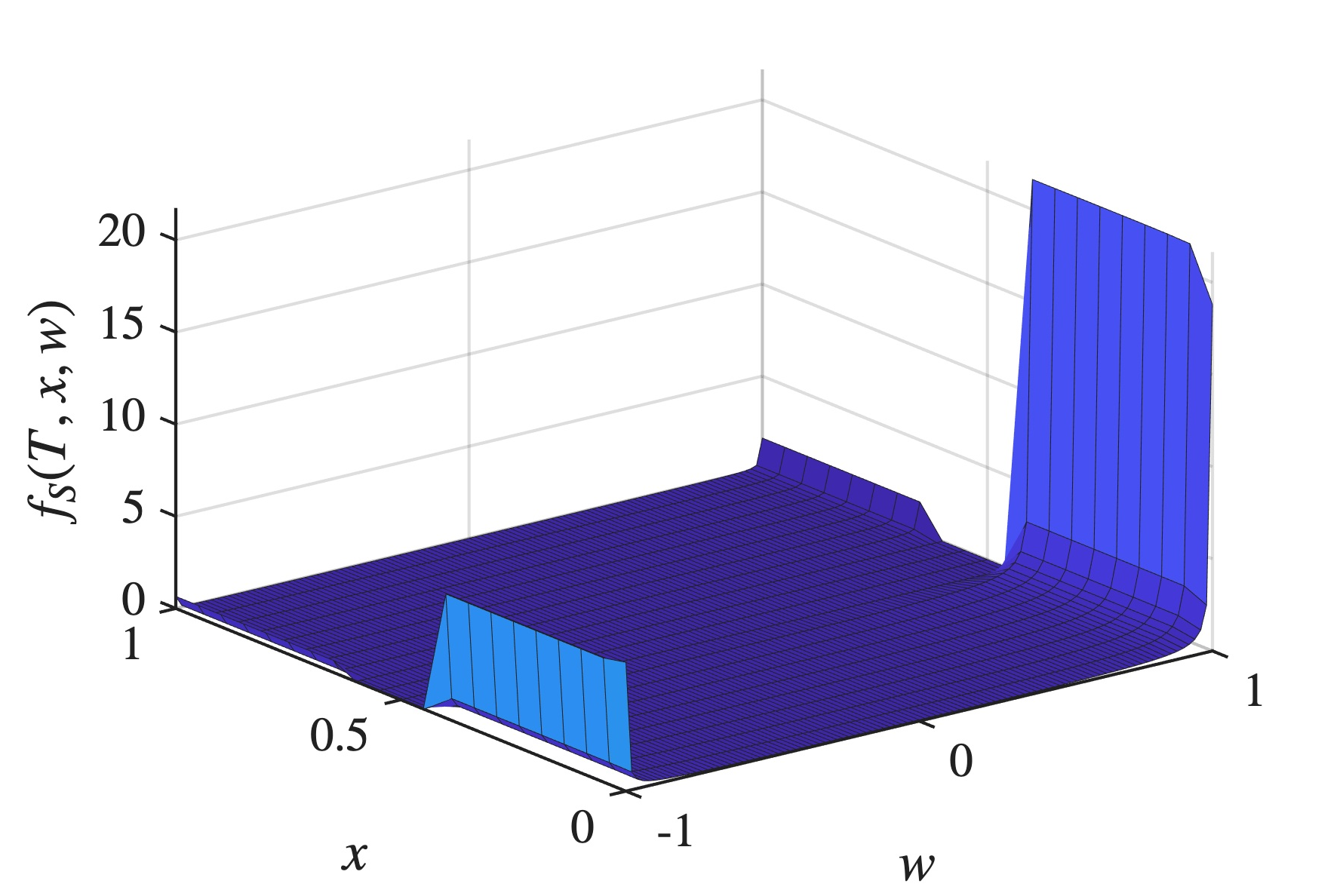}
        \caption*{(B)}
    \end{subfigure}

    \vspace{0.8em}

    \begin{subfigure}[t]{0.49\textwidth}
        \centering
        \includegraphics[width=\linewidth]{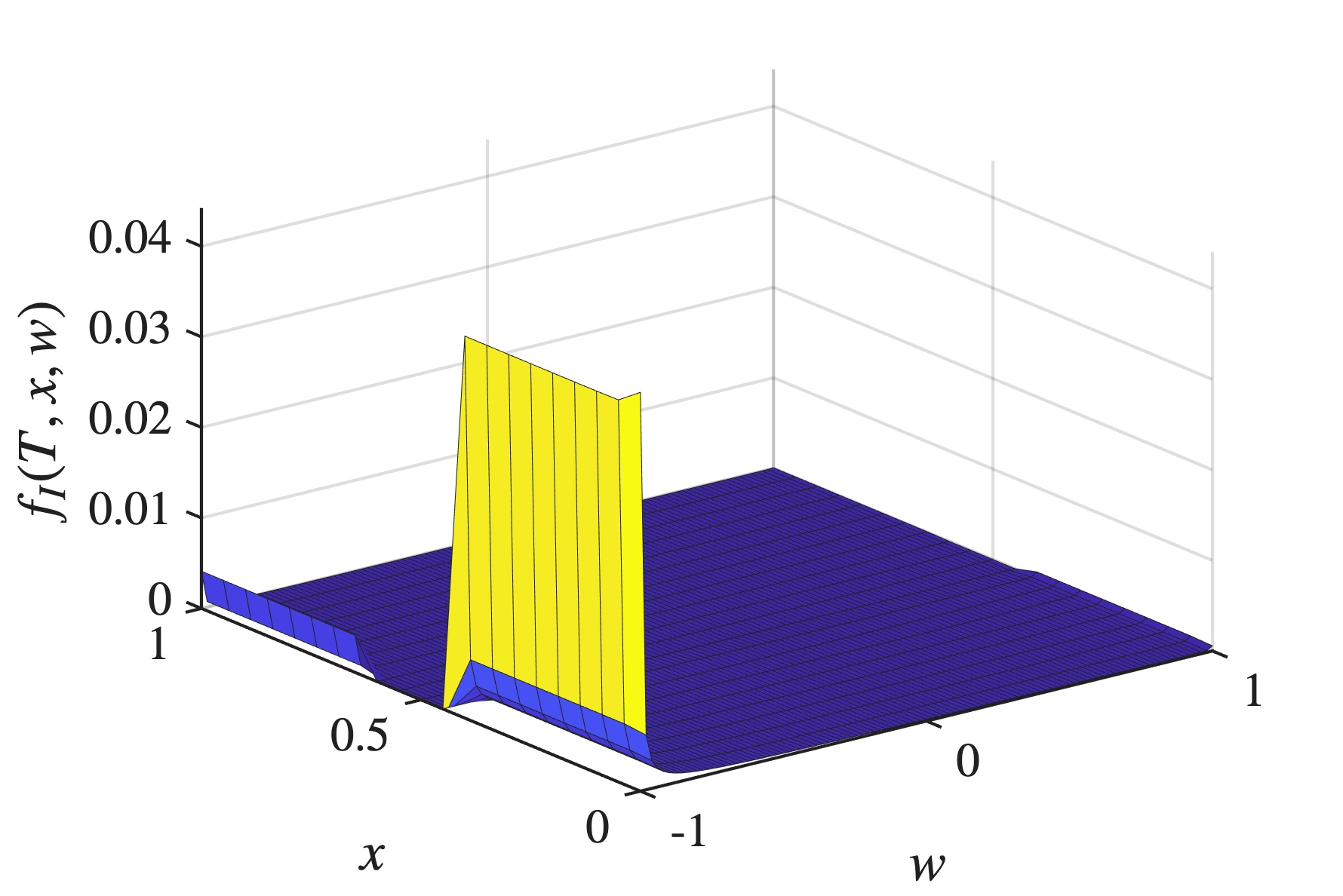}
        \caption*{(C)}
    \end{subfigure}\hfill
    \begin{subfigure}[t]{0.49\textwidth}
        \centering
        \includegraphics[width=\linewidth]{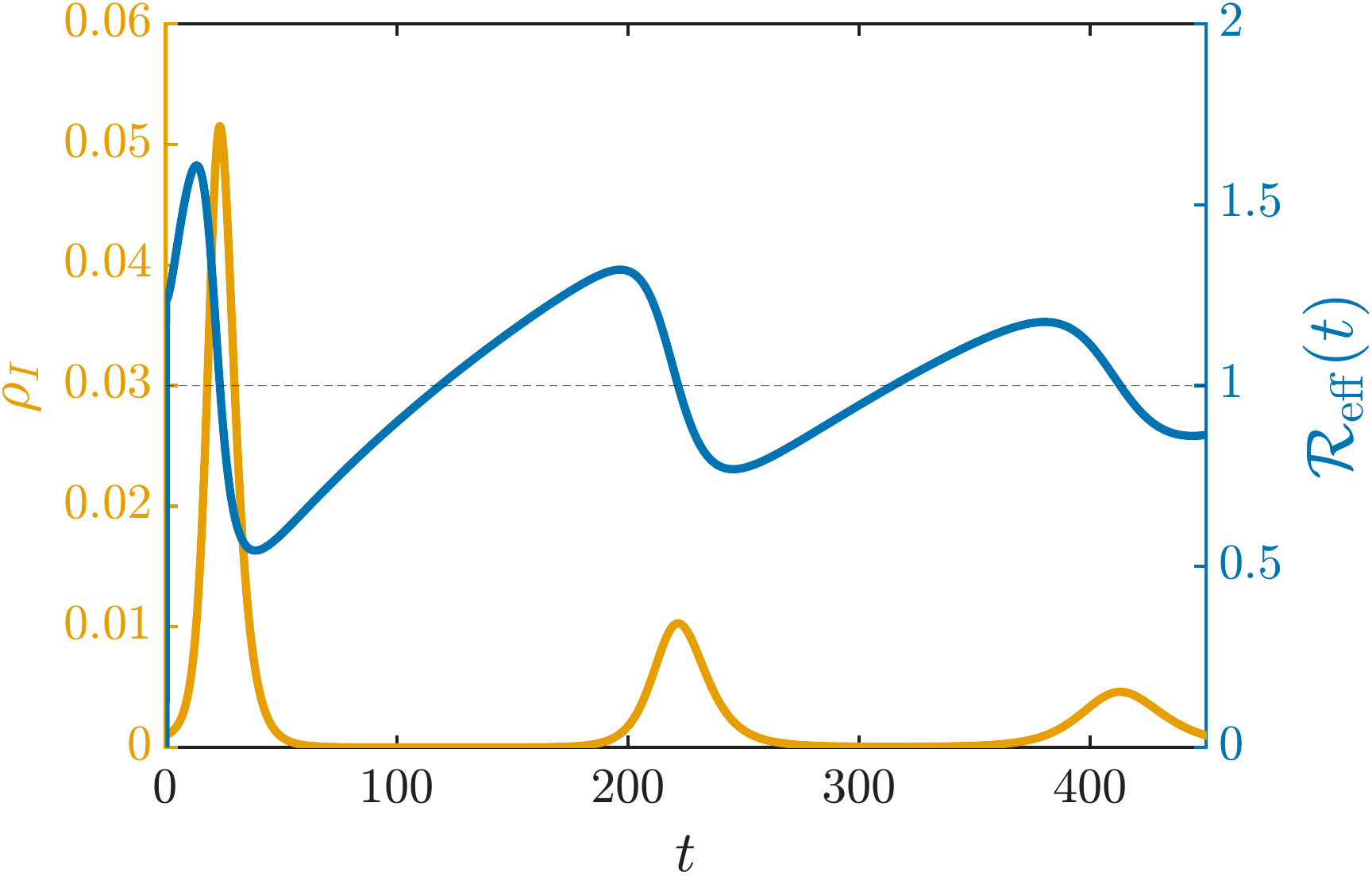}
        \caption*{(D)}
    \end{subfigure}
      \caption{\textbf{Test 3.} Evolution of the simplified model \eqref{eq:vectorial model simplified} with epidemiological transition rate $\beta_T$ defined by \eqref{eq:function beta_T}, graphon $\mathcal{B}$ given by \eqref{eq:fat-tailed graphon}, and opinion interaction function $G$ of the form \eqref{eq:bounded confidence}. Figure (A) shows the initial distribution of susceptible agents. At the final simulation time $T = 450$, the distributions of susceptible (B) and infected (C) populations display a polarization of their highly connected individuals around extreme positive and negative opinions, respectively. The effect of this competition becomes visible in the evolution of the effective reproduction number $\mathcal{R}_{\mathrm{eff}}$ which exhibits a strong oscillating behavior (D), reflected in the evolution of the density $\rho_I$. Values of the parameters: $\beta = 0.8$, $\alpha = 1$ \eqref{eq:function beta_T}, $\gamma = 0.4$, $\lambda = 1$, $\sigma_S^2 = 0.05$, $\sigma_I^2 = 0.03$, $\sigma_R^2 = 0.01$, $\tau = 1$, $\xi = 0.25$ \eqref{eq:fat-tailed graphon}, $\chi = 2$ \eqref{eq:P}, $a = 1$ \eqref{eq:g_P}, and $\Delta = 0.5$ \eqref{eq:bounded confidence}.}
\label{fig:reduced model epidemic waves}
\end{figure}

\subsection{Test 4 -- Epidemic dynamics in absence and presence of opinion leaders}

We now turn to the original kinetic SIR model \eqref{eq:vectorial model}, to investigate the impact of leaders on the epidemic dynamics. Let us still fix $\alpha = 1$ in the function $\beta_T$ defined by \eqref{eq:function beta_T} and consider a bounded confidence-type interaction $G$ of the form \eqref{eq:bounded confidence}. We initialize the distributions $f_S$, $f_I$, and $f_R$ as
\begin{gather*}
    f_S^\init(x,w) = \rho_S^\init \left( c \mathbb{I}_{B_1}(x,w) + \frac{c}{4} \mathbb{I}_{B_2}(x,w)\right), \\[4mm] f_I^\init(x,w)=\frac{\rho_I^\init}{2}\mathbb{I}_{\Omega \times \mathcal{I}}(x,w), \qquad f_R^\init(x,w)=\frac{\rho_R^\init}{2} \mathbb{I}_{\Omega \times \mathcal{I}}(x,w),
\end{gather*}
where $\rho_I^\init = \rho_R^\init = 10^{-3}$ and $\rho_S^\init = 1-\rho_I^\init-\rho_R^\init$, $B_1$ and $B_2$ are rectangular subsets of $\Omega \times \mathcal{I}$ (with unequal area) centered around the points $(x_1,w_1) = (0.8,-0.6)$ and $(x_2,w_2) = (0.2,0.7)$ respectively, and $c > 0$ is a constant chosen such that $f_S^\init$ has mass $\rho_S^\init$. In particular, we assume that poorly connected susceptible individuals are four times more numerous than highly connected ones, and that they are distributed around a negative opinion over a larger band compared to the latter, which are concentrated in the nearer vicinity of a strong positive stance.

In this setting, we perform two numerical tests by considering the fat-tailed graphon $\mathcal{B}$ given by \eqref{eq:fat-tailed graphon} and the interaction function $P$ defined by \eqref{eq:P}, in the two opposite cases modeling the absence or the presence of opinion leaders (exemplified by the red and blue curves of Figure \ref{fig:propensity to interact}, in the context of the simplified model \eqref{eq:vectorial model simplified}). In particular, between the two tests we solely modify the values of the parameters $\xi$ and $\chi$, which characterize the structure of the functions $\mathcal{B}$ and $P$, while maintaining fixed the values of all other parameters. Notice that we now make use of a stronger cutoff $10^{-3}$ to deal with the singularities of the graphon $\mathcal{B}$, in order to recover reasonable values for the opinion step $\Delta w$ based on the CFL condition \eqref{eq:CFL kinetic SIR}.

We report in Figure \ref{fig:full model no leaders} the results of the first test where no leaders are considered, namely when highly connected individuals, concentrated around a positive opinion, are not able to influence the bulk of the population which instead forms an unfavorable opinion cluster. In this case the former, less numerous group of individuals is driven toward the region of negative opinions, and the population reaches a global consensus opposing the use of protective behaviors. This dynamic directly affects the overall number of infected individuals, resulting in a greater spread of the epidemic. 

Figure \ref{fig:full model leaders} shows the results of the second test where leaders are taken into account. In this case, the individuals with positive opinions, despite being much less numerous than those belonging to the unfavorable cluster, have strong enough connections to steer the dynamics of the population toward reaching a global consensus in favor of the use of protective behaviors. In particular, compared to the previous test, the presence of leaders leads to a considerable reduction of the infection peak $\rho_I^\textnormal{max}$, making the epidemic less impactful.

This phenomenon is confirmed by the evolution of the corresponding effective reproduction numbers \eqref{eq:effective reproduction number}, displayed in Figure \ref{fig:reproduction numbers full model}. Indeed, the presence of leaders  has the effect of reducing and shrinking the peak of $\mathcal{R}_{\mathrm{eff}}$, which explains the substantial decrease of infected individuals compared to the case when leaders are absent.

\begin{figure}
    \centering
    
    \begin{subfigure}[t]{0.5\textwidth}
        \centering
        \includegraphics[width=\linewidth]{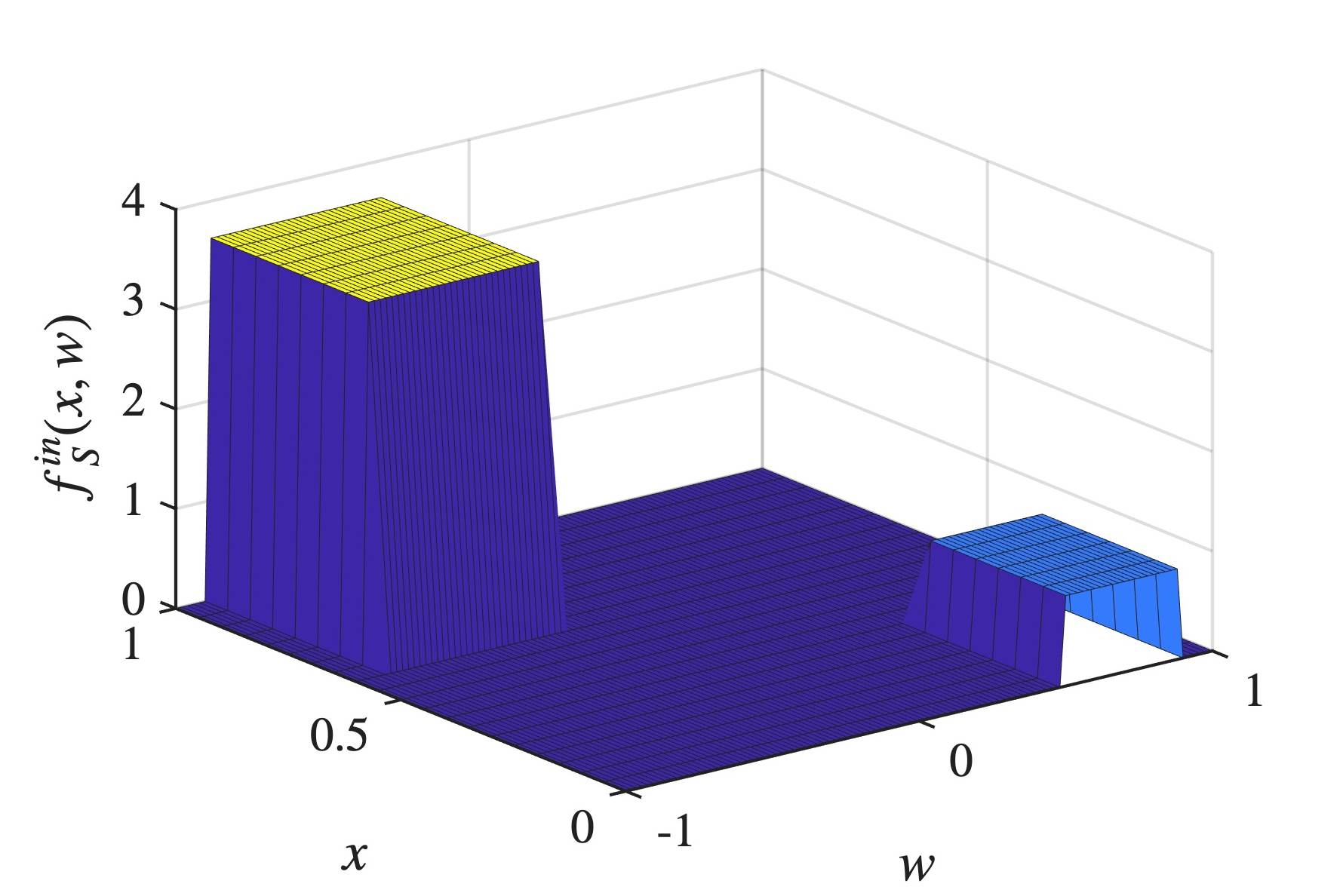}
        \caption*{(A)}
    \end{subfigure}\hfill
    \begin{subfigure}[t]{0.5\textwidth}
        \centering
        \includegraphics[width=\linewidth]{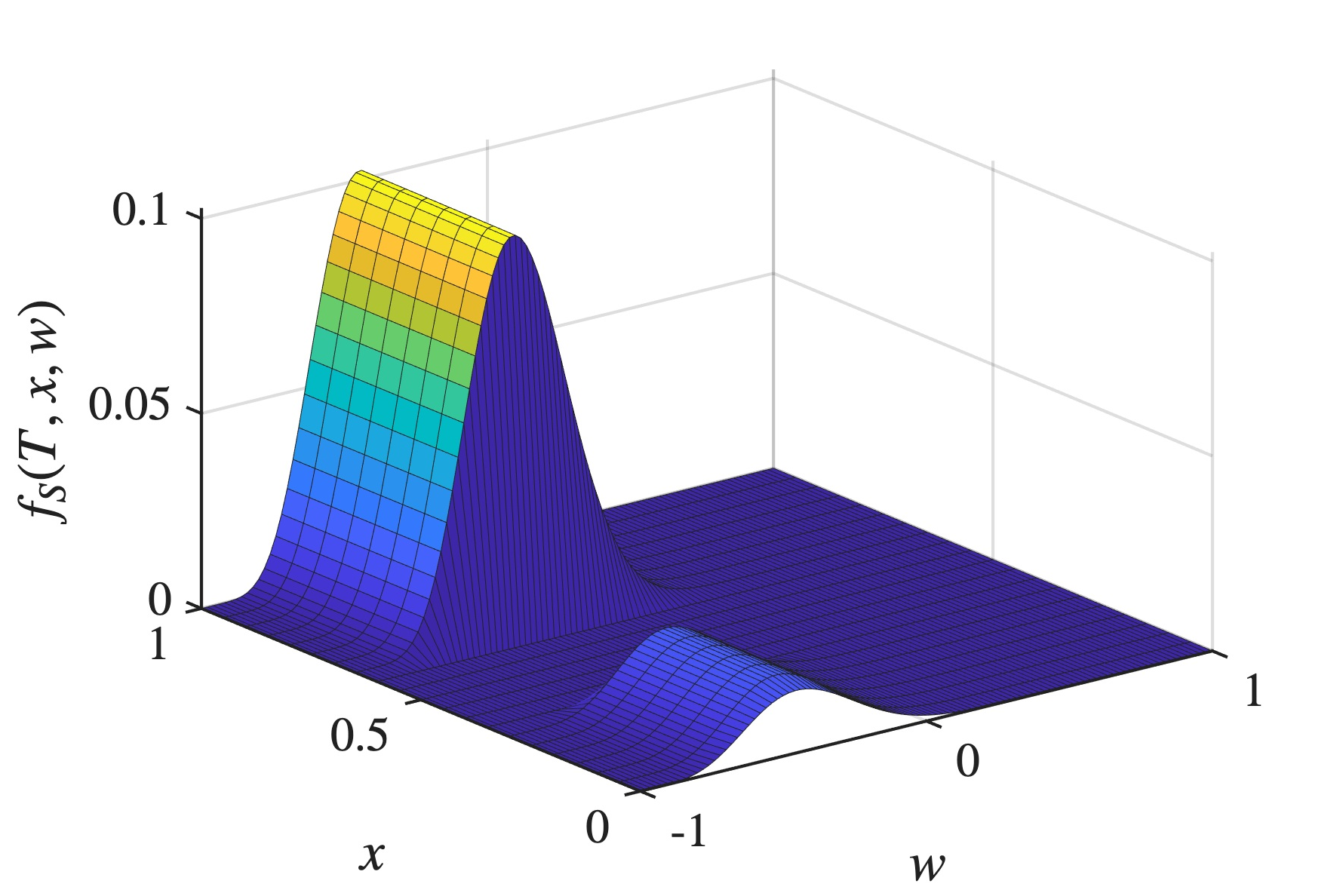}
        \caption*{(B)}
    \end{subfigure}

    \vspace{0.8em}

    \begin{subfigure}[t]{0.49\textwidth}
        \centering
        \includegraphics[width=\linewidth]{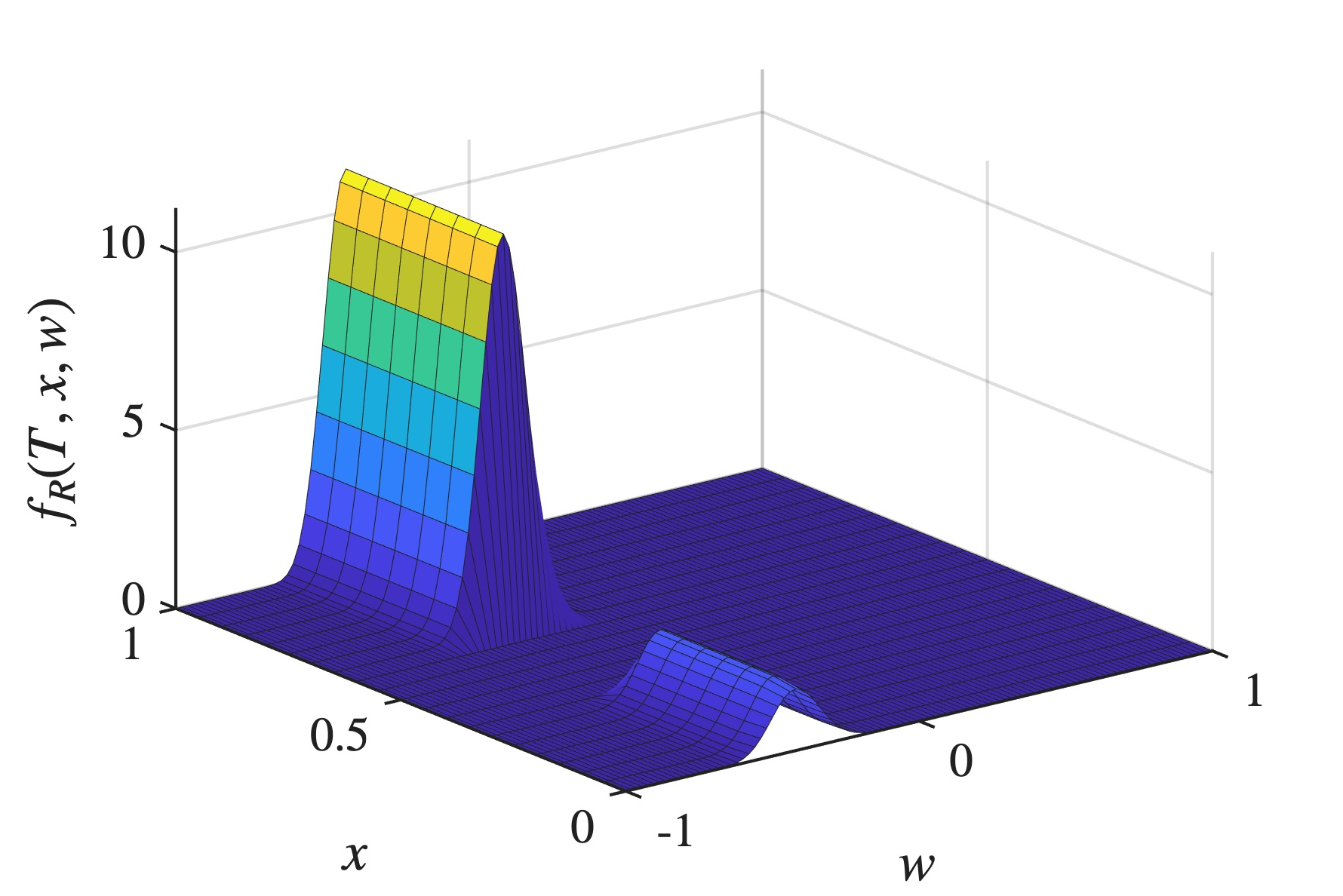}
        \caption*{(C)}
    \end{subfigure}\hfill
    \begin{subfigure}[t]{0.49\textwidth}
        \centering
        \includegraphics[width=\linewidth]{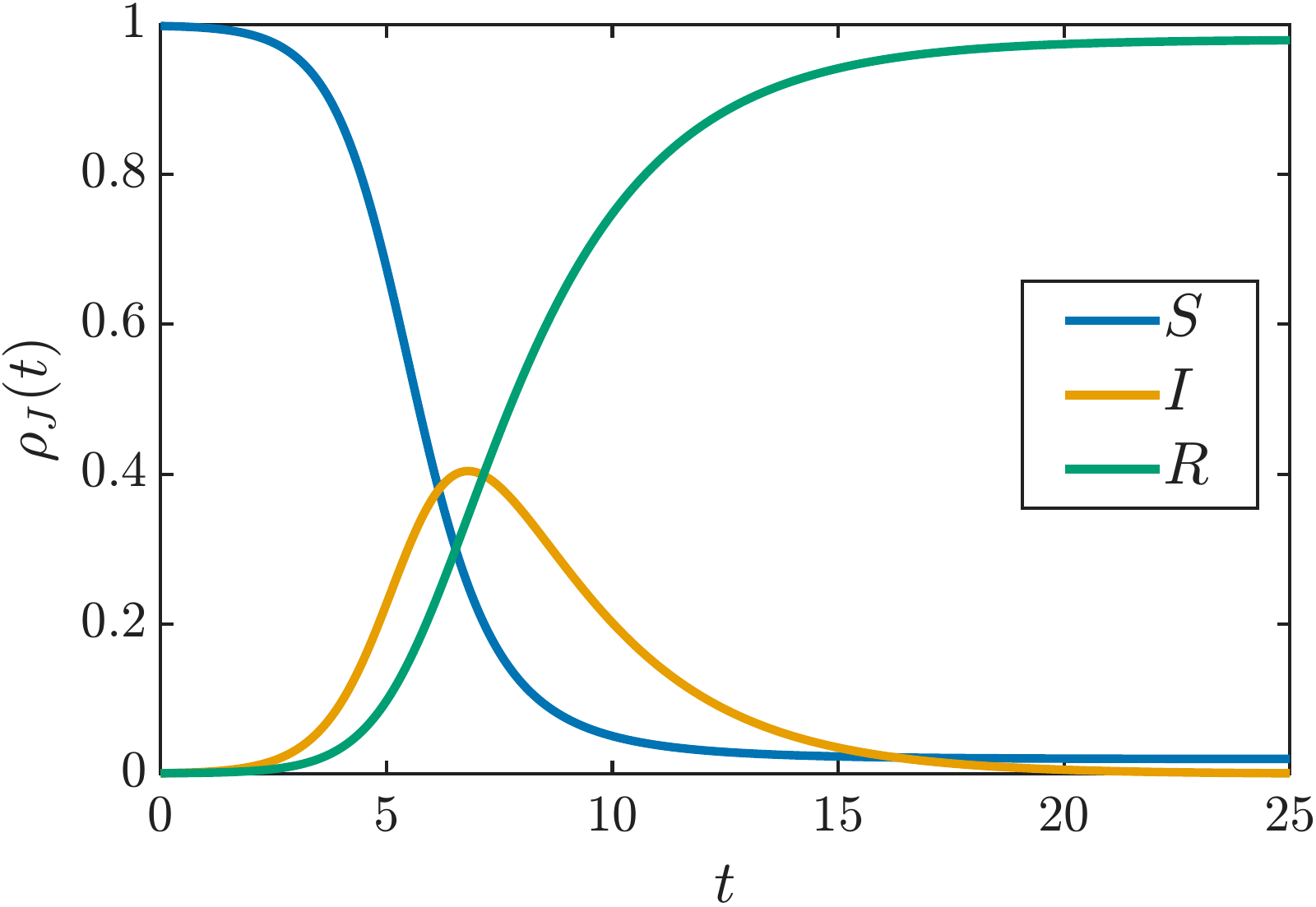}
        \caption*{(D)}
    \end{subfigure}
      \caption{\textbf{Test 4.} Evolution of the original model \eqref{eq:vectorial model} with epidemiological transition rate $\beta_T$ defined by \eqref{eq:function beta_T}, graphon $\mathcal{B}$ given by \eqref{eq:fat-tailed graphon}, and opinion interaction function $G$ of the form \eqref{eq:bounded confidence}. The initial population of susceptible individuals (A) is split between two opposite opinions, depending on the connectivity levels, with the unfavorable part being poorly connected and four times larger than the highly connected, favorable one. We plot the distributions of susceptible (B) and recovered (C) individuals at the final simulation time $T = 25$. The last figure (D) shows the evolution of the associated macroscopic SIR system \eqref{eq:generalized SIR} determined by computing the zeroth-order moment of the kinetic model. The absence of leaders leads to the formation of a global consensus around negative opinions for both susceptible and recovered compartments, and we observe a greater spread of the epidemic. Values of the parameters: $\beta = 0.8$, $\alpha = 1$ \eqref{eq:function beta_T}, $\gamma = 0.4$, $\lambda = 1$, $\sigma_S^2 = 0.05$, $\sigma_I^2 = 0.03$, $\sigma_R^2 = 0.01$, $\tau = 1$, $\xi = 0.05$ \eqref{eq:fat-tailed graphon}, $\chi = 0.5$ \eqref{eq:P}, and $\Delta = 1.2$ \eqref{eq:bounded confidence}.}
\label{fig:full model no leaders}
\end{figure}

\begin{figure}
    \centering
    
    \begin{subfigure}[t]{0.5\textwidth}
        \centering
        \includegraphics[width=\linewidth]{figures/5.6/FullModel_NoLeaders_fS_in.pdf}
        \caption*{(A)}
    \end{subfigure}\hfill
    \begin{subfigure}[t]{0.5\textwidth}
        \centering
        \includegraphics[width=\linewidth]{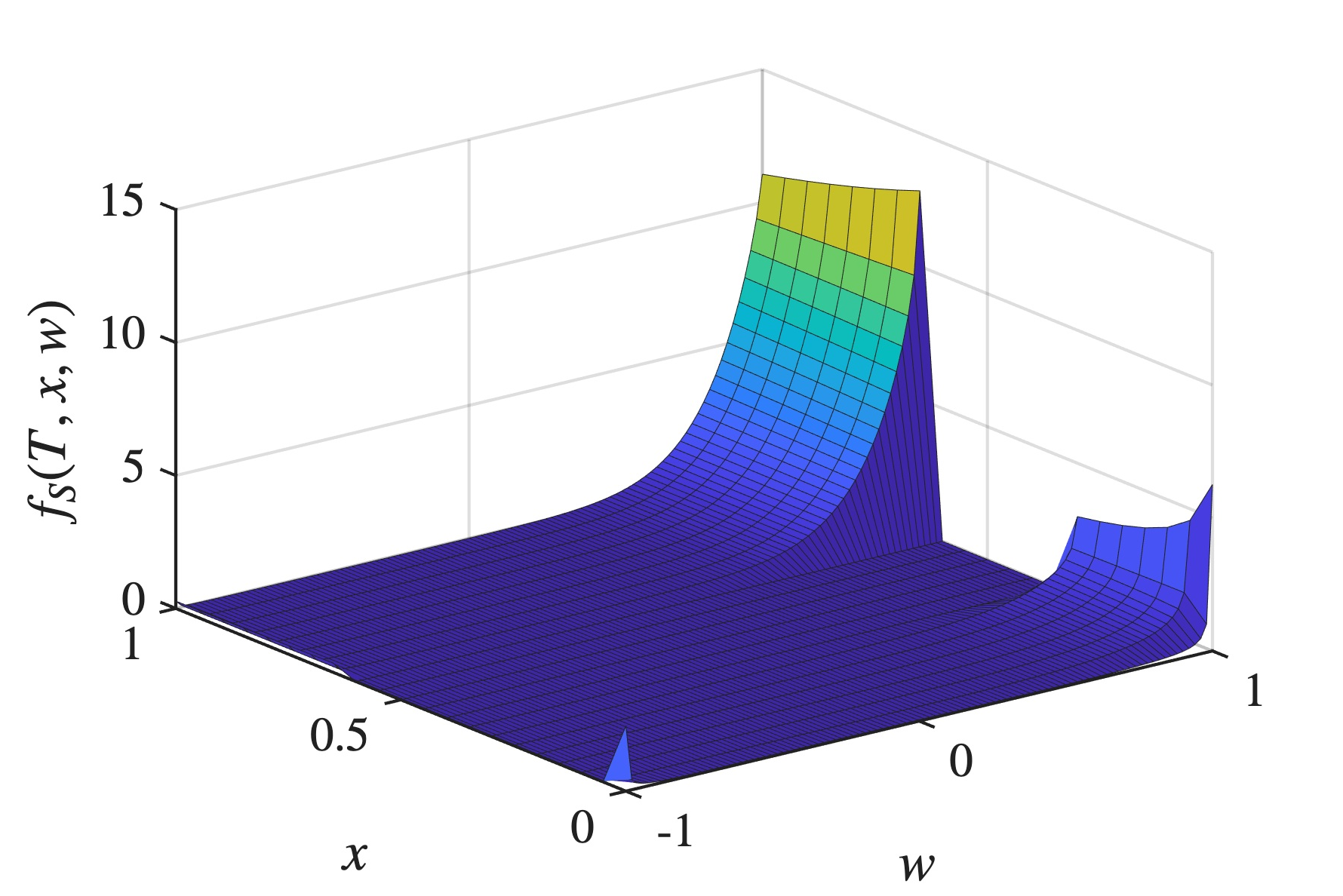}
        \caption*{(B)}
    \end{subfigure}

    \vspace{0.8em}

    \begin{subfigure}[t]{0.49\textwidth}
        \centering
        \includegraphics[width=\linewidth]{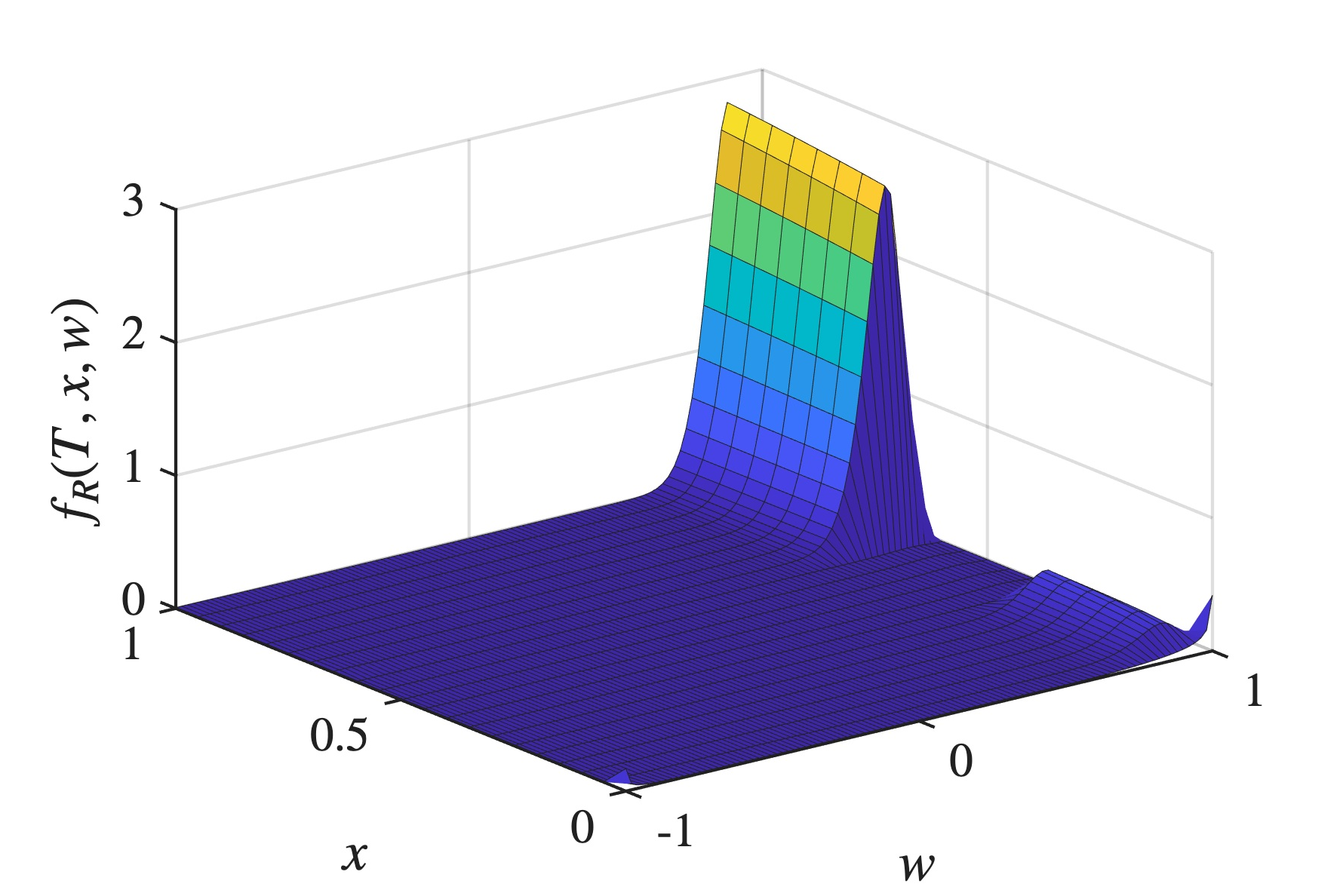}
        \caption*{(C)}
    \end{subfigure}\hfill
    \begin{subfigure}[t]{0.49\textwidth}
        \centering
        \includegraphics[width=\linewidth]{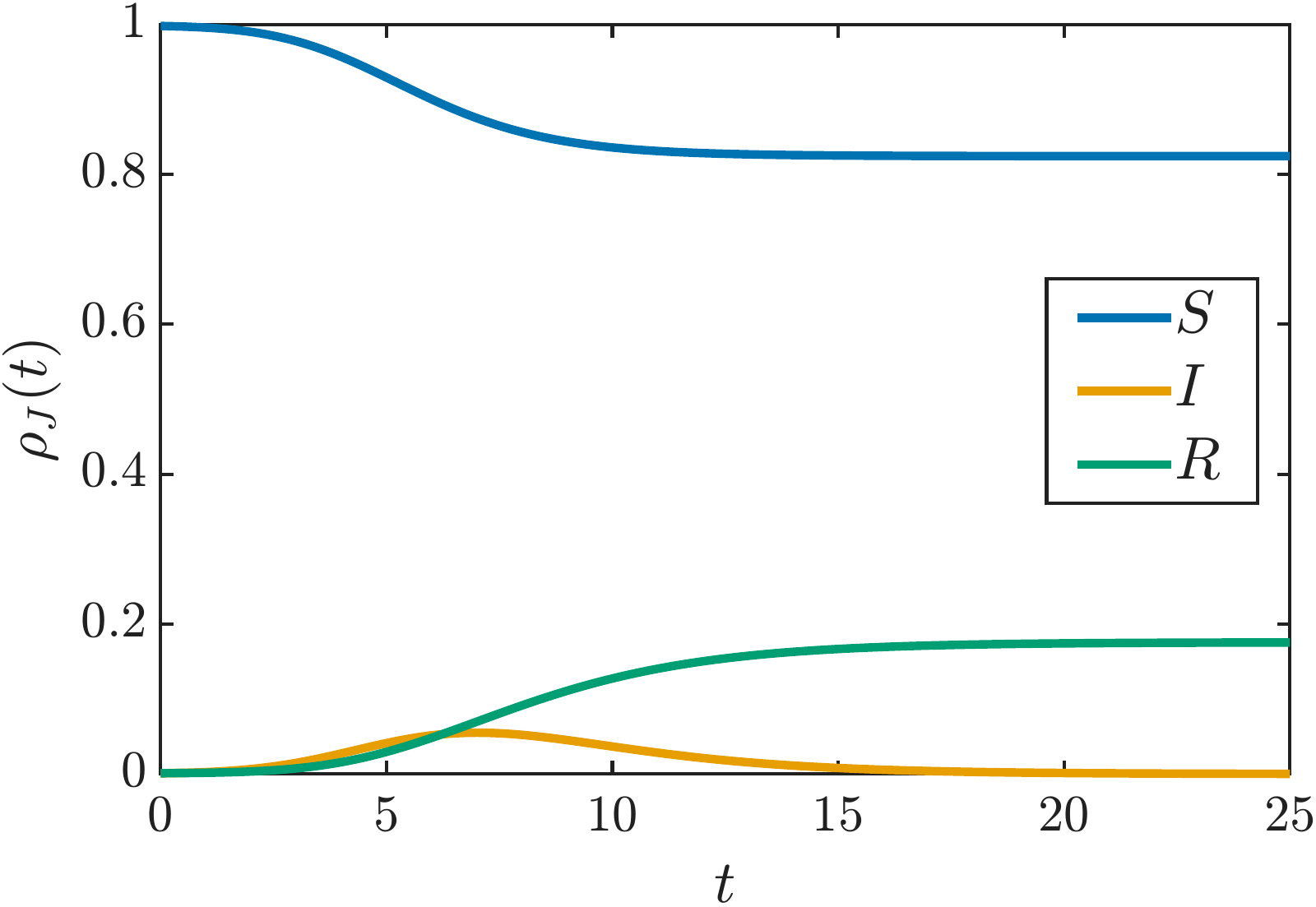}
        \caption*{(D)}
    \end{subfigure}
      \caption{\textbf{Test 4.} Evolution of the original model \eqref{eq:vectorial model} with epidemiological transition rate $\beta_T$ defined by \eqref{eq:function beta_T}, graphon $\mathcal{B}$ given by \eqref{eq:fat-tailed graphon}, and opinion interaction function $G$ of the form \eqref{eq:bounded confidence}. The initial population of susceptible individuals (A) is split between two opposite opinions, depending on the connectivity levels, with the unfavorable part being poorly connected and four times larger than the highly connected, favorable one. We plot the distributions of susceptible (B) and recovered (C) individuals at the final simulation time $T = 25$. The last figure (D) shows the evolution of the associated macroscopic SIR system \eqref{eq:generalized SIR} determined by computing the zeroth-order moment of the kinetic model. Leaders are able to steer the populations of susceptible and recovered agents toward a global consensus around positive opinions, and the impact of the epidemic is considerably reduced. Values of the parameters: $\beta = 0.8$, $\alpha = 1$ \eqref{eq:function beta_T}, $\gamma = 0.4$, $\lambda = 1$, $\sigma_S^2 = 0.05$, $\sigma_I^2 = 0.03$, $\sigma_R^2 = 0.01$, $\tau = 1$, $\xi = 0.25$ \eqref{eq:fat-tailed graphon}, $\chi = 2$ \eqref{eq:P}, and $\Delta = 1.2$ \eqref{eq:bounded confidence}.}
\label{fig:full model leaders}
\end{figure}

\begin{figure}
    \centering
    
    \begin{subfigure}[t]{0.5\textwidth}
        \centering
        \includegraphics[width=\linewidth]{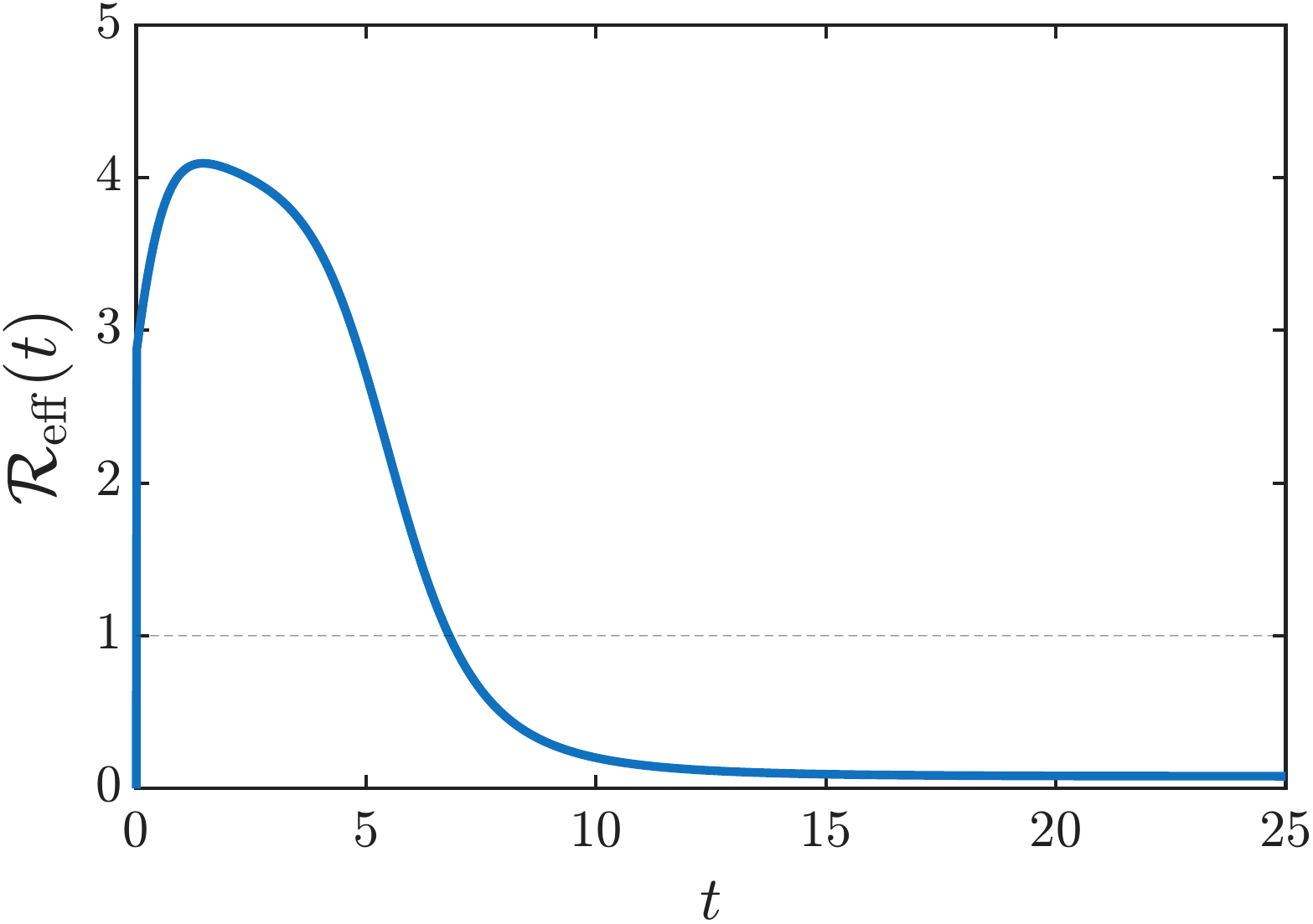}
        \caption*{(A)}
    \end{subfigure}\hfill
    \begin{subfigure}[t]{0.5\textwidth}
        \centering
        \includegraphics[width=\linewidth]{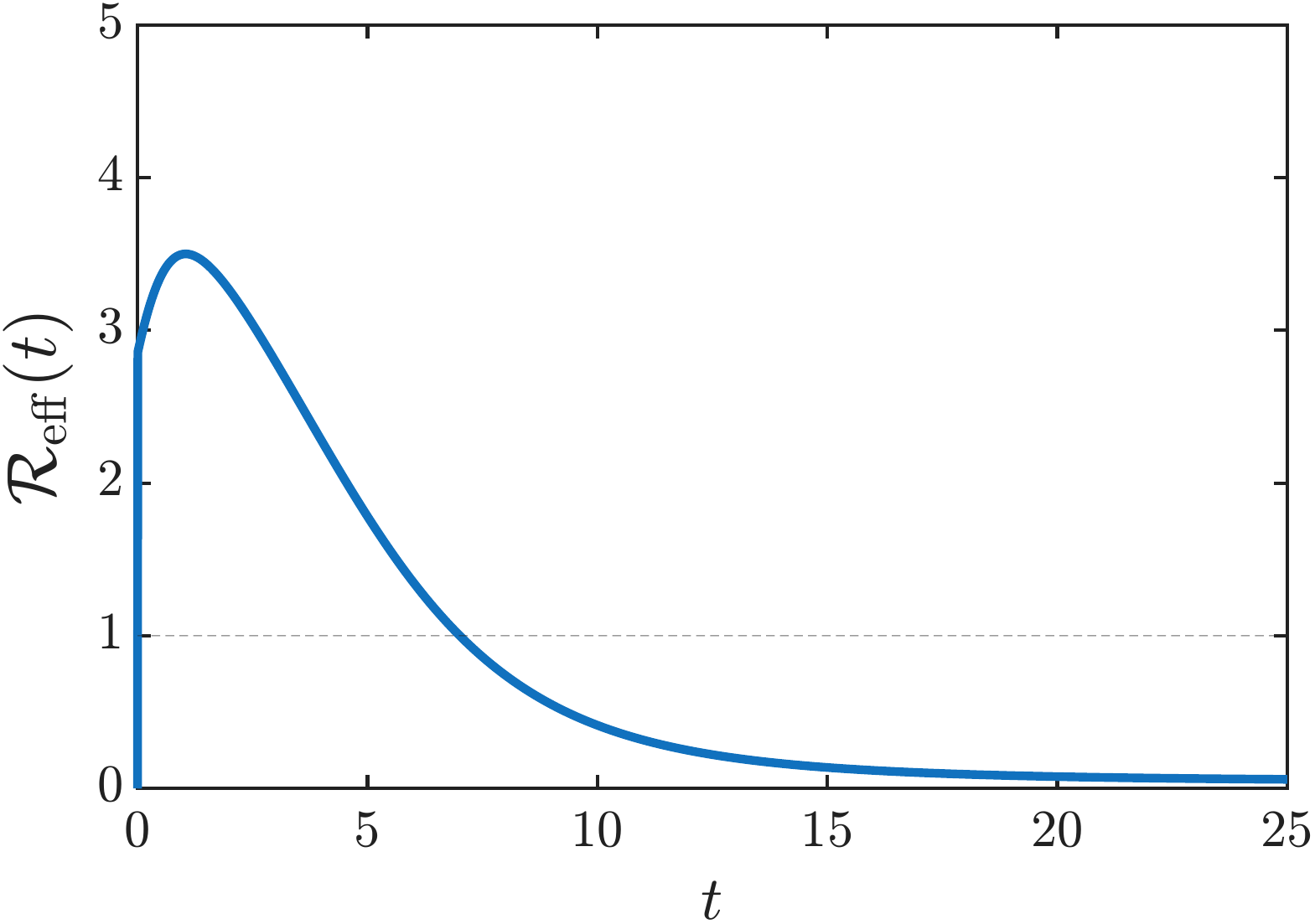}
        \caption*{(B)}
    \end{subfigure}
      \caption{\textbf{Test 4.} Evolution of the effective reproduction number \eqref{eq:effective reproduction number}, for the original model \eqref{eq:vectorial model} with epidemiological transition rate $\beta_T$ defined by \eqref{eq:function beta_T}, graphon $\mathcal{B}$ given by \eqref{eq:fat-tailed graphon}, and opinion interaction function $G$ of the form \eqref{eq:bounded confidence}. Temporal variations of $\mathcal{R}_{\mathrm{eff}}$ for the model without leaders (A) and for the model with leaders (B). The parameters are those used for Figure \ref{fig:full model no leaders} and Figure \ref{fig:full model leaders}, respectively.}
\label{fig:reproduction numbers full model}
\end{figure} 

\section{Conclusions} \label{section6}

    \noindent In this work we introduced kinetic SIR-type models to simulate the spread of an epidemic while explicitly accounting for the opinions that individuals hold regarding the adoption of protective behaviors. Opinion exchanges were mediated by a social network, represented by an underlying graphon \cite{DurFraWolZan}. In particular, each agent occupied a position on the graphon, which determined with which other individuals he was more prone to interact and by who he was more strongly influenced (these two properties were independent of each other). We also introduced a Fokker--Planck-type equation to simulate the spread of awareness regarding the disease's danger as the epidemic evolves.  

    Our kinetic epidemiological model is a generalization of the classical macroscopic SIR system, and we showed how to derive the latter starting from the former. We also introduced the propensity to interact of each agent as a combination of the above discussed properties depending on the position on the graphon. The importance of this new quantity is twofold. Firstly, it allowed us to distinguish between the presence and the absence of opinion leaders (such as politicians and influencers), highlighting the influence that a social network has on the dynamics occurring inside a population. This is an important novelty of our work since in previous articles (see \cite{BonBor,BonTosZan}) they had to be introduced separately from the rest of the population due to mathematical complexities. Secondly, the propensity to interact allowed us to to derive a simplification of the original model, which admits explicit global equilibria. These equilibria showed that agents prone to interact tend to be characterized by opinion polarization, while socially active ones can be characterized by consensus formation, highlighting the importance of social interactions to prevent the formation of extreme opinions, in line with the results presented in \cite{BonBor} by the first and second author. Moreover, we demonstrated strong $L^1$-convergence to equilibrium results toward these equilibria, also providing an explicit rate of convergence through relative entropy methods \cite{AurTosZan, BonBor}. 

    The spread of the popularity of the idea of the dangerousness of the disease did not influence the evolution of the epidemics, however it depended on the opinion of the individuals. This Fokker--Planck-type model admitted explicit equilibria, which showed that the population tends to consider the disease dangerous (respectively, not dangerous) not only if there are many agents with an opinion in favor of (respectively, against) a protective behavior, but they must be also prone to interact in order to diffuse their opinion among other agents. Importantly, we were able to demonstrate a convergence to equilibrium result in the homogeneous Sobolev space $\dot{H}^{-s}$, $s \in \big(\frac{1}{2}, 1\big)$.  Moreover, an additional converge to equilibrium result in $L^1$ was derived under a suitable additional assumption, which was proven to be realistic by the results of the numerical simulations.  

    To investigate the complete behaviors of the proposed models, we performed several numerical simulations based on a novel structure-preserving scheme \cite{ParZan}. In particular, the numerical scheme for the SIR model combines a Chang--Cooper type discretization for the Fokker--Planck opinion dynamics with a Runge--Kutta integration of the epidemiological operators, preserving key structural properties such as nonnegativity and mass conservation. Firstly, the numerical simulations reproduced the trends to equilibrium previously mentioned. Secondly, they allowed us to illustrate how the interplay between opinion dynamics and the graphon-mediated interactions affects the epidemic evolution, both in presence and absence of opinion leaders in favor or a protective behavior. For example, the presence of the latter reduced considerably the epidemic spread. Moreover, we introduced a time-dependent quantity, named effective reproduction number, resembling the classical basic reproduction number, whose temporal evolution may exhibit an oscillatory behavior, reflecting in epidemic waves. Notably, these waves naturally arose from our model, without the need of introducing an explicit external forcing (see, e.g., \cite{BonTosZan}). 

\appendix 
\section{A spatial adjacency graphon} \label{appA}

    \noindent Define the bounded symmetric graphon  
    \begin{align} \label{eq:graphon_app}
        \mathcal{B}(x,y)=
        \left\{
        \begin{aligned}
            & 1 \quad \textnormal{if}\ \  |x-y| \leq r, \\[2mm] 
            & 0 \quad \textnormal{if} \ \   |x-y| > r,
        \end{aligned}
        \right.
    \end{align}
    for some $0 < r <\frac{1}{4}$, meaning that each agent is able to communicate only with individuals $r$-close to them. Note that the agents with the largest number of connections (i.e., such that $d_\textnormal{in}$ is maximum) have a position on the graphon $x \in [r, 1-r]$. Consider the interaction function \eqref{eq:P}, from direct calculations it follows that  
    \begin{align} \label{eq:P_app}
        P(x,y)=
        \begin{cases}
            \left(1+\frac{x+r}{y+r}\right)^{-\chi} \quad &  \textnormal{if}\ \  x,y \in [0,r), \\[2mm] 
            \left(1+\frac{x+r}{2r}\right)^{-\chi}  &\textnormal{if} \ \   x \in [0,r), y \in [r,1-r], \\[2mm] 
            \left(1+\frac{x+r}{1-y+r}\right)^{-\chi} &\textnormal{if} \ \   x \in [0,r), y \in (1-r,1], \\[2mm] 
            \left(1+\frac{2r}{y+r}\right)^{-\chi} \quad & \textnormal{if}\ \  x \in [r,1-r],y \in [0,r), \\[2mm] 
            2^{-\chi} & \textnormal{if} \ \   x,y \in [r,1-r], \\[2mm]  
            \left(1+\frac{2r}{1-y+r}\right)^{-\chi} &\textnormal{if} \ \   x \in [r,1-r], y \in (1-r,1], \\[2mm] 
            \left(1+\frac{1-x+r}{y+r}\right)^{-\chi} \quad &\textnormal{if}\ \  x \in (1-r,1],y \in [0,r), \\[2mm] 
            \left(1+\frac{1-x+r}{2r}\right)^{-\chi} &\textnormal{if} \ \   x \in (1-r,1],y \in [r,1-r], \\[2mm]  
            \left(1+\frac{1-x+r}{1-y+r}\right)^{-\chi} &\textnormal{if} \ \   x,y \in (1-r,1]. 
        \end{cases}
    \end{align}
    Figure \ref{fig:appendix_graphon} shows the graphon \eqref{eq:graphon_app} and the interaction function \eqref{eq:P_app}. 

    \begin{figure}
    \centering
    \begin{subfigure}[h!]{0.5\textwidth}
        \centering
        \includegraphics[width=\linewidth]{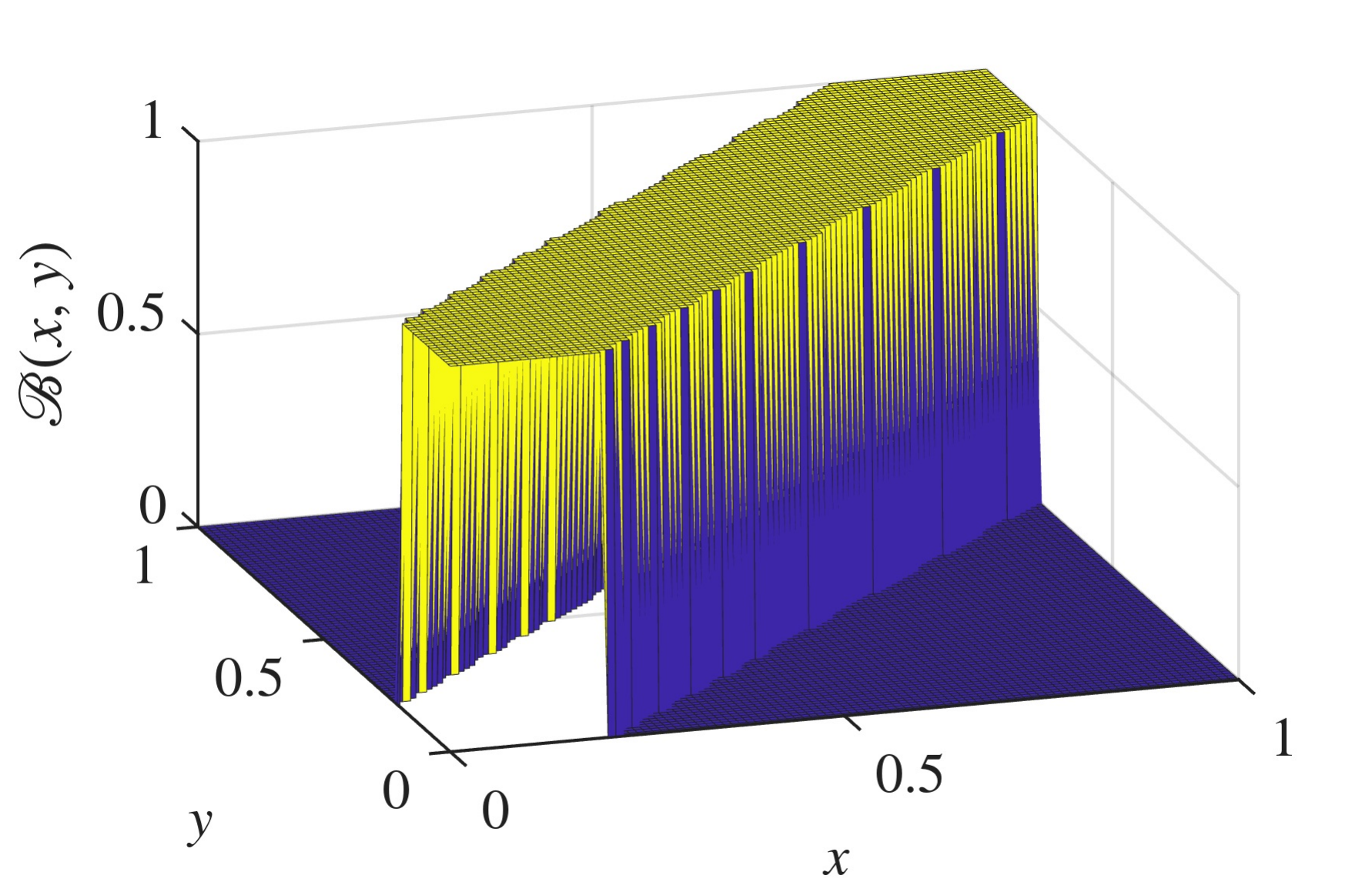}
        \caption*{(A)}
    \end{subfigure}\hfill
    \begin{subfigure}[h!]{0.5\textwidth}
        \centering
        \includegraphics[width=\linewidth]{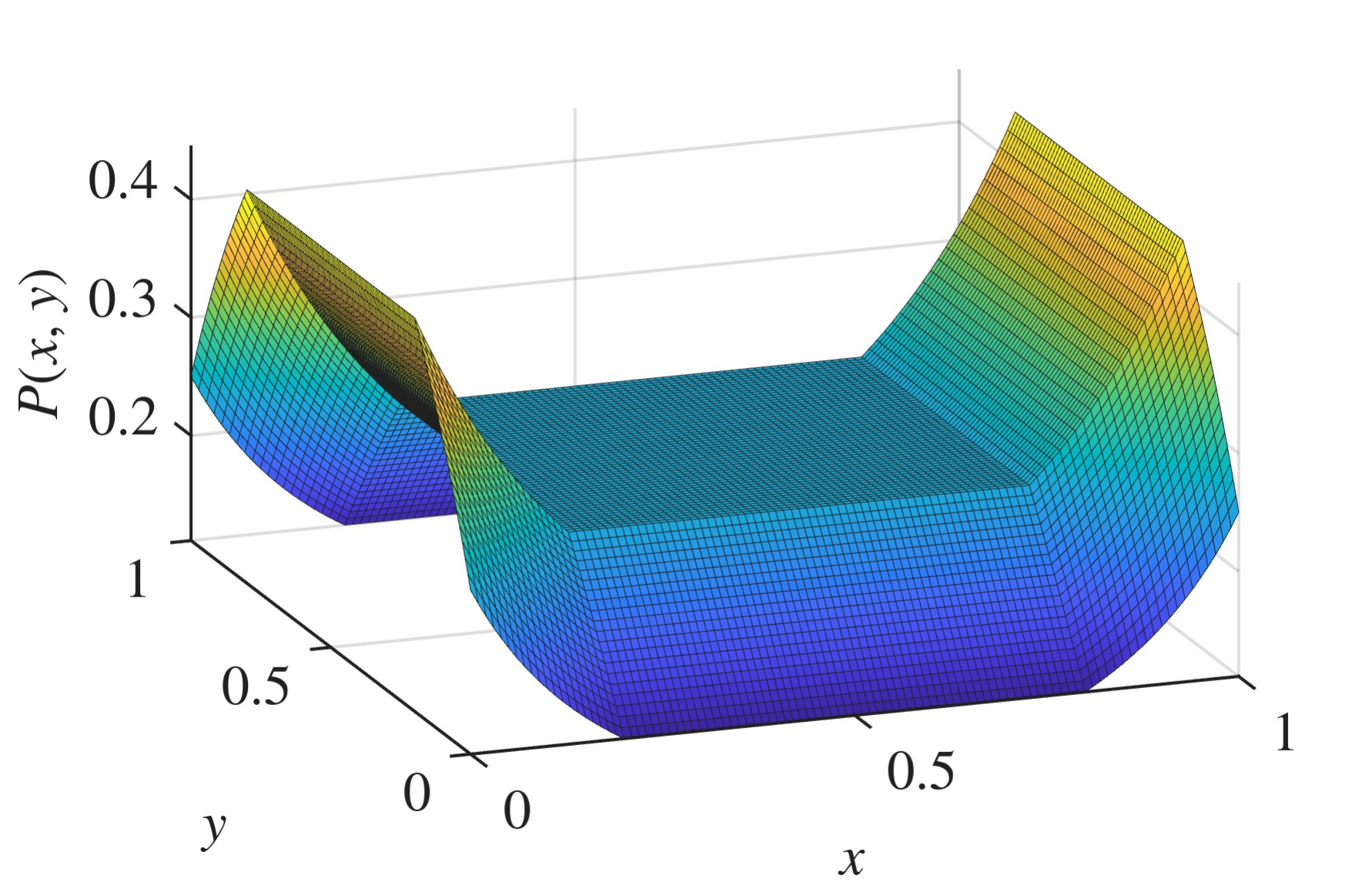}
        \caption*{(B)}
    \end{subfigure}
      \caption{(A) Plot of the graphon $\mathcal{B}$ given by \eqref{eq:graphon_app}. (B) Plot of the interaction function $P$ defined by \eqref{eq:P_app}. Values of the parameters: $r=0.2$ and $\chi=2$.}
    \label{fig:appendix_graphon}
    \end{figure}
    
    Cumbersome calculations show that 
    \begin{align} \label{eq:propensity_app}
        p(x)=
        \begin{cases}
             x\left(1+\frac{x+r}{2r}\right)^{-\chi} + \int_0^r \left(1+\frac{x+r}{y+r}\right)^{-\chi} \dd y \quad & \textnormal{if}\ \  x \in [0,r), \\[2mm] 
             x 2^{-\chi} + \int_{x-r}^r\left(1+\frac{2r}{y+r}\right)^{-\chi} \dd y \quad & \textnormal{if}\ \  x \in [r,2r), \\[2mm]
             r 2^{1-\chi} \quad & \textnormal{if} \ \ x \in [2r,1-2r], \\[2mm]
             (1-x) 2^{-\chi} + \int_{1-r}^{x+r} \left(1+\frac{2r}{1-y+r}\right)^{-\chi} \dd y \quad & \textnormal{if}\ \  x \in (1-2r,1-r], \\[2mm]
             (1-x)\left(1+\frac{1-x+r}{2r}\right)^{-\chi} + \int_{1-r}^1\left(1+\frac{1-x+r}{1-y+r}\right)^{-\chi} \dd y \quad & \textnormal{if}\ \  x \in (1-r,1]. 
        \end{cases}
    \end{align}
    Notice that $p(x)=p(1-x)$. 

    \begin{figure}[h!]
    \includegraphics[width=8cm]{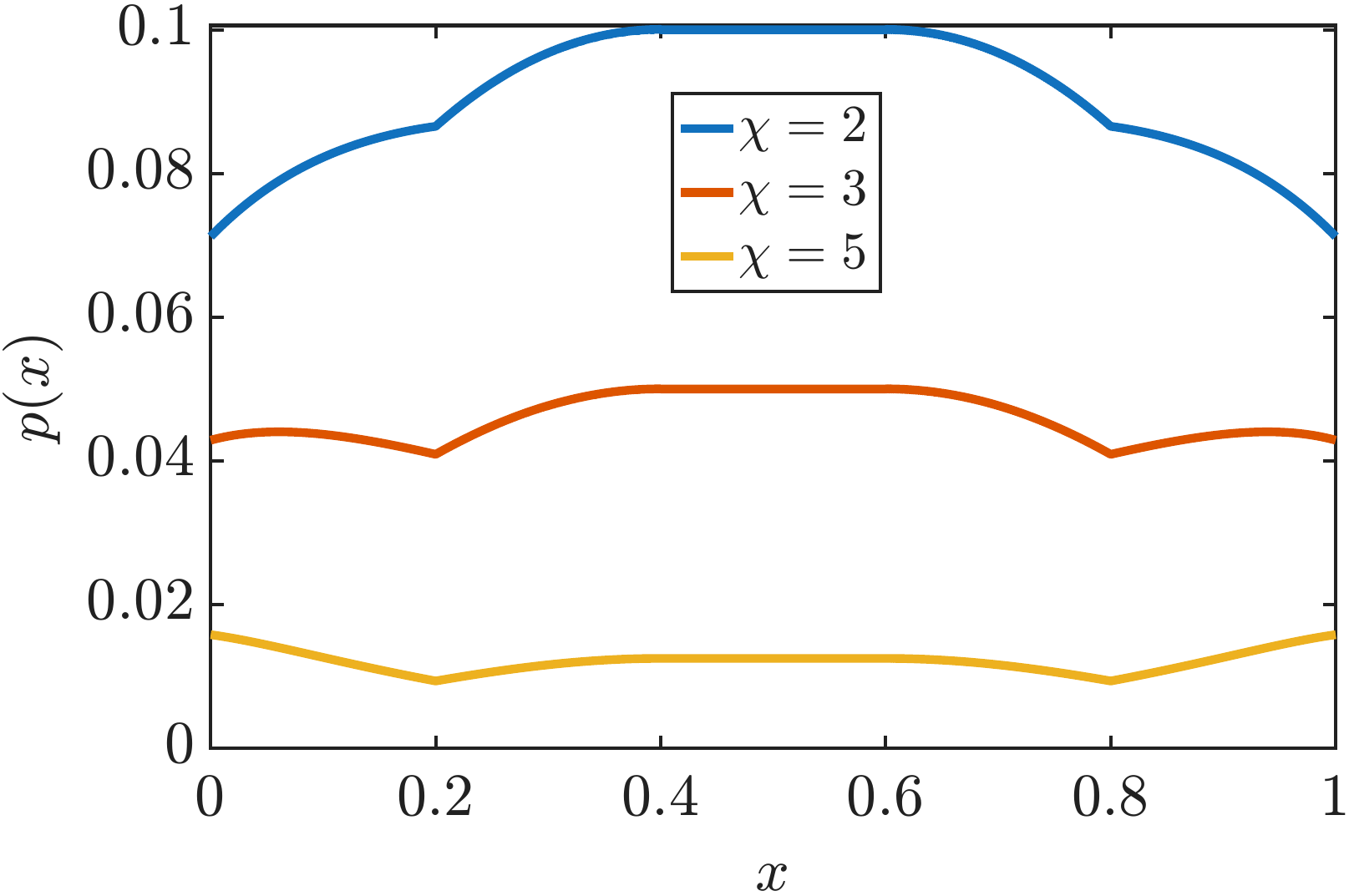}
    \caption{Graph of the propensity to interact $p$ defined by \eqref{eq:propensity_app}, for $r=0.2$ and different values of $\chi$.} \label{f4}
    \end{figure}
    
    Figure \ref{f4} shows the graph of the function $p(x)$ for $r=\frac{1}{5}$ and different values of $\chi$. If $\chi=2$ the agents with the largest propensity to interact have a position $x \in [2r, 1-2r]$, meaning that not only they have the largest number of connections, but they also communicate only with agents with again the largest number of connections (i.e., with position $x \in [r, 1-r]$). For example, agents with a position $x \in [r,2r)$ are among the ones with the largest number of connections but they communicate with individuals with position $x < r$ with a low number of connections. This situation is similar to the one described by the orange curve in Figure \ref{fig:propensity to interact}. The cases $\chi=3$ and $\chi=5$ are more interesting and complex. In both situations the agents with the lowest number of connections ($x=0$ and $x=1$) do not have the lowest propensity to interact, meaning that they are strongly influenced by their few social interactions. In particular, they have the highest propensity to interact if $\chi=5$. 

\bigskip
\bigskip
\noindent \textbf{Code availability.} The MATLAB code is available at \url{https://doi.org/10.5281/zenodo.19500723} \cite{MATLAB}.

\bigskip
\bigskip
\noindent \textbf{Acknowledgments.} AB and JB are members and acknowledge the support of {\it Gruppo Nazionale di Fisica Matematica} (GNFM) of {\it Istituto Nazionale di Alta Matematica} (INdAM). AB acknowledges the support from the INdAM-GNFM project CUP E5324001950001 (Multi-species non-Maxwellian Fokker-Planck models inferred from local non-equilibrium distributions), and from the European Union’s Horizon Europe research and innovation programme under the Marie Skłodowska-Curie grant agreement 101110920, project MesoCroMo (A Mesoscopic approach to Cross-diffusion Modelling in population dynamics). JB thanks the support of the project PRIN 2022 PNRR ``Mathematical modeling for a Sustainable Circular Economy in Ecosystems'' (project code P2022PSMT7, CUP D53D23018960001) funded by the European Union - NextGenerationEU, PNRR-M4C2-I 1.1, and by MUR-Italian Ministry of Universities and Research. JB and MF acknowledge the support of the University of Parma through the action Bando di Ateneo 2022 per la ricerca, cofunded by MUR-Italian Ministry of Universities and Research - D.M. 737/2021 - PNR - PNRR - NextGenerationEU (project ``Collective and Self-Organised Dynamics: Kinetic and Network Approaches''). MF acknowledges the support from the European Union’s Horizon Europe research and innovation programme under the Marie Skłodowska-Curie Doctoral Network DATAHYKING (Grant No. 101072546), and is supported by the French government, through the UniCA$_{JEDI}$ Investments in the Future project managed by the National Research Agency (ANR) with the reference number ANR-15-IDEX-01.

\bigskip
\bigskip
\noindent \textbf{Disclaimer.} Funded by the European Union. Views and opinions expressed are however those of the author(s) only and do not necessarily reflect those of the European Union or of the European Research Executive Agency (REA). Neither the European Union nor the granting authority can be held responsible for them.

\begin{figure}[h!]
\begin{flushleft}
\includegraphics[scale=0.3]{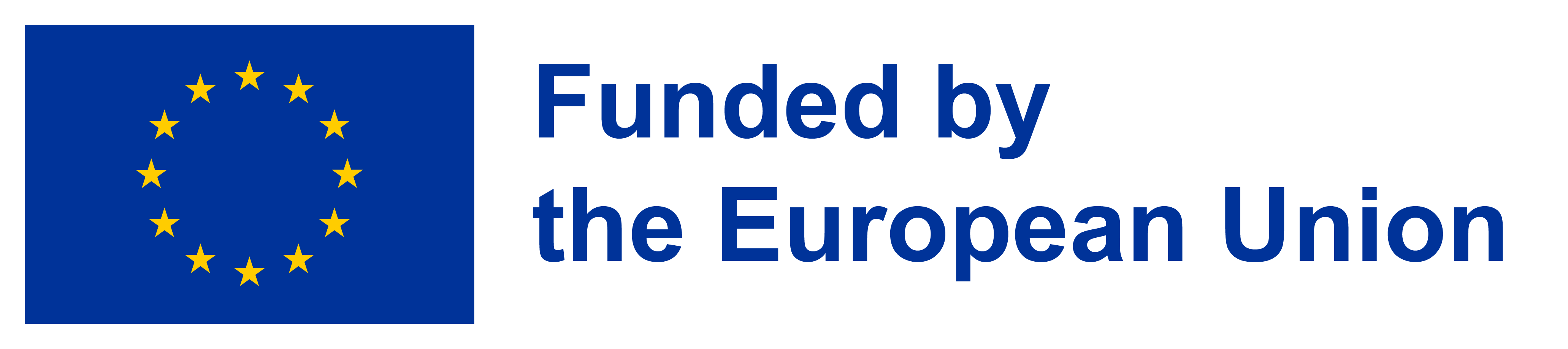}
\end{flushleft}
\end{figure}

\bigskip
\noindent \textbf{Funding.} This research has been supported by the European Union’s Horizon Europe research and innovation programme (MSCA-PF grant agreement 101110920, project MesoCroMo ``A Mesoscopic approach to Cross-diffusion Modelling in population dynamics'', and MSCA-DN grant agreement 101072546, project DATAHYKING), by INdAM--GNFM (project ``Multi-species non-Maxwellian Fokker--Planck models inferred from local non-equilibrium distributions'', CUP E5324001950001), by the European Union - NextGenerationEU and by the MUR-Italian Ministry of Universities and Research (project “Collective and Self-Organised Dynamics: Kinetic and Network Approaches”, action ``Bando di Ateneo 2022 per la ricerca'' D.M. 737/2021 - PNR - PNRR, and project ``Mathematical Modelling for a Sustainable Circular Economy in Ecosystems'', PRIN 2022 PNRR, code P2022PSMT7, CUP D53D23018960001).


\bigskip
\bibliographystyle{plain}
\bibliography{Bibliography_KSIR}
\bigskip
\bigskip

\setlength\parindent{0pt}

\end{document}